\newcounter{cprop}[section]
\newtheorem{theorem}[cprop]{Theorem}
\newtheorem*{theorem*}{Theorem}
\theoremstyle{plain}
\newtheorem{corollary}[cprop]{Corollary}
\newtheorem{lemma}[cprop]{Lemma}
\newtheorem{proposition}[cprop]{Proposition}
\numberwithin{equation}{section}
\theoremstyle{definition}
\newtheorem{definition}[cprop]{Definition}
\newtheorem{example}[cprop]{Example}
\theoremstyle{remark}
\newtheorem{remark}[cprop]{Remark}
\newcommand{\E}{\mathbb{E}}
\renewcommand{\P}{\mathbb{P}}
\newcommand{\R}{\mathbb{R}}
\newcommand{\Q}{\mathbb{Q}}
\newcommand{\N}{\mathbb{N}}
\newcommand{\Z}{\mathbb{Z}}
\newcommand{\vertiii}[1]{{\left\vert\kern-0.25ex\left\vert\kern-0.25ex\left\vert #1 
    \right\vert\kern-0.25ex\right\vert\kern-0.25ex\right\vert}}
\begin{document}
\title[Dynamical theory for SDDE]{A dynamical theory for singular stochastic delay differential equations I: Linear equations and a Multiplicative Ergodic Theorem on fields of Banach spaces}

\author{M. Ghani Varzaneh}
\address{Mazyar Ghani Varzaneh\\
Institut f\"ur Mathematik, Technische Universit\"at Berlin, Germany and Department of Mathematical Sciences, Sharif University of Technology, Tehran, Iran}
\email{mazyarghani69@gmail.com}

\author{S. Riedel}
\address{Sebastian Riedel \\
Institut f\"ur Mathematik, Technische Universit\"at Berlin, Germany}
\email{riedel@math.tu-berlin.de}

\author{M. Scheutzow}
\address{Michael Scheutzow \\
Institut f\"ur Mathematik, Technische Universit\"at Berlin, Germany}
\email{ms@math.tu-berlin.de}

\keywords{multiplicative ergodic theorem, random dynamical systems, rough paths, stochastic delay differential equation}

\subjclass[2010]{34K50, 37H10, 37H15, 60H99, 60G15}

\begin{abstract}
  We show that singular stochastic delay differential equations (SDDEs) induce cocycle maps on a field of Banach spaces. A general Multiplicative Ergodic Theorem on fields of Banach spaces is proved and applied to linear SDDEs. In Part II of this article, we use our results to prove a stable manifold theorem for non-linear singular SDDEs.
\end{abstract}

\maketitle

\section*{Introduction}

Stochastic delay differential equations (SDDEs) describe stochastic processes for which the dynamics do not only depend on the present state, but may depend on the whole past of the process. In its simplest formulation, an SDDE takes the form 
\begin{align}\label{eqn:SDDE_intro}
 dy_t = b(y_t,y_{t-r})\, dt + \sigma(y_t,y_{t-r})\, dB_t(\omega)
\end{align}
for some delay $r > 0$ where $B$ is a Brownian motion, $b$ is the drift and $\sigma$ the diffusion coefficient, both depending on the present and a delayed state of the system. In this case, we speak of a (single) \textit{discrete time delay}. SDDE appear frequently in practice. For instance, they can be used to model cell population growth and neural control mechanisms, cf.~\cite{Buc00} and the references therein, they are applied in financial modeling \cite{Sto05} and for climate models \cite{BTR07}. To be able to solve \eqref{eqn:SDDE_intro} uniquely, an initial condition has to be given which is a path or, more generally, a stochastic process. This means that we are led to solve an equation on an infinite dimensional (path) space. Popular choices for spaces of initial conditions are continuous paths or $L^2$ paths. However, standard It\=o theory can be applied without too much effort to solve \eqref{eqn:SDDE_intro} for such initial conditions, cf. \cite{Mao08, Moh84}. 

To analyse the qualitative behaviour of solutions to \eqref{eqn:SDDE_intro}, in particular its long-time behaviour, it is natural to use a dynamical systems approach. Maybe the most popular concept, which was successfully applied to stochastic differential equations (SDEs) in both finite and infinite dimensions, was developed by L.~Arnold and is called the theory of \textit{Random Dynamical Systems} (RDS), cf. \cite{Arn98} for an exposition. Examples for which the language and theory of RDS are used include random attractors \cite{Sch92,CF94,CDF97}, random stable and unstable manifolds \cite{MS99, MS04} and different concepts of stochastic bifurcation \cite[Chapter 9]{Arn98}. The crucial result on which Arnold's theory is built is a \textit{Multiplicative Ergodic Theorem} (MET), originally proved by Oseledec \cite{Ose68}. This theorem has attracted much attention by different researchers and has been proven with different techniques and increasing generality, cf. \cite{Rag79,Rue79,Rue82,Man83,Thi87,LL10,GTQ15,Blu16}. Under certain conditions, the MET shows that linear and linearised Random Dynamical Systems possess a \textit{Lyapunov spectrum} which can be seen as an analogue to the spectrum of eigenvalues of a matrix. Studying the behaviour of the possibly complex RDS can often be reduced to study its Lyapunov spectrum, which is a huge simplification. For a long time, it was believed that the RDS approach can not be used to study SDDE of the form \eqref{eqn:SDDE_intro}. This article claims, that indeed, it is possible.

Let us explain why it was believed that RDS are not applicable for general SDDEs. The idea is to show that certain equations do not generate a continuous \textit{stochastic semi-flow} which is a necessary condition for generating an RDS and to apply the MET. Recall that given a probability space $(\Omega,\mathcal{F},\P)$, a continuous stochastic semi-flow on a topological space $E$ is a measurable map 
\begin{align*}
  \phi \colon \{(s,t) \in [0,\infty)^2\, | \, s\leq t \} \times \Omega \times E \to E
\end{align*}
 such that on a set of full measure $\tilde{\Omega}$, we have $\phi(t,t,\omega,x) = x$ and $\phi(s,u,\omega,x) = \phi(t,u,\omega,\phi(s,t,\omega,x))$ for every $s,t,u \in [0,\infty)$, $s \leq t \leq u$, every $x \in E$ and every $\omega \in \tilde{\Omega}$ and $x \mapsto \phi(s,t,\omega,x)$ is assumed to be continuous for every choice of $s,t \in [0,\infty)$, $s\leq t$, and every $\omega \in \tilde{\Omega}$. Consider the linear equation
\begin{align}\label{eqn:counterexample_intro}
  \begin{split}
 dy_t &= y_{t-1}\, dB_t(\omega); \quad t \geq 0 \\
 y_t &= \xi_t; \quad t \in [-1,0]
  \end{split}
\end{align}
interpreted as an It\=o integral equation. It is clear that the solution on the time interval $[0,1]$ should be given by
\begin{align*}
 y_t = \xi_0 + \int_0^t \xi_{s-1}\, dB_s(\omega)
\end{align*}
whenever the stochastic integral makes sense. However, Mohammed proved in \cite{Moh86} that there is no modification of the process $y$ which depends continuously on $\xi$ in the supremum norm. This rules out the choice of $E = C([-1,0],\R)$ on which a possible semi-flow $\phi$ induced by \eqref{eqn:counterexample_intro} could be defined. At this stage, one might still hope that another choice of $E$ could be a possible state space for our semi-flow. However, we will prove now that there is in fact no such choice. Inspired by \cite[Section 1.5.1]{LCL07}, we make the following definition:

\begin{definition}
  Let $E$ be a Banach space of functions mapping from $[-1,0]$ to $\R$. We say that $E$ \emph{carries the Wiener measure} if the functions $t \mapsto \sin[(n-1/2)\pi t]$ are contained in $E$ for every $n \geq 1$ and if the series
  \begin{align*}
   \sum_{n = 1}^{\infty} Z_n(\omega) \frac{\sin[(n-1/2)\pi t]}{(n-1/2)\pi}, \quad t \in [-1,0]
  \end{align*}
  converges in $E$ almost surely for every sequence $(Z_n)$ of independent, $\mathcal{N}(0,1)$-distributed random variables.
\end{definition}
Note that \emph{carrying the Wiener measure} is indeed a minimum requirement for the state space $E$ of a possible semi-flow induced by \eqref{eqn:counterexample_intro}, otherwise we would not even be able to choose constant paths as initial conditions. However, this assumption already rules out the possibility of the existence of a continuous semi-flow, as the following theorem shows.

\begin{theorem}
   There is no space $E$ carrying the Wiener measure for which the equation \eqref{eqn:counterexample_intro} induces a continuous mapping $I \colon E \to \R$, $I(\xi) = y_1$, on a set of full measure, which extends the pathwise defined mapping for smooth initial conditions.

 \end{theorem}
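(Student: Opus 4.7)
I would argue by contradiction: assume $E$ carries the Wiener measure and $I\colon E\to\R$ is continuous and extends the pathwise-defined map on smooth initial conditions. The strategy is to construct an iid $\mathcal{N}(0,1)$ sequence $(Z_n)$---necessarily measurable with respect to $B$, hence not independent of it---such that the partial sums
\[
\xi^{(N)}_t := \sum_{n=1}^{N} Z_n\,\frac{\sin[(n-1/2)\pi t]}{(n-1/2)\pi},\qquad t\in[-1,0],
\]
converge in $E$ a.s.\ (automatic from the carrying hypothesis applied to $(Z_n)$), while the real numbers $I(\xi^{(N)})$ diverge a.s., contradicting continuity of $I$.

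For smooth $\xi$ the pathwise rule reads $I(\xi)=\xi_0+\int_0^1\xi_{s-1}\,dB_s$, with the integral interpreted as a Wiener integral for deterministic integrands and otherwise defined by integration by parts; in either case it is linear in $\xi$. Since $\xi^{(N)}_0=0$ and the sum is finite, this gives $I(\xi^{(N)})=\sum_{n=1}^{N} Z_n W_n$ where $W_n:=\int_0^1 \sin[(n-1/2)\pi(s-1)]/[(n-1/2)\pi]\,dB_s$. The key observation is the trigonometric identity $\sin[(n-1/2)\pi(s-1)]=(-1)^n\cos[(n-1/2)\pi s]$, which rewrites
\[
W_n=\frac{(-1)^n}{\sqrt{2}(n-1/2)\pi}\,c_n,\qquad c_n:=\sqrt{2}\int_0^1\cos[(n-1/2)\pi s]\,dB_s.
\]
Because $\{\sqrt{2}\cos[(n-1/2)\pi s]\}_{n\ge 1}$ is orthonormal in $L^2([0,1])$, the sequence $(c_n)$ is iid $\mathcal{N}(0,1)$ and $B|_{[0,1]}$-measurable.

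Setting $Z_n:=(-1)^n c_n$ produces another iid $\mathcal{N}(0,1)$ sequence, so by the carrying hypothesis $\xi^{(N)}\to \xi^{(\infty)}$ in $E$ a.s., and continuity of $I$ gives $I(\xi^{(N)})\to I(\xi^{(\infty)})\in\R$ a.s. However, by construction
\[
I(\xi^{(N)}) = \sum_{n=1}^{N} Z_n W_n = \sum_{n=1}^{N} \frac{c_n^2}{\sqrt{2}(n-1/2)\pi},
\]
a series of non-negative independent terms whose expectations are $\sim 1/(\sqrt{2}\pi n)$. Because $\sum_n 1/n=\infty$ and the partial sums are monotone, a Kolmogorov zero-one (or three-series) argument forces $\sum_n c_n^2/[\sqrt{2}(n-1/2)\pi]=+\infty$ almost surely, contradicting the finite a.s.\ limit from the previous step.

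The one delicate ingredient is the linking choice $Z_n=(-1)^n c_n$: the sequence must be genuinely iid $\mathcal{N}(0,1)$ (so that $E$-convergence is delivered for free by the hypothesis) and simultaneously perfectly correlated with the Wiener integrals $W_n$ so that each product $Z_n W_n$ is non-negative and of harmonic size. Every other step---the trigonometric identity, the orthonormality of the cosine system, and the divergence criterion for sums of positive independent random variables---is routine.
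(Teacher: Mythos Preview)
Your proof is correct and follows essentially the same approach as the paper's: both exploit the sign flip $(-1)^n$ applied to the Karhunen--Lo\`eve coefficients so that the pathwise integral $\int_0^1 \xi^{(N)}_{s-1}\,dB_s$ collapses to a harmonic sum of squares $\sum_n c_n^2/[(n-1/2)\pi]$, which diverges a.s. The only presentational difference is that the paper builds $B$ from an abstract i.i.d.\ sequence $(Z_n)$ via its sine expansion and then justifies the pathwise integral by approximating $B$ by $B^M$ and invoking Young integration, whereas you start from the given $B$, extract the cosine coefficients $c_n$ as Wiener integrals, and compute $I(\xi^{(N)})$ directly by linearity---a slightly more streamlined route to the same contradiction.
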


 \begin{proof}
  Let $(Z_n)$ be a sequence of independent standard normal random variables. Set
  \begin{align*}
   B^N_t(\omega) = \sum_{n = 1}^{N} Z_n(\omega) \frac{\sin[(n-1/2)\pi t]}{(n-1/2)\pi}.
  \end{align*}
  Then $B^N \to B$ as $N \to \infty$ in $\alpha$-H\"older norm, $\alpha < 1/2$, on a set of full measure $\Omega_1$, cf. \eqref{eqn:def_hoelder_norm} where we recall the definition of the H\"older norm and \cite[3.5.1. Theorem]{Bog98} for a general result about Gaussian sequences from which the convergence above follows. Assume that $E$ carries the Wiener measure. Then there is a set of full measure $\Omega_2$ such that the limit
  \begin{align*}
    \sum_{n = 1}^{\infty} \tilde{Z}_n(\omega) \frac{\sin[(n-1/2)\pi t]}{(n-1/2)\pi} =: \lim_{N \to \infty} \tilde{B}^N_t(\omega) =: \tilde{B}_t(\omega)
  \end{align*}
  exists in $E$ for every $\omega \in \Omega_2$ where $\tilde{Z}_n := (-1)^n Z_n$. The theory of Young integration \cite{You36} implies that
  \begin{align*}
   \int_0^1 \tilde{B}^N_t(\omega)\, dB_t^M(\omega) \to \int_0^1 \tilde{B}^N_t(\omega)\, dB_t(\omega)
  \end{align*}
  as $M \to \infty$ for every $\omega \in \Omega_1 \cap \Omega_2$. Noting that $\tilde{Z}_n \sin[(n-1/2)\pi t] = Z_n \cos[(n-1/2)\pi(1+t)]$, we obtain that
  \begin{align*}
   \int_0^1 \tilde{B}^N_t(\omega)\, dB_t^M(\omega) = \sum_{n=1}^N \frac{Z^2_n(\omega)}{(n-1/2)\pi}
  \end{align*}
  for all $M \geq N$. Therefore,
  \begin{align*}
   \int_0^1 \tilde{B}^N_t(\omega)\, dB_t(\omega) = \sum_{n=1}^N \frac{Z^2_n(\omega)}{(n-1/2)\pi} \to \infty
  \end{align*}
  as $N \to \infty$ on a set of full measure $\Omega_3 \subset \Omega_1 \cap \Omega_2$.   
  Now we can argue by contradiction. Assume that there is a set of full measure $\Omega_4$ such that for every $\omega \in \Omega_4$, the map 
  \begin{align*}
   E \ni \xi \mapsto \xi_0 + \int_0^1 \xi_{t-1}\, dB_t(\omega)
  \end{align*}
  is continuous. Since $\Omega_3 \cap \Omega_4$ has full measure, the set is nonempty and we can choose $\omega \in \Omega_3 \cap \Omega_4$. Set $\xi_n := \tilde{B}^n(\omega)$ and $\xi := \tilde{B}(\omega)$. Then we have $\xi_n \to \xi$ in $E$ as $n \to \infty$, but $\int_0^1 \xi^n_{t-1}\, dB_t(\omega)$ diverges as $n \to \infty$ which leads to a contradiction.

 \end{proof}
 
 This theorem shows that \emph{there is no reasonable space of functions on which the SDDE \eqref{eqn:counterexample_intro} induces a continuous semi-flow}, and using RDS to study such equations seems indeed hopeless. Let us however mention here that only a delay in the diffusion part causes the trouble, a delay in a possible drift part would be harmless. For this reason, we will discard the drift in our article and study equations of the form \eqref{eqn:SDDE_intro} with $b=0$ only. We also remark that studying delay equations where the diffusion coefficient may depend on a whole path segment of the solution, so-called \emph{continuous delay}, can lead to easier equations since in that case, the diffusion coefficient might have a smoothing effect. Such equations are called \emph{regular} stochastic delay differential equations, and they can indeed be studied using RDS, cf. \cite{MS96} and \cite{MS97}. The equation \eqref{eqn:counterexample_intro} is an example of \emph{singular} stochastic delay differential equation. 
 
   Let us now explain the idea of the present article. We have seen that there is no space of paths $E$ on which $E \ni \xi \mapsto \int_0^1 \xi_s\, dB_s(\omega)$ is a continuous map on a set of full measure. However, in rough path theory, one knows that there is a family of Banach spaces $\{E_{\omega}\}_{\omega \in \Omega}$ and a set of full measure $\tilde{\Omega}$ such that the maps
  \begin{align*}
   E_{\omega} \ni \xi \mapsto \int_0^1 \xi_s\, d\mathbf{B}_s(\omega)
  \end{align*}
  are continuous for every $\omega \in \tilde{\Omega}$ where the integral has to be interpreted as a \emph{rough paths integral}. Indeed, the spaces $E_{\omega}$ are nothing but the usual spaces of \emph{controlled paths} introduced by Gubinelli in \cite{Gub04} for which we will recall the definition below. Therefore, we can hope to establish a semi-flow property for solutions to \eqref{eqn:counterexample_intro} (and even more general equations) if we allow the state spaces to be random and by interpreting the equation as a delay differential equation driven by a random rough path. Fortunately, Neuenkirch, Nourdin and Tindel already studied delay equations driven by rough paths in \cite{NNT08}, and we can build on their results. Having established such a semi-flow property, the corresponding RDS will involve random spaces as well. This seems hopelessly complicated and maybe unnatural at first sight, but we argue that it is not. It turns out that the structure of such RDS is similar to that which appears when studying the linearisation of an RDS which is induced by an SDE defined on a Riemannian manifold, cf. \cite[Chapter 4]{Arn98}. These RDS act on measurable bundles and are therefore called \emph{bundle RDS}, cf. \cite[Section 1.9]{Arn98}. In a sense, we will see that SDDE induce bundle RDS with the fibres being (infinite dimensional!) spaces of controlled paths. However, it turns out that defining a bundle structure is not necessary since we are only interested in the fibres. Therefore, instead of studying RDS defined on an infinite dimensional bundle, we will study RDS which are defined on \emph{measurable fields of Banach spaces}. After having defined such a structure, the crucial point to ask is whether an MET holds on it. Fortunately, this is indeed the case, and we provide a full proof of such a theorem in the present work. With the MET at hand, we can indeed deduce the existence of a Lyapunov spectrum for linear 
  SDDE. Our main result, which is a combination of Theorem \ref{thm:main_linear} 
  and Corollary \ref{cor:main_thm_bm} to be found in Section \ref{sec:lyapunovspectrum}, can loosely be formulated as follows:
  
  \begin{theorem}
   Linear stochastic delay differential equations of the form
   \begin{align}
    dy_t =  \sigma(y_t,y_{t-r})\, dB_t(\omega)
   \end{align}
   induce linear RDS on measurable fields of Banach spaces given by the spaces of controlled paths defined by $B(\omega)$. Furthermore, an MET applies and provides the existence of a Lyapunov spectrum for the linear RDS. 
  \end{theorem}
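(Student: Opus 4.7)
The plan is to proceed in three stages: build a pathwise rough-path solution map and read off a linear cocycle on a random field of Banach spaces, verify the structural hypotheses (measurability, integrability, quasi-compactness) demanded by the abstract MET developed earlier in the paper, and then apply that MET.

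In the first stage, I would lift the Brownian motion to a geometric (Stratonovich) rough path $\mathbf{B}(\omega)$ on a $\theta$-invariant set of full measure, where $(\theta_t)$ is the Wiener shift. For each such $\omega$ define the fibre $E_\omega$ to be the space of pairs $(\xi,\xi')$ with $\xi \colon [-r,0] \to \R$ controlled by $\mathbf{B}(\omega)$ in Gubinelli's sense, equipped with the standard controlled-path norm. Invoking the pathwise theory of rough delay equations from \cite{NNT08} for the equation $dy_t = \sigma(y_t,y_{t-r})\,d\mathbf{B}_t(\omega)$, the time-$t$ segment $y_{[t-r,t]}$ together with its canonical Gubinelli derivative lies in the shifted fibre $E_{\theta_t\omega}$. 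Linearity of $\sigma$ in $y$ makes the resulting map $\Phi(t,\omega) \colon E_\omega \to E_{\theta_t\omega}$, $(\xi,\xi') \mapsto y_{[t-r,t]}$, a bounded linear operator, and the pathwise concatenation property for rough delay equations yields the cocycle identity $\Phi(s+t,\omega) = \Phi(t,\theta_s\omega)\circ\Phi(s,\omega)$.

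In the second stage, I would verify that $\{E_\omega\}_\omega$ is a measurable field of Banach spaces in the sense defined earlier, using a countable dense set of piecewise smooth paths (which are controlled by every rough path) as a generating family, and that $\omega \mapsto \Phi(t,\omega)$ is measurable. Gaussian tails of the rough-path norm of $\mathbf{B}(\omega)$ together with standard rough-delay a priori bounds give $\log^+\|\Phi(1,\cdot)\| \in L^1(\P)$, which is the integrability input for the MET. The main obstacle is the quasi-compactness/Kuratowski index condition: I must show that on an exponential scale the Kuratowski measure of non-compactness of $\Phi(t,\omega)$ is strictly dominated by its operator norm. Here the key observation is that for $t\ge r$ the solution segment $y_{[t-r,t]}$ depends on the initial datum $(\xi,\xi')$ only through the finite-dimensional data $\xi_0$ and through rough integrals against $\mathbf{B}(\omega)$ over an interval no longer touching $[-r,0]$; this yields a compact factorisation through a smoother controlled-path space, hence compactness of $\Phi(t,\omega)$ for $t\ge r$, which is more than enough for the required growth estimate of the non-compactness index.

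Once these properties are established, the abstract multiplicative ergodic theorem on measurable fields of Banach spaces, stated as Theorem \ref{thm:main_linear} (and its Brownian specialisation Corollary \ref{cor:main_thm_bm}), applies verbatim to $\Phi$ and produces the Lyapunov spectrum together with the measurable Oseledets filtration of the random fibres $E_\omega$, which is the content of the stated theorem. I expect the conceptual cost to lie entirely in the compactness/quasi-compactness step above; the measurability and integrability parts are essentially bookkeeping once the rough-path framework is in place.
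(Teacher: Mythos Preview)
Your three-stage architecture matches the paper's, but two of the key technical steps are not as you describe them, and as stated they would not work.

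First, the generating family for the measurable field. Piecewise smooth paths, viewed as controlled paths with Gubinelli derivative zero, are \emph{not} dense in $\mathscr{D}^{\beta}_{X(\omega)}$: they form the closed subspace $\{\xi'\equiv 0\}$ and miss every genuinely controlled path. The paper instead passes to the spaces $\mathscr{D}^{\alpha,\beta}_{X(\omega)}$ (the closure of $\mathscr{D}^{\beta}$ in the $\mathscr{D}^{\alpha}$-norm, needed for separability) and proves in Theorem~\ref{structre} that the $\omega$-\emph{dependent} sections $g_{(v,f,R)}(\omega)=\big(v+\int_{-r}^{\cdot} f\,dX(\omega)+R,\,f\big)$ with smooth $f,R$ are dense, under a nontrivial constraint \eqref{eqn:alpha_beta_gamma} on $\alpha,\beta,\gamma$. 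This is what feeds into Definition~\ref{def:meas_field_banach}; an $\omega$-independent smooth family cannot do the job.

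Second, the compactness argument. Your claim that for $t\ge r$ the solution segment depends on $(\xi,\xi')$ only through $\xi_0$ and through rough integrals over an interval ``no longer touching $[-r,0]$'' is not correct: for $t=r$ the solution on $[0,r]$ is $y_t=\xi_0+\int_0^t\sigma(y_s,\xi_{s-r})\,d\mathbf{X}_s$, whose integrand depends on the entire path $\xi|_{[-r,0]}$ (and, via the delayed L\'evy area $\mathbb{X}(-r)$, on $\xi'|_{[-r,0]}$ as well). There is no finite-dimensional factorisation. What the paper uses instead (Proposition~\ref{prop:compact_map}) is a regularity-gain argument: a bounded sequence in $\mathscr{D}^{\beta}_{X}$ has, by Arzel\`a--Ascoli, a subsequence converging in $\mathscr{D}^{\delta}_{X}$ for $\delta<\beta$; continuity of the solution map then gives convergence of the solutions in $\mathscr{D}^{\delta}_{X}([0,r])$, and the a priori estimate of Theorem~\ref{thm:delay_linear} upgrades this to a uniform bound in a \emph{stronger} norm $\mathscr{D}^{\beta'}_{X}$ with $\beta<\beta'<\gamma$, forcing convergence in $\mathscr{D}^{\beta}_{X}$. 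This is genuine compactness of the one-step map, not merely a Kuratowski-index estimate.

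Two smaller points: you should lift $B$ to a \emph{delayed} rough path cocycle $(B,\mathbb{B},\mathbb{B}(-r))$ as in Section~\ref{sec:delayed_area_BM}, since the ordinary L\'evy area alone does not suffice to integrate delayed controlled paths; and the abstract MET you want to invoke is Theorem~\ref{thm:MET_Banach_fields}, not Theorem~\ref{thm:main_linear} (the latter is precisely the statement you are trying to prove).
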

  
  In Part II of our paper, we will show that also non-linear equations linearized around equilibrium points induce linear RDS, and prove a stable manifold theorem for such equations. 
  
  Let us finally remark that stochastic differential equations on infinite dimensional spaces frequently lack the semi-flow property. For instance, this is often the case for stochastic partial differential equations (SPDEs), too, cf. e.g. \cite{Fla95} and the references therein. We believe that the approach we present here can be applied also in the context of SPDEs to provide a dynamical systems approach to equations for which the semi-flow property is known not to hold.
  
  \vspace{10pt}

  The article is structured as follows. In Section \ref{sec:basics_rough_delay}, we introduce the techniques to study delay equations driven by rough paths and prove some basic properties. The content of Section \ref{sec:delayed_area_BM} is to show that the Brownian motion can drive rough delay equations and to prove a Wong-Zakai theorem, also in the non-linear case, which might be of independent interest. In Section \ref{sec:RDS_and_delay}, we establish the connection to Arnold's theory and define RDS on measurable fields of Banach spaces. Section \ref{sec:MET_Banach_fields} provides the formulation and the proof of an MET on a field of Banach spaces. The main results of the present paper and a discussion of them are contained in Section \ref{sec:lyapunovspectrum}. Finally, we come back to the example \eqref{eqn:counterexample_intro} and discuss it in more detail in Section \ref{sec:concrete_example}.

\subsection*{Preliminaries and notation}

In this section we collect some notations which will be used throughout the paper.\\ 
\begin{itemize}
 \item If not stated differently, $U$, $V$, $W$ and $\bar{W}$ will always denote finite-dimensional, normed vector spaces over the real numbers, with norm denoted by $|\cdot |$. By $ L(U,W) $ we mean 
the set of linear and continuous functions from $ U $ to $ W $ equipped with usual operator norm.
  \item Let $I$ be an interval in $\R$. A map $ m:I\rightarrow U $ will also be called a \emph{path}. For a path $m$, we denote its increment by $ m_{s,t}=m_{t}-m_{s} $ where by $ m_{t} $ we mean $ m(t) $. We set
\begin{align*}
\Vert m\Vert_{\infty;I}:=\sup_{s\in I}\vert m_{s}\vert
\end{align*}
and define the $ \gamma$-H\"older seminorm, $\gamma \in (0,1]$, by
\begin{align*}
\Vert m\Vert_{\gamma;I} := \sup_{s,t \in I; s \neq t} \frac{\vert m_{s,t}\vert}{\vert t-s\vert^{\gamma}}.
\end{align*}
For a general $2$-parameter function $m^{\#} \colon I \times I \to U$, the same notation is used. We will sometimes omit $I$ as subindex if the domain is clear from the context. The space $C^0(I,U)$ consists of all continuous paths $m \colon I \to U$ equipped with the uniform norm, $C^{\gamma}(I,U)$ denotes the space of all $\gamma$-H\"older continuous functions equipped with the norm
\begin{align}\label{eqn:def_hoelder_norm}
 \Vert \cdot \Vert_{C^{\gamma};I} := \Vert \cdot \Vert_{\infty;I} + \Vert \cdot \Vert_{\gamma;I}.
\end{align}
$C^{\infty}(I,U)$ is the space of all arbitrarily often differentiable functions. If $0 \in I$, using $0$ as subindex such as for $C^{\gamma}_0(I,U)$ denotes the subspace of functions for which $x_0 = 0$. An upper index such as $C^{0,\gamma}(I,U)$ means taking the closure of smooth functions in the corresponding norms.
\end{itemize}

Next, we introduce some basic objects from rough paths theory needed in this article. We refer the reader to \cite{FH14} for a general overview.
\begin{itemize}
 \item  Let  $X \colon \R \to U$ be a locally $\gamma$-H\"older path, $\gamma \in (0,1]$. A \emph{L\'evy area} for $X$ is a continuous function
 \begin{align*}
  \mathbb{X} \colon \R \times \R \to U \otimes U
 \end{align*}
 for which the algebraic identity
 \begin{align*}
   \mathbb{X}_{s,t} = \mathbb{X}_{s,u} + \mathbb{X}_{u,t} + X_{s,u} \otimes X_{u,t} 
 \end{align*}
 is true for every $s,u,t \in \R$ and for which $\| \mathbb{X} \|_{2\gamma ; I} < \infty$
 holds on every compact interval $I \subset \R$. If $\gamma \in (1/3,1/2]$ and $X$ admits L\'evy area $\mathbb{X}$, we call $ \mathbf{X}= \big{(}X, \mathbb{X}\big{)}$ a \emph{$\gamma$-rough path}. If $\mathbf{X}$ and ${\mathbf{Y}}$ are $\gamma$-rough paths, one defines
 \begin{align*}
  \varrho_{\gamma;I}(\mathbf{X},\mathbf{Y}) := \sup_{s,t \in I; s \neq t} \frac{|X_{s,t} - Y_{s,t}|}{|t-s|^{\gamma}} + \sup_{s,t \in I; s \neq t} \frac{|\mathbb{X}_{s,t} - \mathbb{Y}_{s,t}|}{|t-s|^{2 \gamma}}.
 \end{align*}
 
 \item   Let $I = [a,b]$ be a compact interval. A path $m \colon I \to \bar{W}$ is a \emph{controlled path} based on $X$ on the interval $I$ if there exists a $\gamma$-H\"older path $m' \colon I \to L(U,\bar{W})$ such that
\begin{align*}
m_{s,t} = m'_s X_{s,t} + m_{s,t}^{\#}
\end{align*}
for all $s,t \in I$ where $m^{\#} \colon I \times I \to \bar{W}$ satisfies $\| m^{\#} \|_{2\gamma ; I} < \infty$.
The path $m'$ is called a \emph{Gubinelli derivative} of $m$. We use $\mathscr{D}_{X}^{\gamma}(I,\bar{W})$ to denote the space of controlled paths based on $X$ on the interval $I$. It can be shown that this space is a Banach space with norm 
\begin{align*}
 \|m\|_{\mathscr{D}_{X}^{\gamma}} := \|(m,m')\|_{\mathscr{D}_{X}^{\gamma}} := |m_a| + |m'_a| + \|m'\|_{\gamma;I} + \| m^{\#} \|_{2\gamma; I}.
\end{align*}
If $X$ and $\tilde{X}$ are $\gamma$-H\"older paths, $(m,m') \in \mathscr{D}_{X}^{\gamma}(I,\bar{W})$ and $(\tilde{m},\tilde{m}') \in \mathscr{D}_{\tilde{X}}^{\gamma}(I,\bar{W})$, we set 
\begin{align*}
 d_{2\gamma ; I}((m,m'),(\tilde{m},\tilde{m}')) := \|m' - \tilde{m}'\|_{\gamma;I} + \| m^{\#} - \tilde{m}^{\#} \|_{2\gamma; I}.
\end{align*}
If $\bar{W} = \R$, we will also use the noation $\mathscr{D}_{X}^{\gamma}(I)$ instead of $\mathscr{D}_{X}^{\gamma}(I,\R)$.

\end{itemize}

We finally recall the definition of a random dynamical system introduced by L.~ Arnold \cite{Arn98}. 

\begin{itemize}
 \item Let $(\Omega,\mathcal{F})$ and $(X,\mathcal{B})$ be measurable spaces. Let $\mathbb{T}$ be either $\R$ or $\Z$, equipped with a $\sigma$-algebra $\mathcal{I}$ given by the Borel $\sigma$-algebra $\mathcal{B}(\R)$ in the case of $\mathbb{T} = \R$ and by $\mathcal{P}(\Z)$ in the case of $\mathbb{T} = \Z$.
A family $\theta=(\theta_t)_{t \in \mathbb{T}}$ of maps from $\Omega$ to itself is called a \emph{measurable dynamical system} if
\begin{itemize}
   \item[(i)] $(\omega,t) \mapsto \theta_t\omega$ is $\mathcal{F} \otimes \mathcal{I} / \mathcal{F}$-measurable,   \vspace{0.07cm}

   \item[(ii)] $\theta_0 = \operatorname{Id}$,   \vspace{0.07cm}
  
   \item[(iii)] $\theta_{s + t} = \theta_s \circ \theta_t$, for all $s,t \in \mathbb{T}$.   \vspace{0.1cm}
\end{itemize}
If $\mathbb{T} = \mathbb{Z}$, we will also use the notation $\theta := \theta_1$, $\theta^n := \theta_n$ and $\theta^{-n} := \theta_{-n}$ for $n \geq 1$.
If $\P$ is furthermore a probability on $(\Omega,\mathcal{F})$ that is invariant under any of the elements of $\theta$,
$$
\P \circ \theta_t^{-1} = \P
$$ 
for every $t \in \mathbb{T}$, we call the tuple $\big(\Omega, \mathcal{F},\P,\theta\big)$ a \emph{measurable metric dynamical system}. The system is called \emph{ergodic} if every $\theta$-invariant set has probability $0$ or $1$.

\item Let $\mathbb{T}^+ := \{t \in \mathbb{T}\, :\, t \geq 0\}$, equipped with the trace $\sigma$-algebra. An  \emph{(ergodic) measurable random dynamical system} on $(X,\mathcal{B})$ is an (ergodic) measurable metric dynamical system $\big(\Omega, \mathcal{F},\P,\theta\big)$ with a measurable map 
   $$
   \varphi \colon \mathbb{T}^+ \times \Omega \times X \to X
   $$ 
   that enjoys the \emph{cocycle property}, i.e. $\varphi(0,\omega,\cdot) = \operatorname{Id}_X$, for all $\omega \in \Omega$, and
  \begin{align*}
   \varphi(t+s,\omega,\cdot) = \varphi(t,\theta_s\omega,\cdot) \circ \varphi(s,\omega,\cdot)
  \end{align*}
  for all $s,t \in \mathbb{T}^+$ and $\omega \in \Omega$. The map $\varphi$ is called \emph{cocycle}. If $X$ is a topological space with $\mathcal{B}$ being the Borel $\sigma$-algebra and the map $\varphi_{\cdot}(\omega,\cdot) \colon \mathbb{T}^+ \times X \to X$ is continuous for every $\omega \in \Omega$, it is called a \emph{continuous (ergodic) random dynamical system}. In general, we say that \emph{$\varphi$ has property $P$} if and only if $\varphi(t,\omega,\cdot) \colon X \to X$ has property $P$ for every $t \in \mathbb{T}^+$ and $\omega \in \Omega$ whenever the latter statement makes sense.
\end{itemize}


\section{Basic properties of rough delay equations}\label{sec:basics_rough_delay}

In this section, we show how to solve rough delay differential equations and present some basic properties of the solution. 

\subsection{Basic objects, existence, uniqueness and stability}

This section basically summarizes the concepts and results from \cite{NNT08}. We start by introducing ``delayed'' versions of rough paths and controlled paths. Note that, as already mentioned in the introduction, we restrict ourselves to the case of one time delay only. We refer to \cite{NNT08} for corresponding definitions for a finite number of delays.

 \begin{definition}
 Let  $X \colon \R \to U$ be a locally $\gamma$-H\"older path and $r > 0$. A \emph{delayed L\'evy area} for $X$ is a continuous function
 \begin{align*}
   \mathbb{X}(-r) \colon \R \times \R \to U \otimes U
 \end{align*}
 for which the algebraic identity
\begin{align*}
   \mathbb{X}_{s,t}(-r)=\mathbb{X}_{s,u}(-r) + \mathbb{X}_{u,t}(-r)+X_{s-r,u-r}\otimes X_{u,t} 
 \end{align*}
 is true for every $s,u,t \in \R$ and for which $\| \mathbb{X}(-r) \|_{2\gamma ; I} < \infty$
 holds on every compact interval $I \subset \R$. If $\gamma \in (1/3,1/2]$ and $X$ admits L\'evy- and delayed L\'evy area $\mathbb{X}$ and $\mathbb{X}(-r)$, we call $ \mathbf{X}= \big{(}X, \mathbb{X}, \mathbb{X}(-r)\big{)}$ a \emph{delayed $\gamma$-rough path with delay $r > 0$}. If $\mathbf{X}$ and ${\mathbf{Y}}$ are delayed $\gamma$-rough paths, we set 
 \begin{align*}
  \varrho_{\gamma;I}(\mathbf{X},\mathbf{Y}) := \sup_{s,t \in I; s \neq t} \frac{|X_{s,t} - Y_{s,t}|}{|t-s|^{\gamma}} + \sup_{s,t \in I; s \neq t} \frac{|\mathbb{X}_{s,t} - \mathbb{Y}_{s,t}|}{|t-s|^{2 \gamma}} + \sup_{s,t \in I; s \neq t} \frac{|\mathbb{X}(-r)_{s,t} - \mathbb{Y}(-r)_{s,t}|}{|t-s|^{2 \gamma}}.
 \end{align*}

\end{definition}

\begin{remark}\label{eqn:delayed_rp_as_usual_rp}
  For $X$ as in the former definition, set
 \begin{align*}
   Z := (X, X_{\cdot - r}) \in U \oplus U.
 \end{align*}
  If $X$ admits a L\'evy- and delayed L\'evy area, also $Z$ admits a L\'evy area ${\mathbb{Z}}$ given by
  \begin{align*}
    \mathbb{Z} = \begin{pmatrix}
                      \mathbb{X} & \bar{\mathbb{X}}(-r) \\
                      \mathbb{X}(-r) & \mathbb{X}_{\cdot - r, \cdot - r}
                     \end{pmatrix}
  \end{align*}
  where $\bar{\mathbb{X}}^{ij}(-r) := X^i_{s,t} X^{j}_{s-r,t-r} - \mathbb{X}^{ji}_{s,t}(-r)$. Conversely, if $Z$ admits a L\'evy area, the path $X$ admits both L\'evy- and delayed L\'evy area. The delayed L\'evy area can therefore be understood as the usual L\'evy area of a path enriched with its delayed path.

\end{remark}

Next, we recall what is a delayed controlled path.

\begin{definition}
  Let $I = [a,b]$ be a compact interval. A path $m \colon I \to \bar{W}$ is a \emph{delayed controlled path} based on $X$ on the interval $I$ if there exist $\gamma$-H\"older paths $\zeta^0, \zeta^1 \colon I \to L(U,\bar{W})$ such that
\begin{align}\label{eqn:delay_controlled}
m_{s,t} = \zeta^0_s X_{s,t} + \zeta^1_s X_{s-r,t-r} + m^{\#}_{s,t}
\end{align}
for all $s,t \in I$ where $m^{\#} \colon I \times I \to \bar{W}$ satisfies $\| m^{\#} \|_{2\gamma ; I} < \infty$.
The path $(\zeta^0,\zeta^1)$ will again be called \emph{Gubinelli derivative} of $m$. We use $\mathcal{D}_{X}^\gamma(I,\bar{W})$ to denote the space of delayed controlled paths based on $X$ on the interval $I$. A norm on this space can be defined by
\begin{align}\label{eqn:norm_delayed_controlled_path}
 \| m \|_{\mathcal{D}_{X}^{\gamma}} := \|(m,\zeta^0,\zeta^1)\|_{\mathcal{D}_{X}^{\gamma}} := |m_a| + |\zeta^0_a| + |\zeta^1_a| + \|\zeta^0\|_{\gamma;I} + \|\zeta^1\|_{\gamma;I} + \|m^{\#}\|_{2\gamma; I}.
\end{align}
\end{definition}

\begin{remark}\label{remark:delayed_contr_usual_contr}
 Note that any controlled path is also a delayed controlled path (by the choice $\zeta^1 = 0$), but the converse might not be true. However, considering again the enhanced path
  \begin{align*}
    Z = (X,X_{\cdot - r}) \in U \oplus U,
 \end{align*}
 the identity \eqref{eqn:delay_controlled} shows that $m$ is a usual $\bar{W}$-valued controlled path based on $Z$ with Gubinelli derivative $\bar{\zeta} \colon I \to L(U \oplus U, \bar{W})$ given by $\bar{\zeta}_t(v,w) := \zeta^0_t v + \zeta^1_t w$.
\end{remark}

With these objects, we can define an integral as follows.

\begin{theorem}\label{thm:delayed_rough_integral}
 Let $\mathbf{X}= \big{(}X, \mathbb{X}, \mathbb{X}(-r)\big{)}$ be a delayed $\gamma$-rough path and $m$ an $L(U,W)$-valued delayed controlled path based on $X$ with decomposition as in \eqref{eqn:delay_controlled} on the interval $[a,b]$. Then the limit
 \begin{align}\label{dfn}
    \int_{a}^{b} m_{s}\, d\mathbf{X}_s := \lim_{|\Pi| \to 0}  \sum_{t_j \in \Pi} m_{t_{j}} X_{t_{j,t_{j+1}}} + \zeta^{0}_{t_{j}} \mathbb{X}_{t_{j},t_{j+1}} + \zeta^{1}_{t_{j}} \mathbb{X}_{t_{j},t_{j+1}}(-r)
\end{align}
exists where $\Pi$ denotes a partition of $[a,b]$. Moreover, there is a constant $C$ depending on $\gamma$ and $(b-a)$ only such that for all $s < t \in [a,b]$, the estimate
\begin{align*}
 &\left| \int_s^t m_{u}\, d\mathbf{X}_u - m_s X_{s,t} - \zeta^{0}_s \mathbb{X}_{s,t} - \zeta^{1}_{s} \mathbb{X}_{s,t}(-r) \right| \\
 &\quad \leq C\left( \|m^{\#} \|_{2\gamma} \|X\|_{\gamma}  + \|\zeta^0\|_{\gamma} \|\mathbb{X}\|_{2\gamma}  +  \|\zeta^1\|_{\gamma} \|\mathbb{X}(-r)\|_{2\gamma} \right)|t-s|^{3 \gamma}
\end{align*}
holds. In particular,
\begin{align*}
 t \mapsto \int_s^t m_{u}\, d\mathbf{X}_u
\end{align*}
is controlled by $X$ with Gubinelli derivative $m$.

\end{theorem}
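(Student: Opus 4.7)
The plan is to reduce the statement to Gubinelli's sewing lemma by exhibiting the correct local germ. Set
$$\Xi_{s,t} := m_s X_{s,t} + \zeta^0_s \mathbb{X}_{s,t} + \zeta^1_s \mathbb{X}_{s,t}(-r),$$
i.e.\ the summand appearing in \eqref{dfn}. On the compact interval $[a,b]$, the Hölder estimates for $X$, $\mathbb{X}$, $\mathbb{X}(-r)$ together with the norm \eqref{eqn:norm_delayed_controlled_path} immediately give that $\Xi_{s,t}$ is $\gamma$-Hölder in $(s,t)$. The only real task is to control the coboundary $\delta\Xi_{s,u,t} := \Xi_{s,t} - \Xi_{s,u} - \Xi_{u,t}$.

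For this I would compute $\delta$ termwise using the additivity of $X$, Chen's relation for $\mathbb{X}$, and the delayed Chen relation for $\mathbb{X}(-r)$. A short calculation yields
$$\delta(m_s X_{s,t}) = -m_{s,u} X_{u,t}, \qquad \delta(\zeta^0_s \mathbb{X}_{s,t}) = -\zeta^0_{s,u}\mathbb{X}_{u,t} + \zeta^0_s(X_{s,u}\otimes X_{u,t}),$$
$$\delta(\zeta^1_s \mathbb{X}_{s,t}(-r)) = -\zeta^1_{s,u}\mathbb{X}_{u,t}(-r) + \zeta^1_s(X_{s-r,u-r}\otimes X_{u,t}).$$
Inserting the delayed controlled-path decomposition \eqref{eqn:delay_controlled} for $m_{s,u}$ into the first identity, the two quadratic contributions $\zeta^0_s(X_{s,u}\otimes X_{u,t})$ and $\zeta^1_s(X_{s-r,u-r}\otimes X_{u,t})$ produced by the two Chen-type relations cancel against the corresponding pieces of $m_{s,u}X_{u,t}$, leaving
$$\delta\Xi_{s,u,t} = -m^{\#}_{s,u} X_{u,t} - \zeta^0_{s,u}\mathbb{X}_{u,t} - \zeta^1_{s,u}\mathbb{X}_{u,t}(-r),$$
each summand bounded by the norms of $m$, $\mathbf{X}$ times either $|u-s|^{2\gamma}|t-u|^{\gamma}$ or $|u-s|^{\gamma}|t-u|^{2\gamma}$, hence by $|t-s|^{3\gamma}$.

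Since $\gamma > 1/3$ one has $3\gamma > 1$, so Gubinelli's sewing lemma produces a unique additive two-parameter function $\mathcal{I}_{s,t}$ satisfying $|\mathcal{I}_{s,t}-\Xi_{s,t}| \leq C|t-s|^{3\gamma}$, and $\mathcal{I}_{a,b}$ is the limit of the compensated Riemann sums in \eqref{dfn} along any sequence of partitions with mesh going to zero. Setting $\int_a^b m_s\, d\mathbf{X}_s := \mathcal{I}_{a,b}$ simultaneously yields the existence of the limit and the quoted local estimate, with the constant depending only on $\gamma$ and $b-a$ through the sewing-lemma constant. The final controlled-path claim follows at once: by definition of $\Xi$ the correction terms $\zeta^0_s\mathbb{X}_{s,t}$ and $\zeta^1_s\mathbb{X}_{s,t}(-r)$ are $O(|t-s|^{2\gamma})$ and the sewing error is $O(|t-s|^{3\gamma})$, so $\mathcal{I}_{s,t} = m_s X_{s,t} + O(|t-s|^{2\gamma})$, which is exactly the statement that $m$ is a Gubinelli derivative of the integral with respect to $X$.

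The only real obstacle is keeping the algebra of step two straight; the whole point of building the delayed Lévy area $\mathbb{X}(-r)$ with the specific Chen term $X_{s-r,u-r}\otimes X_{u,t}$ is precisely so that this quadratic quantity is available to cancel the $\zeta^1_s X_{s-r,u-r}$ contribution appearing when the Gubinelli derivative has a delayed component $\zeta^1$. Once this is checked, the rest is a routine application of sewing, and the proof's content is no deeper than that of the classical rough integral in \cite{FH14}.
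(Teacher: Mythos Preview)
Your proof is correct and follows exactly the same approach as the paper: the paper's proof consists of the single sentence ``This is just an application of the Sewing lemma, cf.\ e.g.\ \cite[Lemma 4.2]{FH14}, applied to $\Xi_{s,t} = m_s X_{s,t} + \zeta^{0}_s \mathbb{X}_{s,t} + \zeta^{1}_{s} \mathbb{X}_{s,t}(-r)$.'' You have simply written out the $\delta\Xi$ computation and the cancellation that the paper leaves implicit.
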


\begin{proof}
  This is just an application of the Sewing lemma, cf. e.g. \cite[Lemma 4.2]{FH14}, applied to
  \begin{align*}
    \Xi_{s,t} = m_s X_{s,t} + \zeta^{0}_s \mathbb{X}_{s,t} + \zeta^{1}_{s} \mathbb{X}_{s,t}(-r).
  \end{align*}

\end{proof}

\begin{example}\label{ex:rough_delay_linear}
 Let $U = W = \R$ and $\mathbf{X}= \big{(}X, \mathbb{X}, \mathbb{X}(-1)\big{)}$ be a delayed $\gamma$-rough path. We aim to solve the equation
 \begin{align}\label{eqn:delayed_linear}
  \begin{split}
  dy_t &= y_{t-1}\, d\mathbf{X}_t;\quad t \geq 0 \\
  y_t &= \xi_t; \quad t \in [-1,0].
  \end{split}
 \end{align}
 If $\xi \in \mathscr{D}^{\gamma}_X([-1,0])$, the path $[0,1] \ni t \mapsto \xi_{t-1}$ is a delayed controlled path, thus the integral
 \begin{align*}
  [0,1] \ni t \mapsto \int_0^t \xi_{s - 1}\, d \mathbf{X}_s
 \end{align*}
 exists. Therefore, the path
 \begin{align*}
  y_t := \begin{cases}
  \xi_t &\text{ if } t\in [-1,0] \\
  \int_0^t \xi_{s - 1}\, d \mathbf{X}_s + \xi_0 &\text{ if } t\in [0,1] 
  \end{cases}
 \end{align*}
  is the unique continuous solution to \eqref{eqn:delayed_linear} on $[-1,1]$. Since the integral is again an element in $\mathscr{D}^{\gamma}_X([0,1])$, we can iterate the procedure to solve \eqref{eqn:delayed_linear} on the whole positive real line.

\end{example}

We will need the following class of vector fields:

\begin{definition}
 By $C^3_b(W^2, L(U,W))$, we denote the space of bounded functions $\sigma \colon W \oplus W \to L(U,W)$ possessing 3 bounded derivatives.
\end{definition}

We can now state the first existence and uniqueness result for rough delay equations.

\begin{theorem}[Neuenkirch, Nourdin, Tindel]\label{thm:delay_existence}
 For $r > 0$, let $\mathbf{X}$ be a delayed $\gamma$-rough path for $\gamma \in (1/3,1/2]$, $\sigma \in C^3_b(W^2, L(U,W))$ and $(\xi,{\xi}') \in \mathscr{D}_{X}^\beta([-r , 0],W)$ for some $\beta \in (1/3,\gamma)$. Then the equation
 \begin{align}\label{eqn:rough_delay}
  \begin{split}
  y_t &= \xi_0 + \int_0^t \sigma (y_s, y_{s-r})\, d \mathbf{X}_s; \quad t \in [0,r] \\
  y_t &= \xi_{t}; \quad t \in [-r, 0] 
  \end{split}
\end{align}
has a unique solution $(y,y') \in \mathscr{D}_{X}^\beta([0,T],W)$ with Gubinelli derivative given by $y'_t = \sigma(y_t,y_{t-r})$.
\end{theorem}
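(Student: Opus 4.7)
The natural approach is a Banach fixed point argument in the space of controlled paths, exploiting the slight loss of regularity from $\gamma$ to $\beta$ to gain a small factor in the interval length. Since we are solving on $[0,r]$, the delayed argument $y_{s-r}$ for $s \in [0,r]$ always lies in $[-r,0]$, so it equals $\xi_{s-r}$ and is fixed by the initial datum. This means on $[0,r]$ the equation reduces to a non-delayed rough equation with a time-dependent (but controlled) second argument, which considerably simplifies the analysis.

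I would first fix $T \in (0,r]$ and define the solution map
\begin{align*}
 \mathcal{M}_T(y,y')_t := \Big(\xi_0 + \int_0^t \sigma(y_s,\xi_{s-r})\, d\mathbf{X}_s,\ \sigma(y_t,\xi_{t-r})\Big)
\end{align*}
on the closed ball $B_R \subset \mathscr{D}_X^\beta([0,T],W)$ of radius $R$ centred at the constant path $(\xi_0, \sigma(\xi_0,\xi_{-r}))$. To see that $\mathcal{M}_T$ is well-defined, I would verify that the integrand is a delayed controlled path in the sense of \eqref{eqn:delay_controlled}: writing $\sigma(y_s,\xi_{s-r})_{s,t}$ via the standard first-order Taylor expansion and using that $(y,y')$ and $(\xi,\xi')$ are controlled paths, one reads off the Gubinelli derivatives $\zeta^0_s = D_1\sigma(y_s,\xi_{s-r}) y'_s$ and $\zeta^1_s = D_2\sigma(y_s,\xi_{s-r}) \xi'_{s-r}$, with a remainder controlled in $2\beta$-H\"older norm by the $C^3_b$ bounds on $\sigma$ and the controlled norms of $y$ and $\xi$. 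Theorem \ref{thm:delayed_rough_integral} then produces the integral as a controlled path based on $X$ with Gubinelli derivative $\sigma(y_t,\xi_{t-r})$.

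The main technical work is bookkeeping the standard composition estimates: for $(y,y'),(\tilde y, \tilde y') \in B_R$,
\begin{align*}
 \|\mathcal{M}_T(y,y') - \mathcal{M}_T(\tilde y, \tilde y')\|_{\mathscr{D}_X^\beta([0,T])} \leq C(\mathbf{X},\sigma,\xi,R)\, T^{\gamma - \beta}\, \|(y,y') - (\tilde y, \tilde y')\|_{\mathscr{D}_X^\beta([0,T])}.
\end{align*}
The factor $T^{\gamma-\beta}$ comes from two sources: from the Sewing estimate in Theorem \ref{thm:delayed_rough_integral} one extracts a factor $T^{3\gamma - 2\beta}$ (positive since $\gamma > \beta > 1/3$) by interpolating the $2\gamma$-H\"older norms of $\mathbf{X}$ against the $\beta$-topology, and from the composition step for $\sigma$ one uses the local Lipschitz bounds afforded by $D\sigma, D^2\sigma \in C_b$. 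A similar but simpler estimate shows $\mathcal{M}_T(B_R) \subset B_R$ for $T$ small. Choosing $T_0$ so that $C\, T_0^{\gamma-\beta} < 1/2$ yields a unique fixed point on $[0,T_0]$ by the contraction mapping theorem.

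Finally, to reach the full interval $[0,r]$, one concatenates solutions. Starting from the endpoint datum $(y_{T_0}, \sigma(y_{T_0},\xi_{T_0-r}))$, the same fixed point argument produces a solution on $[T_0, 2T_0]$, and crucially the contraction radius $T_0$ depends only on $\|\mathbf{X}\|_{\gamma,2\gamma; [0,r]}$, $\|\sigma\|_{C^3_b}$ and the controlled norm of $\xi$, not on the step, so finitely many iterations cover $[0,r]$. Uniqueness is immediate from the local uniqueness on each step together with the fact that two solutions must coincide at the joining times. The main obstacle throughout is the careful verification that the composition $\sigma(y_\cdot,\xi_{\cdot-r})$ is genuinely a delayed controlled path with remainder of the right order and with constants that are uniform over the ball $B_R$; everything else is then a clean contraction argument.
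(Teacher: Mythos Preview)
Your proposal is correct and follows essentially the same approach as the paper: both argue that for $\zeta \in \mathscr{D}_X^\beta([0,r],W)$ the composition $\sigma(\zeta_\cdot,\xi_{\cdot-r})$ is a delayed controlled path, then run a Banach fixed point argument on the map $\zeta \mapsto \xi_0 + \int_0^\cdot \sigma(\zeta_u,\xi_{u-r})\,d\mathbf{X}_u$, with the Gubinelli derivative read off from the estimate in Theorem~\ref{thm:delayed_rough_integral}. The paper's proof is only a brief sketch citing \cite{NNT08}, so your version simply fills in the standard details (the explicit form of $\zeta^0,\zeta^1$, the $T^{\gamma-\beta}$ contraction factor, and the concatenation step).
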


\begin{proof}
  The theorem was proved in \cite[Theorem 4.2]{NNT08}, we quickly sketch the idea here: First, it can be shown that for an element $\zeta \in \mathscr{D}_{X}^\beta([0 , r],W)$, the path $\sigma(\zeta_{\cdot}, \xi_{\cdot - r})$ is a delayed controlled path. Therefore, one can consider the map
\begin{align*}
  \zeta \mapsto \xi_0 + \int_0^{\cdot} \sigma(\zeta_u,\xi_{u-r})\, d\mathbf{X}_u
\end{align*}
and prove that it has a fixed point in the space $\mathscr{D}_{X}^\beta([0 , r],W)$ to obtain a solution on $[0, r]$. The claimed Gubinelli derivative can be deduced using the estimate provided in Theorem \ref{thm:delayed_rough_integral}.
\end{proof}

We proceed with a theorem which shows that the solution map induced by \eqref{eqn:rough_delay} is continuous. Unfortunately, the corresponding result stated in \cite[Theorem 4.2]{NNT08} is not correct, therefore we can not cite it directly. We will first formulate the correct statement and then discuss the difference compared to \cite[Theorem 4.2]{NNT08}.

\begin{theorem}\label{thm:delay_stability}
 Let $\mathbf{X}$ and $\tilde{\mathbf{X}}$ be a delayed $\gamma$-rough paths with $\gamma \in (1/3,1/2]$, $\sigma\in C^{3}_{b}(W^{2},L(U,W))$ and choose $(\xi,{\xi}') \in \mathscr{D}_{X}^\beta([-r , 0],W)$ and $(\tilde{\xi},\tilde{\xi}') \in \mathscr{D}_{\tilde{X}}^\beta([-r , 0],W)$ for some $\beta \in (1/3,\gamma)$. Consider the solutions $(y,y')$ and $(\tilde{y},\tilde{y}')$ to
 \begin{align*}
 dy_t &=\sigma (y_t, y_{t-r})\, d \mathbf{X}; \quad t \in [0,r] \\
  y_t &= \xi_{t}; \quad t \in [-r, 0] 
\end{align*}
resp.
\begin{align*}
 d\tilde{y}_t &=\sigma (\tilde{y}_t, \tilde{y}_{t-r})\, d \tilde{\mathbf{X}}; \quad t \in [0,r]\\
 \tilde{y}_t &= \tilde{\xi}_{t}; \quad t \in [-r, 0].
\end{align*}
  Then 
\begin{align}
  \begin{split}\label{eqn:loc_lipschitz_delay}
 &d_{2\beta;[0,r]}((y,y'),(\tilde{y},\tilde{y}')) \\
 \leq\ &C\left( |\xi_{-r} - \tilde{\xi}_{-r}| + |\xi'_{-r} - \tilde{\xi}'_{-r}| + d_{2\beta;[-r,0]}((\xi,\xi'),(\tilde{\xi},\tilde{\xi}')) + \varrho_{\gamma;[0,r]}(\mathbf{X},\tilde{\mathbf{X}}) \right)
 \end{split}
\end{align}
holds for some constant $C>0$ depending on $r$, $\gamma$, $\beta$ and $M$, where $M$ is chosen such that
\begin{align*}
 M \geq &\|\xi\|_{\mathscr{D}^{\beta}_{X}} + \|\tilde{\xi}\|_{\mathscr{D}^{\beta}_{\tilde{X}}} + \|X\|_{\gamma} + \|\mathbb{X}\|_{2\gamma} + \|\mathbb{X}(-r)\|_{2\gamma} \\
 &\quad + \|\tilde{X}\|_{\gamma} + \|\tilde{\mathbb{X}}\|_{2\gamma} + \|\tilde{\mathbb{X}}(-r)\|_{2\gamma}.
\end{align*}

\end{theorem}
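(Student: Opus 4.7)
The plan is to combine the delayed rough integral estimate from Theorem \ref{thm:delayed_rough_integral} with a stability estimate for the composition $\sigma(y,y_{\cdot-r})$, viewed as a delayed controlled path, and then iterate the resulting local bound over small subintervals of $[0,r]$ whose length depends only on $M$, $\gamma$, and $\beta$.

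First I would establish an auxiliary lemma: for $(y,y')\in \mathscr{D}^{\beta}_X(I,W)$ with $y_t=\xi_t$ and $y'_t=\xi'_t$ on $[-r,0]$ (and analogously for $\tilde y$), the path $\sigma(y,y_{\cdot-r})$ belongs to $\mathcal{D}^{\beta}_X(I,L(U,W))$ with Gubinelli derivatives $\zeta^0=D_1\sigma(y,y_{\cdot-r})\,y'$ and $\zeta^1=D_2\sigma(y,y_{\cdot-r})\,\xi'_{\cdot-r}$, together with a local Lipschitz estimate in which $\|\sigma(y,y_{\cdot-r})-\sigma(\tilde y,\tilde y_{\cdot-r})\|_{\mathcal{D}^{\beta}_X,\mathcal{D}^{\beta}_{\tilde X}}$ is bounded, up to a constant depending on $\|\sigma\|_{C^3_b}$ and $M$, by
\begin{align*}
 d_{2\beta;I}\bigl((y,y'),(\tilde y,\tilde y')\bigr) + d_{2\beta;[-r,0]}\bigl((\xi,\xi'),(\tilde\xi,\tilde\xi')\bigr) + |\xi_{-r}-\tilde\xi_{-r}| + |\xi'_{-r}-\tilde\xi'_{-r}| + \varrho_{\gamma}(\mathbf{X},\tilde{\mathbf{X}}).
\end{align*}
This is a (tedious but routine) verification by Taylor expansion and the chain rule for $C^3_b$ compositions; compared to the non-delayed case treated in \cite{FH14}, the only novelty is the bookkeeping for $\zeta^1$ coming from the delayed argument.

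Next, I would plug this into the estimate of Theorem \ref{thm:delayed_rough_integral} applied to the difference of the two integrands. On a subinterval $[a,a+\tau]\subset[0,r]$ of length $\tau$, writing $y_t-\tilde y_t=\int_a^t \sigma(y,y_{\cdot-r})\,d\mathbf{X}-\int_a^t \sigma(\tilde y,\tilde y_{\cdot-r})\,d\tilde{\mathbf{X}}$ and identifying the Gubinelli derivatives of the integrals via the last statement of Theorem \ref{thm:delayed_rough_integral}, the $|t-s|^{3\beta}$ remainder yields a prefactor of order $\tau^{\beta}$ multiplying $d_{2\beta;[a,a+\tau]}((y,y'),(\tilde y,\tilde y'))$. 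Choosing $\tau=\tau(M,\gamma,\beta)$ small enough so that this factor is bounded by $1/2$ absorbs the corresponding term into the left-hand side and produces
\begin{align*}
 d_{2\beta;[a,a+\tau]}\bigl((y,y'),(\tilde y,\tilde y')\bigr) \leq C'\Bigl(|y_a-\tilde y_a|+|y'_a-\tilde y'_a|+\varrho_{\gamma;[0,r]}(\mathbf{X},\tilde{\mathbf{X}}) + d_{2\beta;[-r,0]}\bigl((\xi,\xi'),(\tilde\xi,\tilde\xi')\bigr) + |\xi_{-r}-\tilde\xi_{-r}|+|\xi'_{-r}-\tilde\xi'_{-r}|\Bigr).
\end{align*}

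Finally, covering $[0,r]$ by finitely many overlapping intervals of length $\tau$ and iterating, one controls $|y_a-\tilde y_a|+|y'_a-\tilde y'_a|$ at the right endpoint of one interval by the $\mathscr{D}^{\beta}$-distance on the previous one, plus the initial data at $-r$ via the identity $y'_0=\sigma(\xi_0,\xi_{-r})$ and the fundamental theorem of calculus-type bound $|y_a-\tilde y_a|\leq|\xi_0-\tilde\xi_0|+\tau^{\beta}\|y-\tilde y\|_{\beta}$. Summing the resulting geometric series produces a constant depending on $r$, $\gamma$, $\beta$ and $M$ (through $\tau$ and the number of sub-intervals), giving \eqref{eqn:loc_lipschitz_delay}. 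The main obstacle is the first step: carefully tracking the two Gubinelli derivatives $\zeta^0,\zeta^1$ and the $\#$-remainder when the reference rough paths $\mathbf{X}$ and $\tilde{\mathbf{X}}$ differ, which forces $\varrho_{\gamma}(\mathbf{X},\tilde{\mathbf{X}})$ to enter already at the level of the composition lemma and requires the uniform bound $M$ to absorb polynomial dependencies on the underlying norms.
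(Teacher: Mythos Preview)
Your approach is essentially the same as the paper's: both obtain a local Lipschitz estimate with a small-time prefactor by analysing the delayed controlled path $\sigma(y,y_{\cdot-r})$ and the associated rough integral, then iterate over subintervals whose length depends only on $M$. The paper phrases the local step via the germ $\Lambda_{s,t}=\sigma(y_s,\xi_{s-r})X_{s,t}+\sigma_1 y'_s\mathbb{X}_{s,t}+\sigma_2\xi'_{s-r}\mathbb{X}_{s,t}(-r)$ and the Sewing lemma applied to $\Lambda-\tilde\Lambda$, whereas you phrase it as ``composition lemma + Theorem \ref{thm:delayed_rough_integral}''; these are two ways of writing the same computation.

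One point you should make explicit: your constant $C'$ in the local estimate depends on the solution norms $\|y\|_{\mathscr{D}^{\beta}_X}$ and $\|\tilde y\|_{\mathscr{D}^{\beta}_{\tilde X}}$, not just on the data bounded by $M$. To close the argument you need the a priori bound from the existence theorem (in the paper this is \cite[Equation (62)]{NNT08}) to control these solution norms in terms of $M$ alone; only then can $\tau$ and the number of subintervals be chosen depending only on $r,\gamma,\beta,M$. Without this step the iteration does not terminate with the claimed dependence of $C$.
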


\begin{remark}
 In \cite[Theorem 4.2]{NNT08}, the authors state that the estimate
 \begin{align}\label{eqn:wrong_estimate}
  \|y - \tilde{y}\|_{\beta;[0,r]} \leq C(|\xi_{-r} - \tilde{\xi}_{-r}| + \|\xi - \tilde{\xi}\|_{\beta;[-r,0]} + \rho_{\gamma}(\mathbf{X},\tilde{\mathbf{X}}))
 \end{align}
 holds for the usual H\"older norm. However, this estimate can not be true in general. To see this, assume $\mathbf{X}^1 = \mathbf{X}^2 =: \mathbf{X}$ and consider the equation in Example \ref{ex:rough_delay_linear}. If \eqref{eqn:wrong_estimate} was true, the map 
 \begin{align*}
  \xi \mapsto \int\xi \, d\mathbf{X}
 \end{align*}
 would be continuous in the $\beta$-H\"older norm, which is clearly not the case for a genuine rough path $\mathbf{X}$.

\end{remark}

The proof of Theorem \ref{thm:delay_stability} is a bit lengthy, but mostly straightforward. We sketch it in the appendix, cf. page \pageref{proof:stability}.

\subsection{Linear equations}

In this section, we consider the case where $\sigma$ is linear, i.e. $ \sigma\in L\big{(}W^{2},L(U,W)\big{)} $. Note that in this case, there are  $ \sigma_{1},\sigma_{2}\in L\big{(}W,L(U,W)\big{)} $ such that $ \sigma(y_{1},y_{2})=\sigma_{1}(y_{1})+\sigma_{2}(y_{2}) $ for all $y_1, y_2 \in W$. Since linear vector fields are unbounded, we cannot directly apply Theorem \ref{thm:delay_existence}. However, we can prove an a priori bound for any solution of the equation and then deduce existence, uniqueness and stability for linear equations from Theorem \ref{thm:delay_existence} and \ref{thm:delay_stability} by truncating the vector field $\sigma$.

\begin{theorem}\label{thm:delay_linear}
 Let $\mathbf{X}$ be a delayed $\gamma$-rough path over $X$ with $\gamma \in (1/3,1/2]$ and $\sigma\in L(W^{2},L(U,W))$. Then any solution $y \colon [0,r] \to W$ of 
 \begin{align}\label{AXCV}
 \begin{split}
 dy_t &=\sigma (y_t, y_{t-r})\, d \mathbf{X}; \quad t \in [0, r] \\
  y_t &= \xi_{t}; \quad t \in [-r, 0]
\end{split}
\end{align}
satisfies, for $(y,y') = (y, \sigma(y,\xi_{\cdot - r}))$, the bound
 \begin{align}\label{eqn:a_priori_bound_linear}
  \begin{split}
   &\| y \|_{\mathscr{D}_{X}^{\beta}([0,r],W)}\leqslant\\  C&\big{(}1+r^{\gamma -\beta}\Vert X\Vert_{\gamma;[0,r]}\big{)} \| \xi \|_{\mathscr{D}_{X}^{\beta}([-r,0],W)} \exp\left\{ C(\Vert X\Vert_{\gamma ;[0,r]}+\Vert\mathbb{X}\Vert_{2\gamma ;[0,r]}+\Vert\mathbb{X}(-r)\Vert_{2\gamma ;[0,r]})^{\frac{1}{\gamma - \beta}} \right\}
   \end{split}
 \end{align}
  where $C$ depends on $r$, $\| \sigma\|$, $\gamma$ and $\beta$.
\end{theorem}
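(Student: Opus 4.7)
My approach is a standard local-to-global bootstrap adapted to the delayed rough setting: derive a self-referential estimate of the solution norm on short subintervals of $[0,r]$, close it by choosing the subinterval length small enough, and then iterate. Linearity of $\sigma$ is essential, since it ensures that the controlled-path norms of the integrand depend linearly (rather than polynomially) on those of the solution, which is what allows the final bound to remain linear in $\|\xi\|_{\mathscr{D}^\beta_X}$.

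The first step is to observe that, thanks to the decomposition $\sigma(y_s,\xi_{s-r}) = \sigma_1(y_s) + \sigma_2(\xi_{s-r})$ and the identity $y_{s-r} = \xi_{s-r}$ for $s \in [0,r]$, the integrand $m_s := \sigma(y_s,\xi_{s-r})$ is itself a delayed controlled path based on $X$ with
\[
m_{s,t} = \sigma_1(y'_s)\,X_{s,t} + \sigma_2(\xi'_{s-r})\,X_{s-r,t-r} + \sigma_1(y^{\#}_{s,t}) + \sigma_2(\xi^{\#}_{s-r,t-r}),
\]
so that its Gubinelli derivatives are $\zeta^0_s = \sigma_1 y'_s$, $\zeta^1_s = \sigma_2 \xi'_{s-r}$ and its remainder is $m^{\#}_{s,t} = \sigma_1(y^{\#}_{s,t}) + \sigma_2(\xi^{\#}_{s-r,t-r})$. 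All norms of $m,\zeta^0,\zeta^1,m^{\#}$ on a subinterval $[s,t] \subseteq [0,r]$ are then controlled linearly by $\|\sigma\|(\|y\|_{\mathscr{D}^\beta_X([s,t])} + \|\xi\|_{\mathscr{D}^\beta_X([-r,0])})$.

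Applying Theorem \ref{thm:delayed_rough_integral} to $y_{s,t} = \int_s^t m_u\,d\mathbf{X}_u$ (together with the obvious bounds on $y'_{s,t} = m_{s,t}$ following from the decomposition above) produces, on a subinterval $[s,t]$ of length $h$, an estimate of the form
\[
\|y\|_{\mathscr{D}^\beta_X([s,t])} \leq \kappa(h)\,\|y\|_{\mathscr{D}^\beta_X([s,t])} + C_1(|y_s|+|y'_s|) + C_2\,\|\xi\|_{\mathscr{D}^\beta_X([-r,0])},
\]
where $\kappa(h) := C\|\sigma\|\bigl(\|X\|_\gamma h^{\gamma-\beta} + (\|\mathbb{X}\|_{2\gamma} + \|\mathbb{X}(-r)\|_{2\gamma})\,h^{2(\gamma-\beta)}\bigr)$ and $C_1,C_2$ depend only on $\|\sigma\|$, $\gamma$, $\beta$ and $M$. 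Choosing $h = h^* := \min\{r,\,c\bigl(\|X\|_\gamma + \|\mathbb{X}\|_{2\gamma} + \|\mathbb{X}(-r)\|_{2\gamma}\bigr)^{-1/(\gamma-\beta)}\}$ with $c$ small enough forces $\kappa(h^*) \leq 1/2$ and closes the estimate to $\|y\|_{\mathscr{D}^\beta_X([s,t])} \leq 2C_1(|y_s|+|y'_s|) + 2C_2\|\xi\|_{\mathscr{D}^\beta_X([-r,0])}$. To globalize, partition $[0,r]$ into $N \asymp r/h^*$ consecutive subintervals of length at most $h^*$ and set $A_k := |y_{kh^*}| + |y'_{kh^*}|$. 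The value $|y_t|+|y'_t|$ at the right endpoint of a subinterval is dominated by a fixed multiple of $\|y\|_{\mathscr{D}^\beta_X}$ on that subinterval plus $\|\xi\|_{\mathscr{D}^\beta_X}$, giving a geometric recursion $A_{k+1} \leq \lambda A_k + \mu \|\xi\|_{\mathscr{D}^\beta_X}$, hence $A_k \leq \lambda^k(A_0 + \mu'\|\xi\|_{\mathscr{D}^\beta_X})$. Since $A_0 \lesssim \|\xi\|_{\mathscr{D}^\beta_X}$ and $\lambda^N = \exp(N\log\lambda)$ produces exactly the exponential factor in \eqref{eqn:a_priori_bound_linear}, reassembling the $\mathscr{D}^\beta_X$-norm on $[0,r]$ from the subintervals yields the claim; the prefactor $(1+r^{\gamma-\beta}\|X\|_\gamma)$ arises when reconstructing the global $\beta$-Hölder norm of $y$ from its increments across interval joins.

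The main obstacle I expect is the bookkeeping of the powers of $h$ in the sewing estimate so that the local bound genuinely contracts, and passing from the additive local norms back to the $\mathscr{D}^\beta_X$-norm on $[0,r]$ linearly rather than with further multiplicative losses. No substantial new difficulty arises from the delay: as noted in Remark \ref{remark:delayed_contr_usual_contr}, a delayed controlled path is an ordinary controlled path based on the enhanced driver $(X, X_{\cdot-r})$, and Theorem \ref{thm:delayed_rough_integral} already delivers a sewing bound in which $\mathbb{X}(-r)$ appears on the same footing as $\mathbb{X}$.
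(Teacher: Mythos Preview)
Your proposal is correct and follows essentially the same route as the paper: identify $\sigma(y,\xi_{\cdot-r})$ as a delayed controlled path, apply the sewing estimate of Theorem~\ref{thm:delayed_rough_integral} on short subintervals to obtain a contracting self-referential bound, choose the subinterval length proportional to $(\|X\|_\gamma+\|\mathbb{X}\|_{2\gamma}+\|\mathbb{X}(-r)\|_{2\gamma})^{-1/(\gamma-\beta)}$, iterate a geometric recursion across the partition, and reassemble, with the factor $(1+r^{\gamma-\beta}\|X\|_\gamma)$ coming from the cross-term $y'_{s,u}X_{u,t}$ when patching $y^{\#}$ across subintervals. The only cosmetic difference is that the paper tracks the full local quantity $B_n=\|y\|_{\beta;I_n}+\|y\|_{\infty;I_n}+\|y'\|_{\beta;I_n}+\|y'\|_{\infty;I_n}+\|y^{\#}\|_{2\beta;I_n}$ in the recursion rather than just the endpoint values $|y_s|+|y'_s|$, and correspondingly imposes an additional smallness condition $\theta^\beta(1+\|\sigma\|)\leq 1/4$ to absorb the $\|y\|_{\infty;I}$ term---a condition implicit in your choice of $c$.
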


\begin{proof}
 For $ s,t\in [0,r] $, we have
\begin{align*}
y_{s,t}= y'_s X_{s,t}+y^{\#}_{s,t}
\end{align*}
where
\begin{align}\label{ddc}
y^{\prime}_s &=\sigma(y_{s},\xi_{s-r})
\end{align}
and 
\begin{align*}
 y^{\#}_{s,t} &= \int_s^t \sigma (y_u, y_{u-r})\, d \mathbf{X}_{u} - \sigma(y_{s},\xi_{s-r})X_{s,t} \\
&= \tilde{\rho}_{s,t} + \sigma_{1}y^{\prime}_{s}\mathbb{X}_{s,t} + \sigma_{2}\xi^{\prime}_{s-r}\mathbb{X}_{s,t}(-r)
\end{align*}
with $\tilde{\rho}$ given by
\begin{align*}
 \tilde{\rho}_{s,t} = \int_s^t \sigma (y_u, y_{u-r})\, d \mathbf{X}_{u} - \sigma(y_{s},\xi_{s-r})X_{s,t} - \sigma_{1}y^{\prime}_{s}\mathbb{X}_{s,t} - \sigma_{2}\xi^{\prime}_{s-r}\mathbb{X}_{s,t}(-r).
\end{align*}
Note that $u \mapsto \sigma (y_u, \xi_{u-r})$ is a delayed controlled path with Gubinelli derivative $u \mapsto (\sigma_1 y'_u, \sigma_2 \xi'_{u-r})$. Therefore, we can use the estimate provided in Theorem \ref{thm:delayed_rough_integral} to see that for a constant $ M = M(\beta,r) $ and $ I=[a,b]\subset [0,r] $ :
\begin{align}\label{DCCC}
\begin{split}
\Vert y^{\#}\Vert_{2\beta ;I}&\leqslant\Vert \sigma\Vert\big{(}\Vert y^{\prime}\Vert_{\infty ;I}\Vert\mathbb{X}\Vert_{2\gamma ;I}+\Vert\xi^{\prime}\Vert_{\infty;[-r,0]}\Vert\mathbb{X}(-r)\Vert_{2\gamma ;I}\big{)}(b-a)^{2\gamma -2\beta}\\&+M\Vert \sigma\Vert\big{(}\Vert y^{\#}\Vert_{2\beta ;I}\Vert X\Vert_{\gamma ;I}+\Vert\xi^{\#}\Vert_{2\beta ;[-r,0]}\Vert X\Vert_{\gamma ;I}\big{)}(b-a)^{\gamma}
\\&+M\Vert\sigma\Vert\big{(}\Vert y^{\prime}\Vert_{\beta ;I}\Vert\mathbb{X}\Vert_{2\gamma ;I}+\Vert\xi^{\prime}\Vert_{\beta ;[-r,0]}\Vert\mathbb{X}(-r)\Vert_{2\gamma ;I}\big{)}(b-a)^{2\gamma -\beta} 
\end{split}
\end{align}
and by relation (\ref{ddc}) :
\begin{align*}
&\Vert y\Vert_{\beta ;I}\leqslant \Vert\sigma\Vert\big{(}\Vert y\Vert_{\infty ,I}+\Vert \xi\Vert_{\infty ,[-r,0]}\big{)}\Vert X\Vert_{\gamma ;I}(b-a)^{\gamma -\beta}+\Vert y^{\#}\Vert _{2\beta ;I}(b-a)^{\beta} \quad \text{and}\\
&\Vert y^{\prime}\Vert_{\beta ;I}\leqslant \Vert\sigma\Vert\big{(}\Vert y\Vert_{\beta ;I}+\Vert\xi\Vert_{\beta ;[-r,0]}\big{)}.
\end{align*}
Now assume that $ b-a=\theta<1\wedge r$ for a given $\theta$ and set 
\begin{align*}
 A := 1+\Vert X\Vert_{\gamma ;[0,r]}+\Vert\mathbb{X}\Vert_{2\gamma ;[0,r]}+\Vert\mathbb{X}(-r)\Vert_{2\gamma ;[0,r]}.
\end{align*}
Our former estimates imply that there are constants $ \tilde{M},\tilde{N} $ depending on $ \Vert\sigma\Vert $ such that
\begin{align}\label{A12}
&\Vert y\Vert_{\beta ;I}+\Vert y\Vert_{\infty ;I}+\Vert y^{\prime}\Vert_{\beta ;I}+\Vert y^{\#}\Vert_{2\beta ;I}+\Vert y^{\prime}\Vert_{\infty ;I}\leqslant\nonumber\\& \tilde{M}A\theta^{\gamma -\beta}\big{(}\Vert y\Vert_{\beta ;I}+\Vert y\Vert_{\infty ;I}+\Vert y^{\prime}\Vert_{\beta ;I}+\Vert y^{\#}\Vert_{2\beta ;I}+\Vert y^{\prime}\Vert_{\infty ;I}\big{)}+\\&\tilde{N}A\big{(}\Vert \xi\Vert_{\beta ;[-r,0]}+\Vert \xi\Vert_{\infty ;[-r,0]}+\Vert \xi^{\prime}\Vert_{\infty ;[-r,0]}+\Vert \xi^{\#}\Vert_{2\beta ;[-r,0]}\big{)}+\big{(}1+\Vert\sigma\Vert\big{)}\Vert y\Vert_{\infty ;I}\nonumber.
\end{align}
Choose $\theta$ small enough such that
\begin{align}\label{A13}
\tilde{M}A\theta^{\gamma -\beta}\leqslant\frac{1}{4} \quad \text{and} \quad\theta^{\beta}(1+\Vert\sigma\Vert)\leqslant\frac{1}{4}.
\end{align}
For $ n \geqslant 1 $ and  $ n\theta\leqslant r $, set $I_{n} := [(n-1)\theta ,n\theta] $ and
\begin{align*}
&B_{n}=\Vert y\Vert_{\beta ;I_{n}}+\Vert y\Vert_{\infty ;I_{n}}+\Vert y^{\prime}\Vert_{\beta ;I_{n}}+\Vert y^{\#}\Vert_{2\beta ;I_{n}}+\Vert y^{\prime}\Vert_{\infty ;I_{n}} \\
&B_{0}=\Vert \xi\Vert_{\beta ;[-r,0]}+\Vert \xi\Vert_{\infty ;[-r,0]}+\Vert \xi^{\prime}\Vert_{\infty ;[-r,0]}+\Vert \xi^{\#}\Vert_{2\beta ;[-r,0]}.
\end{align*}
Note that $ \Vert y\Vert_{\infty ;I_{n}}\leqslant B_{n-1}+\theta^{\beta}B_{n} $. By (\ref{A12}) and (\ref{A13}),
\begin{align*}
B_{n}\leqslant 2\tilde{N}AB_{0}+2\big{(}1+\Vert\sigma\Vert\big{)}B_{n-1}.
\end{align*}
Set $C=2\tilde{N}A $ and $\tilde{C}=2\big{(}1+\Vert\sigma\Vert\big{)}$. By a simple induction argument, it is not hard to verify that for $ k \leq n $,
\begin{align*}
B_{n}\leqslant C(1+\tilde{C}+\tilde{C}^{2}+...+\tilde{C}^{k-1})B_{0}+\tilde{C}^{k}B_{n-k} 
\end{align*}
which implies
\begin{align*}
 B_{n}\leqslant \tilde{C}^{n}(1+C)B_{0}
\end{align*}
for $k = n$. Note that since $ y^{\#}_{s,t}= y^{\#}_{s,u}+ y^{\#}_{u,t} +y^{\prime}_{s,u}X_{u,t}$,
\begin{align}\label{sharp}
\Vert y^{\#}\Vert_{2\beta;[0,r]}\leqslant\sum_{1\leqslant n\leqslant m}\Vert y^{\#}\Vert_{2\beta;I_{n}}+r^{\gamma -\beta}\Vert X\Vert_{\gamma;[0,r]}\sum_{1\leqslant n\leqslant m}\Vert y^{\prime}\Vert_{\beta;I_{n}}
\end{align}
Now set $ m=[\frac{r}{\theta}]+1 $. By (\ref{sharp}) and subadditivity of the H\"older norm,
\begin{align*}
\Vert y\Vert_{\beta ;[0,r]}&+\Vert y\Vert_{\infty ;[0,r]}+\Vert y^{\prime}\Vert_{\beta ;[0,r]}+\Vert y^{\#}\Vert_{2\beta ;[0,r]}+\Vert y^{\prime}\Vert_{\infty ;[0,r]}\\
&\leqslant \big{(}1+r^{\gamma -\beta}\Vert X\Vert_{\gamma;[0,r]}\big{)}\sum_{1\leqslant n\leqslant m}B_{n}\leqslant \big{(}1+r^{\gamma -\beta}\Vert X\Vert_{\gamma;[0,r]}\big{)}\tilde{C}^{m+1}(1+C)B_{0}.
\end{align*}
Note that an appropriate choice for $ \theta $ is 
\begin{align}\label{bound1}
\theta =\frac{1}{\big{(}4\tilde{M}A\big{)}^{\frac{1}{\gamma -\beta}}+\big{(}4(1+\Vert\sigma\Vert)\big{)}^{\frac{1}{\beta}}+1+\frac{1}{r}}
\end{align}
which implies the claimed bound.
\end{proof}

From Theorem \ref{thm:delay_linear}, it follows that in the case of linear vector fields $\sigma$, the solution map induced by \eqref{AXCV} is a bounded linear map. We now prove that it is even compact.

\begin{proposition}\label{prop:compact_map}
 Under the same assumptions as in Theorem \ref{thm:delay_linear}, the solution map induced by \eqref{AXCV} is a compact linear map for every $1/3 < \beta < \gamma$.
\end{proposition}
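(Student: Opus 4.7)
The plan is to exhibit the solution map as a composition of a bounded map into a strictly stronger space and a compact embedding. Let $(\xi_n, \xi'_n)$ be a bounded sequence in $\mathscr{D}_X^\beta([-r,0],W)$; by Theorem \ref{thm:delay_linear} the corresponding solutions $(y_n, y'_n)$ are uniformly bounded in $\mathscr{D}_X^\beta([0,r],W)$. I want to extract a subsequence converging in $\mathscr{D}_X^\beta([0,r],W)$, and for this I rely on the arithmetic inequalities $2\beta > \gamma$ and $2\beta + \gamma > 2\gamma$, both of which follow from the standing assumptions $\beta \in (1/3,\gamma)$ and $\gamma \in (1/3, 1/2]$.

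Using these inequalities I upgrade each ingredient of the $\mathscr{D}_X^\beta$-norm to a strictly stronger regularity. First, from $\xi_{n;s,t} = \xi'_n(s) X_{s,t} + \xi^\#_n(s,t)$ together with $X \in C^\gamma$, $\xi^\#_n \in C^{2\beta}$, and $2\beta \geq \gamma$, the paths $\xi_n$ (and, by the same reasoning on $[0,r]$, the paths $y_n$) are uniformly bounded in $C^\gamma$. Second, the Gubinelli derivative of the solution, $y'_n = \sigma_1 y_n + \sigma_2 (\xi_n)_{\cdot - r}$, is then uniformly bounded in $C^\gamma([0,r], L(U,W))$; the crucial point is that the shifted \emph{path} $(\xi_n)_{\cdot - r}$ is $\gamma$-H\"older, even though $\xi'_n$ alone is only $\beta$-H\"older. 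Third, writing
\begin{align*}
y^\#_n(s,t) = \sigma_1 y'_n(s)\mathbb{X}_{s,t} + \sigma_2 \xi'_n(s-r)\mathbb{X}_{s,t}(-r) + \tilde\rho_n(s,t)
\end{align*}
as in the proof of Theorem \ref{thm:delay_linear}, the two explicit terms are uniformly in $C^{2\gamma}$, and the sewing error $\tilde\rho_n$ satisfies $|\tilde\rho_n(s,t)| \lesssim |t-s|^{2\beta + \gamma}$ with a prefactor controlled by the (uniform) $\mathscr{D}_X^\beta$-norms of $\xi_n, y_n$ and the rough-path norms of $\mathbf{X}$. Since $2\beta + \gamma > 2\gamma$, this gives $y^\#_n$ uniformly bounded in $C^{2\gamma}([0,r]^2)$.

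With these stronger bounds established, I apply the compact embeddings $C^\gamma \hookrightarrow C^\beta$ and $C^{2\gamma} \hookrightarrow C^{2\beta}$ (both obtained from Arzel\`a-Ascoli via the interpolation $\|g\|_\alpha \leq 2\|g\|_\infty^{1-\alpha/\alpha'}\|g\|_{\alpha'}^{\alpha/\alpha'}$ valid for $\alpha < \alpha'$), together with Bolzano-Weierstrass for the finite-dimensional point values $y_n(0)$ and $y'_n(0)$. After passing to a subsequence I obtain $y_n \to y_*$ in $C^\beta$, $y'_n \to \zeta$ in $C^\beta$, $y^\#_n \to y^\#_*$ in $C^{2\beta}$, and convergence of the point values. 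Taking limits in $y_n(t) - y_n(s) = y'_n(s) X_{s,t} + y^\#_n(s,t)$ identifies $(y_*, \zeta)$ as an element of $\mathscr{D}_X^\beta([0,r],W)$ with $\zeta = y'_*$, and combining the four convergences yields $y_n \to y_*$ in the $\mathscr{D}_X^\beta$-norm.

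I expect the main obstacle to be the third regularity upgrade. Theorem \ref{thm:delayed_rough_integral} is stated with exponent $3\gamma$ for $\gamma$-regular inputs, whereas in our situation the integrand $\sigma(y_n, (\xi_n)_{\cdot-r})$ is only a $\beta$-regular delayed controlled path, so the underlying sewing-lemma estimate has to be re-run with the ``slow'' exponent $2\beta + \gamma$, and the resulting prefactor must be shown to depend only on data that is uniformly controlled in $n$ by Theorem \ref{thm:delay_linear}.
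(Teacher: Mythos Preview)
Your proof is correct. Both your argument and the paper's rest on the same phenomenon—integration against a $\gamma$-rough path improves regularity, so the image of a $\mathscr{D}_X^\beta$-bounded set is bounded in a strictly stronger norm—but the two proofs organise this differently.

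The paper first applies Arzel\`a--Ascoli to the \emph{inputs} $(\xi_n,\xi'_n)$, obtaining convergence in $\mathscr{D}_X^\delta$ for some $\delta<\beta$; it then invokes the stability estimate (Theorem~\ref{thm:delay_stability}) to transfer this to the outputs, and finally bootstraps the outputs back up to $\mathscr{D}_X^{\beta'}$ with $\beta'>\beta$ via an estimate of the type~\eqref{DCCC}. Your argument bypasses the detour through the weaker space and the stability theorem entirely: you work directly on the outputs, exploiting the explicit formula $y'_n=\sigma_1 y_n+\sigma_2(\xi_n)_{\cdot-r}$ to see that $y'_n$ is uniformly $C^\gamma$ (because it depends on the $\gamma$-H\"older \emph{paths} $y_n,\xi_n$, not on the merely $\beta$-H\"older derivative $\xi'_n$), and then run the sewing estimate with the mixed exponent $2\beta+\gamma>2\gamma$ to push $y^\#_n$ into $C^{2\gamma}$. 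What you gain is a more self-contained argument that does not rely on Theorem~\ref{thm:delay_stability}; what the paper's route buys is that it never needs to track the precise exponents $2\beta>\gamma$ and $2\beta+\gamma>2\gamma$, since any $\beta'\in(\beta,\gamma)$ suffices for its interpolation step.
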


 \begin{proof}
 Fix $\beta < \gamma$. Let $ \lbrace\xi^{(n)}\rbrace_{n\geqslant 1} $ be a bounded sequence in $ \mathscr{D}_{X}^{\beta}([-r,0],W)$, i.e.
\begin{align*}
\xi^{(n)}_{u,v}=(\xi^{(n)})^{\prime}_{u}X_{u,v} + (\xi^{(n)})^{\#}_{u,v}
\end{align*}
with uniformly bounded $\beta$-H\"older norm of $\xi^{(n)}$ and $(\xi^{(n)})^{\prime}$ and uniformly bounded $2\beta$-H\"older norm of $(\xi^{(n)})^{\#}$. From the Arzel\`a-Ascoli theorem, there are continuous functions $\xi$ and $\xi'$ such that
\begin{align*}
 (\xi^{(n)}, (\xi^{(n)})^{\prime}) \to (\xi,\xi')
\end{align*}
uniformly along a subsequence, which we will henceforth denote by $(\xi^{(n)},(\xi^{(n)})')_n$ itself. It follows that $(\xi^{(n)}, (\xi^{(n)})^{\prime}) \to (\xi,\xi')$ in $\delta$-H\"older norm for every $\delta < \beta$. Define $ \xi^{\#}_{u,v}:=\xi_{u,v} - \xi_{u}^{\prime}X_{u,v}$. Clearly, $(\xi^{(n)})^{\#} \to \xi^{\#}$ uniformly, and since
\begin{align*}
 |\xi^{\#}_{u,v}| \leq \sup_n \|  (\xi^{(n)})^{\#} \|_{2\beta;[-r,0]} |v - u|^{2\beta}
\end{align*}
for every $-r \leq u \leq v \leq 0$, it follows that $(\xi^{(n)})^{\#} \to \xi^{\#}$ in $2\delta$-H\"older norm for every $\delta < \beta$. This implies that $(\xi^{(n)}, (\xi^{(n)})') \to (\xi,\xi')$ in the space $ \mathscr{D}_{X}^{\delta}([-r,0],W)$ for every $\delta < \beta$. Let $(y^n,(y^n)')$ denotes the solutions to \eqref{AXCV} for the initial conditions $(\xi^n, (\xi^n)')$. Fix some $1/3 < \delta < \beta$. From continuity, the solutions $(y^n,(y^n)')$ converge in the space $ \mathscr{D}_{X}^{\delta}([0,r],W)$, too. Choose $\beta < \beta' < \gamma$. Using a similar estimate as \eqref{DCCC} in Theorem \ref{thm:delay_linear} where we apply the estimate in Theorem \ref{thm:delayed_rough_integral} for $\delta$ shows that we can bound  $\| (y^n,(y^n)') \|_{\mathscr{D}_{X}^{\beta'}([0,r],W)}$ uniformly over $n$ where the bound depends, in particular, on $\sup_n \| (y^n,(y^n)') \|_{\mathscr{D}_{X}^{\delta}([0,r],W)}$. This implies convergence also in the space $\mathscr{D}_{X}^{\beta}([0,r],W)$ and therefore proves compactness.
\end{proof}

\subsection{A semi-flow property}

  In this section,
we discuss the flow property induced by a rough delay equation. Recall that a \emph{flow} on some set $M$ is a mapping 
\begin{align*}
  \phi \colon [0,\infty) \times [0,\infty) \times M \to M
\end{align*}
 such that $\phi(t,t,\xi) = \xi$ and
\begin{align}\label{eqn:flow_prop}
 \phi(s,t,\xi) = \phi(u,t, \phi(s,u,\xi))
\end{align}
hold for every $\xi \in M$ and $s,t,u \in [0,\infty)$. Our prime example of a flow is a differential equation in which case $\xi \in M$ denotes an initial condition at time point $s$ and $\phi(s,t,\xi)$ denotes the solution at time $t$. In the setting of a delay equation, we can only expect to solve the equation forward in time, i.e. $\phi(s,t,\xi)$ will only be defined for $s \leq t$. If \eqref{eqn:flow_prop} is assumed to hold only for $s \leq u \leq t$, we will speak of a \emph{semi-flow}. In case of a rough delay equation, we will give up the idea of choosing a common set of admissible initial conditions $M$ which will work for all time instances. Instead, our semi-flow will actually consist of a family of maps 
\begin{align*}
 \phi(s,t,\cdot) \colon M_s \to M_t
\end{align*}
where $(M_t)_{t \geq 0}$ are sets (later: spaces) indexed by time. Note that the semi-flow property \eqref{eqn:flow_prop} still makes perfect sense in this setting, and this is what we are going to prove. Note also that the phenomenon of time-varying spaces is already visible in Example \ref{ex:rough_delay_linear}: admissible initial conditions are controlled paths defined on intervals depending on the time when we start to solve the equation.

\begin{theorem}\label{thm:flow_prop}
Let $\mathbf{X}$ be a delayed $\gamma$-rough path over $X$ with $\gamma \in (1/3,1/2]$ and $\sigma\in C^{3}_{b}(W^{2},L(U,W))$. Consider the equation
\begin{align}\label{equ}
\begin{split}
 dy_t &=\sigma (y_t, y_{t-r})\, d \mathbf{X}; \quad t \in [s, \infty) \\
  y_t &= \xi_{t}; \quad t \in [s-r, s]
\end{split}
\end{align}
for $s \in \R$. Let $\beta \in (1/3, \gamma)$. If $\xi \in \mathscr{D}_{X}^\beta([s-r , s],W)$, the equation \eqref{equ} has a unique solution  $y \colon [s,\infty) \to W$ and for $t \geq s$, we denote by $\phi(s,t,\xi)$ the solution path segment
\begin{align*}
\phi(s,t,\xi) = (y_u)_{t - r \leq u \leq t}.
\end{align*}
If $ r \leqslant t - s$, we have that $\phi(s,t,\xi) \in \mathscr{D}_{X}^\beta([t-r, t],W)$ with Gubinelli derivative $\phi'(s,t,\xi) = (\sigma(y_u,y_{u-r}))_{t-r \leq u \leq t}$ and
\begin{align}\label{continuity}
\phi(s,t,\cdot) \colon \mathscr{D}_{X}^\beta([s-r, s],W) &\to \mathscr{D}_{X}^\beta([t-r, t],W) \\
 \xi &\mapsto \phi(s,t, \xi)\nonumber
\end{align}
is a continuous map. In case that $ \xi _{s}^{\prime}= \sigma\big{(}\xi_{s},\xi_{s-r}\big{)} $, we have $\phi(s,t,\xi)\in \mathscr{D}_{X}^\beta([-r + t, t],W)$ for all $s \leq t$ with Gubinelli derivative given by
\begin{align*}
 \phi(s,t,\xi)(u) = \begin{cases}
                     \xi'_u &\text{ for } t-r \leq u \leq s \\
                     \sigma(y_u,y_{u-r}) &\text{ for } s \leq u \leq t
                    \end{cases}
\end{align*}
for $r > t - s$. For $s \leq u \leq t$ and $r \leqslant u - s $, we have the semi-flow property
\begin{align}\label{Flow}
\phi(s,s,\cdot) &= \operatorname{Id}_{\mathscr{D}_{X}^p([-r + s, s],W)} \qquad \text{and} \nonumber\\
\phi(u,t, \cdot)& \circ \phi(s,u, \xi) = \phi(s,t, \xi).
\end{align}
Again, if $ \xi _{s}^{\prime}= \sigma\big{(}\xi_{s},\xi_{-r+s}\big{)} $, \eqref{Flow} is true for all $s \leq u \leq t$.
\end{theorem}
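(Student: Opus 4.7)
The plan is to bootstrap Theorem~\ref{thm:delay_existence} and Theorem~\ref{thm:delay_stability} by a step-by-step iteration on the intervals $J_k := [s + (k-1)r,\, s + kr]$, $k \geq 1$, and then read off the semi-flow identity from uniqueness.

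\textbf{Existence, uniqueness, and controlled-path structure.} On $J_1$, a time-shifted version of Theorem~\ref{thm:delay_existence} yields a unique $(y,y') \in \mathscr{D}_X^{\beta}(J_1, W)$ with $y'_u = \sigma(y_u, \xi_{u-r})$, using $\xi \in \mathscr{D}_X^{\beta}([s-r,s],W)$ as data. Now $y|_{J_1} \in \mathscr{D}_X^{\beta}(J_1, W)$ serves as the new initial segment, so a further application of Theorem~\ref{thm:delay_existence} produces $y$ on $J_2$, and so on; the iteration is legitimate because the just-built segment on $J_{k-1}$ is a genuine $\mathscr{D}_X^{\beta}$ controlled path, which is the only input needed. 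Uniqueness on $[s,\infty)$ follows from uniqueness on each $J_k$. Whenever $t - r \geq s$, the entire window $[t-r,t]$ lies in the post-initial region, so the output of Theorem~\ref{thm:delay_existence} on a suitable shifted window gives $(y|_{[t-r,t]},\, \sigma(y_\cdot, y_{\cdot -r})) \in \mathscr{D}_X^{\beta}([t-r,t],W)$ as claimed.

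\textbf{Continuity and the compatibility condition.} Continuity of $\phi(s,t,\cdot)$ follows by chaining the local Lipschitz estimate \eqref{eqn:loc_lipschitz_delay} of Theorem~\ref{thm:delay_stability} across the finitely many blocks $J_1,\dots,J_k$ needed to reach $t$: since $\sigma \in C^3_b$, the controlled-path norm of each newly built segment is bounded in terms of the previous one and of the (fixed) rough-path data, so the constant $M$ in \eqref{eqn:loc_lipschitz_delay} stays bounded on bounded sets of initial conditions, and the block-by-block Lipschitz estimates compose. For the case $\xi'_s = \sigma(\xi_s, \xi_{s-r})$, the right- and left-limits of the Gubinelli derivative at $s$ agree, so the concatenated path lies in $\mathscr{D}_X^{\beta}([s-r, s+r], W)$; then every window $[t-r,t]$ with $s \leq t < s+r$ is still a controlled path, and the piecewise Gubinelli derivative is exactly the one written in the statement.

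\textbf{Semi-flow identity.} The identity $\phi(s,s,\cdot) = \mathrm{Id}$ is immediate. For $s \leq u \leq t$ with $u - s \geq r$, the segment $\phi(s,u,\xi)$ already lies in $\mathscr{D}_X^{\beta}([u-r,u],W)$ by the first paragraph, hence is admissible initial data for \eqref{equ} started at time $u$. Uniqueness on each subsequent block of length $r$ then forces the solution launched from $\phi(s,u,\xi)$ at time $u$ to coincide with $y$ on $[u,\infty)$, and taking its segment at time $t$ recovers $\phi(s,t,\xi)$. The restriction $u - s \geq r$ is exactly the price of needing $\phi(s,u,\xi)$ to be a legitimate controlled path, and this is where the only genuinely delicate point of the argument sits: without the compatibility condition, the Gubinelli derivative of the concatenated solution has a jump at $s$, so restriction windows straddling $s$ fail to be controlled paths and cannot serve as initial data. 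Under the compatibility condition $\xi'_s = \sigma(\xi_s, \xi_{s-r})$ this obstruction disappears, $\phi(s,u,\xi)$ is an element of $\mathscr{D}_X^{\beta}$ for every $u \geq s$, and the same uniqueness argument yields the semi-flow identity for arbitrary $s \leq u \leq t$.
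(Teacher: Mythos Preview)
Your proof is correct and follows essentially the same route as the paper: iterate Theorem~\ref{thm:delay_existence} over the blocks $[s+(k-1)r,\,s+kr]$, glue the Gubinelli derivatives (which agree at the endpoints) to obtain controlled-path structure on arbitrary windows in $[s,\infty)$, invoke Theorem~\ref{thm:delay_stability} block-by-block for continuity, and deduce the semi-flow identity from uniqueness. The paper phrases the gluing step slightly more explicitly (``Since the derivatives agree on the boundary points of the intervals, we can `glue them together'\,''), whereas your line ``the output of Theorem~\ref{thm:delay_existence} on a suitable shifted window'' glosses over the fact that $[t-r,t]$ may straddle several blocks; but this is a minor imprecision, not a gap.
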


\begin{proof}
As in Theorem \ref{thm:delay_existence}, we can first solve \eqref{equ} on the time interval $[s,s+r]$. This can now be iterated to obtain a solution on $[s,\infty)$. The claimed Gubinelli derivatives on every interval $[s + kr, s + (k+1)r]$, $k \in \N_0$, are a consequence of Theorem \ref{thm:delayed_rough_integral}. Since the derivatives agree on the boundary points of the intervals, we can ``glue them together'' to obtain a controlled path on arbitrary intervals $[u,v] \subset [s,\infty)$. If the assumption $ \xi _{s}^{\prime}= \sigma\big{(}\xi_{s},\xi_{s-r}\big{)} $ holds, this can even be done for every interval $[u,v] \subset [s-r,\infty)$. Continuity of the map (\ref{continuity}) is a consequence of Theorem \ref{thm:delay_stability}.
The semi-flow property (\ref{Flow}) is a consequence of existence and uniqueness of solutions:  Let $ y_{\tau}^{s,\xi}$ be the solution of (\ref{equ}) for $\tau \geq s - r$ where $ y_{\tau}^{s,\xi}=\xi_{\tau} $ for $ s-r\leqslant\tau\leqslant s $. Let $s \leq u \leq t$ and assume either $r \leqslant u - s $ or $ \xi _{s}^{\prime}= \sigma\big{(}\xi_{s},\xi_{s-r}\big{)} $. For $ \tau<u $, it is not hard to verify that $ y_{\tau}^{s,\xi}=y_{\tau}^{u,\phi(s,u,\xi)} $. If $ u\leqslant\tau $ by definition:
\begin{align*}
y_{\tau}^{s,\xi}&=\xi_s + \int _{s}^{\tau}\sigma (y_{z}^{s,\xi},y_{z-r}^{s,\xi})d \mathbf{X}_{z}= y_{u}^{s,\xi}+\int _{u}^{\tau}\sigma (y_{z}^{s,\xi},y_{z-r}^{s,\xi})d \mathbf{X}_{z} \quad \text{and} \\
 y_{\tau}^{u,\phi(s,u,\xi)}&=y_{u}^{s,\xi}+\int _{u}^{\tau}\sigma (y_{z}^{u,\phi(s,u,\xi)},y_{z-r}^{u,\phi(s,u,\xi)})d \mathbf{X}_{z}.
\end{align*}
Given the uniqueness of the solution, $ y_{\tau}^{s,\xi}=y_{\tau}^{u,\phi(s,u,\xi)} $ which indeed implies (\ref{Flow}).
\end{proof}

\section{Existence of delayed L\'evy areas for the Brownian motion and a Wong-Zakai theorem}\label{sec:delayed_area_BM}

In order to apply the results from Section \ref{sec:basics_rough_delay} to stochastic delay differential equations, we need to make sure that the Brownian motion can be ''lifted'' to a process taking values in the space of delayed rough paths. In this section, $B = (B^1, \ldots, B^d) \colon \R \to \R^d$ will always denote an $\R^d$-valued two-sided Brownian motion defined on a probability space $(\Omega,\mathcal{F},\P)$ adapted to some two-parameter filtration $(\mathcal{F}^t_s)_{s \leq t}$, i.e. $(B_{t+s} - B_s)_{t \geq 0}$ is a usual $(\mathcal{F}_s^{t+s})_{t \geq 0}$-Brownian motion for every $s \in \R$ and $B_0 = 0$ almost surely (cf. \cite[Section 2.3.2]{Arn98} for a more detailed discussion about two-sided stochastic processes).

\begin{definition}
 For $r > 0$, set 
 \begin{align*}
  \mathbf{B}_{s,t}^{\text{It\=o}} := \left( B_{s,t}, \mathbb{B}_{s,t}^{\text{It\=o}}, \mathbb{B}_{s,t}^{\text{It\=o}}(-r) \right) := \left(B_t - B_s, \int_s^t (B_u - B_s)\, \otimes dB_u, \int_s^t (B_{u - r} - B_{s - r})\, \otimes d B_u \right)
 \end{align*}
 for $s \leq t \in \R$ where the stochastic integrals are understood in It\=o-sense. We furthermore define 
 \begin{align*}
  \mathbf{B}_{s,t}^{\text{Strat}} :=  \left( B_{s,t}, \mathbb{B}_{s,t}^{\text{It\=o}} + \frac{1}{2}(t-s) I_d, \mathbb{B}_{s,t}^{\text{It\=o}}(-r) \right)
 \end{align*}
 where $I_d$ denotes the identity matrix in $\R^d$.

\end{definition}

\begin{proposition}\label{prop:existence_bm_lift}
 Both processes $\mathbf{B}^{\text{It\=o}}$ and $\mathbf{B}^{\text{Strat}}$ have modifications, henceforth denoted with the same symbols, with sample paths being delayed $\gamma$-rough paths for every $\gamma < 1/2$ almost surely. Moreover, the $\gamma$-H\"older norms of both processes have finite $p$-th moment for every $p > 0$ on any compact interval.
\end{proposition}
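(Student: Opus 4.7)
The plan is to reduce everything to standard stochastic calculus (BDG, Kolmogorov's continuity criterion) and verify the algebraic identities by direct computation with It\=o integrals. The sample path regularity of $B$ itself ($\gamma$-H\"older with all moments for every $\gamma<1/2$) is the classical Kolmogorov result, so the real content is the existence and regularity of $\mathbb{B}^{\text{It\=o}}$ and, more importantly, of the delayed L\'evy area $\mathbb{B}^{\text{It\=o}}(-r)$. The existence of $\mathbb{B}^{\text{It\=o}}$ and Chen's relation $\mathbb{B}_{s,t}-\mathbb{B}_{s,u}-\mathbb{B}_{u,t}=B_{s,u}\otimes B_{u,t}$ are classical; the required H\"older regularity and moment bounds come from BDG combined with Wiener chaos equivalence of $L^p$-norms. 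I focus my discussion on the delayed part.

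First I would define $\mathbb{B}^{\text{It\=o}}(-r)_{s,t}:=\int_s^t(B_{u-r}-B_{s-r})\otimes dB_u$, which is a well-defined It\=o integral since the integrand is adapted to $(\mathcal{F}_{-\infty}^u)$ (here I use the two-sided filtration to cover negative times). The algebraic identity follows from the splitting
\begin{align*}
 \mathbb{B}(-r)_{s,t}-\mathbb{B}(-r)_{s,u}-\mathbb{B}(-r)_{u,t}
 =\int_u^t(B_{u-r}-B_{s-r})\otimes dB_v
 =B_{s-r,u-r}\otimes B_{u,t},
\end{align*}
exactly matching the algebraic identity in the definition of a delayed L\'evy area. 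Next, BDG gives
\begin{align*}
 \mathbb{E}\bigl|\mathbb{B}(-r)_{s,t}\bigr|^p\leq C_p\,\mathbb{E}\left(\int_s^t|B_{u-r}-B_{s-r}|^2\,du\right)^{p/2}\leq C_p|t-s|^p
\end{align*}
for all $p\geq 2$, using $\mathbb{E}|B_{u-r}-B_{s-r}|^2=u-s$ and Jensen. The key point is that BDG does not require any independence between integrand and integrator, so the bound is uniform in whether $t-s$ exceeds $r$ or not; alternatively, one observes that $\mathbb{B}(-r)_{s,t}$ lies in the second Wiener chaos, so all $L^p$-norms are equivalent to the $L^2$-norm that I just computed.

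With the moment estimate $\mathbb{E}|\mathbb{B}(-r)_{s,t}|^p\lesssim|t-s|^p$ for every $p<\infty$, I would apply a two-parameter Kolmogorov-type continuity theorem (for example the version for rough paths in \cite{FH14}, which exploits Chen's relation to reduce the two-parameter problem to a one-parameter one) to obtain a continuous modification whose $2\gamma$-H\"older seminorm on any compact interval has finite $p$-th moment for every $p<\infty$ and every $\gamma<1/2$. Chen's relation, which holds almost surely for every triple $(s,u,t)$ of the original process, extends to all $(s,u,t)$ for the modification by continuity. Combined with the analogous, classical results for $B$ and $\mathbb{B}^{\text{It\=o}}$, this shows $\mathbf{B}^{\text{It\=o}}$ has the claimed sample path property with the claimed moment bounds.

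Finally, for the Stratonovich version it suffices to verify that the It\=o--Stratonovich correction only affects the diagonal L\'evy area and not the delayed one. The correction for the undelayed integral gives exactly $\tfrac12(t-s)I_d$, and Chen's relation is preserved because the correction $\tfrac12(t-s)I_d$ is additive in $t-s$. For the delayed integral, the cross quadratic variation $[B_{\cdot-r},B]_t$ vanishes identically: for any partition of mesh at most $r$, the sums $\sum(B_{t_{i+1}-r}-B_{t_i-r})(B_{t_{i+1}}-B_{t_i})$ are sums of products of increments of $B$ over disjoint time intervals, each with mean zero and summable second moment, which converge to $0$ in $L^2$. Therefore $\mathbb{B}^{\text{Strat}}(-r)=\mathbb{B}^{\text{It\=o}}(-r)$, and the Stratonovich claim follows immediately from the It\=o one. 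The main technical point of the argument, as hinted above, is handling $t-s>r$ in the delayed L\'evy area where independence between $B_{\cdot-r}$ and $B$ on $[s,t]$ breaks down, and this is resolved cleanly by appealing to BDG (or equivalently to the fixed chaos structure), both of which are insensitive to independence.
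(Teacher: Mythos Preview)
Your proof is correct and follows essentially the same approach as the paper. The paper's own proof is a one-line invocation of the Kolmogorov criterion for rough paths \cite[Theorem~3.1]{FH14} applied to the It\=o and Stratonovich lifts of the enhanced process $Z=(B,B_{\cdot-r})$, using Remark~\ref{eqn:delayed_rp_as_usual_rp} to identify a delayed $\gamma$-rough path for $X$ with an ordinary $\gamma$-rough path for $Z$; your argument simply unpacks this by verifying the delayed Chen relation, the BDG moment bound, and the vanishing of the It\=o--Stratonovich correction for the delayed component directly. The only stylistic difference is that the paper's packaging via $Z$ lets it quote the standard result wholesale, whereas you verify the ingredients by hand---but the ingredients are identical.
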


\begin{proof}
 The assertion follows by considering the usual It\=o- and Stratonovich lifts of the enhanced process $(B,B_{\cdot - r})$ as in \cite[Section 3.2 and 3.]{FH14}, using the Kolmogorov criterion for rough paths stated in \cite[Theorem 3.1]{FH14} (cf. also Remark \ref{eqn:delayed_rp_as_usual_rp}).
\end{proof}

The next proposition justifies the names of the processes defined above.

\begin{proposition}
 Let $(m(\omega),\zeta^0(\omega),\zeta^1(\omega)) \in \mathcal{D}_{B(\omega)}^{\gamma}$ almost surely. Furthermore, assume that the process $(m_t,\zeta^0_t,\zeta^1_t)_{t \geq 0}$ is $(\mathcal{F}^t_0)_{t \geq 0}$-adapted. Then
 \begin{align*}
  \int_0^T m_s\, dB_s = \int_0^T m_s\, d\mathbf{B}^{\text{It\=o}}_s \quad \text{and} \quad \int_0^T m_s\,\circ dB_s = \int_0^T m_s\, d\mathbf{B}^{\text{Strat}}
 \end{align*}
 almost surely for every $T > 0$.

\end{proposition}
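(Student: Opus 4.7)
The strategy is to pass to the limit in the compensated Riemann sums that define the rough integral (Theorem~\ref{thm:delayed_rough_integral}) and compare them to the left-endpoint Riemann approximations of the classical It\=o/Stratonovich integrals, exploiting the adaptedness of $(m,\zeta^0,\zeta^1)$. Fix a sequence of partitions $\Pi_n$ of $[0,T]$ with $|\Pi_n|\to 0$. By Theorem~\ref{thm:delayed_rough_integral},
\begin{align*}
\int_0^T m_s\, d\mathbf{B}^{\text{It\=o}}_s \;=\; \lim_{n\to\infty}\sum_{t_j\in\Pi_n}\Bigl( m_{t_j}B_{t_j,t_{j+1}} + \zeta^0_{t_j}\, \mathbb{B}^{\text{It\=o}}_{t_j,t_{j+1}} + \zeta^1_{t_j}\, \mathbb{B}^{\text{It\=o}}_{t_j,t_{j+1}}(-r) \Bigr)
\end{align*}
almost surely. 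Since $m$ is continuous and adapted, the first sum converges in probability to $\int_0^T m_s\, dB_s$, so establishing the It\=o identity reduces to showing that the two correction sums vanish in probability.

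The key observation is that $\mathbb{B}^{\text{It\=o}}_{s,t}$ and $\mathbb{B}^{\text{It\=o}}_{s,t}(-r)$ are themselves It\=o integrals of adapted integrands ($B_u-B_s$ and $B_{u-r}-B_{s-r}$, respectively), hence each has vanishing conditional expectation given $\mathcal{F}^{t_j}_0$ and $L^2$ norm of order $t-s$ by the It\=o isometry. Combined with adaptedness of $\zeta^0$ and $\zeta^1$, the summands $\zeta^\ell_{t_j}\,\mathbb{B}^{\text{It\=o}}_{t_j,t_{j+1}}$ and their delayed analogues form orthogonal martingale differences, so expanding the square yields, after localizing to $\{\|\zeta^\ell\|_{\infty,[0,T]}\le N\}$ (which is needed because $\zeta^\ell$ is only almost surely bounded),
\begin{align*}
\mathbb{E}\left[\left|\sum_{t_j\in\Pi_n}\zeta^\ell_{t_j}\,\mathbb{B}^{\text{It\=o}}_{t_j,t_{j+1}}\right|^2 \mathbbm{1}_{\{\|\zeta^\ell\|_{\infty,[0,T]}\le N\}}\right] \;\le\; C N^2 \sum_{t_j\in\Pi_n}(t_{j+1}-t_j)^2 \;\le\; CN^2 T |\Pi_n| \longrightarrow 0.
\end{align*}
Sending first $n\to\infty$ and then $N\to\infty$ gives convergence in probability and proves the It\=o identity.

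For the Stratonovich case, the only change to the Riemann sum arises from $\mathbb{B}^{\text{Strat}}_{s,t}-\mathbb{B}^{\text{It\=o}}_{s,t}=\tfrac12(t-s)I_d$ (the delayed area being unchanged), so by continuity of $\zeta^0$,
\begin{align*}
\int_0^T m_s\, d\mathbf{B}^{\text{Strat}}_s - \int_0^T m_s\, d\mathbf{B}^{\text{It\=o}}_s \;=\; \lim_{n\to\infty}\sum_{t_j\in\Pi_n} \tfrac12 (t_{j+1}-t_j)\,\zeta^0_{t_j}\, I_d \;=\; \tfrac12\int_0^T \zeta^0_s\, I_d\, ds
\end{align*}
almost surely, i.e.\ the trace contraction of $\zeta^0$ with the identity. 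This matches the classical It\=o--Stratonovich correction $\tfrac12\langle m,B\rangle_T$: from the delayed controlled expansion of $m$ one reads off $d\langle m,B\rangle_s = \zeta^0_s\, d\langle B,B\rangle_s + \zeta^1_s\, d\langle B_{\cdot-r},B\rangle_s$, and $\langle B_{\cdot-r},B\rangle\equiv 0$ because infinitesimal increments of $B$ and $B_{\cdot-r}$ live on disjoint intervals almost surely---the same structural reason why $\mathbb{B}^{\text{Strat}}(-r)=\mathbb{B}^{\text{It\=o}}(-r)$ carries no extra correction. Added to the already established It\=o identity, this proves the Stratonovich statement. The one genuinely delicate point is verifying that $\zeta^1_{t_j}\,\mathbb{B}^{\text{It\=o}}_{t_j,t_{j+1}}(-r)$ is a martingale-difference sequence with respect to a filtration to which both $\zeta^1$ and the delayed It\=o integral are adapted; using the natural (two-sided) filtration of $B$ this is immediate, and the rest of the estimates are then routine.
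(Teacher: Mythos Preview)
Your approach is essentially the same as the paper's: both identify the rough integral with the limit of compensated Riemann sums, note that $\sum m_{t_j}B_{t_j,t_{j+1}}$ converges to the It\=o integral, and show the correction terms vanish in $L^2$ by exploiting that $\mathbb{B}^{\text{It\=o}}_{s,t}$ and $\mathbb{B}^{\text{It\=o}}_{s,t}(-r)$ are martingale increments, so that the summands are orthogonal. The paper spells out the martingale property of the delayed area by verifying $\E[\mathbb{B}_{s,u}(-r)\mid\mathcal{F}_s]=0$ directly from its Riemann-sum definition; your remark about passing to the natural two-sided filtration to handle the measurability of $B_{\cdot-r}$ is well placed.

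There is one small slip in your localization: the indicator $\mathbbm{1}_{\{\|\zeta^\ell\|_{\infty,[0,T]}\le N\}}$ is $\mathcal{F}_T$-measurable but not $\mathcal{F}_{t_k}$-measurable, so multiplying by it destroys the orthogonality of the off-diagonal terms when you expand the square---those cross terms need not vanish. The paper avoids this by a stopping argument (work on $[0,T\wedge\tau_N]$ with $\tau_N=\inf\{t:|\zeta^0_t|\vee|\zeta^1_t|>N\}$), which keeps the integrands bounded while preserving adaptedness; with that standard fix your argument goes through unchanged. Your treatment of the Stratonovich case is more explicit than the paper's, which simply refers to \cite[Corollary 5.2]{FH14}.
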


\begin{proof}
We will first consider the It\=o-case which is similar to \cite[Proposition 5.1]{FH14}. Set $\mathcal{F}_t := \mathcal{F}_0^t$. To simplify notation, assume $W = \R$. Let $(\tau_j)$ be a partition of $[0,T]$. We first prove that 
\begin{align}\label{eqn:off_diag_zero}
\E\bigg{[}\big{(}\zeta^0_{\tau_{j}}\mathbb{B}_{\tau_{j},\tau_{j+1}}+\zeta^1_{\tau_{j}}\mathbb{B}_{\tau_{j},\tau_{j+1}}(-r)\big{)}\big{(}\zeta^0_{\tau_{k}}\mathbb{B}_{\tau_{k},\tau_{k+1}}+\zeta^1_{\tau_{k}}\mathbb{B}_{\tau_{k},\tau_{k+1}}(-r)\big{)}\bigg{]} =0
\end{align}
  for $j < k$. To see this, note that
\begin{align*}
&\E\big{[}\big{(}\zeta^1_{\tau_{j}}\mathbb{B}_{\tau_{j},\tau_{j+1}}(-r)\big{)}\big{(}\zeta^1_{\tau_{k}}\mathbb{B}_{\tau_{k},\tau_{k+1}}(-r)\big{)}\big{]} = \E\bigg{[} \E\big{[}\big{(}\zeta^1_{\tau_{j}}\mathbb{B}_{\tau_{j},\tau_{j+1}}(-r)\big{)}\big{(}\zeta^1_{\tau_{k}}\mathbb{B}_{\tau_{k},\tau_{k+1}}(-r)\big{)}\big{|}\mathcal{F}_{\tau_{k}}\big{]}\bigg{]}=\\
&\E\bigg{[}\zeta^1_{\tau_{j}} \mathbb{B}_{\tau_{j},\tau_{j+1}}(-r)  \zeta^1_{\tau_{k}} \E \big{[}\mathbb{B}_{\tau_{k},\tau_{k+1}}(-r)\big{|}\mathcal{F}_{\tau_{k}}\big{]}\bigg{]}.
\end{align*}
We show that $\E\big{[}\mathbb{B}_{s,u}(-r)\big{|}\mathcal{F}_{s}\big{]} =0 $ for $s \leq u$. By definition,
\begin{align*}
\mathbb{B}_{s,u}(-r) = \lim_{|\Pi| \to 0}\sum_{t_{k}\in\Pi}B_{s-r,t_{k}-r}\otimes B_{t_{k},t_{k+1}}
\end{align*}
where $ \Pi $ is a partition for $[s,u] $ and the limit is understood in $L^2(\Omega)$-sense. Consequently,
\begin{align*}
  \E\big{[}\mathbb{B}_{s,u}(-r)\big{|}\mathcal{F}_{s}\big{]} = \lim_{|\Pi| \to 0}\sum_{t_{k}\in\Pi} \E\big{[}B_{s-r,t_{k}-r}\otimes B_{t_{k},t_{k+1}}\big{|}\mathcal{F}_{s}\big{]}
\end{align*}
again in $L^2$.
Note that
\begin{align*}
  \E\big{[}B_{s-r,t_{k}-r}&\otimes B_{t_{k},t_{k+1}}\big{|}\mathcal{F}_{s}\big{]}=\\
&\begin{cases}
B_{s-r,t_{k}-r}\otimes \E\big{[} B_{t_{k},t_{k+1}}\big{|}\mathcal{F}_{s}\big{]} = 0 , \ \ \text{if} \ t_{k}-r\leqslant s\\
B_{s-r,s}\otimes \E\big{[} B_{t_{k},t_{k+1}}\big{|}\mathcal{F}_{s}\big{]} + \E\big{[}B_{s,t_{k}-r}\otimes B_{t_{k},t_{k+1}}\big{|}\mathcal{F}_{s}\big{]} =0  , \ \ \text{if}\  s<  t_{k}-r.
\end{cases}
\end{align*}
Other cases are similar and \eqref{eqn:off_diag_zero} can be deduced. Using a stopping argument, we may assume that there is a deterministic $M > 0$ such that
\begin{align*}
 \sup_{t \in [0,T]} \|\zeta^0_t(\omega) \| \vee \|\zeta^1_t(\omega) \| \leq M
\end{align*}
almost surely. Then,
\begin{align*}
\E\bigg{[}\big{(}\sum_{j} \zeta^{0}_{t_{j}} \mathbb{B}_{\tau_{j},\tau_{j+1}} + \zeta^{1}_{\tau_{j}} \mathbb{B}_{\tau_{j},\tau_{j+1}}(-r)\big{)}^{2}\bigg{]}\leqslant M \sum_{j}(\tau_{j+1}-\tau_{j})^{2} \leq MT \max_j|\tau_{j+1} - \tau_j|
\end{align*}
which converges to $0$ when the mesh size of the partition gets small. The claim now follows using the definition of the It\=o integral as a limit of Riemann sums. 
  The proof for the Stratonovich integral is similar to  \cite[Corollary 5.2]{FH14}.
\end{proof}

The following corollary is immediate.

\begin{corollary}\label{cor:rough_ito_coincides}
 The solution to the It\=o equation
 \begin{align*}
  dY_t = \sigma(Y_t,Y_{t-r})\, dB_t
 \end{align*}
 is almost surely equal to the solution to the random rough delay equation
 \begin{align*}
  dY_t = \sigma(Y_t,Y_{t-r})\, d\mathbf{B}^{\text{It\=o}}_t
 \end{align*}
 if the initial condition is $\mathcal{F}_{-1}^0$-measurable and almost surely controlled by $B$. The same statement holds in the Stratonovich case.

\end{corollary}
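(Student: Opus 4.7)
The plan is to deduce the corollary from the preceding proposition by an inductive argument on the delay intervals $[kr,(k+1)r]$, using uniqueness of the Itô SDDE to identify the rough-path solution with the Itô one.

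First, I would work on $[0,r]$. Let $\tilde Y$ denote the (unique) solution of the rough delay equation starting from $\xi$, which exists by Theorem \ref{thm:delay_existence} (in the $C_b^3$ case) or by a truncation argument together with Theorem \ref{thm:delay_linear} (in the linear case). By the description of the Gubinelli derivative in those theorems, $\tilde Y$ is a controlled path with $\tilde Y' = \sigma(\tilde Y_\cdot,\tilde Y_{\cdot-r})$, and a short computation using the chain rule for controlled paths shows that the integrand $\sigma(\tilde Y_\cdot,\tilde Y_{\cdot-r})$ is a \emph{delayed} controlled path with Gubinelli derivatives $\zeta^0_s = D_1\sigma(\tilde Y_s,\tilde Y_{s-r})\sigma(\tilde Y_s,\tilde Y_{s-r})$ and $\zeta^1_s = D_2\sigma(\tilde Y_s,\tilde Y_{s-r})\xi'_{s-r}$.

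The next step is to verify adaptedness of $(\tilde Y,\zeta^0,\zeta^1)$ with respect to $(\mathcal{F}_{-1}^t)_{t\geq 0}$. Since $\mathbf{B}^{\text{Itô}}_{s,t}$ on $[-r,t]$ is constructed from Itô integrals of $B$, each of its components is $\mathcal{F}_{-1}^t$-measurable. The Picard iteration producing $\tilde Y$ involves only $\xi$ (which is $\mathcal{F}_{-1}^0$-measurable by assumption), the rough path $\mathbf{B}^{\text{Itô}}$, and continuous operations, so the fixed point is $\mathcal{F}_{-1}^\cdot$-adapted, and the same is true for $\zeta^0,\zeta^1$. Hence the preceding proposition applies and yields
\begin{align*}
 \int_0^t \sigma(\tilde Y_s,\tilde Y_{s-r})\,d\mathbf{B}^{\text{Itô}}_s
 \;=\; \int_0^t \sigma(\tilde Y_s,\tilde Y_{s-r})\,dB_s \qquad \text{a.s.}
\end{align*}
for every $t\in[0,r]$. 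Therefore $\tilde Y$ satisfies the Itô SDDE on $[0,r]$, and by pathwise uniqueness for Itô SDDEs with the given coefficient, $\tilde Y = Y$ a.s.\ on $[0,r]$.

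For the induction step, note that the restriction of $Y=\tilde Y$ to $[0,r]$ is controlled by $B$ (with Gubinelli derivative $\sigma(Y,Y_{\cdot-r})$) and is $\mathcal{F}_{-1}^\cdot$-adapted. Hence the same argument, applied on the interval $[r,2r]$ with initial segment the restriction to $[0,r]$, gives equality of the two solutions on $[r,2r]$; iterating yields equality on $[0,\infty)$. The Stratonovich statement follows identically, replacing $\mathbf{B}^{\text{Itô}}$ by $\mathbf{B}^{\text{Strat}}$ and invoking the Stratonovich half of the proposition. The only genuinely delicate point, and the main obstacle, is the careful justification that the \emph{delayed} Gubinelli derivative $\zeta^1$ is adapted and satisfies the hypotheses of the preceding proposition, since it involves the initial-segment Gubinelli derivative $\xi'_{\cdot-r}$; adaptedness of $\xi'$ is inherited from $\mathcal{F}_{-1}^0$-measurability of $\xi$ together with the controlled-path decomposition, and on subsequent intervals from the inductive hypothesis.
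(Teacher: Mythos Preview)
Your argument is correct and is precisely the unwinding of what the paper means by ``immediate'': the paper gives no proof beyond stating that the corollary follows directly from the preceding proposition, and your induction over the intervals $[kr,(k+1)r]$ together with pathwise uniqueness of the It\=o SDDE is exactly how one makes this explicit.

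One small point worth tightening: the proposition as stated requires $(\mathcal{F}_0^t)_{t\ge 0}$-adaptedness, whereas the initial segment $\xi$ (and hence $\zeta^1_s=D_2\sigma(\tilde Y_s,\tilde Y_{s-r})\xi'_{s-r}$) is only $\mathcal{F}_{-r}^0$-measurable; in general $\mathcal{F}_{-r}^0\not\subset\mathcal{F}_0^t$. You implicitly work with $(\mathcal{F}_{-r}^t)_{t\ge 0}$ instead, which is fine since the proof of the proposition goes through verbatim for any filtration making $B$ a martingale with the correct independence structure, but it would be worth saying so explicitly.
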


Next, we prove an approximation result. 
\begin{definition}
Let $ \rho :\mathbb{R}\rightarrow [0,2] $ be a smooth function such that $\operatorname{supp}(\rho)\subset [0,1] $ and which integrates to $1$. We set
\begin{align*}
B^{\varepsilon}_{t}:=\int_{\mathbb{R}}B_{-\varepsilon z,t-\varepsilon z}\rho(z)d z, \quad \varepsilon \in (0,1].
\end{align*}
\end{definition}

It is not hard to see that
\begin{align}\label{AZAZ}
\E|B^{\varepsilon}_{s,t}|^{2}\leqslant M(t-s) \ \ \text{and} \ \ \ \lim_{\varepsilon \rightarrow 0}\E|B^{\varepsilon}_{s,t}-B_{s,t}|^{2} =0
\end{align}
where $ M $ is independent of $\varepsilon$.

\begin{lemma}\label{representation}
We have the following pathwise identity:
\begin{align}\label{eqn:pathwise}
\int _{s}^{t}B^{\varepsilon}_{s-r,u-r}\otimes d B_{u}^{\varepsilon}=\int _{\mathbb{R}}\rho(z)\int _{s-\varepsilon z}^{t-\varepsilon z}B^{\varepsilon}_{s-r,u+\varepsilon z-r}\otimes dB_{u} dz.
\end{align}
\end{lemma}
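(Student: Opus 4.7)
The plan is to exploit the fact that, for any fixed $\omega$, $B^\varepsilon$ is a smooth function of time (a convolution of the continuous path $B$ with the smooth compactly supported mollifier $\rho$). Consequently, every integral that appears in the claimed identity can, after integration by parts, be rewritten as an ordinary Lebesgue integral in which Fubini applies without any subtlety.

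First I would integrate by parts on the left-hand side. Since $u \mapsto B^\varepsilon_{s-r,u-r}$ and $u \mapsto B^\varepsilon_u$ are both $C^\infty$ pathwise and the integrand vanishes at $u=s$,
\begin{align*}
\int_s^t B^\varepsilon_{s-r,u-r}\otimes dB^\varepsilon_u
 \;=\; B^\varepsilon_{s-r,t-r}\otimes B^\varepsilon_t \;-\; \int_s^t \dot B^\varepsilon_{u-r}\otimes B^\varepsilon_u\,du.
\end{align*}
For the right-hand side, for each fixed $z$ the inner integral has a smooth (though random) integrand against the $\alpha$-Hölder path $B$ ($\alpha<1/2$), so Young's integration-by-parts formula gives
\begin{align*}
\int_{s-\varepsilon z}^{t-\varepsilon z} B^\varepsilon_{s-r,u+\varepsilon z-r}\otimes dB_u
 \;=\; B^\varepsilon_{s-r,t-r}\otimes B_{t-\varepsilon z}
  \;-\; \int_{s-\varepsilon z}^{t-\varepsilon z} \dot B^\varepsilon_{u+\varepsilon z-r}\otimes B_u\,du.
\end{align*}
The substitution $v=u+\varepsilon z$ brings the interval of integration back to $[s,t]$, and Fubini's theorem (applicable since $\rho$ has compact support and the integrands are bounded on compacts) permits me to interchange $\int_\R\rho(z)\,dz$ with $\int_s^t\cdots dv$, yielding
\begin{align*}
\text{RHS}\;=\; B^\varepsilon_{s-r,t-r}\otimes\!\int_\R \rho(z)B_{t-\varepsilon z}\,dz
 \;-\; \int_s^t \dot B^\varepsilon_{v-r}\otimes \Bigl(\int_\R \rho(z)B_{v-\varepsilon z}\,dz\Bigr)\,dv.
\end{align*}

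From the definition of $B^\varepsilon$ one has $\int_\R \rho(z)B_{v-\varepsilon z}\,dz = B^\varepsilon_v + c$, where $c:=\int_\R \rho(z)B_{-\varepsilon z}\,dz$ is a (random) constant independent of $v$. Inserting this into the previous display and distributing the tensor products, the contributions involving $c$ cancel, since
\begin{align*}
\int_s^t \dot B^\varepsilon_{v-r}\,dv \;=\; B^\varepsilon_{t-r}-B^\varepsilon_{s-r} \;=\; B^\varepsilon_{s-r,t-r}.
\end{align*}
What remains is precisely the expression obtained for the left-hand side, which completes the identity.

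The main point requiring care is the Young integration-by-parts step on the right-hand side: the integrand is smooth in $u$ but \emph{random}, and $B$ has only Hölder regularity. This is handled pathwise, for almost every $\omega$ the integrand has Hölder exponent arbitrarily close to $1$ while $B$ has exponent $\alpha<1/2$, so the sum of exponents exceeds $1$ and the classical Young IBP formula applies. Once this is in place, the rest of the argument is bookkeeping around the constant $c$ coming from the non-centered tail of the mollification.
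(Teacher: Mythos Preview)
Your proof is correct and follows essentially the same route as the paper: integrate by parts to convert the Young integrals against $B$ and $B^\varepsilon$ into ordinary Lebesgue integrals, then use Fubini and the definition of $B^\varepsilon$ to identify the two sides. The only cosmetic difference is that the paper applies integration by parts on the right-hand side, integrates in $z$, and then integrates by parts once more to recover the left-hand side, whereas you integrate by parts on both sides separately and compare; your explicit treatment of the constant $c=\int_\R\rho(z)B_{-\varepsilon z}\,dz$ is in fact a point the paper leaves implicit.
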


\begin{proof}
Note that both integrals in \eqref{eqn:pathwise} are indeed pathwise defined since $B^{\varepsilon}$ is smooth and $B$ is H\"older continuous. Using integration by parts, for $i,j \in \{1,\ldots, d\}$,
\begin{align*}
\int _{s-\varepsilon z}^{t-\varepsilon z}(B^{\varepsilon})^{i}_{s-r,u+\varepsilon z-r}dB^{j}_{u}=(B^{\varepsilon})^{i}_{s-r,t-r}B^{j}_{t-\varepsilon z}-\int_{s-r}^{t-r}B^{j}_{u+r-\varepsilon z}d(B^{\varepsilon})^{i}_{u}.
\end{align*}
Consequently,
\begin{align*}
\int _{\mathbb{R}}\rho(z)\int _{s-\varepsilon z}^{t-\varepsilon z}(B^{\varepsilon})^{i}_{s-r,u+\varepsilon z-r}dB^{j}_{u} dz&=\int_{\mathbb{R}}(B^{\varepsilon})^{i}_{s-r,t-r}\rho(z)B^{j}_{t-\varepsilon z}dz-\int_{\mathbb{R}}\int_{s-r}^{t-r}\rho(z)B^{j}_{u+r-\varepsilon z}d(B^{\varepsilon})^{i}_{u} dz\\
&=(B^{\varepsilon})^{i}_{s-r,t-r}(B^{\varepsilon})^{j}_{t}-\int_{s-r}^{t-r}(B^{\varepsilon})^{j}_{u+r}d(B^{\varepsilon})^{i}_{u}.
\end{align*}
Using integration by parts again, we have
\begin{align*}
(B^{\varepsilon})^{i}_{s-r,t-r}(B^{\varepsilon})^{j}_{t}-\int_{s-r}^{t-r}(B^{\varepsilon})^{j}_{u+r}d(B^{\varepsilon})^{i}_{u}
 =\int _{s}^{t}(B^{\varepsilon})^{i}_{s-r,u-r}d (B^{\varepsilon})_{u}^{j}
\end{align*}
which implies the claim.
\end{proof}

\begin{lemma}
For $ \mathbb{B}_{s,t}(-r) = \int_{s}^{t}B_{s-r,u-r}\otimes dB_{u}$ and $ \mathbb{B}^{\varepsilon}_{s,t}(-r)=\int_{s}^{t}B^{\varepsilon}_{s-r,u-r}\otimes dB_{u}^{\varepsilon} $,
\begin{align}\label{ZAZZ}
 \E\big{|}\mathbb{B}_{s,t}(-r)\big{|}^{2} \leq M (t-s)^{2} \ \text{ and}  \ \ \ \ \  \E\big{|}\mathbb{B}^{\varepsilon}_{s,t}(-r)\big{|}^{2}  \leq M (t-s)^{2}
\end{align}
for a constant $M > 0$ independent of $s,t$ and $\varepsilon$.
\end{lemma}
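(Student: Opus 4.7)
The strategy is to establish both bounds via the It\^o isometry. The first bound follows directly; the second uses the pathwise identity of Lemma \ref{representation} to convert the smooth Stieltjes integral into an It\^o integral against $B$, after which Jensen's inequality and It\^o isometry give the estimate.

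For the first bound, I would work componentwise and write
\begin{align*}
\mathbb{B}^{ij}_{s,t}(-r)=\int_{s}^{t}B^{i}_{s-r,u-r}\,dB^{j}_{u}.
\end{align*}
For $u\in[s,t]$ the integrand is $\mathcal{F}^{u-r}_{-\infty}\subset\mathcal{F}^{u}_{-\infty}$-measurable, hence adapted, so the It\^o isometry (together with independence of distinct Brownian components in the off-diagonal case) yields
\begin{align*}
\E|\mathbb{B}^{ij}_{s,t}(-r)|^{2}=\int_{s}^{t}\E|B^{i}_{s-r,u-r}|^{2}\,du=\int_{s}^{t}(u-s)\,du=\tfrac{1}{2}(t-s)^{2},
\end{align*}
and summing over $i,j\in\{1,\ldots,d\}$ produces a bound of the desired form.

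For the second bound I would invoke Lemma \ref{representation}. Since $\rho$ is a probability density on $[0,1]$, Jensen's inequality gives
\begin{align*}
|\mathbb{B}^{\varepsilon}_{s,t}(-r)|^{2}\leq\int_{\R}\rho(z)\left|\int_{s-\varepsilon z}^{t-\varepsilon z}B^{\varepsilon}_{s-r,u+\varepsilon z-r}\otimes dB_{u}\right|^{2}dz.
\end{align*}
Each inner integral is a genuine It\^o integral provided the integrand is $\mathcal{F}_{u}$-measurable; noting that $B^{\varepsilon}_{v}$ only depends on $B$ at times $\leq v$, this holds whenever $\varepsilon z\leq r$, i.e.\ as soon as $\varepsilon\leq r$. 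Under that condition, the It\^o isometry combined with the uniform $L^{2}$-bound in \eqref{AZAZ} and the substitution $v=u+\varepsilon z-s$ gives
\begin{align*}
\E\left|\int_{s-\varepsilon z}^{t-\varepsilon z}B^{\varepsilon}_{s-r,u+\varepsilon z-r}\otimes dB_{u}\right|^{2}\leq C\int_{s-\varepsilon z}^{t-\varepsilon z}\E|B^{\varepsilon}_{s-r,u+\varepsilon z-r}|^{2}\,du\leq CM\int_{0}^{t-s}v\,dv=\tfrac{CM}{2}(t-s)^{2}.
\end{align*}
Integrating this uniform-in-$z$ bound against $\rho(z)\,dz$ then yields the claimed estimate.

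The main subtlety is the adaptedness restriction $\varepsilon\leq r$: the forward-shifted mollified path $B^{\varepsilon}_{u+\varepsilon z-r}$ must remain in the past of time $u$. If $r\geq 1$ this is automatic since $\varepsilon\in(0,1]$; if $r<1$ and $\varepsilon\in(r,1]$ one can either absorb this compact regime into the constant $M$ by a direct Cauchy--Schwarz argument using smoothness of $B^{\varepsilon}$, or restrict to $\varepsilon\in(0,r\wedge 1]$, which is harmless for the subsequent Wong--Zakai approximation.
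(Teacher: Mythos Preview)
Your proof is correct and follows essentially the same route as the paper: the paper's proof reads in its entirety ``An easy consequence of the Cauchy--Schwarz inequality and Lemma~\ref{representation}'', and your argument simply spells this out, using Jensen (a form of Cauchy--Schwarz against the probability weight $\rho$) on the $z$-integral and then the It\^o isometry on the inner stochastic integral. You are in fact more careful than the paper in flagging the adaptedness constraint $\varepsilon z\le r$ needed to interpret the inner integral as a genuine It\^o integral; this point, together with the related issue that $B^{\varepsilon}_{v}$ depends on $B$ on $[-\varepsilon,0]\cup[v-\varepsilon,v]$ (so one should also arrange $u\ge 0$ along the integration range), is glossed over in the paper but is easily handled by stationarity of the increments of $B^{\varepsilon}$, which lets one shift $s$ so that $s-\varepsilon z\ge 0$.
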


\begin{proof}
An easy consequence of the Cauchy-Schwarz inequality and Lemma \ref{representation}.
\end{proof}

\begin{lemma}
We have
\begin{align}\label{eqn:conv_delayed_area}
\lim_{\varepsilon \rightarrow 0} \E\big{|}\mathbb{B}_{s,t}(-r)-\mathbb{B}_{s,t}^{\varepsilon}(-r)\big{|}^{2} = 0.
\end{align}
\end{lemma}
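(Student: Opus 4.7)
The plan is to recast both quantities as It\^o integrals against the same Brownian motion $B$, apply the It\^o isometry to the difference, and conclude by dominated convergence. The pathwise identity in Lemma \ref{representation} is the ingredient that makes this reduction possible.

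First, using that $\rho$ integrates to one and that $\operatorname{supp}(\rho) \subset [0,1]$ (so $[s-\varepsilon z, t-\varepsilon z] \subset [s-\varepsilon, t]$ for all $z$ in the support), I would rewrite
\begin{align*}
\mathbb{B}_{s,t}(-r) &= \int_{\mathbb{R}} \rho(z) \int_{s-\varepsilon}^{t} \mathbf{1}_{[s,t]}(u)\, B_{s-r, u-r} \otimes dB_u\, dz, \\
\mathbb{B}^{\varepsilon}_{s,t}(-r) &= \int_{\mathbb{R}} \rho(z) \int_{s-\varepsilon}^{t} \mathbf{1}_{[s-\varepsilon z, t-\varepsilon z]}(u)\, B^{\varepsilon}_{s-r, u+\varepsilon z - r} \otimes dB_u\, dz,
\end{align*}
the second identity being Lemma \ref{representation} after extending the inner integration domain. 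For $\varepsilon < r/2$ and $z \in [0,1]$ we have $u + \varepsilon z - r + \varepsilon \leq u$, so $B^{\varepsilon}_{s-r, u+\varepsilon z - r}$ is $\mathcal{F}_0^u$-measurable and the inner integrand is adapted. A stochastic Fubini theorem then yields
\begin{align*}
\mathbb{B}_{s,t}(-r) - \mathbb{B}^{\varepsilon}_{s,t}(-r) = \int_{s-\varepsilon}^{t} \Psi^{\varepsilon}(u) \otimes dB_u,
\end{align*}
where
\begin{align*}
\Psi^{\varepsilon}(u) := \int_{0}^{1} \rho(z) \left[ \mathbf{1}_{[s,t]}(u)\, B_{s-r, u-r} - \mathbf{1}_{[s-\varepsilon z, t-\varepsilon z]}(u)\, B^{\varepsilon}_{s-r, u+\varepsilon z - r} \right] dz.
\end{align*}

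Next, the It\^o isometry (applied componentwise in the tensor indices) gives
\begin{align*}
\E \bigl| \mathbb{B}_{s,t}(-r) - \mathbb{B}^{\varepsilon}_{s,t}(-r) \bigr|^{2} \leq C \int_{s-\varepsilon}^{t} \E |\Psi^{\varepsilon}(u)|^{2}\, du
\end{align*}
for a constant $C$ depending only on the dimension $d$. The goal is then to pass to the limit $\varepsilon \to 0$ by dominated convergence. For fixed $u \in (s,t)$ and $\varepsilon$ small enough, both indicators equal $1$ for every $z \in [0,1]$, so $\Psi^{\varepsilon}(u) = \int_0^1 \rho(z)\bigl[B_{s-r,u-r} - B^{\varepsilon}_{s-r, u+\varepsilon z - r}\bigr] dz$. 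Writing
\begin{align*}
B_{s-r, u-r} - B^{\varepsilon}_{s-r, u+\varepsilon z - r} = \bigl( B_{s-r, u+\varepsilon z -r} - B^{\varepsilon}_{s-r, u+\varepsilon z - r}\bigr) + B_{u+\varepsilon z - r,\, u-r}
\end{align*}
and invoking \eqref{AZAZ} together with the $L^2$-continuity of $B$ shows that each summand vanishes in $L^2$ as $\varepsilon \to 0$, hence $\E|\Psi^{\varepsilon}(u)|^2 \to 0$ for almost every $u$. The uniform bound $\E|\Psi^{\varepsilon}(u)|^2 \leq C$, with $C$ depending only on $t-s$ and $r$, follows from \eqref{AZAZ} applied to both $B$ and $B^{\varepsilon}$. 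Dominated convergence then yields \eqref{eqn:conv_delayed_area}.

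The step that requires the most care will be the stochastic Fubini exchange, which hinges on the adaptedness check available for $\varepsilon \leq r/2$; the remainder of the argument is a routine $L^{2}$ estimate using the already established second-moment bounds.
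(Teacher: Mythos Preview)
Your argument is correct and is exactly the route the paper has in mind: use Lemma \ref{representation} to express $\mathbb{B}^{\varepsilon}_{s,t}(-r)$ as an It\^o integral against $B$, subtract, apply the isometry, and conclude via dominated convergence using \eqref{AZAZ}. The paper compresses all of this into a single sentence, so what you have written is precisely the filled-in version of its proof. One minor remark: your adaptedness check is slightly over-cautious, since $B^{\varepsilon}_v$ depends on $B$ only at times $\leq v$, so $\varepsilon \leq r$ already suffices; but this does not affect the argument.
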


\begin{proof}
A direct consequence of Lemma \ref{representation} and \eqref{AZAZ}. 
\end{proof}

\begin{theorem}\label{thm:approx_strat_delay}
 Setting 
 \begin{align*}
  \mathbf{B}_{s,t}^{\varepsilon} := \left( B^{\varepsilon}_{s,t}, \mathbb{B}_{s,t}^{\varepsilon}, \mathbb{B}_{s,t}^{\varepsilon}(-r) \right) := \left(B^{\varepsilon}_{s,t}, \int_s^t B_{s,u}^{\varepsilon}\, \otimes dB^{\varepsilon}_u, \int_s^t B^{\varepsilon}_{s-r,u - r}\, \otimes d B^{\varepsilon}_u \right),
 \end{align*}
 we have
 \begin{align*}
  \lim_{\varepsilon \rightarrow\infty}\sup_{q\geqslant 1}\frac{\left\|d_{\gamma;I}\big{(} \mathbf{B}^{\varepsilon} , \mathbf{B}^{\text{Strat}} \big{)}\right\|_{L^{q}}}{\sqrt{q}} = 0
 \end{align*}
  for every $\gamma < 1/2$ and every compact interval $I \subset \R$ where $d_{\gamma;I}$ denotes the homogeneous metric
   \begin{align*}
  d_{\gamma;I}(\mathbf{X},\mathbf{Y}) = \sup_{s,t \in I; s \neq t} \frac{|X_{s,t} - Y_{s,t}|}{|t-s|^{\gamma}} + \sqrt{ \sup_{s,t \in I; s \neq t} \frac{|\mathbb{X}_{s,t} - \mathbb{Y}_{s,t}|}{|t-s|^{2 \gamma}}} + \sqrt{ \sup_{s,t \in I; s \neq t} \frac{|\mathbb{X}_{s,t}(-r) - \mathbb{Y}_{s,t}(-r)|}{|t-s|^{2 \gamma}}}.
    \end{align*}
\end{theorem}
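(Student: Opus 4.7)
The plan is to combine three ingredients: pointwise $L^2$ bounds for each of the three components of $\mathbf{B}^{\varepsilon} - \mathbf{B}^{\text{Strat}}$, Nelson's hypercontractivity to upgrade $L^2$ to $L^q$ for variables in a fixed Wiener chaos, and a Kolmogorov-type criterion for rough paths that converts pointwise moment bounds into H\"older-type bounds. The square-root structure of the homogeneous metric $d_{\gamma;I}$ should exactly compensate the extra power of $q$ coming from the second-order components, leaving an overall $\sqrt{q}$ growth.

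First I would establish pointwise $L^2$ estimates: for every $s < t$ in $I$,
\begin{align*}
 \E|B^{\varepsilon}_{s,t} - B_{s,t}|^2 &\leq c_1(\varepsilon)(t-s),\\
 \E|\mathbb{B}^{\varepsilon}_{s,t} - \mathbb{B}^{\text{Strat}}_{s,t}|^2 &\leq c_2(\varepsilon)(t-s)^2,\\
 \E|\mathbb{B}^{\varepsilon}_{s,t}(-r) - \mathbb{B}_{s,t}(-r)|^2 &\leq c_3(\varepsilon)(t-s)^2,
\end{align*}
with $c_i(\varepsilon) \to 0$ as $\varepsilon \to 0$. The first bound is \eqref{AZAZ}; the third follows from \eqref{eqn:conv_delayed_area} together with the pathwise representation in Lemma \ref{representation} and a scaling/Cauchy--Schwarz argument as in \eqref{ZAZZ}, tracking the correct power of $(t-s)$; the second is the classical statement that smooth mollifications of Brownian motion converge in $L^2$ to the Stratonovich iterated integral.

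Next, each of the three increments lives in a finite inhomogeneous Wiener chaos of $B$: the path component in the first chaos, the area components in the sum of zeroth, first and second chaos. By Nelson's hypercontractivity, the $L^q$-norm of any element of the $k$-th chaos is controlled by $(q-1)^{k/2}$ times its $L^2$-norm, so Step~1 yields pointwise moment bounds of order $c_1(\varepsilon)\sqrt{q}\,(t-s)^{1/2}$ for the first-level increment and $c_i(\varepsilon)\, q\,(t-s)$ for the two area increments ($i=2,3$). Then I would apply the Kolmogorov-type criterion for delayed rough paths, which follows from \cite[Theorem 3.1]{FH14} after viewing $\mathbf{B}$ as a standard rough path over the enhanced process $(B, B_{\cdot - r})$ in the sense of Remark \ref{eqn:delayed_rp_as_usual_rp}. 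The key point is that the homogeneous metric $d_{\gamma;I}$ takes the \emph{square root} of the area seminorms, which converts the factor $q$ from second-chaos hypercontractivity into $\sqrt{q}$, matching the first-chaos scaling of the path component. Assembling the three contributions gives
\begin{align*}
 \bigl\|d_{\gamma;I}(\mathbf{B}^{\varepsilon}, \mathbf{B}^{\text{Strat}})\bigr\|_{L^q} \leq C(\gamma, I)\, \sqrt{q}\,\max_{i}\sqrt{c_i(\varepsilon)}
\end{align*}
for every $\gamma < 1/2$ and $q$ sufficiently large; dividing by $\sqrt{q}$ and taking the supremum over $q$ yields the claim (small values of $q$ are handled by monotonicity of $L^q$-norms).

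The main technical obstacle is tracking the $q$-dependence cleanly through the Kolmogorov/Garsia--Rodemich--Rumsey step: pointwise moment bounds of some intermediate H\"older exponent $\alpha$ only give $\gamma$-H\"older regularity for $\gamma < \alpha - 1/q$, so the intermediate exponent has to be chosen slightly above $\gamma$ and the resulting constants must be balanced against the hypercontractivity factors so that $c_i(\varepsilon) \to 0$ survives uniformly in $q$. The fact that $c_i(\varepsilon)$ enters under a square root (coming from the homogeneous metric) is harmless since it still tends to zero, but the bookkeeping of constants is the delicate part.
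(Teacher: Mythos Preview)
Your proposal is correct and follows essentially the same route as the paper: pointwise $L^2$ bounds from \eqref{AZAZ}, \eqref{ZAZZ}, \eqref{eqn:conv_delayed_area} (and their analogues for the usual L\'evy area), hypercontractivity in a fixed Wiener chaos to pass from $L^2$ to $L^q$, and then a Kolmogorov-type argument to obtain H\"older bounds uniformly in $q$. The paper's proof is terser---it simply points to \cite[Chapter 15]{FV10}, in particular Proposition~15.24 there, which packages exactly the $q$-uniform Kolmogorov step you identify as the main technical obstacle---but the content is the same.
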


\begin{proof}
 The strategy of the proof is standard, cf. \cite[Chapter 15]{FV10}, we only sketch the main arguments. First, the uniform bounds \eqref{ZAZZ} and the convergence \eqref{eqn:conv_delayed_area} hold for $\mathbb{B}^{\varepsilon}$ and $\mathbb{B}^{\text{Strat}}$, too, cf. \cite[Theorem 15.33 and Theorem 15.37]{FV10}. Since all objects are elements in the second Wiener chaos, the results even hold in the $L^q$-norm for any $q \geq 1$. We can now argue as in the proof of \cite[Proposition 15.24]{FV10} to conclude.
\end{proof}

As an application, we can prove a Wong-Zakai theorem for stochastic delay equations. 

\begin{theorem}
 Let $\sigma\in C^{3}_{b}(W^{2},L(U,W))$ and $B^{\varepsilon}$ be defined as above. Assume that there is a set of full measure $\tilde{\Omega} \subset \Omega$ such that
 \begin{align}\label{eqn:ass_controlled}
  (\xi(\omega),\xi'(\omega)) \in \mathscr{D}_{B^{\varepsilon}(\omega)}([-r,0],W) \cap \mathscr{D}_{B(\omega)}([-r,0],W)
 \end{align}
 holds for every $\varepsilon \in (0,1]$ and every $\omega \in \tilde{\Omega}$. Then the solutions to random delay ordinary differential equations
 \begin{align*}
  dY^{\varepsilon}_t &= \sigma(Y^{\varepsilon}_t,Y^{\varepsilon}_{t-r})\, dB^{\varepsilon}_t; \quad t \geq 0 \\
  Y_t^{\varepsilon} &= \xi_t; \qquad t \in [-r,0]
 \end{align*}
 converge in probability as $\varepsilon \to 0$ in $\gamma$-H\"older norm on compact sets for every $\gamma < 1/2$ to the solution $Y$ of
 \begin{align*}
  dY_t &= \sigma(Y_t,Y_{t-r})\, d\mathbf{B}^{\text{Strat}}_t; \quad t \geq 0 \\
  Y_t &= \xi_t; \qquad t \in [-r,0].
 \end{align*}
 Moreover, if $(\xi_t,\xi'_t)$ is $\mathcal{F}_{-1}^0$-measurable for every $t \in [-r,0]$, the solution $Y$ coincides almost surely with the solution of the Stratonovich delay equation
 \begin{align*}
  dY_t &= \sigma(Y_t,Y_{t-r})\,\circ dB_t; \quad t \geq 0 \\
  Y_t &= \xi_t; \qquad t \in [-r,0].
 \end{align*}

\end{theorem}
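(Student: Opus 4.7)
The plan is to reinterpret $Y^\varepsilon$, which is defined as the classical Riemann--Stieltjes solution, as the unique solution to the rough delay equation driven by the smooth lift $\mathbf{B}^\varepsilon$, and then to apply the stability estimate of Theorem \ref{thm:delay_stability} together with the rough-path convergence $\mathbf{B}^\varepsilon \to \mathbf{B}^{\text{Strat}}$ from Theorem \ref{thm:approx_strat_delay}. The identification with the Stratonovich SDDE will then follow from Corollary \ref{cor:rough_ito_coincides}. For the first step, note that because $B^\varepsilon$ has smooth sample paths, the rough delay integral \eqref{dfn} against $\mathbf{B}^\varepsilon$ coincides pathwise with the ordinary Riemann--Stieltjes integral against $B^\varepsilon$ for any delayed controlled path: the correction terms involving $\mathbb{B}^\varepsilon$ and $\mathbb{B}^\varepsilon(-r)$ contribute Riemann sums that are of higher order. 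The hypothesis \eqref{eqn:ass_controlled} then lets us invoke Theorem \ref{thm:delay_existence} on $[0,r]$ to identify $Y^\varepsilon$ with the unique rough solution, equipped with Gubinelli derivative $(Y^\varepsilon)'_t = \sigma(Y^\varepsilon_t, Y^\varepsilon_{t-r})$.

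For the core stability step, I fix $\beta_0 \in (1/3,\gamma)$ and a slightly larger $\beta_1 \in (\beta_0,\gamma)$, at which all quantities are assumed to be controlled paths. I apply Theorem \ref{thm:delay_stability} at regularity $\beta_0$ to the pair of drivers $(\mathbf{B}^\varepsilon, \mathbf{B}^{\text{Strat}})$ with the common pair $(\xi,\xi')$. The two initial-condition remainders differ by $\xi^{\#,\varepsilon} - \xi^{\#} = -\xi'(B^\varepsilon - B)$; this tends to $0$ uniformly, and on a truncation event $\Omega_K = \{M \leq K\}$ controlled by the parameter $M$ of Theorem \ref{thm:delay_stability}, it enjoys a uniform $2\beta_1$-Hölder bound, so a standard interpolation between the uniform $2\beta_1$-Hölder bound and uniform convergence yields $\|\xi^{\#,\varepsilon} - \xi^{\#}\|_{2\beta_0} \to 0$. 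Combined with $\varrho_{\gamma;[0,r]}(\mathbf{B}^\varepsilon,\mathbf{B}^{\text{Strat}}) \to 0$ in probability — a consequence of Theorem \ref{thm:approx_strat_delay}, once one converts the homogeneous metric $d_\gamma$ to $\varrho_\gamma$ via the boundedness on $\Omega_K$ — the stability bound \eqref{eqn:loc_lipschitz_delay} yields, on $\Omega_K$,
\begin{align*}
d_{2\beta_0;[0,r]}\bigl((Y^\varepsilon,(Y^\varepsilon)'),(Y,Y')\bigr) \longrightarrow 0 \quad\text{in probability.}
\end{align*}
Letting $K \to \infty$ and using that $\mathbb{P}(\Omega_K) \to 1$ by the $\varepsilon$-uniform moment bounds from Proposition \ref{prop:existence_bm_lift} and Theorem \ref{thm:approx_strat_delay} removes the truncation.

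I would next iterate this argument via Theorem \ref{thm:flow_prop}: the solution segment on $[kr,(k+1)r]$ is produced from the segment on $[(k-1)r,kr]$ with Gubinelli derivative $\sigma(y_\cdot, y_{\cdot-r})$, which is a Lipschitz image of the solution, so the controlled-path convergence on $[0,r]$ propagates to every compact subinterval of $[0,\infty)$. The resulting convergence of $Y^\varepsilon$ to $Y$ in $\beta_0$-Hölder norm, combined with the $\varepsilon$-uniform Hölder bound on $Y^\varepsilon$ inherited from the controlled-path representation, upgrades by interpolation to convergence in $\alpha$-Hölder norm for every $\alpha < \gamma$, and since $\gamma < 1/2$ was arbitrary, for every $\alpha < 1/2$. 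For the final assertion, under the additional $\mathcal{F}_{-1}^0$-measurability hypothesis the solution $Y$ is adapted and a.s.~controlled by $B$ with Gubinelli derivative $\sigma(Y_\cdot,Y_{\cdot-r})$, so the Stratonovich version of Corollary \ref{cor:rough_ito_coincides} identifies the rough integral against $\mathbf{B}^{\text{Strat}}$ with the classical Stratonovich integral, and $Y$ coincides almost surely with the Stratonovich SDDE solution.

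The principal obstacle is the localization in the stability step: the constant in \eqref{eqn:loc_lipschitz_delay} depends on the random quantity $M$ in a potentially exponential way, so convergence cannot be obtained in $L^q$ but only in probability via truncation; and the interpolation between uniform $2\beta_1$-Hölder bounds and uniform convergence — used both for the initial-condition remainders and for the final upgrade of the Hölder exponent — is the main technical subtlety that must be carried out carefully in the full proof.
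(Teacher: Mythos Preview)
Your proposal is correct and follows exactly the approach of the paper, which gives only a one-line proof: ``A combination of the stability result in Theorem \ref{thm:delay_stability}, Theorem \ref{thm:approx_strat_delay} and Corollary \ref{cor:rough_ito_coincides}.'' The truncation argument, the interpolation to handle the initial-condition remainder and to upgrade the H\"older exponent, and the iteration over successive intervals of length $r$ are all details that the paper leaves implicit but which you have filled in reasonably.
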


\begin{proof}
 A combination of the stability result in Theorem \ref{thm:delay_stability}, Theorem \ref{thm:approx_strat_delay} and Corollary \ref{cor:rough_ito_coincides}.
\end{proof}

\begin{remark}
 Note that \eqref{eqn:ass_controlled} is satisfied, for instance, if $\xi$ has almost surely differentiable sample paths, in which case we can choose $\xi' \equiv 0$. 
\end{remark}

%
%

\section{Random Dynamical Systems induced by stochastic delay equations}\label{sec:RDS_and_delay}

This section establishes the connection between stochastic delay equations and Arnold's concept of a random dynamical system. 

\subsection{Delayed rough path cocycles}

 We start by describing the object which will drive our equation. The following definition is an analogue of a \emph{rough paths cocycle} defined in \cite{BRS17} for delay equations.

\begin{definition}
Let $(\Omega, \mathcal{F}, \mathbb{P}, (\theta_t)_{t \in \R})$ be a measurable metric dynamical system and $r > 0$. A \emph{delayed $\gamma$-rough path cocycle $\mathbf{X}$ (with delay $r >0$)} is a delayed $\gamma$-rough path valued stochastic process $\mathbf{X}(\omega) = (X(\omega), \mathbb{X}(\omega), \mathbb{X}(-r)(\omega))$ such that
\begin{align}\label{eqn:cocycle_rough_delay}
 \mathbf{X}_{s,s+t}(\omega) = \mathbf{X}_{0,t}(\theta_s \omega)
\end{align}
holds for every $\omega \in \Omega$ and every $s,t \in \R$. 

\end{definition}

Our goal is to prove that Brownian motion together with L\'evy- and delayed L\'evy area can be understood as delayed rough path cocycles.

\begin{definition}
  For a finite-dimensional vector space $U$, set
\begin{align*}
  \tilde{T}^{2}(U):=\big{\lbrace}\big{(}1\oplus(\alpha,\beta)\oplus(\gamma,\theta)\big{)}\ \vert\ \alpha ,\beta\in U \ \text{and} \ \gamma ,\theta\in U\otimes U\big{\rbrace}.
\end{align*}
We define projections $\Pi_i^j$ by
\begin{align*}
 \Pi_i^j\big{(}1\oplus(\alpha,\beta)\oplus(\gamma,\theta)\big{)} := \begin{cases}
                                                                     \alpha &\text{if } i = 1,\, j = 1 \\
                                                                     \beta &\text{if } i = 1,\, j = 2 \\
                                                                     \gamma &\text{if } i = 2,\, j = 1 \\
                                                                     \theta &\text{if } i = 2,\, j = 2.
                                                                    \end{cases}
\end{align*}
Furthermore, we set
\begin{align*}
\big{(}1\oplus(\alpha_{1},\beta_{1})\oplus(\gamma_{1},\theta_{1})\big{)}&\circledast \big{(}1\oplus(\alpha_{2},\beta_{2})\oplus(\gamma_{2},\theta_{2})\big{)}:=\\ &\big{(}1\oplus(\alpha_{1}+\alpha_{2},\beta_{1}+\beta_{2})\oplus(\gamma_{1}+\gamma_{2}+\alpha_{1}\otimes\alpha_{2},\theta_{1}+\theta_{2}+\beta_{1}\otimes\alpha_{2})\big{)}
\end{align*}
and $\mathbf{1} := (1,(0,0),(0,0))$.

\end{definition}
It is not hard to verify that $(\tilde{T}^{2}(U), \circledast)$ is a topological group with identity $\mathbf{1}$. For a continuous $U$-valued path of bounded variation $x$, we can define the following natural lifting map
\begin{align*}
\tilde{S}_{2}(x)_{u,v}:=\bigg{(}1\oplus\big{(}x_{u,v},x_{u-r,v-r}\big{)}\oplus\big{(}\int_{u}^{v}x_{u,\tau}\otimes dx_{\tau},\int_{u}^{v}x_{u-r,\tau-r}\otimes dx_{\tau}\big{)}\bigg{)} \in \tilde{T}^{2}(U).
\end{align*}

\begin{definition}
Assume $I \subset \R$ and $0 \in I$. We define $\mathcal{C}_0^{0,1-\text{var}}(I, U)$ as the closure of the set of arbitrarily often differentiable paths $x$ from $I$ to $U$ with $x_0 = 0$ with respect to the $1$-variation norm. Furthermore, $C_{0}^{0,p-var}(I,\tilde{T}^{2}(U)) $ is defined as the set of continuous maps $\mathbf{x} \colon I\rightarrow \tilde{T}^{2}(U) $ such that $ \mathbf{x}_{0}=\mathbf{1} $ and for which there exists a sequence $ x_{n}\in C_{0}^{0,1-\text{var}}(I,U) $ with
\begin{align*}
d_{p-var}\big{(}\mathbf{x},\tilde{S}_{2}(x_{n})\big{)}:=\sup_{i,j\in \{1,2 \}}\bigg{(}\sup_{ \mathcal{P} \subset I}\sum_{t_{k}\in\mathcal{P} }\big{\vert}\Pi_{i}^{j}\big{(}\mathbf{x}_{t_{k},t_{k+1}}-\tilde{S}_{2}(x_{n})_{t_{k},t_{k+1}}\big{)}\big{\vert}^{\frac{p}{i}}\bigg{)}^{\frac{1}{p}}\longrightarrow 0
\end{align*} 
as $n \to \infty$. We use the notation $\mathbf{x}_{s,t} := \mathbf{x}_s^{-1} \circledast \mathbf{x}_t$ here. The space $C_{0}^{0,p-var}(\R,\tilde{T}^{2}(U)) $ consists of all continuous paths $\mathbf{x} \colon \R \rightarrow \tilde{T}^{2}(U) $ for which $\mathbf{x} \vert_I \in C_{0}^{0,p-var}(I,\tilde{T}^{2}(U))$ for every $I$ as above.
\end{definition}

We can now state the following results:
\begin{theorem}\label{thm:version_delayed_rough_cocycle}
Let $ p\geqslant 1 $ and let $\overline{\mathbf{X}} $ be an $ C_{0}^{0,p-var}(\mathbb{R},\tilde{T}^{2}(U)) $-valued random variable on a probability space $ (\overline{\Omega},\overline{\mathcal{F}},\overline{\mathbb{P}} )$. Assume that $\mathbf{X}$ has stationary increments, i.e. the law of the process $(\overline{\mathbf{X}}_{t_0,t_0 + h})_{h \in \R} $ does not depend on $t_0 \in \R$. Then we can define a metric dynamical system $(\Omega ,\mathcal{F},\mathbb{P},\theta)$ and a $ C_{0}^{0,p-var}(\mathbb{R},\tilde{T}^{2}(U)) $-valued random variable $ \mathbf{X} $ on $\Omega $ with the same law as $ \overline{\mathbf{X}}$ which satisfies the cocycle property \eqref{eqn:cocycle_rough_delay}.
\end{theorem}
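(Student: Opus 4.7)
The plan is to realize everything on the canonical path space, with $\theta$ being the natural shift on this space, and then to use stationarity of the increments to transfer the law invariance.

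I would first set $\Omega := C_{0}^{0,p\text{-var}}(\R, \tilde{T}^{2}(U))$ and let $\mathcal{F}$ be the Borel $\sigma$-algebra generated by the $p$-variation metric (which is separable, turning $\Omega$ into a Polish space). I would then define $\mathbf{X}$ to be the canonical coordinate process $\mathbf{X}_t(\omega) := \omega(t)$, and set $\P := \overline{\P} \circ \overline{\mathbf{X}}^{-1}$, so that by construction $\mathbf{X}$ has the same law as $\overline{\mathbf{X}}$.

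The shift is the candidate
\[
(\theta_s \omega)(t) := \omega(s)^{-1} \circledast \omega(s+t), \qquad s,t \in \R.
\]
I would first verify that $\theta_s$ maps $\Omega$ into itself: $(\theta_s\omega)(0) = \mathbf{1}$ is immediate, and the fact that the shifted path is again a geometric $p$-rough path (i.e.\ approximable in $p$-variation by smooth lifts) follows by left-translating the approximating sequence $x_n$, since the increment $(\theta_s\omega)_{u,v} = \omega_{s+u,s+v}$ and the $p$-variation norm is translation invariant on intervals. The semigroup identities $\theta_0 = \operatorname{Id}$ and $\theta_{s+t} = \theta_s \circ \theta_t$ are then a direct calculation using associativity of $\circledast$ and the defining identity $\mathbf{x}_{u,v} = \mathbf{x}_u^{-1} \circledast \mathbf{x}_v$. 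Joint measurability of $(s,\omega) \mapsto \theta_s\omega$ reduces to measurability of $(s,\omega) \mapsto \omega(s)$, which holds because $\omega$ is continuous, hence $(s,\omega)\mapsto\omega(s)$ is jointly continuous in the product of the usual topology on $\R$ and the $p$-variation topology restricted to any compact subinterval of $\R$.

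The main step, and the one I expect to be the main obstacle, is the invariance $\P \circ \theta_s^{-1} = \P$. Here I would use that the stationarity assumption on $\overline{\mathbf{X}}$ gives equality of the finite-dimensional distributions of $(\overline{\mathbf{X}}_{s,s+t_1}, \ldots, \overline{\mathbf{X}}_{s,s+t_n})$ and $(\overline{\mathbf{X}}_{0,t_1},\ldots,\overline{\mathbf{X}}_{0,t_n})$ for every $t_1,\ldots,t_n \in \R$ and every $s \in \R$. Via the coordinate process representation, the finite-dimensional distributions of $\theta_s$-pushed paths evaluated at $(t_1,\ldots,t_n)$ are exactly the law of $(\mathbf{X}_{s,s+t_1},\ldots,\mathbf{X}_{s,s+t_n})$, which by stationarity equals the law of $(\mathbf{X}_{0,t_1},\ldots,\mathbf{X}_{0,t_n}) = (\mathbf{X}_{t_1},\ldots,\mathbf{X}_{t_n})$ (using $\mathbf{X}_0 = \mathbf{1}$). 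Since $\Omega$ is a Polish space and the cylinder sets generate $\mathcal{F}$, a monotone class argument shows $\P \circ \theta_s^{-1} = \P$ on all of $\mathcal{F}$.

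Finally, the cocycle property is built into the construction:
\[
\mathbf{X}_{0,t}(\theta_s\omega) = (\theta_s\omega)(t) = \omega(s)^{-1}\circledast \omega(s+t) = \mathbf{X}_{s,s+t}(\omega),
\]
for every $\omega \in \Omega$ and every $s,t \in \R$, which completes the verification of \eqref{eqn:cocycle_rough_delay}. The delicate points to keep an eye on during execution are the measurability/regularity of the $p$-variation Borel structure (to make sure the law of $\overline{\mathbf{X}}$ is a well-defined Borel probability on $\Omega$) and confirming that $\theta_s$ really preserves the subspace of paths with $0 \in I$ and starting at $\mathbf{1}$; both are handled by the algebraic structure of $\tilde{T}^2(U)$ combined with translation invariance of $p$-variation.
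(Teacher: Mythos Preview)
Your proposal is correct and follows exactly the same construction as the paper: setting $\Omega = C_{0}^{0,p\text{-var}}(\R,\tilde{T}^{2}(U))$ with its Borel $\sigma$-algebra, $\P$ the law of $\overline{\mathbf{X}}$, $\mathbf{X}_t(\omega)=\omega(t)$, and $(\theta_s\omega)(t)=\omega(s)^{-1}\circledast\omega(s+t)$. The paper itself simply refers to \cite[Theorem~5]{BRS17} for the details you have spelled out.
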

\begin{proof}
The proof in all lines is similar to Theorem 5 in \cite{BRS17} by setting $ \Omega =C_{0}^{0,p-var}(\mathbb{R},\tilde{T}^{2}(U)) $, $\mathcal{F}$ being the Borel $\sigma$-algebra, $\mathbb{P}$ the law of $\bar{\mathbb{X}}$ and for $ \omega\in\Omega $, we define
\begin{align*}
(\theta_{s}\omega)(t):=\omega(s)^{-1}\circledast\omega(t+s) \ , \ \ \ \ \ \mathbf{X}_{t}(\omega)=\omega(t).
\end{align*}
\end{proof}


\begin{remark}
Note that the cocycle property  \eqref{eqn:cocycle_rough_delay} is equivalent to $ \mathbf{X}_{t}(\theta_{s}(\omega))=\mathbf{X}_{s}^{-1}(\omega)\circledast\mathbf{X}_{t+s}(\omega) $ for every $s,t \in \R$ and every $\omega \in \Omega$.
\end{remark}

We will also ask for ergodicity of rough cocycles. The following lemma will be useful.

\begin{lemma}\label{lemma:ergodic}
 Let $(\Omega,\mathcal{F},\mathbb{P},(\theta_t)_{t \in \R})$ and $(\tilde{\Omega},\tilde{\mathcal{F}},\tilde{\mathbb{P}},(\tilde{\theta}_t)_{t \in \R})$ be two measurable metric dynamical systems and let $\Phi \colon \Omega \to \tilde{\Omega}$ be a measurable map such that $\tilde{\mathbb{P}} = \P \circ \Phi^{-1}$. Assume that for every $t \in \R$, there is a set of full $\P$-measure $\Omega_t \subset \Omega$ on which $\Phi \circ \theta_t = \tilde{\theta}_t \circ \Phi$ holds. Then, if $\mathbb{P}$ is ergodic, $\tilde{\mathbb{P}}$ is ergodic, too.
\end{lemma}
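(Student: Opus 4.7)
The plan is the classical ``factor of an ergodic flow'' argument, adapted to the fact that the intertwining identity is only assumed to hold on a $t$-dependent set of full measure. Given any $\tilde{\theta}$-invariant set $\tilde{A}\in\tilde{\mathcal{F}}$, I would pull it back via $A:=\Phi^{-1}(\tilde{A})\in\mathcal{F}$ and aim to show $\P(A)\in\{0,1\}$; the conclusion then follows since $\tilde{\P}(\tilde{A})=\P(\Phi^{-1}(\tilde{A}))=\P(A)$ by the hypothesis $\tilde{\P}=\P\circ\Phi^{-1}$.

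First, fix $t\in\R$. On the full-measure set $\Omega_t$ on which $\Phi\circ\theta_t=\tilde{\theta}_t\circ\Phi$, one has
\[
\theta_t^{-1}A\cap\Omega_t=(\Phi\circ\theta_t)^{-1}(\tilde{A})\cap\Omega_t=(\tilde{\theta}_t\circ\Phi)^{-1}(\tilde{A})\cap\Omega_t=\Phi^{-1}(\tilde{\theta}_t^{-1}\tilde{A})\cap\Omega_t=A\cap\Omega_t,
\]
using $\tilde{\theta}_t^{-1}\tilde{A}=\tilde{A}$. Since $\P(\Omega_t)=1$, this yields $\P(\theta_t^{-1}A\triangle A)=0$ for every $t\in\R$.

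The main obstacle is that this is only almost-invariance, while the paper's definition of ergodicity requires that every strictly $\theta$-invariant set have probability in $\{0,1\}$. To bridge this, I would replace $A$ by
\[
A^{\ast}:=\Bigl\{\omega\in\Omega\,:\,\theta_s\omega\in A\text{ for Lebesgue-a.e.\ }s\in\R\Bigr\},
\]
which is measurable because $(s,\omega)\mapsto\theta_s\omega$ is jointly measurable. Translation invariance of Lebesgue measure gives $\theta_t^{-1}A^{\ast}=A^{\ast}$ for every $t$, so $A^{\ast}$ is strictly $\theta$-invariant. Fubini's theorem, applied to the product of $\P$ with an equivalent probability measure on $\R$ (e.g.\ $\tfrac12 e^{-|s|}\,ds$), together with the almost-invariance $\P(\theta_s^{-1}A\triangle A)=0$ for all $s$, shows $\P(A\triangle A^{\ast})=0$.

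Finally, ergodicity of $\P$ applied to $A^{\ast}$ forces $\P(A^{\ast})\in\{0,1\}$, hence $\P(A)\in\{0,1\}$, and therefore $\tilde{\P}(\tilde{A})\in\{0,1\}$. Since $\tilde{A}$ was an arbitrary $\tilde{\theta}$-invariant set, $\tilde{\P}$ is ergodic.
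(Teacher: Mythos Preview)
Your argument is correct and is precisely the standard factor-of-an-ergodic-flow argument, including the careful step of upgrading almost-invariance of $A=\Phi^{-1}(\tilde A)$ to strict invariance via the Lebesgue-a.e.\ hull $A^{\ast}$. The paper does not spell out a proof at all; it simply remarks that the lemma is a slight generalization of \cite[Lemma~3]{GS11}, so your write-up supplies exactly the details the authors delegate to that reference.
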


\begin{proof}
 The reader will have no difficulties to check that the assertion is just a slight generalization of \cite[Lemma 3]{GS11}.
\end{proof}

\begin{theorem}\label{thm:B_delayed_cocycle}
 Consider the processes $\mathbf{B}^{\text{It\=o}}$ and $\mathbf{B}^{\text{Strat}}$ defined in Section \ref{sec:delayed_area_BM}. Then for each process, we can find an ergodic metric dynamical system $(\Omega ,\mathcal{F},\mathbb{P},\theta)$ on which we can define a new process with the same law, satisfying the cocycle property \eqref{eqn:cocycle_rough_delay}, i.e. both processes are delayed $\gamma$-rough path cocycles for every $\gamma \in (1/3,1/2)$. 
\end{theorem}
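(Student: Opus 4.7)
The plan is to combine the abstract realization theorem \ref{thm:version_delayed_rough_cocycle} with the ergodicity of the classical Wiener shift via the transfer principle of Lemma \ref{lemma:ergodic}, using Theorem \ref{thm:approx_strat_delay} to place the processes in the correct function space.

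\textbf{Step 1: Membership in $C_{0}^{0,p-var}$.} First I would verify that, upon choosing appropriate modifications, both $\mathbf{B}^{\text{It\=o}}$ and $\mathbf{B}^{\text{Strat}}$ have sample paths in $C_{0}^{0,p-var}(\mathbb{R},\tilde{T}^{2}(\mathbb{R}^d))$ for some $p > 2$ with $p\gamma > 1$. For the Stratonovich lift, Theorem \ref{thm:approx_strat_delay} provides smooth approximations $\mathbf{B}^{\varepsilon}$ (which lie in $\tilde{S}_2(C_{0}^{0,1-var})$) converging to $\mathbf{B}^{\text{Strat}}$ almost surely along a subsequence in $\gamma$-H\"older topology, and $\gamma$-H\"older convergence implies $p$-variation convergence whenever $p\gamma > 1$. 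For the It\=o lift, one uses that $\mathbf{B}^{\text{Strat}}_{s,t} - \mathbf{B}^{\text{It\=o}}_{s,t}$ is the smooth deterministic path $\tfrac{1}{2}(t-s)I_d$ living only on the non-delayed L\'evy slot, which can be subtracted from the smooth approximations without destroying membership in the closure.

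\textbf{Step 2: Cocycle realization.} Both processes have stationary increments: for the first level this is the defining property of the two-sided Brownian motion, and for the L\'evy and delayed L\'evy areas a change of variable $u \mapsto u - s_0$ in
\begin{align*}
\mathbb{B}^{\text{It\=o}}_{s_0 + s,\, s_0 + s + h}(-r) = \int_{s_0+s}^{s_0+s+h}\bigl(B_{u-r} - B_{s_0+s-r}\bigr) \otimes dB_u
\end{align*}
reduces the distribution of the increment from $s_0+s$ to $s_0+s+h$ to the law of the same object starting at $s$, using stationary increments of the enhanced process $(B_u, B_{u-r})_{u \in \mathbb{R}}$. Theorem \ref{thm:version_delayed_rough_cocycle} then furnishes a metric dynamical system $(\Omega,\mathcal{F},\P,\theta)$ together with a $C_{0}^{0,p-var}$-valued random variable $\mathbf{X}$ having the same law as the original process and satisfying the cocycle identity \eqref{eqn:cocycle_rough_delay}.

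\textbf{Step 3: Ergodicity.} Let $(\tilde{\Omega},\tilde{\mathcal{F}},\tilde{\P},\tilde{\theta})$ be the two-sided Wiener dynamical system, with $(\tilde{\theta}_s B)(t) := B(t+s) - B(s)$; it is classical that $\tilde{\P}$ is ergodic. Define the measurable map $\Phi \colon \tilde{\Omega} \to \Omega$ sending a sample Brownian path to its delayed rough-path lift (chosen on the full-measure set where all relevant It\=o integrals are defined); then $\P = \tilde{\P} \circ \Phi^{-1}$ by construction. For each fixed $s \in \mathbb{R}$, the equality $\Phi \circ \tilde{\theta}_s = \theta_s \circ \Phi$ holds $\tilde{\P}$-almost surely: at level one this is immediate, and at level two both $\mathbb{B}^{\text{It\=o}}$ and $\mathbb{B}^{\text{It\=o}}(-r)$ transform covariantly under the translation $u \mapsto u+s$ of the integrating Brownian motion, which is exactly what the shift $\theta_s$ on $\tilde{T}^2$-valued paths produces via $(\theta_s \omega)(t) = \omega(s)^{-1} \circledast \omega(t+s)$. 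Lemma \ref{lemma:ergodic} then transfers ergodicity from $\tilde{\P}$ to $\P$.

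The main obstacle I expect is Step 1: while finite $p$-variation of both processes is a straightforward consequence of their H\"older regularity and Proposition \ref{prop:existence_bm_lift}, membership in the \emph{closure} of smooth paths in $p$-variation is more delicate, particularly for the It\=o lift whose canonical smooth approximations naturally converge to the Stratonovich and not the It\=o enhancement. The reduction to the Stratonovich case via the explicit deterministic correction, combined with Theorem \ref{thm:approx_strat_delay}, circumvents this; otherwise a direct dyadic piecewise-linear interpolation argument (as in the classical construction of Brownian rough paths) would work.
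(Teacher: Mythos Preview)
Your approach mirrors the paper's proof closely: Step 1 uses Theorem \ref{thm:approx_strat_delay} to place the Stratonovich lift in the path space, Step 2 invokes stationarity and Theorem \ref{thm:version_delayed_rough_cocycle}, and Step 3 transfers ergodicity from the Wiener shift via Lemma \ref{lemma:ergodic}, with the map $\Phi$ defined through the rough-path lift (the paper makes $\Phi$ everywhere-defined by taking the limit along a fixed dyadic partition sequence and setting it to the neutral element where the limit fails).

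One point in Step 1 needs care. For the It\=o lift, subtracting the correction $\tfrac{1}{2}(t-s)I_d$ from $\tilde{S}_2(x_n)$ does \emph{not} yield an element of the form $\tilde{S}_2(y)$ for any smooth $y$, and indeed $\mathbf{B}^{\text{It\=o}}$ does not lie in $C_0^{0,p\text{-var}}(\mathbb{R},\tilde{T}^2(\mathbb{R}^d))$ as defined: that space is the closure of smooth lifts and hence contains only geometric rough paths satisfying $\operatorname{Sym}(\mathbb{X}_{s,t}) = \tfrac{1}{2}X_{s,t}\otimes X_{s,t}$, a relation the It\=o enhancement violates. The paper resolves this exactly along the lines you sketch, but by defining a modified lifting map
\[
\hat{S}_2(x)_{s,t} := \Bigl(1 \oplus (x_{s,t},x_{s-r,t-r}) \oplus \Bigl(\textstyle\int_s^t x_{s,\tau}\otimes dx_\tau - \tfrac{1}{2}(t-s)I_d,\ \int_s^t x_{s-r,\tau-r}\otimes dx_\tau\Bigr)\Bigr)
\]
and a corresponding separable space $\hat{C}_0^{0,p\text{-var}}$ as the closure of $\hat{S}_2$-images. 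The Stratonovich approximation result then shows $\mathbf{B}^{\text{It\=o}}$ takes values there, and Theorem \ref{thm:version_delayed_rough_cocycle} applies verbatim with this space in place of $C_0^{0,p\text{-var}}$. Your correction idea is morally right but needs this repackaging to go through.
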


\begin{proof}
 We will first consider $\mathbf{B}^{\text{Strat}}$. From the approximation result in Theorem \ref{thm:approx_strat_delay}, we see that $\mathbf{B}^{\text{Strat}}$ takes values in $ C_{0}^{0,p-var}(\mathbb{R},\tilde{T}^{2}(\R^d))$ for every $p \in (2,3)$. It is easy to check that the process has stationary increments, therefore we can apply Theorem \ref{thm:version_delayed_rough_cocycle}. It remains to show ergodicity. By construction, $\Omega = C_{0}^{0,p-var}(\mathbb{R},\tilde{T}^{2}(\R^d))$, $\mathcal{F}$ is the Borel $\sigma$-algebra and $\mathbb{P} = \hat{\mathbb{P}} \circ S^{-1} $ where $(\hat{\Omega},\hat{\mathcal{F}},\hat{\mathbb{P}},\hat{\theta})$ is the measurable metric dynamical system given by $\hat{\Omega} = C_0^0(\mathbb{R},\R^d)$, $\hat{\mathcal{F}}$ the corresponding Borel $\sigma$-algebra, $\hat{\mathbb{P}}$ the Wiener measure and $\hat{\theta} = (\hat{\theta}_t)_{t \in \R}$ the Wiener shift. The map $S \colon \hat{\Omega} \to \Omega$ is defined as follows: For $x \in \hat{\Omega}$, set 
\begin{align*}
  S(x) = \bigg{(}1\oplus\big{(}x_{s,t},x_{s-r,t-r}\big{)}\oplus\big{(}\int_{s}^{t}x_{s,\tau}\otimes dx_{\tau},\int_{s}^{t}x_{s-r,\tau-r}\otimes dx_{\tau}\big{)}\bigg{)}_{s \leq t}
\end{align*}
if the integrals exist as limits of Riemann sums, in Statonovich sense, on compact sets for the sequence of partitions given by $\Pi_n = \{ k/2^{n}\, :\, k \in \Z \}$ as $n \to \infty$, and $S(x) = (1,0,0)$ otherwise. It is not hard to see that there is a set of full $\hat{\mathbb{P}}$-measure on which the  limits do exist. It follows that for every $t \in \R$, there is a set of full measure $\hat{\Omega}_t$ such that for every $x \in \hat{\Omega}_t$,
\begin{align*}
 S(\hat{\theta}_t x) = \theta(S(x)).
\end{align*}
Since $\hat{\mathbb{P}}$ is ergodic, ergodicity of $\mathbb{P}$ follows by Lemma \ref{lemma:ergodic} which completes the proof for the Stratonovich case.

 For the It\=o-case, we can argue analogously: First, we define a map 
 \begin{align*}
\hat{S}_{2}(x)_{s,t}:=\bigg{(}1\oplus\big{(}x_{s,t},x_{s-r,t-r}\big{)}\oplus\big{(}\int_{s}^{t}x_{s,\tau}\otimes dx_{\tau} - \frac{1}{2} (t-s) I_d,\int_{s}^{t}x_{s-r,\tau-r}\otimes dx_{\tau}\big{)}\bigg{)} \in \tilde{T}^{2}(U)
\end{align*}
for smooth paths and a corresponding (separable!) space $ \hat{C}_{0}^{0,p-var}(\mathbb{R},\tilde{T}^{2}(\R^d))$ in which, using again the approximation result for the Stratonovich lift, the random variable $\mathbf{B}^{\text{It\=o}}$ takes its values. Then a version of \cite[Theorem 5]{BRS17} applies and shows the claim. Ergodicity is proven analogously to the Stratonvich case.

\end{proof}

%
%

\subsection{Cocycle property of the solution map}

Let $I \subset \R$ be a compact interval and $X \colon I \to U$ a $\gamma$-H\"older continuous path. 
It is easy to see that for $ \alpha \leq \beta \leq \gamma $, 
\begin{align*}
i_{\alpha ,\beta}: \mathscr{D}_{X}^{\beta}(I,W) &\to \mathscr{D}_{X}^{\alpha}(I,W) ,\\
(\xi,\xi') &\mapsto (\xi,\xi')
\end{align*}
is a continuous embedding. We make the following definition:
\begin{definition}
 We define $ \mathscr{D}_{X}^{\alpha ,\beta}(I,W)$ as the closure of $\mathscr{D}_{X}^{\beta}(I,W)$ in the space $\mathscr{D}_{X}^{\alpha}(I,W)$. 
\end{definition}
The reason why we introduce these spaces is their separability, which we will prove in the next lemma.
\begin{lemma}
 For all $\alpha < \beta$, the spaces $ \mathscr{D}_{X}^{\alpha ,\beta}(I,W)$ are separable.
\end{lemma}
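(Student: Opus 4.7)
The plan is to embed $\mathscr{D}_{X}^{\alpha,\beta}(I,W)$ into a separable metric space; since every subset of a separable metric space is itself separable in the subspace topology, the conclusion follows immediately. Writing $I = [a,b]$, the main vehicle will be the injective linear map
\[
L \colon \mathscr{D}_{X}^{\alpha}(I,W) \longrightarrow W \times C^{\alpha}(I, L(U,W)) \times C^{2\alpha}(I^{2}, W), \qquad L(\xi,\xi') := (\xi_{a},\, \xi',\, \xi^{\#}),
\]
which is a topological embedding: the target norm is equivalent to $\|\cdot\|_{\mathscr{D}_{X}^{\alpha}}$ (the term $|\xi'_{a}|$ is absorbed into the $C^{\alpha}$-norm of $\xi'$), and the underlying path is recovered from $\xi_{t} = \xi_{a} + \xi'_{a} X_{a,t} + \xi^{\#}_{a,t}$.

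The principal ingredient I intend to invoke is the classical embedding $C^{\beta}(I,V) \subset C^{0,\alpha}(I,V)$ for $\alpha < \beta$ on a compact interval, where $C^{0,\alpha}$ is, by definition, the $C^{\alpha}$-closure of smooth functions. This follows from a standard mollification plus interpolation argument: for $f \in C^{\beta}$, mollifications $f_{\varepsilon}$ are uniformly bounded in $C^{\beta}$ and converge uniformly to $f$, after which the interpolation inequality
\[
\|f_{\varepsilon}-f\|_{\alpha} \leq 2 \,\|f_{\varepsilon}-f\|_{\infty}^{1-\alpha/\beta}\|f_{\varepsilon}-f\|_{\beta}^{\alpha/\beta} \longrightarrow 0
\]
shows $f \in C^{0,\alpha}$. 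The same argument adapts verbatim to 2-parameter functions, giving the analogous inclusion $C^{2\beta}(I^{2},V) \subset C^{0,2\alpha}(I^{2},V)$. Both of these closure spaces are separable by construction, a countable dense set being provided by polynomial data with rational coefficients.

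Combining these observations, $L$ maps every element of $\mathscr{D}_{X}^{\beta}(I,W)$ into the separable subspace
\[
S := W \times C^{0,\alpha}(I, L(U,W)) \times C^{0,2\alpha}(I^{2}, W).
\]
Since $S$ is closed in the ambient product $W \times C^{\alpha} \times C^{2\alpha}$ (each factor being closed in its respective $C^{\alpha}$-type space), any $\mathscr{D}_{X}^{\alpha}$-limit of a sequence from $\mathscr{D}_{X}^{\beta}(I,W)$ still has its $L$-image in $S$. Hence $L(\mathscr{D}_{X}^{\alpha,\beta}(I,W)) \subset S$, and separability is inherited. The one non-trivial input is the embedding $C^{\beta} \hookrightarrow C^{0,\alpha}$ together with its 2-parameter analogue; once these are in hand, the remainder is bookkeeping.
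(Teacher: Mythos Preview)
Your argument is correct and is essentially the paper's own proof: embed $\mathscr{D}_{X}^{\alpha,\beta}(I,W)$ into a product of H\"older-type spaces via $(\xi,\xi')\mapsto(\xi_a,\xi',\xi^{\#})$ (the paper uses $(\xi,\xi',\xi^{\#})$) and inherit separability from the factors, which the paper simply asserts whereas you justify via the inclusion $C^{\beta}\subset C^{0,\alpha}$. One small caveat: for the two-parameter remainder the mollification does not adapt quite \emph{verbatim}, since a convolution in both variables neither vanishes on the diagonal nor keeps the $2\beta$-seminorm uniformly bounded; multiplying the mollified function by a smooth cutoff $\psi((t-s)/\varepsilon)$ vanishing near the diagonal repairs both issues and the interpolation goes through, so this is a cosmetic fix rather than a gap.
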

\begin{proof}
 The space $ \mathscr{D}_{X}^{\alpha ,\beta}(I,W))$ can be viewed as a subset of
 \begin{align*}
  {C}^{\alpha,\beta}(I,W) \times {C}^{\alpha,\beta}(I,L(U,W)) \times {C}^{2 \alpha,2 \beta}(I,W)
 \end{align*}
 where ${C}^{\alpha,\beta}$ again means taking the closure of $\beta$-H\"older functions in the $\alpha$-H\"older norm. Since all spaces above are separable, the result follows. 

\end{proof}

If the parameters $\alpha < \beta < \gamma$ satisfy a certain condition, we can find a very explicit dense subset. This is the content of the next theorem, which has far reaching consequences, as we will see.
\begin{theorem}\label{structre}
Let $\alpha < \beta < \gamma \leq 1/2$. Assume that there is a $\kappa \in (0, \gamma)$ such that
\begin{align}\label{eqn:alpha_beta_gamma}
  \beta -\alpha>\frac{(1-\alpha)(1-\beta-\gamma+\kappa)}{(1-\beta)(1-2\alpha +\kappa)}
\end{align}
 holds. Then the set 
 \begin{align*}
  \left\{ (\psi,\psi')\ |\ \psi_{s,t} = \int_{s}^{t}f(\tau)\, dX_{\tau} + R_{s,t},\ \psi'_s = f(s) \text{ where }\ f \in C^{\infty}\big{(} I,L(U,W)\big{)} \text{ and } R \in C^{\infty}(I ,W) \right\}
 \end{align*}
 is dense in $\mathscr{D}_X^{\alpha ,\beta}(I,W)$, the integral being understood as a Young-integral here. In particular, $\mathscr{D}_X^{\alpha ,\beta}(I,W)$ does not depend on $\beta$ in this case.
\end{theorem}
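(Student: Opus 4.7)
The starting point is that $\mathscr{D}_X^{\alpha,\beta}(I,W)$ is by definition the $\mathscr{D}_X^\alpha$-closure of $\mathscr{D}_X^\beta(I,W)$, so it suffices to approximate any $(\xi,\xi') \in \mathscr{D}_X^\beta(I,W)$ in the $\mathscr{D}_X^\alpha$-topology by controlled paths of the prescribed form. My plan is a two-scale mollification. Extend $\xi'$ continuously outside $I$, fix a standard mollifier $\rho_\epsilon$ at scale $\epsilon$, and set $f_\epsilon := \xi' \ast \rho_\epsilon$. The usual estimates give $\|f_\epsilon\|_{C^\beta} \lesssim \|\xi'\|_{C^\beta}$, $\|\xi' - f_\epsilon\|_\infty \lesssim \epsilon^\beta$, $\|f_\epsilon\|_{C^1} \lesssim \epsilon^{\beta-1}$, and by interpolation $\|\xi' - f_\epsilon\|_{C^\alpha} \lesssim \epsilon^{\beta-\alpha}$. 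Since $f_\epsilon$ is smooth, $\int_a^t f_\epsilon\, dX_\tau$ is a well-defined Young integral, and the candidate approximant is
\begin{align*}
\psi^\epsilon_t := \xi_a + \int_a^t f_\epsilon\, dX_\tau + R^\epsilon_t - R^\epsilon_a,
\end{align*}
where $R^\epsilon \in C^\infty(I,W)$ is a smooth correction to be chosen. By construction $(\psi^\epsilon)' = f_\epsilon$, so $\psi^\epsilon$ is of the required form.

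Convergence $(\psi^\epsilon, f_\epsilon) \to (\xi,\xi')$ in $\mathscr{D}_X^\alpha$ reduces to the base-point and Gubinelli-derivative contributions (all routine from the above estimates and from $\alpha < \beta$) and to the $C^{2\alpha}$-convergence of the controlled-path remainder
\begin{align*}
(\psi^\epsilon)^\#_{s,t} - \xi^\#_{s,t} = (R^\epsilon - h^\epsilon)_{s,t} + \bigl(\xi'(s) - f_\epsilon(s)\bigr) X_{s,t},
\end{align*}
where $h^\epsilon_t := \xi_t - \xi_a - \int_a^t f_\epsilon\, dX_\tau$. The first summand can be handled by selecting $R^\epsilon$ as a further mollification of $h^\epsilon$ at a second, finer scale, once a quantitative H\"older control on $h^\epsilon$ has been established via the decomposition $h^\epsilon_{s,t} = (\xi'(s)-f_\epsilon(s)) X_{s,t} + \xi^\#_{s,t} - \bigl[\int_s^t f_\epsilon\, dX_\tau - f_\epsilon(s) X_{s,t}\bigr]$ combined with the Young-type tail estimate on the bracketed term.

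The genuine obstacle is the second summand $\bigl(\xi'(s) - f_\epsilon(s)\bigr) X_{s,t}$. The direct estimate $\epsilon^\beta |t-s|^\gamma$ yields $C^{2\alpha}$-smallness only under $\gamma \geq 2\alpha$, and the general case requires a careful interpolation. I would split the range of $|t-s|$ at an auxiliary scale $\delta$: on $|t-s| \geq \delta$ one uses the uniform bound, and on $|t-s| < \delta$ one rewrites
\begin{align*}
\bigl(\xi'(s) - f_\epsilon(s)\bigr) X_{s,t} = \int_s^t \bigl[\xi'(s) - f_\epsilon(\tau)\bigr]\, dX_\tau - \Bigl[\int_s^t f_\epsilon\, dX_\tau - f_\epsilon(s) X_{s,t}\Bigr],
\end{align*}
bounding the integrand by $\|\xi'\|_{C^\beta}|s-\tau|^\beta + C\epsilon^\beta$ and the Young tail by an interpolation between $\|f_\epsilon\|_{C^\beta}$ (bounded) and $\|f_\epsilon\|_{C^1} \sim \epsilon^{\beta-1}$. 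Optimising the two free parameters $\epsilon$ and $\delta$, with a small slack $\kappa < \gamma$ in the effective Young exponent $\beta+\gamma-\kappa$, leads precisely to the interpolation inequality \eqref{eqn:alpha_beta_gamma}, which is the exact arithmetic needed to drive all error terms to zero simultaneously.

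Finally, the $\beta$-independence of $\mathscr{D}_X^{\alpha,\beta}(I,W)$ is automatic once density is shown: the explicit dense set in the statement does not involve $\beta$ and is contained in every $\mathscr{D}_X^{\beta'}$ with $\beta' < \gamma$, so its $\mathscr{D}_X^\alpha$-closure is the same space regardless of which admissible $\beta$ was used to define it.
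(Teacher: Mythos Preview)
Your decomposition
\[
(\psi^\epsilon)^\#_{s,t} - \xi^\#_{s,t} = (R^\epsilon - h^\epsilon)_{s,t} + \bigl(\xi'(s) - f_\epsilon(s)\bigr) X_{s,t}
\]
and the plan to handle the two summands separately is where the argument breaks. Under the hypotheses of the theorem one may well have $2\alpha>\gamma$ (e.g.\ $\alpha=0.35$, $\beta=0.48$, $\gamma=0.5$, small $\kappa$ satisfies \eqref{eqn:alpha_beta_gamma}). In that regime the second summand alone obeys only $|(\xi'(s)-f_\epsilon(s))X_{s,t}|\lesssim \epsilon^\beta|t-s|^\gamma$, so its $2\alpha$-H\"older quotient blows up as $t\to s$ for every fixed $\epsilon$. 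No choice of $\delta$ can repair this, and your displayed ``rewriting'' on small scales is not an identity: expanding the right-hand side gives $\xi'(s)X_{s,t}+f_\epsilon(s)X_{s,t}-2\int_s^t f_\epsilon\,dX_\tau$, not $(\xi'(s)-f_\epsilon(s))X_{s,t}$. Likewise the first summand cannot be made small in $C^{2\alpha}$ by mollifying $h^\epsilon$: the function $h^\epsilon$ has H\"older regularity at most $\gamma$ (it inherits the regularity of $X$ through the term $(\xi'(s)-f_\epsilon(s))X_{s,t}$ in its increment), and a mollification of a $C^\gamma$ function cannot converge in $C^{2\alpha}$ when $2\alpha>\gamma$. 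In short, the separation itself is the mistake: the cross term must be absorbed into the construction of $R$, not isolated.

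The paper proceeds differently. It works with the two-parameter object $\tilde\rho_{s,t}=\xi^\#_{s,t}-\rho_{s,t}$ (where $\rho_{s,t}=\int_s^t f\,dX-f(s)X_{s,t}$ is the Young tail) and builds $R$ not by mollification but as a piecewise-linear interpolant of the values $\tilde\rho_{t_k,t_{k+1}}$ on a partition of mesh $\delta$. Smallness of $R-\tilde\rho$ on scales below $\delta$ comes directly from $\|\tilde\rho\|_{2\beta}<\infty$ together with the Young bound $|\rho_{s,t}|\lesssim n^{(1-\beta-\gamma+\kappa)/(1-\beta)}|t-s|^{1+\kappa}$. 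On scales larger than $\delta$ the crucial input is the sewing relation
\[
\tilde\rho_{s,t}-\tilde\rho_{s,u}-\tilde\rho_{u,t}=(\xi'_{s,u}-f_{s,u})X_{u,t},
\]
which lets one telescope $\tilde R_{\tau,v}-\tilde\rho_{\tau,v}$ over the partition and control it by a sum of terms bounded via $\|\xi'-f\|_\alpha\lesssim n^{-(\beta-\alpha)}$. Balancing the resulting factor $n^{-(\beta-\alpha)}\delta^{-(1-\alpha)}$ against the $\delta$-constraints from the small-scale bound is exactly what produces condition \eqref{eqn:alpha_beta_gamma}. This telescoping over a partition --- rather than a second mollification --- is the missing idea in your sketch.
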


\begin{proof}
Let $I = [a,b]$ and $r = b-a$. Take $ (\xi,\xi') \in \mathscr{D}_X^{\alpha ,\beta}(I,W))$, i.e. $ \xi_{s,t} = \xi^{\prime}_{s}X_{s,t}+\xi^{\#}_{s,t} $, and assume that $ \Vert\xi^{\prime}\Vert _{\beta; I} , \Vert\xi^{\#}\Vert_{2\beta; I} <\infty $. Let $ \phi :\mathbb{R}\rightarrow\mathbb{R} $ be a mollifier. Note that we can extend $ \xi^{\prime} $ to a $ \beta $-H\"older function on $ \mathbb{R} $ by setting $ \xi_{t}^{\prime}=\xi'_b$ for $ t\geqslant b$ and $ \xi_{t}^{\prime}=\xi_{a}^{\prime} $ for $ t\leqslant a $ . For given $n \in \N$, we define a smooth function $f: I \rightarrow L(U,W) $ by setting 
\begin{align*}
f_{s} := \int_{\mathbb{R}}\xi_{s -\frac{1}{n}z}^{\prime}\phi(z)\, dz =n\int_{\mathbb{R}}\xi_{z}^{\prime}\phi\big{(}n(s -z)\big{)}\, dz.
\end{align*}
Our goal is to find a smooth function $R$ such that for  
\begin{align}\label{eqn:def_psi}
 \psi_{s,t} := \int_s^t f(\tau)\, dX_{\tau} + R_{s,t} \quad \text{and} \quad \psi'_s := f(s),
\end{align}
we have $\|(\xi,\xi') - (\psi,\psi')\|_{\mathscr{D}_X^{\alpha}(I,W))} < \varepsilon$ for any given $\varepsilon > 0$ when choosing $n$ large enough. Note that
\begin{align*}
 \psi^{\#}_{s,t} = \int_s^t f(\tau) - f(s)\, dX_{\tau} + R_{s,t}
\end{align*}
 is finite $2\beta$-H\"older continuous by standard Young estimates, which implies that $(\psi,\psi')$ is indeed an element in $\mathscr{D}_X^{\alpha ,\beta}(I,W)$.

Note first that for  $ \eta_{s,t}= f_{s,t}-\xi_{s,t}^{\prime} $,
\begin{align}\label{A2}
\begin{split}
\big{\vert}\eta_{s,t}\big{\vert}&=
\big{\vert} f_{s,t}-\xi_{s,t}^{\prime}\big{\vert} \leqslant\int_{\mathbb{R}}\phi(z)\big{\vert}\xi^{\prime}_{s -\frac{1}{n}z,t -\frac{1}{n}z}-\xi^{\prime}_{s,t}\big{\vert}dz\\&=\int_{\mathbb{R}}\phi(z)\big{\vert} \xi^{\prime}_{s -\frac{1}{n}z,t -\frac{1}{n}z}-\xi^{\prime}_{s ,t}\big{\vert}^{\frac{\alpha}{\beta}}\big{\vert} \xi^{\prime}_{s -\frac{1}{n}z,t -\frac{1}{n}z}-\xi^{\prime}_{s,t}\big{\vert}^{1-\frac{\alpha}{\beta}}dz\lesssim (t-s)^{\alpha}(\frac{1}{n})^{\beta -\alpha} 
\end{split}
\end{align}
which implies that $\| \xi' - \psi'\|_{\alpha;I} + | \xi'_a - \psi'_a| \to 0$ as $n \to \infty$. Since 
\begin{align*}
  \frac{d}{d s} f_{s} = n\int_{\mathbb{R}}\xi_{s -\frac{1}{n}z}^{\prime}\frac{d}{d z}\phi(z)dz 
\end{align*}
 and from $\beta $-H\"older continuity of $ \xi^{\prime} $, 
\begin{align*}
 \big{\vert} f_{s,t}\big{\vert}\lesssim (t-s)^{\beta} \quad \text{and} \quad \big{\vert} f_{s,t}\big{\vert}\lesssim n(t-s).
\end{align*}
By polarization, this implies that
\begin{align*}
 \vert f_{s,t}\vert\lesssim n^{\theta}(t-s)^{\theta +\beta(1-\theta)}
\end{align*}
holds for every $ 0\leqslant\theta\leqslant 1 $.
Setting $\theta =\frac{1-\gamma -\beta +\kappa}{1-\beta} $, we obtain
\begin{align}\label{C2}
\big{\vert}f_{s,t}\big{\vert}\lesssim n^{\frac{1-\gamma -\beta +\kappa}{1-\beta}}(t-s)^{1-\gamma +\kappa}.
\end{align}
Let $ \rho_{s,t}:=\int_{s}^{t}f_{\tau}\, dX_{\tau} -f_{s}X_{s,t} $. From the Young inequality and relation (\ref{C2}),
\begin{align}\label{inter2}
\big{\vert}\rho_{s,t}\big{\vert}\lesssim \Vert f\Vert_{1-\gamma +\kappa}\Vert X\Vert_{\gamma}(t-s)^{1+\kappa} \lesssim n^{\frac{1-\gamma -\beta +\kappa}{1-\beta}}(t-s)^{1+\kappa}.
\end{align}
Let $ \epsilon>0 $ be given and set $ \tilde{\rho} := \xi^{\#}_{s,t}-\rho_{s,t} $. Since $ \Vert\tilde{\rho}\Vert_{2\beta}<\infty$, we can choose $ \delta>0 $ small enough such that
\begin{align}\label{A1}
\sup_{s,t\in I,\vert t-s\vert\leqslant\delta} \frac{\vert\tilde{\rho}_{s,t}\vert}{(t-s)^{2\alpha}}\leqslant\Vert\xi^{\#}\Vert_{2\beta}\delta^{2(\beta -\alpha)}+\sup_{s,t\in I,\vert t-s\vert\leqslant\delta}\frac{\vert\rho_{s,t}\vert}{(t-s)^{2\alpha}}\leqslant\epsilon.
\end{align} 
More precisely, we see from (\ref{inter2}) that $\delta$ has to be chosen such that
\begin{align}\label{A3}
\delta  \lesssim\epsilon ^{\frac{1}{2(\beta -\alpha)}}\ \ \ \ \ \text{and} \ \ \ \ \ \ \ \ \ \delta\lesssim\big{(}\frac{\epsilon}{n^{\frac{1-\beta -\gamma +\kappa}{1-\beta}}}\big{)}^{\frac{1}{1-2\alpha +\kappa}}.
\end{align}
We will determine $ n $ and consequently $ \delta $ in the future. Now, it is easy to verify that
\begin{align}\label{remain}
\tilde{\rho}_{s,t}-\tilde{\rho}_{s,u}-\tilde{\rho}_{u,t} =(\xi^{\prime}_{s,u}-f_{s,u})X_{u,t}=\eta_{s,u}X_{u,t}.
\end{align}
Let $ P=\lbrace a = t_{0} < t_{1} < ... < t_{m} = b\rbrace $ be a partition of $I$ such that $ t_{i}-t_{i-1} =\delta $ for $1\leqslant i\leqslant m-1 $ and $ t_{m}-t_{m-1}\leqslant\delta $. We can define a piecewise linear function $\tilde{R} $ by
\begin{align*}
\tilde{R}_{\tau ,\upsilon}:=\tilde{\rho}_{t_{k},t_{k+1}}\frac{\upsilon -\tau}{t_{k+1}-t_{k}}, \ \ \ \ \tau,\upsilon\in [t_{k},t_{k+1}]
\end{align*}
and consequently for $ \tau ,\upsilon\in I$ with $ t_{k}\leqslant \tau\leqslant t_{k+1}\leqslant ...\leqslant t_{j}\leqslant\upsilon\leqslant t_{j+1} $,
\begin{align*}
\tilde{R}_{\tau,\upsilon}=\frac{t_{k+1}-\tau}{t_{k+1}-t_{k}}\tilde{\rho}_{t_{k},t_{k+1}}+\tilde{\rho}_{t_{k+1},t_{k+2}}+....+\frac{\upsilon -t_{j}}{t_{j+1}-t_{j}}\tilde{\rho}_{t_{j},t_{j+1}}.
\end{align*}
From (\ref{remain}),
\begin{align*}
&\tilde{R}_{\tau,\upsilon}-\tilde{\rho}_{\tau,\upsilon}=\big{(}\frac{t_{k+1}-\tau}{t_{k+1}-t_{k}}\tilde{\rho}_{t_{k},t_{k+1}}+....+\frac{\upsilon -t_{j}}{t_{j+1}-t_{j}}\tilde{\rho}_{t_{j},t_{j+1}}\big{)}-\big{(} \tilde{\rho}_{\tau ,t_{k+1}}+...+\tilde{\rho}_{t_{j-1},t_{j}}+\tilde{\rho}_{t_{j},\upsilon} \big{)}\\ &-\big{(}\eta_{\tau ,t_{k+1}}X_{t_{k+1},\upsilon}+\eta_{t_{k+1},t_{k+2}}X_{t_{k+2},\upsilon}+...+\eta_{t_{j-1},t_{j}}X_{t_{j},\upsilon}\big{)}\\&=\big{(}\frac{t_{k+1}-\tau}{t_{k+1}-t_{k}}\tilde{\rho}_{t_{k},t_{k+1}}+\frac{\upsilon -t_{j}}{t_{j+1}-t_{j}}\tilde{\rho}_{t_{j},t_{j+1}}-\tilde{\rho}_{\tau ,t_{k+1}}-\tilde{\rho}_{t_{j},\upsilon} \big{)}-\big{(} \eta_{\tau ,t_{k+1}}X_{t_{k+1},\upsilon}+...+\eta_{t_{j-1},t_{j}}X_{t_{j},\upsilon} \big{)}\\&=: A - B.
\end{align*}
From (\ref{A1}), it is not hard to verify that
\begin{align}
\Vert A \Vert\leqslant 4\epsilon(\upsilon -\tau)^{2\alpha}.
\end{align}
By (\ref{A2}) and our assumptions on $ X $,
\begin{align*}
\Vert B \Vert\lesssim(\frac{1}{n})^{\beta -\alpha}\big{[}(t_{k+1}-\tau)^{\alpha}(\upsilon -t_{k+1})^{\gamma}+...+(t_{j}-t_{j-1})^{\alpha}(\upsilon -t_{j})^{\gamma}\big{]}\leqslant\frac{\delta^{\alpha +\gamma}}{n^{\beta -\alpha}}\big{[}(j-k)^{\gamma}+...+1^{\gamma}\big{]}.
\end{align*}
Since $ (j-k-1)\delta\leqslant \upsilon -\tau $ and $ m\delta\leqslant r $,
\begin{align}
\frac{\Vert B \Vert}{(\upsilon -\tau)^{2\alpha}}\lesssim\frac{\delta^{\gamma -\alpha}}{n^{\beta -\alpha}}m^{\gamma +1-2\alpha} \lesssim\frac{1}{n^{\beta -\alpha}}\frac{1}{\delta^{1-\alpha}}.
\end{align}
Now from (\ref{A3}), if $ \beta -\alpha>\frac{(1-\alpha)(1-\beta-\gamma+\kappa)}{(1-\beta)(1-2\alpha +\kappa)} $, we can find $n$ and $\delta $ such that
\begin{align*}
  \frac{\Vert B \Vert}{(\upsilon -\tau)^{2\alpha}}\leqslant\epsilon
\end{align*}
and therefore
\begin{align*}
 \| \tilde{R} - \tilde{\rho} \|_{ 2\alpha;I} \leq 5 \epsilon.
\end{align*}
Since $ \tilde{R} $ is a piecewise linear function, we can find an $ R\in C^{\infty}(I,W) $ such that
\begin{align*}
\sup_{s,t\in I}\frac{\vert R_{s,t}-\tilde{R}_{s,t}\vert}{(t-s)^{2\alpha}}\leqslant\epsilon.
\end{align*}
Using this $R$ in \eqref{eqn:def_psi}, we obtain
\begin{align*}
  \| \xi^{\#} - \psi^{\#} \|_{2\alpha ; I} \leq \| \tilde{\rho} - \tilde{R}  \|_{ 2\alpha;I} + \| \tilde{R} - R \|_{ 2\alpha;I} \leq 6 \epsilon,
\end{align*}
thus the stated set is indeed dense. 

\end{proof}

\begin{remark}
 In the applications we have in mind, $\gamma < 1/2$ can be chosen arbitrarily close to $1/2$. Let $\alpha < 1/2$ be given. Choose $\alpha < \beta < \gamma < 1/2$ and $0 < \kappa < \gamma$ such that
 \begin{align*}
  2(1 - \beta - \gamma + \kappa) < (1 - 2\alpha + \kappa)(\beta - \alpha).
 \end{align*}
 Note that this can always be achieved by choosing $\beta$ and $\gamma$ close to $1/2$ and $\kappa$ close to $0$. Since $1-\alpha < 1$ and $1 - \beta > 1/2$, this implies that
 \begin{align*}
  \frac{(1 - \alpha)(1 - \beta - \gamma + \kappa)}{(1-\beta)(1 - 2\alpha + \kappa)} < \beta - \alpha,
 \end{align*}
 i.e. in this case, we can always find parameters such that the condition in Theorem \ref{structre} is satisfied.

\end{remark}

\begin{theorem}\label{fiber}
Let $\mathbf{X}$ be a delayed $\gamma$-rough path cocycle for some $\gamma \in (1/3,1/2]$. Under the assumptions of Theorem \ref{thm:flow_prop}, the map
\begin{align}\label{eqn:cocyle_delay}
\varphi(n, \omega, \cdot) := \phi(0,nr,\omega, \cdot)
\end{align}
is a continuous map
\begin{align*}
\varphi(n, \omega, \cdot) \colon \mathscr{D}_{X(\omega)}^\beta([-r , 0],W) \to \mathscr{D}_{X(\theta_{nr} \omega)}^\beta([-r , 0],W)
\end{align*}
and the cocycle property
\begin{align}
\varphi(n+m , \omega, \cdot) = \varphi(n, \theta_{mr} \omega, \cdot) \circ \varphi(m, \omega, \cdot)
\end{align}
holds for every $s ,t \in [0,\infty)$. 
If $\sigma$ is linear, the cocycle is compact linear. Furthermore, all assertions remain true if we replace the spaces $\mathscr{D}^\beta$ by $\mathscr{D}^{\alpha,\beta}$ for $1/3 < \alpha < \beta < \gamma$.
\end{theorem}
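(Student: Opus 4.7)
The plan is to deduce the result by combining the deterministic semi-flow theory (Theorems \ref{thm:flow_prop} and \ref{thm:delay_stability}, and Proposition \ref{prop:compact_map}) with the cocycle relation $\mathbf{X}_{s,s+t}(\omega)=\mathbf{X}_{0,t}(\theta_s\omega)$ satisfied by the driving noise. The key observation is that this relation induces, for every $s\in\R$, an isometric bijection
$$
T_s : \mathscr{D}^\beta_{X(\omega)}([s-r,s],W) \longrightarrow \mathscr{D}^\beta_{X(\theta_s\omega)}([-r,0],W),\qquad (\eta,\eta') \mapsto (\eta(\cdot+s),\eta'(\cdot+s)),
$$
under which I will read $\varphi(n,\omega,\cdot) = T_{nr}\circ \phi(0,nr,\omega,\cdot)$. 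The target-space assertion and the continuity of $\varphi(n,\omega,\cdot)$ then follow directly from Theorem \ref{thm:flow_prop}, and compactness in the linear case is Proposition \ref{prop:compact_map}.

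For the cocycle identity, the idea is to apply the semi-flow property from Theorem \ref{thm:flow_prop}:
$$
\phi(0,(n+m)r,\omega,\xi)=\phi(mr,(n+m)r,\omega,\phi(0,mr,\omega,\xi)).
$$
Denoting by $y$ the full solution on $[-r,\infty)$ driven by $\mathbf{X}(\omega)$ with initial condition $\xi$, the shifted path $\tilde y(t):=y(t+mr)$ satisfies the rough delay equation driven by $\mathbf{X}(\theta_{mr}\omega)$ on $[-r,\infty)$ with initial condition $T_{mr}(y|_{[mr-r,mr]})=\varphi(m,\omega,\xi)$. Uniqueness identifies $\tilde y$ with the solution of this shifted equation; restricting to $[nr-r,nr]$ and applying $T_{nr}$ then yields $\varphi(n+m,\omega,\xi)=\varphi(n,\theta_{mr}\omega,\varphi(m,\omega,\xi))$.

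For the strengthened statement involving $\mathscr{D}^{\alpha,\beta}$, I would first observe that all of Theorem \ref{thm:delay_existence}, Theorem \ref{thm:delay_stability}, Theorem \ref{thm:flow_prop} and Proposition \ref{prop:compact_map} apply with any regularity parameter in $(1/3,\gamma)$; substituting $\alpha$ for $\beta$ gives a continuous (and, in the linear case, compact) solution map at $\mathscr{D}^\alpha$-regularity. Given $\xi \in \mathscr{D}^{\alpha,\beta}_{X(\omega)}([-r,0],W)$, pick $\xi_k \in \mathscr{D}^\beta_{X(\omega)}([-r,0],W)$ with $\xi_k \to \xi$ in $\mathscr{D}^\alpha$; the images $\varphi(n,\omega,\xi_k)$ lie in $\mathscr{D}^\beta_{X(\theta_{nr}\omega)}([-r,0],W)$ by the first part, and the $\mathscr{D}^\alpha$-continuity shows that they converge in $\mathscr{D}^\alpha$ to $\varphi(n,\omega,\xi)$. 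Hence $\varphi(n,\omega,\xi)$ lies in the closed subspace $\mathscr{D}^{\alpha,\beta}_{X(\theta_{nr}\omega)}([-r,0],W)$ of $\mathscr{D}^\alpha_{X(\theta_{nr}\omega)}([-r,0],W)$, and continuity, the cocycle identity and (in the linear case) compactness transfer verbatim.

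The bulk of the analytic content is already contained in Theorems \ref{thm:flow_prop} and \ref{thm:delay_stability} and Proposition \ref{prop:compact_map}; the main care is bookkeeping of the shift map $T_s$ and the interplay between the temporal shift of the driver and the $\theta$-action on $\Omega$. The most delicate point is the $\mathscr{D}^{\alpha,\beta}$ part, where one has to verify that the extended solution lands back inside this closed subspace rather than only in the larger space $\mathscr{D}^\alpha$; the approximation argument sketched above is what handles it.
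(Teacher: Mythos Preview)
Your proposal is correct and follows essentially the same approach as the paper: the shift map $T_s$ you introduce is exactly the isomorphism $\Psi$ used there, continuity and compactness are drawn from Theorem \ref{thm:flow_prop} and Proposition \ref{prop:compact_map}, the cocycle identity is obtained by combining the semi-flow property with the identity $\phi(0,nr,\theta_{mr}\omega,\cdot)=\phi(mr,(n+m)r,\omega,\cdot)$ (which the paper reduces to $n=1$ and checks from the definition of the integral, while you argue directly via uniqueness for the shifted equation), and the $\mathscr{D}^{\alpha,\beta}$ invariance is deduced from $\mathscr{D}^\alpha$-continuity via the approximation argument you spell out, which is precisely what the paper means by ``a consequence of the continuity of $\varphi$''.
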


\begin{proof}
Note that $\mathscr{D}_{X(\omega)}^\beta([-r+nr , nr],W) \cong \mathscr{D}_{X(\theta_{nr}\omega)}^\beta([-r , 0],W) $ by the natural linear map 
\begin{align*}
\Psi \colon \mathscr{D}_{X(\omega)}^\beta([-r+nr , nr],V) &\longrightarrow \mathscr{D}_{X(\theta_{nr}\omega)}^\beta([-r , 0],V)\\
(\xi_{\tau})_{-r+nr\leqslant\tau\leqslant nr} &\mapsto (\tilde{\xi}_{\tau}=\xi_{\tau +nr})_{-r\leqslant\tau\leqslant 0}.
\end{align*}
Continuity of $\varphi$ is a consequence of Theorem \ref{thm:flow_prop}. Regarding the cocycle property, by the semi-flow property (\ref{Flow}) it is enough to show that
\begin{align*}
\phi(0,nr,\theta_{mr}\omega, \cdot)=\phi\big{(}mr,(m+n)r,\omega,\cdot\big{)}.
\end{align*}
Using again the semi-flow property (\ref{Flow}), it is enough to show the equality for $ n=1 $ only. Finally, by the definition of the integral in (\ref{dfn}) and the cocycle property of a rough cocycle, this can easily be verified. 
The statements about linearity and compactness are a consequence of \ref{thm:delay_linear} and Proposition \ref{prop:compact_map}. The claim that all spaces $\mathscr{D}^\beta$ can be replaced by $\mathscr{D}^{\alpha,\beta}$ follows from the invariance 
\begin{align}\label{separable}
\varphi\big{(}n,\omega,\mathscr{D}_{X(\omega)}^{\alpha,\beta}([-r,0],W)\big{)}\subset \mathscr{D}_{X(\theta_{nr}\omega)}^{\alpha ,\beta}([-r,0],W)
\end{align}
which is a consequence of the continuity of $\varphi$. 
\end{proof}

Note that so far, we worked with delayed rough path cocycles $\mathbf{X}$ which are defined on a continuous-time metric dynamical system $(\Omega,\mathcal{F},\P,(\theta_t)_{t \in \R})$. In Theorem \ref{fiber}, we saw that stochastic delay equations a priori induce discrete-time RDS only. The reason is that we cannot expect that the semi-flow property \eqref{eqn:flow_prop} holds in full generality for all times, cf. Theorem \ref{thm:flow_prop}. Therefore, in what follows, we will continue working with discrete time only. From now on, whenever we consider cocycles induced by delay equations with delay $r > 0$, our underlying discrete-time metric dynamical system is given by  $(\Omega,\mathcal{F},\P,\theta)$ with $\theta := \theta_r$. We also use the notation $\varphi(\omega,\cdot) := \varphi(1,\omega,\cdot)$ for the cocycle $\varphi$ defined in \eqref{eqn:cocyle_delay}. 

Next, we describe a structure which will be useful for us.

\begin{definition}\label{def:meas_field_banach}
 Let $(\Omega,\mathcal{F})$ be a measurable space. A family of Banach spaces $\{E_{\omega}\}_{\omega \in \Omega}$ is called a \emph{measurable field of Banach spaces} if there is a set of sections
 \begin{align*}
  \Delta \subset \prod_{\omega \in \Omega} E_{\omega}
 \end{align*}
 with the following properties:
 \begin{itemize}
  \item[(i)] $\Delta$ is a linear subspace of $\prod_{\omega \in \Omega} E_{\omega}$.
  \item[(ii)] There is a countable subset $\Delta_0 \subset \Delta$ such that for every $\omega \in \Omega$, the set $\{g(\omega)\, :\, g \in \Delta_0\}$ is dense in $E_{\omega}$.
  \item[(iii)] For every $g \in \Delta$, the map $\omega \mapsto \| g(\omega) \|_{E_{\omega}}$ is measurable.
 \end{itemize}

\end{definition}

\begin{remark}
 Let us remark here that the former definition originates from us, we did not encounter a description of a \emph{measurable field of Banach spaces} elsewhere in the literature. In fact, it is a mix of a \emph{measurable field of Hilbert spaces} to be found e.g. in \cite[page 220]{Fol95} and a \emph{continuous field of Banach spaces}, cf. e.g. \cite[page 211]{Dix77}. Since the stated properties in Definition \ref{def:meas_field_banach} are exactly what we need for proving the Multiplicative Ergodic Theorem in the next section, it is also a pragmatic definition. 
\end{remark}

\begin{proposition}
 Let $X \colon \Omega \to C^{\gamma}(I,U)$ be a stochastic process. Assume that there are $\alpha < \beta < \gamma$ and some $\kappa \in (0,\gamma)$ such that \eqref{eqn:alpha_beta_gamma} is satisfied. Then $\{\mathscr{D}^{\alpha,\beta}_{X(\omega)}(I,W)\}_{\omega \in \Omega}$ is a measurable field of Banach spaces.
\end{proposition}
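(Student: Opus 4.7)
The strategy is to exploit the explicit dense subfamily provided by Theorem \ref{structre}. Under hypothesis \eqref{eqn:alpha_beta_gamma}, every fiber $\mathscr{D}_{X(\omega)}^{\alpha,\beta}(I,W)$ contains, as a dense subset, the controlled paths of the form $(\psi^{f,R,\omega}, f)$ with
\[
\psi^{f,R,\omega}(s) := R(s) + \int_a^s f(\tau)\, dX_\tau(\omega),
\]
parametrized by $(f,R) \in C^\infty(I,L(U,W)) \times C^\infty(I,W)$, the integral being a Young integral. Since the map $(f,R) \mapsto (\psi^{f,R,\omega}, f)$ is linear, the collection
\[
\Delta := \{\, g^{(f,R)} \,:\, f \in C^\infty(I,L(U,W)),\ R \in C^\infty(I,W)\,\}, \qquad g^{(f,R)}(\omega) := (\psi^{f,R,\omega},f),
\]
is automatically a linear subspace of $\prod_{\omega \in \Omega} E_\omega$, settling property (i).

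For property (ii), I pick a countable $C^1$-dense subset $\mathcal{C} \subset C^\infty(I,L(U,W)) \times C^\infty(I,W)$ (for instance, pairs of polynomials with rational coefficients) and set $\Delta_0 := \{ g^{(f,R)} : (f,R) \in \mathcal{C}\}$. Integration by parts on the Young integral yields the pointwise estimate
\[
\|g^{(f,R)}(\omega) - g^{(\tilde f,\tilde R)}(\omega)\|_{\mathscr{D}_{X(\omega)}^\alpha} \leq C\bigl(1 + \|X(\omega)\|_{\gamma;I}\bigr)\bigl(\|f-\tilde f\|_{C^1(I)} + \|R-\tilde R\|_{C^1(I)}\bigr).
\]
For fixed $\omega$, Theorem \ref{structre} approximates any element of $\mathscr{D}_{X(\omega)}^{\alpha,\beta}(I,W)$ by some smooth $g^{(f,R)}(\omega)$, after which the displayed bound, with its $\omega$-dependent constant, allows us to further approximate $(f,R)$ by an element of $\mathcal{C}$, giving fiberwise density. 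This is the most delicate point of the argument: Theorem \ref{structre} delivers density only for the uncountable family of all smooth $(f,R)$, and one must verify that a single countable family works simultaneously for every $\omega$. The issue is resolved because density is required pointwise in $\omega$, so the factor $\|X(\omega)\|_{\gamma;I}$ is simply a fixed finite constant for each chosen $\omega$.

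For property (iii), the norm splits as
\[
\|g^{(f,R)}(\omega)\|_{\mathscr{D}_{X(\omega)}^\alpha} = |R(a)| + |f(a)| + \|f\|_{\alpha;I} + \|(\psi^{f,R,\omega})^{\#}\|_{2\alpha;I},
\]
and only the last term depends on $\omega$. Writing $(\psi^{f,R,\omega})^{\#}_{s,t} = R(t) - R(s) + \int_s^t (f(\tau) - f(s))\, dX_\tau(\omega)$ and applying integration by parts expresses this quantity as a jointly continuous function of $X(\omega) \in C^0(I,U)$ and $(s,t) \in I \times I$. Since $X$ is measurable as a $C^\gamma$-valued random variable, the map $\omega \mapsto (\psi^{f,R,\omega})^{\#}_{s,t}$ is measurable for every $(s,t)$, and by continuity in $(s,t)$ the $2\alpha$-H\"older seminorm is realized as a countable supremum over rational pairs, hence measurable in $\omega$. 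This completes the verification of all three axioms.
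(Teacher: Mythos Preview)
Your proof is correct and follows essentially the same route as the paper's: both build $\Delta$ from the smooth-parametrized sections $g^{(f,R)}$ furnished by Theorem~\ref{structre}, extract a countable $\Delta_0$ by taking a countable dense family of parameters, and verify measurability of the norm by observing that only the remainder term depends on $\omega$ and is a countable supremum of measurable quantities. The differences are cosmetic: the paper parametrizes by $(v,f,R)\in\R\times C^\infty(I,L(U,W))\times C^\infty_0(I,W)$ whereas you absorb the initial value into $R$; for (ii) you spell out the continuity estimate $(f,R)\mapsto g^{(f,R)}(\omega)$ that the paper leaves implicit; and for (iii) you use integration by parts to exhibit the remainder as a continuous functional of $X(\omega)\in C^0$, while the paper invokes measurability of the Young integral as a limit of Riemann sums.
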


\begin{proof}
 For $s = (v,f,R) \in \R \times C^{\infty}(I,L(U,W)) \times C^{\infty}_0(I,W)$, define
 \begin{align*}
  g_s(\omega) := \left(v + \int_{-r}^{\cdot} f(\tau)\, dX_{\tau}(\omega) + R, f\right) \in \mathscr{D}^{\alpha,\beta}_{X(\omega)}(I,W)
 \end{align*}
 and set 
 \begin{align}\label{eqn:def_Delta}
  \Delta := \{g_s\, :\, s \in \R \times C^{\infty}(I,L(U,W)) \times C^{\infty}_0(I,W) \}.
 \end{align}
  It is clear that (i) holds for $\Delta$. Let $S$ be a countable and dense subset of $\R \times C^{\infty}(I,L(U,W)) \times C^{\infty}_0(I,W)$ and define $\Delta_0 := \{g_s\, :\, s \in S \}$. By definition, $\Delta_0$ is countable, and $\{g_s(\omega)\, :\, s \in S\}$ is dense in $\mathscr{D}^{\alpha,\beta}_{X(\omega)}(I,W)$ for fixed $\omega \in \Omega$ by Theorem \ref{structre}. It remains to prove (iii). Let $I = [a,b]$ and choose $s = (v,f,R)$. Then
 \begin{align*}
\Vert g_s(\omega) \Vert =\vert v\vert &+\vert f(a)\vert +\sup_{s,t\in I \cap \mathbb{Q},s<t}\frac{\vert f(t)-f(s)\vert}{(t-s)^{\alpha}}\\
&+\sup_{s,t\in I \cap \mathbb{Q},s<t}\frac{\vert R_{s,t}+ \int _{s}^{t}f(\tau)\, dX_{\tau} (\omega) -f(s)X_{s,t}(\omega)\vert}{(t-s)^{2\alpha}}.
 \end{align*}
 The integral is measurable since it is a limit of measurable Riemann sums. Measurability of $\omega \mapsto \Vert g_s(\omega) \Vert$ thus follows which finishes the proof.

\end{proof}

\begin{definition}\label{def:RDS_on_Banach_field}
 Let $(\Omega,\mathcal{F},\P,\theta)$ be a measurable metric dynamical system and $(\{E_{\omega}\}_{\omega \in \Omega},\Delta)$ a measurable field of Banach spaces. A \emph{continuous cocycle on $\{E_{\omega}\}_{\omega \in \Omega}$} consists of a family of continuous maps
 \begin{align}\label{eqn:def_cocycle}
  \varphi(\omega, \cdot) \colon E_{\omega} \to E_{\theta \omega}.
 \end{align}
 If $\varphi$ is a continuous cocycle, we define $\varphi(n,\omega,\cdot) \colon E_{\omega} \to E_{\theta^n \omega}$ as 
 \begin{align*}
  \varphi(n,\omega,\cdot) := \varphi(\theta^{n-1}\omega,\cdot) \circ \cdots \circ \varphi(\omega,\cdot).
 \end{align*}
 We say that \emph{$\varphi$ acts on $\{E_{\omega}\}_{\omega \in \Omega}$} if the maps
 \begin{align*}
  \omega \mapsto \| \varphi(n,\omega,g(\omega)) \|_{E_{\theta^n \omega}}, \quad n \in \N
 \end{align*}
 are measurable for every $g \in \Delta$. In this case, we will speak of a \emph{continuous random dynamical system on a field of Banach spaces}. If the map \eqref{eqn:def_cocycle} is bounded linear/compact, we call $\varphi$ a bounded linear/compact cocycle.

\end{definition}

\begin{theorem}\label{eqn:delay_induce_RDS}
 The continuous cocycle
 \begin{align*}
  \varphi(\omega,\cdot) \colon \mathscr{D}_{X(\omega)}^{\alpha,\beta}([-r,0],W) \to \mathscr{D}_{X(\theta_r \omega)}^{\alpha,\beta}([-r,0],W)  
 \end{align*}
 defined in Theorem \ref{fiber} induces a random dynamical system on the field of Banach spaces \\ $\{\mathscr{D}_{X(\omega)}^{\alpha,\beta}([-r,0],W)\}_{\omega \in \Omega}$. 

\end{theorem}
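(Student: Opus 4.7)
The plan is to verify the single non-trivial condition required by Definition \ref{def:RDS_on_Banach_field}, namely that for every section $g \in \Delta$ from \eqref{eqn:def_Delta} and every $n \in \N$, the map $\omega \mapsto \|\varphi(n,\omega,g(\omega))\|_{\mathscr{D}^{\alpha,\beta}_{X(\theta^n\omega)}}$ is $\mathcal{F}$-measurable. Continuity of each $\varphi(\omega,\cdot)$ and the cocycle identity were already produced in Theorem \ref{fiber}, and the invariance \eqref{separable} makes the target norm well-defined. The cocycle of iterates $\varphi(n,\omega,\cdot)$ is well-defined by the discrete-time metric dynamical system $(\Omega,\mathcal{F},\P,\theta)$ with $\theta=\theta_r$.

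First I would reduce the problem to measurability of pointwise objects. Writing $\varphi(n,\omega,g(\omega))=(y^\omega,(y^\omega)')$ with $(y^\omega)'_t=\sigma(y^\omega_t,y^\omega_{t-r})$, the defining norm \eqref{eqn:norm_delayed_controlled_path} reads
\begin{align*}
\|\varphi(n,\omega,g(\omega))\| = |y^\omega_{-r}|+|(y^\omega)'_{-r}|+\|(y^\omega)'\|_{\alpha;[-r,0]}+\|(y^\omega)^{\#}\|_{2\alpha;[-r,0]},
\end{align*}
where $(y^\omega)^{\#}_{s,t}=y^\omega_{s,t}-(y^\omega)'_s X_{s,t}(\theta^n\omega)$ and the target interval $[-r,0]$ is understood after the canonical translation $\Psi$ from the proof of Theorem \ref{fiber}. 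Since $y^\omega$ and $(y^\omega)'$ are continuous, both H\"older seminorms are countable suprema over $\mathbb{Q}$, so it suffices to show that for each fixed $s<t$ in $[-r,0]\cap\mathbb{Q}$ the three maps $\omega\mapsto y^\omega_t$, $\omega\mapsto(y^\omega)'_t$, $\omega\mapsto X_{s,t}(\theta^n\omega)$ are measurable. The last one is measurable by joint measurability of the dynamical system $\theta$ and of $\mathbf{X}$. Since $\sigma$ is continuous, the second reduces to the first via $(y^\omega)'_t=\sigma(y^\omega_t,y^\omega_{t-r})$.

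It remains to prove measurability of $\omega\mapsto y^\omega_t$ for every fixed $t$. I would handle this by induction on the intervals $[kr,(k+1)r]$, $k=0,\ldots,n-1$, over which the solution is built in Theorem \ref{thm:flow_prop}, combined with the Picard scheme from Theorem \ref{thm:delay_existence}. On each such interval, $y^\omega$ is the unique fixed point of the map
\begin{align*}
T^\omega_k(\zeta)(\tau) = y^\omega_{kr}+\int_{kr}^{\tau}\sigma(\zeta_u,y^\omega_{u-r})\,d\mathbf{X}_u(\omega),\qquad \tau\in[kr,(k+1)r].
\end{align*}
The rough integral is defined in \eqref{dfn} as the limit of Riemann-type sums, each of which is a finite algebraic expression in the pointwise values of $\zeta$, $(y^\omega)'$ on the previous interval, and the three components of $\mathbf{X}(\omega)$ at rational times. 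Hence, whenever $\zeta^\omega$ is a family of paths whose values at each time are measurable in $\omega$, so is $T^\omega_k(\zeta^\omega)$. Iterating from the initial guess equal to the constant $y^\omega_{kr}$ (measurable by the inductive hypothesis, with base case $k=0$ provided by the explicit formula \eqref{eqn:def_Delta} for $g(\omega)$) yields Picard iterates whose pointwise-in-time values are measurable. Uniform convergence of these iterates in $\mathscr{D}^\beta_{X(\omega)}([kr,(k+1)r],W)$ implies pointwise convergence, so the fixed point $y^\omega$ inherits pointwise measurability. This closes the induction and completes the proof.

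The main obstacle is bookkeeping of measurability through the rough integral and the Picard contraction while the ambient Banach spaces $\mathscr{D}^{\beta}_{X(\omega)}$ themselves vary with $\omega$. The resolution, implicit in the argument above, is to perform everything pointwise in time, where the fibers collapse to the common finite-dimensional space $W$, and only reassemble the $\mathscr{D}^{\alpha,\beta}$-norm at the very end as a countable supremum of measurable pointwise data.
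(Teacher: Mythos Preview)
Your strategy coincides with the paper's: reduce to pointwise-in-time measurability of $\omega\mapsto y^\omega_t$, obtain it via Picard iteration, and then express the $\mathscr{D}^{\alpha,\beta}$-norm as a countable supremum of measurable quantities. However, there is a real gap in your handling of the Picard scheme. You assert ``uniform convergence of these iterates in $\mathscr{D}^\beta_{X(\omega)}([kr,(k+1)r],W)$'', but the fixed-point argument behind Theorem \ref{thm:delay_existence} does \emph{not} give a contraction on the full interval $[kr,(k+1)r]$; it gives one only on a short interval whose length depends on the rough-path norms of $\mathbf{X}(\omega)$ on $[kr,(k+1)r]$, hence on $\omega$. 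So the sequence of iterates you write down, started from the constant $y^\omega_{kr}$ and run on the whole interval, need not converge for each $\omega$, and you cannot pass to a pointwise limit as claimed.

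The paper closes exactly this gap. It introduces the measurable function $A(\omega)$ collecting the rough-path norms and a decreasing $T\colon[0,\infty)\to(0,r]$ such that the Picard scheme converges on $[0,T(A(\omega))]$. It then stratifies $\Omega$ into measurable sets $\Omega_m$ on which intervals of length $r/m$ suffice, runs the iteration separately on each subinterval $[jr/m,(j+1)r/m]$, and concatenates. This makes both the convergence and the measurability go through simultaneously. Your argument becomes correct once you insert this $\omega$-dependent subdivision; without it, the convergence claim is unsupported.
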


\begin{proof}
Let $\Delta$ be defined as \eqref{eqn:def_Delta} and take $g \in \Delta$. Consider the solution $y$ to 
\begin{align*}
 y_t(\omega) &= g_0(\omega) + \int_0^t \sigma(y_{\tau}(\omega),y_{\tau - r}(\omega))\, d \mathbf{X}_{\tau}(\omega),\quad t \geq 0; \\
 y_t(\omega) &= g_t(\omega), \quad t \in [-r,0].
\end{align*}
To simplify notation, set $\| \cdot \|_{\mathscr{D}_{X(\omega)}([0,r])} :=  \| \cdot \|_{\mathscr{D}_{X(\omega)}^{\alpha,\beta}([0,r],W)}$. We will prove that $\omega \mapsto \| y(\omega) \|_{\mathscr{D}_{X(\omega)}([0,r])}$ is measurable. Define
\begin{align*}
y^{1}_t(\omega) := g_0(\omega) + \int_{0}^{t} \sigma \big{(} g_0(\omega) ,g_{\tau - r}(\omega) \big{)}\, d\mathbf{X}_{\tau}(\omega)
\end{align*}
and recursively for $ n\geqslant 1 $
\begin{align*}
y^{n+1}_t(\omega) := g_0(\omega) + \int_{0}^{t}\sigma \big{(} y^{n}_{\tau}(\omega) ,g_{\tau - r}(\omega) \big{)}\, d\mathbf{X}_{\tau}(\omega).
\end{align*}
By induction, one can show that $\omega \mapsto y^n_t(\omega)$ is measurable for every $t \in [0,r]$ and $n \geq 1$. 
By a similar strategy for proving continuity of the It\=o-Lyons map, one can show that $y^n(\omega) \to y(\omega)$ in the space $\mathscr{D}_{X(\omega)}^{\alpha,\beta}([0,T(A(\omega))],W)$ as $n \to \infty$ where  
\begin{align*}
  A(\omega) = \Vert X(\omega)\Vert_{\gamma ;[0,r]} + \Vert\mathbb{X}(\omega)\Vert_{2\gamma ;[0,r]} + \Vert\mathbb{X}(\omega)(-r)\Vert_{2\gamma ;[0,r]}
\end{align*}
and $T \colon [0,\infty) \to (0,r]$ is a decreasing function. 
Define
\begin{align*}
  \Omega_m := \left\{ \omega \in \Omega \,:\, T(A(\omega)) \leq \frac{r}{m} \right\}.
\end{align*}
Then $ \Omega_{m} $ is a measurable subset and $ \Omega = \bigcup_{m \geqslant 1} \Omega_{m} $. 
Fix $m \in \N$ and choose $ \omega \in \Omega_{m} $. Then $ (y^{n}(\omega))_n$ is a Cauchy sequence in the space $\mathscr{D}_{X(\omega)}^{\alpha,\beta}([0,r/m],W)$ and, consequently, converges to some element $\tilde{y}^{0}(\omega)$ for which we can conclude that $\omega \mapsto \tilde{y}^{0}_t(\omega)$ is measurable for every $t \in [0,r/m]$. Now we can repeat this argument in $[\frac{jr}{m},\frac{(j+1)r}{m}]$ for $j = 0, \ldots, m-1$ and obtain a sequence of elements $\tilde{y}^{j}(\omega) \in \mathscr{D}_{X(\omega)}^{\alpha,\beta}([jr/m,(j+1)r/m],W)$ with the properties that $\omega \mapsto \tilde{y}_t^j(\omega)$ is measurable for every $t \in [jr/m,(j+1)r/m]$ and 
\begin{align*}
  y_t(\omega) = \sum_{j=0}^{m-1} \tilde{y}_t^{j}(\omega) \chi_{[\frac{jr}{m},\frac{(j+1)r}{m})}(t).
\end{align*}
This implies that $\omega \mapsto y_t(\omega)$ is measurable for every $t \in [0,r]$ on the subspace $\Omega_m$. Since $m$ was arbitrary, measurability follows also on the space $\Omega$. Note that $y'_t(\omega) = \sigma(y_t(\omega),g_{t - r}(\omega))$, thus
\begin{align*}
 \| y(\omega) \|_{\mathscr{D}_{X(\omega)}([0,r])} = |y_0(\omega)| &+ |y'_0(\omega)| + \sup_{s < t \in [0,r] \cap \Q} \frac{|y'_{s,t}|}{|t-s|^{\alpha}} \\
 &+ \sup_{s < t \in [0,r] \cap \Q} \frac{\left| \int_s^t \sigma(y_{\tau}(\omega),g_{\tau - r}(\omega)) \, d\mathbf{X}_{\tau}(\omega) - \sigma(y_{s}(\omega),g_{s - r}(\omega)) \right|}{|t-s|^{2\alpha}}
\end{align*}
and measurability of $\omega \mapsto \| y(\omega) \|_{\mathscr{D}_{X(\omega)}([0,r])}$ follows. We can now repeat this argument to see that $\omega \mapsto \| y(\omega) \|_{\mathscr{D}_{X(\omega)}([nr,(n+1)r])}$ is measurable for every $n \geq 0$ which proves the theorem.


\end{proof}
\section{A Multiplicative Ergodic Theorem on a measurable field of Banach spaces}\label{sec:MET_Banach_fields}

In this section, $(\Omega,\mathcal{F},\mathbb{P},\theta)$ will denote a measurable metric dynamical system, $(\{E_{\omega}\}_{\omega \in \Omega},\Delta, \Delta_0)$ will be a measurable field of Banach spaces as in Definition \ref{def:meas_field_banach} and $\varphi$ a bounded linear cocycle acting on it, cf. Definition \ref{def:RDS_on_Banach_field}. Our goal is to prove a Multiplicative Ergodic Theorem (MET) in this abstract setting. The strategy we use is close to the one introduced in two recent works, both proving an MET on a Banach space. The first one is due to Blumenthal \cite{Blu16}, the second was written by Gonz\'{a}lez-Tokman and Quas \cite{GTQ15}. 
Note, however, that none of them gives a proof of the MET for cocycles acting on fields of Banach spaces. For that reason, the measurability assumption in these works is very different from ours, and we have to prove measurability for our objects in a completely different way. Furthermore, we do not assume reflexivity of the Banach spaces as in \cite{GTQ15}.

We start with an easy observation.

\begin{lemma}\label{lemma:oper_meas}
 For every $n \in \N$, the map
 \begin{align*}
  \omega \mapsto \| \varphi(n,\omega,\cdot) \|_{L(E_{\omega},E_{\theta^n \omega})}
 \end{align*}
 is measurable. 

\end{lemma}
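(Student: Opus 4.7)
The plan is to express the operator norm as a countable supremum of measurable functions, using the dense set of sections $\Delta_0$ provided by the measurable field structure. The key identity I want to establish is
\begin{align*}
 \| \varphi(n,\omega,\cdot) \|_{L(E_{\omega},E_{\theta^n \omega})} = \sup_{g \in \Delta_0} \frac{\| \varphi(n,\omega,g(\omega)) \|_{E_{\theta^n \omega}}}{\| g(\omega) \|_{E_{\omega}}}
\end{align*}
with the convention that the summand is set to $0$ on the set $\{\omega : g(\omega) = 0\}$.

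First I would verify this identity for a fixed $\omega$. The inequality ``$\geq$'' is immediate from the definition of the operator norm. For the reverse inequality, I would pick an arbitrary $x \in E_{\omega}$ with $\|x\|_{E_\omega} > 0$ and, using property (ii) of Definition \ref{def:meas_field_banach}, choose a sequence $(g_k) \subset \Delta_0$ with $g_k(\omega) \to x$ in $E_{\omega}$. Since eventually $\|g_k(\omega)\|_{E_\omega} > 0$, and since both $\varphi(n,\omega,\cdot)$ and the norm on $E_{\theta^n\omega}$ are continuous, one has $\|\varphi(n,\omega,g_k(\omega))\|_{E_{\theta^n\omega}} / \|g_k(\omega)\|_{E_\omega} \to \|\varphi(n,\omega,x)\|_{E_{\theta^n\omega}}/\|x\|_{E_\omega}$, which yields the claim.

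Second, I would show measurability of each summand $\omega \mapsto \| \varphi(n,\omega,g(\omega)) \|_{E_{\theta^n \omega}} / \| g(\omega) \|_{E_{\omega}}$ for fixed $g \in \Delta_0$. The denominator is measurable by property (iii) of Definition \ref{def:meas_field_banach}, and the numerator is measurable by the action hypothesis on $\varphi$ (Definition \ref{def:RDS_on_Banach_field}). The set $A_g := \{\omega : \|g(\omega)\|_{E_\omega} > 0\}$ is therefore measurable, and on $A_g$ the quotient is measurable as a ratio of measurable functions with nonvanishing denominator; outside $A_g$ we defined the summand to be $0$. Thus the full summand is measurable.

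Finally, because $\Delta_0$ is countable, the supremum over $g \in \Delta_0$ of these measurable functions is measurable, proving the lemma. I do not anticipate a significant obstacle: the only mildly delicate point is the convention $0/0 = 0$ together with the density argument above, which is handled by observing that points where $g(\omega) = 0$ simply do not contribute to the supremum and can safely be discarded by restricting to the measurable set $A_g$.
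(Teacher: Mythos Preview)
Your proof is correct and follows essentially the same approach as the paper: express the operator norm as a countable supremum over $\Delta_0$ via density and continuity of $\varphi(n,\omega,\cdot)$, then invoke measurability of numerator and denominator. The paper's argument is more terse but identical in content, including the same convention for handling the case $g(\omega)=0$.
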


\begin{proof}
 Using properties of $\Delta$ and continuity of $\varphi$,
 \begin{align*}
  \| \varphi(n,\omega,\cdot) \|_{L(E_{\omega},E_{\theta^n \omega})} = \sup_{\xi \in E_{\omega} \setminus\{0\}} \frac{\|\varphi(n,\omega,\xi)\|}{\| \xi \|} = \sup_{g \in \Delta_0} \frac{\|\varphi(n,\omega,g(\omega))\|}{\| g(\omega) \|} \chi_{\{ \|g\| > 0\}}(\omega)
 \end{align*}
 with the convention $\infty \cdot 0 = 0$. Since the fraction on the right hand side is a quotient of measurable functions and the supremum runs over a countable set, measurability follows.

\end{proof}

\begin{definition}
 Let $V$ be a vector space. If we can write $V$ as a direct sum $V = F \oplus H$ of vector spaces, we call it an \emph{algebraic splitting}. We also say that \emph{$F$ is a complement of $H$} and vice versa. The projection operator $\pi_{F \Vert H}(v) = f$ with $v = f + h$, $f \in F$, $h \in H$, is called the \emph{projection operator onto $F$ parallel to $H$}. If $V$ is a normed space and $\pi_{F \Vert H}$ is bounded linear, i.e.
\begin{align*}
 \| \pi_{F \Vert H} \| = \sup_{f \in F, e \in H, f + h \neq 0} \frac{\| f \|}{\|f + h\|} < \infty,
\end{align*}
we call $V = F \oplus H$ a \emph{topological splitting}. 
\end{definition}

%

%
%

The next lemma proves a further measurability result. The assumptions will be justified in the sequel.

\begin{lemma}\label{Z2}
 For $\omega \in \Omega$ and $\mu \in \R$, define the subspace
 \begin{align*}
  F_{\mu}(\omega) := \left\{ \xi \in E_{\omega} \,:\, \limsup_{n \to \infty} \frac{1}{n} \log \| \varphi(n,\omega,\xi) \| \leq \mu \right\}.
 \end{align*}
 Assume that there is a strictly decreasing sequence $ (\mu_{j})_{1 \leqslant j\leqslant N} $, $N \leqslant \infty$, and a $\theta$-invariant, measurable set $\Omega_0 \subset \Omega $ of full measure with the following properties:
 \begin{itemize}
    \item[(i)] $F_{\mu_1}(\omega) = E_{\omega}$ for every $\omega \in \Omega_0$.
  \item[(ii)] For every $ j<N $, there is  a number $m_{j} \in \N$ such that $F_{\mu_{j+1}}(\omega)$ is closed and $m_{j}$-codimensional in $ F_{\mu_{j}}(\omega) $ for every $\omega \in \Omega_0$.
   \item[(iii)]  For every $ j< N $,
  \begin{align}\label{eqn:_bound}
  \lim_{n \to \infty} \frac{1}{n} \log \|\varphi(n,\omega,\cdot)|_{F_{\mu{j}}(\omega)}\| = \mu_j
\end{align}
  for every $\omega \in \Omega_0$.
  \item[(iv)] For every $ j<N $, if $H_{\omega}^{j}$ is any complement of $F_{\mu_{j+1}}(\omega)$ in $F_{\mu_{j}}(\omega)$,
 \begin{align}\label{eqn:lower_bound_subspace}
  \lim_{n \to \infty} \frac{1}{n} \log \inf_{h \in H_{\omega}^{j} \setminus \{0\}} \frac{\|\varphi(n,\omega,h)\|}{\|h\|} = \mu_j
  \end{align}
  for every $\omega \in \Omega_0$.
  \item[(v)]
  \begin{align}\label{eqn:upper_bound_restr_subsp}
   \limsup_{n \to \infty} \frac{1}{n} \log \| \varphi(n,\omega,\cdot)\mid_{F_{\mu_N}(\omega)} \| \leq \mu_N
  \end{align}
  for every $\omega \in \Omega_0$.
 \end{itemize}
Then for every $ n \in \N$ and $ j\leqslant N $ , the map
\begin{align}\label{BCA}
\omega \mapsto {\Vert}\varphi(n,\omega,\cdot) \mid_{F_{\mu_j}(\omega)}{\Vert} \chi_{\Omega_0}(\omega)
\end{align}
is measurable.
\end{lemma}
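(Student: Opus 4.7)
The plan is to approximate $F_{\mu_j}(\omega)$ by open cones defined via finite-time cocycle conditions and use the spectral gap between consecutive Lyapunov exponents to turn the restriction norm into a countable supremum along the dense family $\Delta_0$. The case $j=1$ is immediate from Lemma \ref{lemma:oper_meas}, since $F_{\mu_1}(\omega)=E_\omega$ on $\Omega_0$ by (i). For $2\leq j\leq N$, fix $\varepsilon\in(0,(\mu_{j-1}-\mu_j)/2)$ and, for $M,p\in\N$, define
\begin{align*}
  \rho_M(\omega,\xi)&:=\sup_{m\geq M}\frac{\|\varphi(m,\omega,\xi)\|}{e^{m(\mu_j+\varepsilon)}\|\xi\|}\quad(\xi\neq 0),\\
  V_M^{(p)}(\omega)&:=\{\xi\in E_\omega\setminus\{0\}\,:\,\rho_M(\omega,\xi)<1+1/p\}.
\end{align*}
Since $\rho_M(\omega,\cdot)$ is a supremum of continuous functions, it is lower semicontinuous, so each $V_M^{(p)}(\omega)$ is open in $E_\omega\setminus\{0\}$.

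The first key step is the \emph{spectral gap inclusion} $V_M^{(p)}(\omega)\subset F_{\mu_j}(\omega)$ for $\omega\in\Omega_0$. If $\xi\notin F_{\mu_j}(\omega)$, let $k<j$ be the smallest index with $\xi\in F_{\mu_k}(\omega)$ (such $k$ exists by (i)); then $\xi\in F_{\mu_k}(\omega)\setminus F_{\mu_{k+1}}(\omega)$. Decomposing along any algebraic complement of $F_{\mu_{k+1}}$ in $F_{\mu_k}$ provided by (ii) and combining the upper bound (iii) on the $F_{\mu_{k+1}}$-component with the lower bound (iv) on the complement-component gives $\limsup_m m^{-1}\log(\|\varphi(m,\omega,\xi)\|/\|\xi\|)\geq\mu_k\geq\mu_{j-1}>\mu_j+\varepsilon$, so $\rho_M(\omega,\xi)=+\infty$. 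Conversely, if $\xi\in F_{\mu_j}(\omega)$, assumption (iii) ensures $\xi\in V_M(\omega):=\{\rho_M\leq 1\}\subset V_M^{(p)}(\omega)$ for all $M$ large enough (depending on $\xi$). Together, $V_M(\omega)\subset V_M^{(p)}(\omega)\subset F_{\mu_j}(\omega)$ and $\bigcup_M V_M(\omega)=F_{\mu_j}(\omega)\setminus\{0\}$, whence
\begin{align*}
 N_M^{(p)}(\omega):=\sup_{\xi\in V_M^{(p)}(\omega)}\frac{\|\varphi(n,\omega,\xi)\|}{\|\xi\|}\ \xrightarrow[M\to\infty]{}\ \|\varphi(n,\omega,\cdot)|_{F_{\mu_j}(\omega)}\|\qquad(\omega\in\Omega_0).
\end{align*}

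Measurability of $N_M^{(p)}$ then follows from openness of $V_M^{(p)}(\omega)$ and density of $\Delta_0$: the countable set $\{g\in\Delta_0:g(\omega)\in V_M^{(p)}(\omega)\}$ is dense in $V_M^{(p)}(\omega)$, and continuity of $\xi\mapsto\|\varphi(n,\omega,\xi)\|/\|\xi\|$ on $E_\omega\setminus\{0\}$ yields
\begin{align*}
N_M^{(p)}(\omega)=\sup_{g\in\Delta_0}\frac{\|\varphi(n,\omega,g(\omega))\|}{\|g(\omega)\|}\,\chi_{\{g(\omega)\in V_M^{(p)}(\omega)\}}(\omega).
\end{align*}
Each summand is measurable: $\omega\mapsto\rho_M(\omega,g(\omega))$ is a countable supremum of measurable functions, and $\omega\mapsto\|\varphi(n,\omega,g(\omega))\|$ is measurable by the standing assumption on the cocycle. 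Multiplying by $\chi_{\Omega_0}$ and passing to the limit $M\to\infty$ (a monotone limit of measurable functions) yields measurability of \eqref{BCA}.

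The main obstacle is the spectral gap inclusion, where (ii), (iii) and (iv) must be used simultaneously to exclude Lyapunov exponents strictly between $\mu_j$ and $\mu_{j-1}$. Approximating from \emph{outside} by the \emph{open} cones $V_M^{(p)}(\omega)$ rather than from inside by the closed cones $V_M(\omega)$ is the essential conceptual move: the strict inequality $\rho_M<1+1/p$ still keeps us inside $F_{\mu_j}(\omega)$ by the spectral gap, while openness is exactly what lets us invoke density of $\Delta_0$ to write the supremum as a countable one.
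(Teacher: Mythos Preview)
Your argument is correct and takes a genuinely different route from the paper. The paper proceeds by \emph{induction} on $j$: it first proves that $\omega\mapsto d(g(\omega),F_{\mu_j}(\omega))$ is measurable for every $g\in\Delta$ by approximating the unit sphere of $F_{\mu_j}(\omega)$ through auxiliary sets
\[
B_\omega^{l,k}(\mu_j)=\Big\{\xi:\|\xi\|=1,\ \|\varphi(k,\omega,\xi)\|<e^{k(\mu_j+1/l)},\ d(\xi,F_{\mu_i}(\omega))<e^{k(\mu_j-\mu_{i-1})}\text{ for }i<j\Big\},
\]
where the distance conditions to the lower levels are measurable by the induction hypothesis; the restriction norm is then extracted from these sets by a double limit in $k$ and $l$. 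Your approach is \emph{non-inductive}: you characterise $F_{\mu_j}(\omega)\setminus\{0\}$ directly as the increasing union of the open growth-cones $V_M^{(p)}(\omega)$, and the spectral gap $\mu_{j-1}>\mu_j+\varepsilon$ together with (ii)--(iv) disposes of all higher levels in one stroke. This is cleaner and avoids both the induction and the auxiliary distance functions; the paper's route, however, delivers the measurability of $\omega\mapsto d(g(\omega),F_{\mu_j}(\omega))$ as a byproduct. Two cosmetic remarks: the parameter $p$ is not actually needed (any single open cone $\{\rho_M<2\}$ would do, since the limit is taken only in $M$), and when bounding the $F_{\mu_{k+1}}$-component of $\xi$ you should invoke the very definition of $F_{\mu_{k+1}}(\omega)$ (or (v) in the borderline case $k+1=N$) rather than (iii); neither affects the validity of the proof.
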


\begin{proof}
First we claim that for every $ g\in \Delta $ and $ j\leqslant N $ the map
\begin{align}
\omega \mapsto d\big{(}g(\omega) , F_{\mu_{j}}(\omega)\big{)}
\end{align}
is measurable. To see this, it suffices to show measurability of the function
\begin{align*}
    d\big{(}g(\omega), S_{F_{\mu_{j}}(\omega)}\big{)}:=\inf_{\substack{ \xi \in F_{\mu_{j}}(\omega)\\[2pt] \Vert\xi \Vert=1}}\Vert g(\omega)-\xi \Vert
\end{align*}
where $ S_{F_{\mu_{j}}(\omega)} $ is the unit sphere in $ F_{\mu_{j}}(\omega) $. We use induction to prove the claim. The statement is clear for $j = 1$, so let $ j\geqslant 2 $. For every $ 1\leqslant i<j $, since $ \dim\big{[}\frac{F_{\mu_{i}}(\omega)}{F_{\mu_{i+1}}(\omega)}\big{]}<\infty $, we can find  a finite-dimensional subspace $ H_{i}(\omega)$ such that for a constant\footnote{The existence of this complement with the given bound for the projection is a classical result and follows e.g. from \cite[III.B.11]{Woj91}, cf. also \cite[Lemma 2.3]{Blu16}.}  $ M $,
\begin{align}\label{complement}
F_{\mu_{i}}(\omega)=H_{i}(\omega)\oplus F_{\mu_{i+1}}(\omega)\ \ \ \text{and}\ \ \ \Vert\pi_{H_{i}(\omega)||F_{\mu_{i+1}}(\omega)}\Vert< M.
\end{align}
For $ \mu_{0}:=\mu_{1} $ and $l,k \geq 1$ set 
\begin{align*}
B_{\omega}^{l,k}(\mu_{j})=\bigg{\lbrace} \xi \in E_{\omega}:\ \| \xi\| = 1, \ &\Vert\varphi(k,\omega,\xi) \Vert< \exp \big{(}  k(\mu_j + \frac{1}{l}) \big{)}\ \text{and}\ \\ &d\big{(}\xi, F_{\mu_{i}}(\omega)\big{)}<\exp\big{(}k(\mu_{j}-\mu_{i-1})\big{)}, 1\leqslant i<j  \bigg{\rbrace}.
\end{align*}
We claim that
\begin{align}\label{mea_dis}
d\big{(}g(\omega), S_{F_{\mu_{j}}(\omega)}\big{)}=\lim_{k\rightarrow\infty}\liminf_{l\rightarrow\infty}d\big{(}g(\omega), B^{l,k}_{\omega}(\mu_{j})\big{)}.
\end{align}
Set the right side equal to $ A $. By definition, it is straightforward  to show that $ d\big{(}g(\omega), S_{F_{\mu_{j}}(\omega)}\big{)}\geqslant A$. For the opposite direction, let $ \epsilon>0 $. For large $ k,l $ we can find $ \xi^{l,k}\in B_{\omega}^{l,k}(\mu_{j}) $ such that $\Vert g(\omega)-\xi^{l,k}\Vert\leqslant A+\epsilon $. By  our assumptions on $ B^{k,l}_{\omega}(\mu_{j})$, we have a decomposition of the form
\begin{align*}
\xi^{l,k}=\sum_{1\leqslant i<j-1}h_{i}^{l,k}+h_{j-1}^{l,k}+f^{l,k}
\end{align*}
such that for $ 1\leqslant i<j $, $ h_{i}^{l,k}\in H_{i}(\omega)$ and $ f^{l,k}\in F_{\mu_{j}}(\omega) $. Moreover, there is a constant $ \tilde{M} $ such that for $ 1\leqslant i <j-1 $,  
\begin{align*}
\Vert h_{i}^{l,k}\Vert <\tilde{M}\, d\big{(}\xi^{l,k},F_{\mu_{i+1}}(\omega)\big{)}\ \ \ \text{and}\ \ \ \ \Vert f^{l,k}\Vert <\tilde{M}.
\end{align*}
From \eqref{eqn:lower_bound_subspace}, choosing $k$ larger if necessary, we obtain that for a given $\delta > 0$,
\begin{align*}
 \exp\big{(}k(\mu_{j-1}-\delta)\big{)}\Vert h^{l,k}_{j-1}\Vert \leqslant \Vert\varphi(k,\omega,h_{j-1}^{l,k})\Vert&\leqslant \Vert\varphi(k,\omega,\xi^{l,k})\Vert+\\&\sum_{1\leqslant i<j-1}\Vert\varphi(k,\omega,h_{i}^{l,k})\Vert+\tilde{M}\, \Vert\varphi(k,\omega,.)|_{F_{\mu_{j}}(\omega)}\Vert.
 \end{align*}
Consequently, from our assumptions on $ B^{l,k}_{\omega}(\mu_{j}) $ and \eqref{eqn:_bound}, we obtain for large $l,k$
 \begin{align*}
 \Vert h^{l,k}_{j-1}\Vert \leqslant \tilde{\tilde{M}}\, \exp\big{(}k(\mu_{j}-\mu_{j-1}+2\delta)\big{)}
 \end{align*}
 for a constant $ \tilde{\tilde{M}} $. Now for large $ l,k $,
\begin{align*}
\Vert \sum_{1\leqslant i<j}h_{i}^{l,k}\Vert<\epsilon\ , \ \ \ \ \ 1-\epsilon\leqslant\Vert f^{l,k}\Vert\leqslant 1+\epsilon.
\end{align*}
Consequently, $ d\big{(}g(\omega),S_{F_{\mu_{j}(\omega)}}(\omega)\big{)}\leqslant A $ and \eqref{mea_dis} is proved. The rest of the proof is straightforward: For $ \tilde{g}\in\Delta $ we set 
\begin{align*}
C^{l,k,j}(\tilde{g}):=\big{\lbrace} \omega\ :\ \frac{\tilde{g}(\omega)}{\Vert \tilde{g}(\omega)\Vert}\in B^{l,k}_{\omega}(\mu_{j})\big{\rbrace}
\end{align*}
From the definition of $B^{l,k}_{\omega}(\mu_j)$ and the induction hypothesis, $ C^{l,k,j}(\tilde{g}) $ is measurable for every $k,l \geq 1$. Note that
\begin{align*}
    d\big{(}g(\omega), S_{F_{\mu_{j}}(\omega)}\big{)} = \inf_{\tilde{g}\in\Delta_{0}} J_{\tilde{g}}(\omega)
\end{align*}
where 
\begin{equation}
J_{\tilde{g}}(\omega) = 
 \begin{cases}
       \infty &\quad \text{if } \omega\notin C^{l,k,j}(\tilde{g}) \\
       \Vert g(\omega)-\frac{\tilde{g}(\omega)}{\Vert \tilde{g}(\omega)\Vert}\Vert &\quad\text{otherwise.} \\ 
     \end{cases}
\end{equation}
Since $J_{\tilde{g}}(\omega)$ is measurable, this proves the claim. Therefore, we have also shown measurability of $C^{l,k,j}(g)$ for every $j,k,l \geq 1$ and $g \in \Delta$. Next, with the same argument as above, we can show that 
\begin{align*}
{\Vert}{(}\varphi(n,\omega,\cdot) \mid_{F_{\mu_j}(\omega)}&{\Vert} \chi_{\Omega_0}(\omega)=\lim _{l \rightarrow\infty}\liminf_{k \rightarrow\infty}\bigg{[}\sup_{\xi\in B_{\omega}^{l,k}(\mu_{j})} \Vert\varphi (n,\omega,\xi)\Vert\bigg{]}\chi_{\Omega_0}(\omega)
\end{align*}
for every $j  \geq 2$. Since
\begin{align*}
\sup_{\xi\in B_{\omega}^{l,k}(\mu_{j})} \Vert\varphi (n,\omega,\xi)\Vert=\sup_{g\in\Delta_{0}}\frac{\Vert\varphi(n,\omega,g(\omega))\Vert}{\Vert g(\omega)\Vert}\chi_{C^{l,k,j}(g)}(\omega),
\end{align*} 
measurability of \eqref{BCA} follows.
\end{proof}

The next lemma is a version of \cite[Lemma 3.7]{Blu16}. Unfortunately, there was a gap in proof which, however, was corrected in a subsequent erratum\footnote{Private communication with A.~Blumenthal.}. We present a full proof here, using the strategy of the above mentioned erratum. 
\begin{lemma}\label{Mea}
Let the same assumptions as in Lemma \ref{Z2} be satisfied. Then there exists a $ \theta$-invariant, measurable set $\Omega_1 \subset \Omega$ of full measure such that for every $\omega \in \Omega_{1}$, if $H_{\omega}$ is a complement of $F_{\mu_2}(\omega)$ in $E_{\omega}$, we have 
\begin{align}\label{PRo}
\lim _{n\rightarrow\infty}\frac{1}{n}\log \Vert\pi_{\varphi(n,\omega,H_{\omega})\parallel F_{\mu_{2}}(\theta^{n}\omega)}\Vert = 0.
\end{align}
\end{lemma}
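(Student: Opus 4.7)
My plan is to sandwich $\tfrac{1}{n}\log\alpha_n(\omega)$, where
\[
\alpha_n(\omega) := \bigl\|\pi_{\varphi(n,\omega,H_\omega)\,\parallel\,F_{\mu_2}(\theta^n\omega)}\bigr\|,
\]
between $0$ from below and arbitrary $\epsilon'>0$ from above. The lower bound is free: every nontrivial projection has operator norm at least $1$, so it suffices to check the projection is well-defined. Injectivity of $\varphi(n,\omega,\cdot)|_{H_\omega}$ comes from (iv), and $\varphi(n,\omega,H_\omega)\cap F_{\mu_2}(\theta^n\omega)=\{0\}$ follows because any $\varphi(n,\omega,h)\in F_{\mu_2}(\theta^n\omega)$ with $h\neq 0$ would force growth rate $\leq\mu_2$ along $\varphi(m,\theta^n\omega,\cdot)$, contradicting (iv) applied to $h$ via the cocycle identity $\varphi(m,\theta^n\omega,\varphi(n,\omega,h))=\varphi(n+m,\omega,h)$. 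Since $\varphi(n,\omega,H_\omega)$ is $m_1$-dimensional and $F_{\mu_2}(\theta^n\omega)$ has codimension $m_1$, this gives a topological direct-sum decomposition.

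For the upper bound, I would fix $\epsilon\in(0,(\mu_1-\mu_2)/4)$ and take nonzero $v_H=\varphi(n,\omega,h)\in\varphi(n,\omega,H_\omega)$ and $v_F\in F_{\mu_2}(\theta^n\omega)$, normalized so that $\|v_H\|=1$. If $\|v_F\|>2$ the reverse triangle inequality already gives $\|v_H\|/\|v_H+v_F\|\leq 1$, so I restrict to $\|v_F\|\leq 2$. The key is to push everything forward by $\varphi(m,\theta^n\omega,\cdot)$ for a well-chosen $m=m(n,\epsilon)$, and combine the bounds
\begin{align*}
\|\varphi(m,\theta^n\omega,v_H)\| = \|\varphi(n+m,\omega,h)\| &\geq e^{m(\mu_1-\epsilon)-2n\epsilon},\\
\|\varphi(m,\theta^n\omega,v_F)\| &\leq 2\, e^{m(\mu_2+\epsilon)},\\
\|\varphi(m,\theta^n\omega,v_H+v_F)\| &\leq e^{m(\mu_1+\epsilon)}\|v_H+v_F\|,
\end{align*}
where the first line uses (iv) at $\omega$ together with $\|v_H\|\leq e^{n(\mu_1+\epsilon)}\|h\|$ from (iii) at $\omega$ with $j=1$, and the second and third lines come from (iii) at $\theta^n\omega$ with $j=2$ and $j=1$, respectively. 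The triangle inequality then yields
\[
e^{m(\mu_1+\epsilon)}\|v_H+v_F\| \;\geq\; e^{m(\mu_1-\epsilon)-2n\epsilon} - 2\,e^{m(\mu_2+\epsilon)}.
\]
Choosing $m=\lceil C n\rceil$ with $C:=3\epsilon/(\mu_1-\mu_2-2\epsilon)$ forces the negative term to be at most half the positive one, giving $\|v_H+v_F\|\geq\tfrac12 e^{-(2C+2)n\epsilon}$, hence $\alpha_n(\omega)\leq 2\,e^{(2C+2)n\epsilon}$. As $\epsilon\searrow 0$ one has $(2C(\epsilon)+2)\epsilon\to 0$, so $\limsup_n\tfrac1n\log\alpha_n(\omega)\leq 0$.

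The main obstacle is that the growth bounds from (iii) at $\theta^n\omega$ only kick in past a threshold $M(\theta^n\omega,\epsilon)$ that varies as we move along the orbit, whereas the argument demands $m=Cn\geq M(\theta^n\omega,\epsilon)$. I would handle this by invoking temperedness: under the standard MET integrability hypothesis $\log^+\|\varphi(1,\omega,\cdot)\|\in L^1$ (in force throughout Section \ref{sec:MET_Banach_fields}), Birkhoff's ergodic theorem combined with Borel--Cantelli forces $M(\theta^n\omega,\epsilon)=o(n)$ almost surely, which makes $m=\lceil Cn\rceil$ admissible for all $n$ beyond some $\omega$-dependent threshold. The set $\Omega_1$ is then obtained by removing from $\Omega_0$ the measurable null set where temperedness fails and intersecting with all $\theta^k$-preimages to restore $\theta$-invariance. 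Crucially, the whole argument is uniform in the choice of complement $H_\omega$, since only the filtration-level estimates (iii)--(iv) enter, so the conclusion holds simultaneously for every $H_\omega$.
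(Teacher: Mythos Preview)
Your core strategy coincides with the paper's: exploit the spectral gap $\mu_1>\mu_2$ by iterating forward from time $n$ via $\varphi(m,\theta^n\omega,\cdot)$ and comparing the growth of the $H$-part against the $F_{\mu_2}$-part. The paper argues by contradiction and takes $p=n_k$; you give a direct estimate with $m=\lceil Cn\rceil$. These are equivalent in content.

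The real issue is your temperedness step. You claim that Birkhoff together with Borel--Cantelli gives $M(\theta^n\omega,\epsilon)=o(n)$ almost surely, but this is not what those tools deliver. Borel--Cantelli (via $\theta$-invariance of $\P$) would need $\sum_n \P(M(\cdot,\epsilon)>cn)<\infty$ for every $c>0$, which is equivalent to $\E M(\cdot,\epsilon)<\infty$; you have not established this, and a.s.\ finiteness of $M$ alone does not suffice. Birkhoff applied to $\log^+\|\varphi(1,\cdot,\cdot)\|$ only yields the crude bound $\tfrac1m\log\|\varphi(m,\theta^n\omega,\cdot)\|\lesssim \E\log^+\|\varphi(1,\cdot,\cdot)\|$, not the sharp rate $\mu_j+\epsilon$ on $F_{\mu_j}$ that your inequalities (2) and (3) require.

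The paper fixes exactly this point by working with the envelope functions
\[
\phi_j(\omega)=\sup_{p\ge 0}\,e^{-p(\mu_j+\delta)}\bigl\|\varphi(p,\omega,\cdot)|_{F_{\mu_j}(\omega)}\bigr\|,\qquad j=1,2,
\]
whose measurability follows from Lemma~\ref{Z2}. Submultiplicativity gives $\log^+\bigl(\phi_j(\omega)/\phi_j(\theta\omega)\bigr)\le |\mu_j+\delta|+\log^+\|\varphi(1,\omega,\cdot)\|\in L^1$, so Ma\~n\'e's lemma yields $\tfrac1n\log^+\phi_j(\theta^n\omega)\to 0$. This produces bounds $\|\varphi(p,\theta^n\omega,\cdot)|_{F_{\mu_j}}\|\le e^{p(\mu_j+\delta)+n\epsilon}$ valid for \emph{all} $p\ge 0$, eliminating the moving threshold entirely. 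If you replace your Birkhoff/Borel--Cantelli invocation with this envelope argument, your direct estimate goes through cleanly.
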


\begin{proof}
It is enough to show that
\begin{align}\label{lim}
\limsup_{n\rightarrow\infty}\log \Vert\pi_{\varphi(n,\omega,H_{\omega})\parallel F_{\mu_{2}}(\theta^{n}\omega)}\Vert \leqslant 0.
\end{align}
Define
\begin{align*}
&\phi_{1}(\omega)=\sup_{p\geqslant 0} \exp\big{(}{-p(\mu_{1}+\delta)}\big{)}\Vert\varphi(p,\omega,\cdot)\Vert \\
&\phi_{2}(\omega)=\sup_{p\geqslant 0} \exp\big{(}{-p(\mu_{2}+\delta)}\big{)}\Vert\varphi(p,\omega,\cdot)|_{F_{\mu_{2}}(\omega)}\Vert
\end{align*}
From Lemma \ref{Z2}, $\phi_{1}$ and $\phi_{2} $ are measurable functions and bounded on a set of full measure $\Omega_0$. So from \cite[Lemma III.8]{Man83},
there exists a measurable subset $\Omega_1$ of full measure such that for any $\omega \in \Omega_{1}$,
 \begin{align}\label{ASD}
 \lim_{n\rightarrow\infty}\frac{1}{n}\log^+ \phi(\theta^{n} \omega)=0
 \end{align}
 where $ \phi(\omega) = \max\lbrace\phi_{1}(\omega),\phi_{2}(\omega)\rbrace$. Note that we can assume that $\Omega_{1}$ is also $ \theta$-invariant, otherwise we can replace it by $ \bigcap_{j \in \Z}(\theta^{j})^{-1}(\Omega_{1})$. Fix $\omega \in \Omega_1$ and assume that $ H_{\omega} \oplus F_{\mu_{2}}(\omega)= E_{\omega}$. Let $\epsilon > 0$. From (\ref{eqn:_bound}) and (\ref{eqn:lower_bound_subspace}), we can find an $N \in \N $ such that for $n\geqslant N$, 
 \begin{align}\label{AZ}
 \begin{split}
 \Vert\varphi(n,\omega,\cdot)\Vert\leqslant \exp\big{(} &n(\mu_{1}+\delta)\big{)}\ , \ \ \inf_{h \in H_{\omega}\setminus\lbrace 0\rbrace }\frac{\Vert\varphi(n,\omega,h)\Vert}{\Vert h \Vert}\geqslant \exp\big{(}n(\mu_{1}-\delta) \big{)} \\
 &\phi(\theta^{n}\omega)\leqslant \exp(n\epsilon).
 \end{split}
 \end{align}
 We prove \eqref{lim} by contradiction. Assume there is a ${\gamma} >0$ and a sequence $\big{(}n_{k},h_k,f_k \big{)} \in \big{(}\mathbb{N},H_{\omega},F_{\mu_{2}}(\theta^{n_{k}}\omega)\big{)}$ such that
\begin{align}\label{ZA}
\begin{split}
n_{k} \rightarrow \infty \ , \ \Vert\ h_k \Vert =1 \ \text{and} \ \ \frac{\Vert\varphi(n_{k},\omega,h_k)\Vert}{\Vert\varphi(n_{k},\omega,h_k) -f_k \Vert}\geqslant\frac{1}{2}\exp(n_{k}{\gamma}) \text{ for all } k\geq 1.
\end{split}
\end{align}
For $p \geq 0$,
\begin{align}\label{BBN}
\begin{split}
\Vert\varphi(n_{k}+p,\omega,h_k)\Vert &=\Vert\varphi(p,\theta^{n_{k}} \omega, \varphi(n_{k},\omega,h_k)) \Vert\\
&\leqslant \Vert\varphi(p,\theta^{n_{k}}\omega,\cdot)\Vert \Vert\varphi(n_{k},\omega,h_k) - f_k \Vert + \Vert\varphi(p,\theta^{n_{k}}\omega,\cdot)|_{F_{\theta^{n_{k}}\omega}}\Vert \Vert f_k \Vert
\end{split}
\end{align}
From \eqref{ZA}, it follows that $\Vert f_k \Vert\leqslant 3\Vert\varphi(n_{k},\omega ,h_k) \Vert $. Now for large $ n_{k} $, from (\ref{AZ}) and (\ref{BBN}),
\begin{align*}
\exp\big{(}(n_{k}+p)(\mu_{1}-\delta)\big{)}& \leqslant 2\exp\bigg{(}n_{k}\epsilon +p(\mu_{1}+\delta)+n_{k}(\mu_{1}+\delta)-n_{k} {\gamma}\bigg{)}\\
&+3\exp\bigg{(}p(\mu_{2}+\delta)+n_{k}\epsilon+n_{k}(\mu_{1}+\delta)\bigg{)}.
\end{align*}
Choosing $ p=n_{k}$ and $ \delta ,\epsilon $ small, we will have a contradiction. 
\end{proof}

The following definition is taken from \cite{GTQ15}.
\begin{definition}\label{def:volume}
Let $ X ,Y $ be  Banach spaces. For $ x_{1},...,x_{k}\in X $, we define 
\begin{align}\label{VOL}
  \operatorname{Vol}(x_{1},x_{2},...,x_{k}):=\Vert x_{1}\Vert\prod_{i=2}^{k}d(x_{i},\langle x_{j} \rangle_{1\leqslant j<i})
 \end{align}
 where $d$ denotes the usual distance between a point and a subset in $X$. For a given bounded linear function $ T : X\rightarrow Y $ and $k \geq 1$, set 
\begin{align*}
D_{k}(T):=\sup_{\Vert x_{i}\Vert =1 ; i=1,...,k} \operatorname{Vol} \big{(}T(x_{1}),T(x_{2}),...,T(x_{k})\big{)}
\end{align*}
\end{definition}

We summarize some basic properties of $D_k$ in the next lemma.
\begin{lemma}\label{lemma:prop_Dk}
 Let $X,Y,Z$ be Banach spaces and $ T:X\rightarrow Y $,  $S:Y\rightarrow Z $ bounded linear maps.
 \begin{itemize}
  \item[(i)] $D_{1}(T)=\Vert T\Vert$ and $D_{k}(T)\leqslant\Vert T\Vert^{k}$ for $k \geq 1$.
  \item[(ii)] $D_{k}(S\circ T)\leqslant D_{k}(S)D_{k}(T)$ for $k \geq 1$.
 \end{itemize}

\end{lemma}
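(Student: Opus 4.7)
For part (i), the first equality is immediate: $D_{1}(T) = \sup_{\|x\|=1}\operatorname{Vol}(Tx) = \sup_{\|x\|=1}\|Tx\| = \|T\|$. For the bound $D_{k}(T)\leq \|T\|^{k}$, observe that since $0\in\langle Tx_{j}\rangle_{1\leq j<i}$, we have $d(Tx_{i},\langle Tx_{j}\rangle_{j<i})\leq \|Tx_{i}\|\leq \|T\|$ whenever $\|x_{i}\|=1$. Multiplying these bounds together with $\|Tx_{1}\|\leq\|T\|$ gives $\operatorname{Vol}(Tx_{1},\ldots,Tx_{k})\leq\|T\|^{k}$.

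For part (ii), the central claim from which (ii) will follow is that for \emph{arbitrary} $y_{1},\ldots,y_{k}\in Y$,
\begin{align*}
 \operatorname{Vol}(Sy_{1},\ldots,Sy_{k})\leq D_{k}(S)\operatorname{Vol}(y_{1},\ldots,y_{k}).
\end{align*}
Once this is established, the lemma follows by specializing to $y_{i}=Tx_{i}$ with $\|x_{i}\|=1$: the right-hand side is then bounded by $D_{k}(S)\cdot D_{k}(T)$, and taking the supremum over admissible $x_{i}$ yields $D_{k}(S\circ T)\leq D_{k}(S)D_{k}(T)$. The case of linearly dependent $y_{i}$ is trivial (the left-hand side also vanishes, since $Sy_{i}$ satisfies the same linear dependencies by linearity), so we may assume linear independence.

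The proof of the central claim rests on two elementary properties of $\operatorname{Vol}$ which follow directly from the definition \eqref{VOL}: (a) \emph{Homogeneity}, $\operatorname{Vol}(\ldots,\lambda y_{i},\ldots)=|\lambda|\operatorname{Vol}(\ldots,y_{i},\ldots)$; and (b) \emph{Upper-triangular invariance}, $\operatorname{Vol}(y_{1},\ldots,y_{i}+w,\ldots,y_{k})=\operatorname{Vol}(y_{1},\ldots,y_{k})$ whenever $w\in\langle y_{j}\rangle_{j<i}$. The plan is to perform a Gram--Schmidt-style normalization: fix $\epsilon>0$, set $\hat{y}_{1}:=y_{1}/\|y_{1}\|$ and, inductively for $i\geq 2$, pick $z_{i}\in\langle y_{j}\rangle_{j<i}$ with $\|y_{i}-z_{i}\|\leq(1+\epsilon)d(y_{i},\langle y_{j}\rangle_{j<i})=:(1+\epsilon)d_{i}$ and set $\hat{y}_{i}:=(y_{i}-z_{i})/\|y_{i}-z_{i}\|$, so $\|\hat{y}_{i}\|=1$. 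Linearity of $S$ gives $S\hat{y}_{i}=(Sy_{i}-Sz_{i})/\|y_{i}-z_{i}\|$ with $Sz_{i}\in\langle Sy_{j}\rangle_{j<i}$. Applying properties (a) and (b) to both tuples yields
\begin{align*}
 \operatorname{Vol}(S\hat{y}_{1},\ldots,S\hat{y}_{k}) = \frac{\operatorname{Vol}(Sy_{1},\ldots,Sy_{k})}{\prod_{i=1}^{k}\|y_{i}-z_{i}\|}\quad\text{and}\quad \operatorname{Vol}(\hat{y}_{1},\ldots,\hat{y}_{k})\geq\frac{\operatorname{Vol}(y_{1},\ldots,y_{k})}{(1+\epsilon)^{k}\prod_{i=1}^{k}d_{i}}=\frac{1}{(1+\epsilon)^{k}},
\end{align*}
the last equality using the telescoping expression $\operatorname{Vol}(y_{1},\ldots,y_{k})=\|y_{1}\|\prod_{i\geq 2}d_{i}$. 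Combining with $\operatorname{Vol}(S\hat{y}_{1},\ldots,S\hat{y}_{k})\leq D_{k}(S)$ (since $\|\hat{y}_{i}\|=1$) and letting $\epsilon\to 0$ gives the central claim.

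The only delicate point is the infimum defining $d_{i}$ may not be attained, which is why the $\epsilon$-approximation is needed; this is a standard maneuver but must be carried through carefully. Everything else reduces to bookkeeping with the telescoping identity for $\operatorname{Vol}$.
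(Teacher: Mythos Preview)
Your proof is correct. The paper itself does not give an argument: it declares (i) straightforward and cites \cite[Lemma 1]{GTQ15} for (ii). Your Gram--Schmidt normalization together with the homogeneity and upper-triangular invariance of $\operatorname{Vol}$ is precisely the standard route to the submultiplicativity $D_k(S\circ T)\le D_k(S)D_k(T)$, and is in line with the argument found in the cited reference. One minor cosmetic point: the second displayed inequality about $\operatorname{Vol}(\hat{y}_1,\ldots,\hat{y}_k)$ is not strictly needed---the chain $\operatorname{Vol}(Sy_1,\ldots,Sy_k)=\big(\prod_i\|y_i-z_i\|\big)\operatorname{Vol}(S\hat{y}_1,\ldots,S\hat{y}_k)\le (1+\epsilon)^k\big(\prod_i d_i\big)D_k(S)=(1+\epsilon)^k\operatorname{Vol}(y_1,\ldots,y_k)D_k(S)$ already gives the central claim directly---but the logic is sound either way.
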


\begin{proof}
 The proof of (i) is straightforward, (ii) is proven in \cite[Lemma 1]{GTQ15}.
\end{proof}
%
\begin{lemma}\label{estimate2}
Let $T :X\rightarrow Y $ be a bounded linear map between two Banach spaces, $ x\in \langle x_{i} \rangle_{1\leqslant i\leqslant k}$ and  $ \Vert x_{i}\Vert =1$. Then there exists a constant $\alpha_{k}$ which only depends on $k$ such that
\begin{align*}
  \operatorname{Vol}\big{(}T(x_{1}),T(x_{2}),...,T(x_{k})\big{)}\leqslant \alpha_{k}\Vert T\Vert ^{k-1}\frac{\Vert Tx\Vert}{\Vert x\Vert}
\end{align*}
\end{lemma}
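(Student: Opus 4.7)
The plan is to reduce the volume on the left-hand side to a single distance estimate, using that $x$ is a non-trivial linear combination of $x_1, \ldots, x_k$. First, I would write $x = \sum_{i=1}^{k} c_i x_i$ and pick an index $i_0$ realizing $|c_{i_0}| = \max_j |c_j|$. Since $\|x_j\| = 1$, the triangle inequality gives $\|x\| \leq k\, |c_{i_0}|$, hence $|c_{i_0}| \geq \|x\|/k$. From the identity $c_{i_0} T x_{i_0} = Tx - \sum_{j \neq i_0} c_j T x_j$ one immediately reads off
\[
d\bigl(T x_{i_0},\, \langle T x_j : j \neq i_0 \rangle\bigr) \;\leq\; \frac{\|Tx\|}{|c_{i_0}|} \;\leq\; \frac{k\, \|Tx\|}{\|x\|},
\]
which will serve as the ``good'' factor in the volume.

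Next I would establish an auxiliary permutation lemma: for any vectors $v_1, \ldots, v_k$ in a Banach space and any permutation $\pi \in S_k$,
\[
\operatorname{Vol}(v_{\pi(1)}, \ldots, v_{\pi(k)}) \leq C_k\, \operatorname{Vol}(v_1, \ldots, v_k)
\]
with $C_k$ depending only on $k$. By factoring $\pi$ into adjacent transpositions it suffices to swap two neighbouring entries; passing to the quotient by the span of the preceding vectors reduces the task to showing that in any $2$-dimensional normed space the quantities $\|u\|\, d(v, \langle u\rangle)$ and $\|v\|\, d(u, \langle v\rangle)$ are comparable up to a universal constant (the factor $2$ suffices). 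This in turn follows from a short computation: writing $d(u,\langle v\rangle) = \|u - tv\|$, using the triangle inequality to bound $|t|$, and rescaling to control $\|v - t^{-1} u\|$. Since any permutation is a product of at most $k(k-1)/2$ adjacent transpositions one may take $C_k = 2^{k(k-1)/2}$.

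Finally I would choose a permutation $\pi$ moving $i_0$ to the last position and apply the permutation lemma, so that the Gram--Schmidt expansion
\[
\operatorname{Vol}(Tx_{\pi(1)}, \ldots, Tx_{\pi(k-1)}, Tx_{i_0}) = \|Tx_{\pi(1)}\| \prod_{j=2}^{k-1} d\bigl(Tx_{\pi(j)}, \langle Tx_{\pi(l)} : l<j\rangle\bigr) \cdot d\bigl(Tx_{i_0}, \langle Tx_j : j \neq i_0\rangle\bigr)
\]
becomes the product of $k-1$ factors each bounded by $\|T\|$ (since $\|x_i\|=1$) times the good factor bounded in the first step. Combining everything gives the claim with $\alpha_k = k \cdot 2^{k(k-1)/2}$.

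The main obstacle is the permutation lemma: unlike in a Hilbert space, where $\operatorname{Vol}$ is a symmetric function of its arguments (given by a Gram determinant), in a Banach space the ordering matters and one must pay a constant to reorder. Once this is isolated, everything else amounts to routine estimates.
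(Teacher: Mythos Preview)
Your proof is correct and follows the same opening move as the paper: pick a coordinate $i_0$ with $|c_{i_0}| \geq \|x\|/k$, move $x_{i_0}$ to the last slot, and bound the final distance factor by $k\|Tx\|/\|x\|$ while bounding the remaining $k-1$ factors by $\|T\|$ each. The difference lies entirely in how the reordering is justified. The paper does not prove a permutation lemma directly; instead it invokes \cite[Proposition 2.14]{Blu16} (essentially John's theorem) to equip the finite-dimensional span $\langle T x_1,\ldots,T x_k\rangle$ with an inner product whose norm is $\sqrt{k}$-equivalent to the given one, observes that the Hilbert-space volume $\operatorname{Vol}_V$ is exactly permutation-invariant, and absorbs the distortion into~$\alpha_k$. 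Your route is more elementary and self-contained, at the cost of a somewhat larger constant ($k\cdot 2^{k(k-1)/2}$ versus roughly $k^{k+1}$). One small point: your sketch of the two-dimensional step (``bound $|t|$ by the triangle inequality, then rescale'') only yields an \emph{upper} bound $|t|\|v\|\leq 2\|u\|$, which by itself goes the wrong way; the clean argument needs a case split according to whether $|t|\|v\|\geq \|u\|/2$ (then $\|u\|/|t|\leq 2\|v\|$ and the rescaling works) or $|t|\|v\|<\|u\|/2$ (then $d(u,\langle v\rangle)\geq \|u\|/2$ directly). With that patch the factor $2$ is indeed correct.
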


\begin{proof}
  Assume $ \frac{x}{\Vert x\Vert}=\sum_{1\leqslant j\leqslant k}\beta_{j}x_{j}$. Consequently, there exists $ 1\leqslant t\leqslant k $ such that $ \beta_{t}\geqslant\frac{1}{k}$. Define $y = (y_1,\ldots,y_k)$ as
\begin{align*}
  y_i =
\begin{cases} 
x_{i} & \text{for } i\neq t,n, \\ 
x_{n} & \text{for } i = t, \\
x_{t} & \text{for } i = n. 
\end{cases} 
 \end{align*}
By definition,
\begin{align}\label{Vol2}
\begin{split}
  \operatorname{Vol}\big{(}T(y_{1}),T(y_{2}),...,T(y_{n})\big{)} &\leqslant\Vert T\Vert ^{k-1}d \big{(}T(y_{n}),\langle T(y_{i}) \rangle_{1\leqslant i\leqslant n-1}\big{)}\\ &\leqslant k\Vert T\Vert^{k-1}\frac{\Vert Tx\Vert}{\Vert x\Vert}.
\end{split}
\end{align}
From \cite[Proposition 2.14]{Blu16}, there is an inner product $(\cdot,\cdot)_{V}$ on $ V = \langle T(x_{i}) \rangle_{1\leqslant i\leqslant k} $ such that 
\begin{align*}
\frac{1}{\sqrt{k}}\leqslant\frac{\Vert T(x)\Vert_{V}}{\Vert T(x)\Vert} \leqslant\sqrt{k}  \ \ \ \ \ \ \ \ \ \forall x\in \langle x_{i} \rangle_{1\leqslant i\leqslant k}.
\end{align*}
It is not hard to see that this implies that
\begin{align*}
\sqrt{k}\leqslant\frac{d_{V}\big{(}T(x_{j}), \langle T(x_{i}) \rangle_{1\leqslant i<j}\big{)}}{d\big{(}T(x_{j}), \langle T(x_{i}) \rangle_{1\leqslant i<j}\big{)}}\leqslant\sqrt{k}
\end{align*}
and, consequently, 
\begin{align}\label{VOL3}
(\frac{1}{\sqrt{k}})^{k}\leqslant\frac{\operatorname{Vol}_{V}\big{(}T(x_{1}),...,T(x_{k})\big{)}
}{\operatorname{Vol}\big{(}T(x_{1}),...,T(x_{k})\big{)}}\leqslant (\sqrt{k})^{k}.
\end{align}
Note that $\operatorname{Vol}_{V}\big{(}T(x_{1}),...,T(x_{k})\big{)} =\operatorname{Vol}_V \big{(}T(y_{1}),T(y_{2}),...,T(y_{k})\big{)} $ so our claim follows from  (\ref{Vol2}) and (\ref{VOL3}).
\end{proof}

\begin{lemma}\label{restr}
Assume that $ X , Y $  are Banach spaces and that $ T: X\rightarrow Y $ is a linear map. Let $ V\subset X $ be a closed subspace of codimension $m$. Then for $ k>m $, there exists a constant $ C $ which only depends on $k$ and $m$ such that
\begin{align}
D_{k}(T)\leqslant C D_{m}(T)D_{k-m}(T|_{V})
\end{align}
\end{lemma}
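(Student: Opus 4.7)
My plan is to bound $\operatorname{Vol}(T(x_1),\ldots,T(x_k))$ for arbitrary unit vectors $x_1,\ldots,x_k \in X$ by a quantity of the form $C(k,m) D_m(T) D_{k-m}(T|_V)$, and then take the supremum. The idea is to ``correct'' $k-m$ of the vectors so that they lie in $V$, using the structure that the quotient $X/V$ has dimension exactly $m$.

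First, I would perform a greedy selection in $X/V$. Write $[x] \in X/V$ for the quotient class. I pick $i_1$ maximizing $\|[x_{i_1}]\|$, and inductively $i_t$ maximizing $d\bigl([x_{i_t}],\langle [x_{i_1}],\ldots,[x_{i_{t-1}}]\rangle\bigr)$, stopping when this distance becomes $0$ or when $t = m$. Padding with vectors from $V$ if necessary, I obtain indices $i_1,\ldots,i_m$ and for every $j \notin \{i_1,\ldots,i_m\}$ unique scalars $\lambda_{j,1},\ldots,\lambda_{j,m}$ with $v_j := x_j - \sum_s \lambda_{j,s} x_{i_s} \in V$. The greedy choice forces $|\lambda_{j,s}| \leq c_1(k,m)$, so $\|v_j\| \leq c_2(k,m)$; this is a quantitative version of the standard ``Auerbach-type'' selection argument on the $m$-dimensional quotient.

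Next I exploit two elementary manipulations of $\operatorname{Vol}$. (i) \emph{Permutation invariance up to a constant}: on the finite-dimensional subspace $\langle T(x_1),\ldots,T(x_k)\rangle$, the inner-product approximation used in the proof of Lemma \ref{estimate2} shows that $\operatorname{Vol}(T(x_{\sigma(1)}),\ldots,T(x_{\sigma(k)})) \leq \gamma_k \operatorname{Vol}(T(x_1),\ldots,T(x_k))$ for any permutation $\sigma$, with $\gamma_k$ depending only on $k$. (ii) \emph{Column-operation invariance}: for any $b \geq 2$, replacing $y_b$ by $y_b + w$ with $w \in \langle y_1,\ldots,y_{b-1}\rangle$ does not change $d(y_b,\langle y_{<b}\rangle)$ nor any later term, hence leaves $\operatorname{Vol}$ unchanged. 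After reordering so that $T(x_{i_1}),\ldots,T(x_{i_m})$ occupy the first $m$ slots (cost: factor $\gamma_k$) and performing column operations to subtract $\sum_s \lambda_{j_l,s} T(x_{i_s})$ from $T(x_{j_l})$ in each of the last $k-m$ slots (cost: factor $1$), the expression becomes
\begin{align*}
\operatorname{Vol}\bigl(T(x_{i_1}),\ldots,T(x_{i_m}),T(v_{j_1}),\ldots,T(v_{j_{k-m}})\bigr).
\end{align*}

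Finally, since enlarging the reference subspace can only decrease distances,
\begin{align*}
\operatorname{Vol}\bigl(T(x_{i_1}),\ldots,T(x_{i_m}),T(v_{j_1}),\ldots,T(v_{j_{k-m}})\bigr) \leq \operatorname{Vol}\bigl(T(x_{i_1}),\ldots,T(x_{i_m})\bigr)\cdot \operatorname{Vol}\bigl(T(v_{j_1}),\ldots,T(v_{j_{k-m}})\bigr),
\end{align*}
so the first factor is bounded by $D_m(T)$ and, using the slot-wise homogeneity of $\operatorname{Vol}$ together with $\|v_{j_l}\| \leq c_2$, the second factor is bounded by $c_2^{k-m} D_{k-m}(T|_V)$. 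Taking the supremum over $\|x_1\|=\cdots=\|x_k\|=1$ gives the lemma with $C = \gamma_k\, c_2(k,m)^{k-m}$. The main obstacle is the quantitative greedy selection of the $\lambda_{j,s}$: although morally finite-dimensional linear algebra, one must argue inductively using that the greedy step distances $d_t = d([x_{i_t}],\langle [x_{i_s}]:s<t\rangle)$ dominate $d([x_j],\langle [x_{i_s}]:s<t\rangle)$ for every $j$, in order to convert smallness of the quotient distances into a genuine coefficient bound.
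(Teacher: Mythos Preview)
The paper does not prove this lemma itself; it simply cites \cite[Lemma 8]{GTQ15}. Your argument is correct and follows the natural route. The greedy back-substitution you outline does produce a uniform coefficient bound: maximality at step $t$ gives $|\lambda_{j,t}|\le 1$, and then, using subadditivity of $d(\cdot,W)$ together with the maximality at each earlier greedy step (so that $d([x_j],\langle[x_{i_s}]:s<r\rangle)\le d_r$ for every index $j$ still available), one obtains inductively $|\lambda_{j,s}|\le 2^{t-s}\le 2^{m-1}$, whence $c_2=1+m\,2^{m-1}$ depends only on $m$. The permutation estimate via the inner-product comparison of Lemma~\ref{estimate2} and the column-operation invariance of $\operatorname{Vol}$ then yield the factorisation exactly as you describe. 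One cosmetic point: when the greedy process halts at $t<m$, ``padding with vectors from $V$'' should really read ``absorb $m-t$ further original indices into the first block''; the corresponding $\lambda$-coefficients in the remaining decompositions are then zero, and the splitting and bounds go through unchanged.
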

\begin{proof}
  \cite[Lemma 8]{GTQ15}.
\end{proof}

\begin{proposition}\label{KING}
 Let $\varphi$ be a bounded linear cocycle acting on a measurable field of Banach spaces $(\{E_{\omega}\}_{\omega \in \Omega},\Delta,\Delta_0)$. Then for every $n, k\geqslant 1$, the map 
 \begin{align*}
  \Psi_n^k \colon  \Omega &\to \R \\
  \omega &\mapsto D_k(\varphi(n,\omega,\cdot))
 \end{align*}
is measurable.
\end{proposition}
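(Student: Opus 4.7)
The plan is to write $\Psi_n^k(\omega)$ as a countable supremum of manifestly measurable functions, exploiting the dense set $\Delta_0$, the linearity of $\varphi(n,\omega,\cdot)$, and the algebraic structure of $\Delta$. First I would verify the homogeneity identity $\operatorname{Vol}(\lambda_1 y_1, \ldots, \lambda_k y_k) = |\lambda_1 \cdots \lambda_k| \operatorname{Vol}(y_1, \ldots, y_k)$, which is immediate from \eqref{VOL} since the reference subspaces $\langle \lambda_j y_j\rangle_{j<i}$ coincide with $\langle y_j\rangle_{j<i}$ for nonzero scalars. Combined with density of $\{g(\omega) : g \in \Delta_0\}$ in $E_\omega$ and continuity of $\varphi(n,\omega,\cdot)$, this should yield
\[
\Psi_n^k(\omega) = \sup_{(g_1, \ldots, g_k) \in \Delta_0^k,\ g_i(\omega) \neq 0 \ \forall i}\, \frac{\operatorname{Vol}\bigl(\varphi(n,\omega,g_1(\omega)), \ldots, \varphi(n,\omega,g_k(\omega))\bigr)}{\|g_1(\omega)\|_{E_\omega} \cdots \|g_k(\omega)\|_{E_\omega}},
\]
provided $\operatorname{Vol}$ itself is continuous in its arguments. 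The continuity of $\operatorname{Vol}$ I would establish by splitting into cases: on the open set of linearly independent tuples, both the factor $\|y_1\|$ and the distances $d(y_i, \langle y_j\rangle_{j<i})$ vary continuously; on the complement, at a point where $j^{*}$ is the first index with $y_{j^{*}} = \sum_{j<j^{*}} a_j y_j$, the factor at $i=j^{*}$ already vanishes, and for a perturbation $y^m \to y$ one has $d(y_{j^{*}}^m, \langle y_j^m\rangle_{j<j^{*}}) \leq \|y_{j^{*}}^m - \sum_{j<j^{*}} a_j y_j^m\| \to 0$, so this factor still collapses the whole product.

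Second, for any fixed $(g_1, \ldots, g_k) \in \Delta_0^k$, I would show that the expression inside the supremum is a measurable function of $\omega$. The denominator is measurable directly from Definition \ref{def:meas_field_banach}(iii). For the numerator, setting $y_i(\omega) := \varphi(n,\omega, g_i(\omega))$, I would decompose $\operatorname{Vol}(y_1(\omega), \ldots, y_k(\omega)) = \|y_1(\omega)\| \prod_{i=2}^{k} d(y_i(\omega), \langle y_j(\omega)\rangle_{j<i})$. Here $\|y_1(\omega)\|$ is measurable by Definition \ref{def:RDS_on_Banach_field}, and continuity of the norm reduces each distance to
\[
d(y_i(\omega), \langle y_j(\omega)\rangle_{j<i}) = \inf_{\alpha \in \Q^{i-1}} \bigl\|\varphi\bigl(n,\omega, (g_i - \textstyle\sum_{j<i} \alpha_j g_j)(\omega)\bigr)\bigr\|,
\]
using linearity of $\varphi(n,\omega,\cdot)$ together with the fact, from Definition \ref{def:meas_field_banach}(i), that $\Delta$ is a linear subspace, so $\tilde g_\alpha := g_i - \sum_{j<i} \alpha_j g_j \in \Delta$. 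Each term $\omega \mapsto \|\varphi(n,\omega, \tilde g_\alpha(\omega))\|$ is measurable by hypothesis, so the distance is a countable infimum of measurable functions, hence measurable. The supremum over the countable index set $\Delta_0^k$ then yields measurability of $\Psi_n^k$.

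The main obstacle I anticipate is the density/continuity reduction in the first step: while the rescaling identity and the density of $\Delta_0$ are routine, the continuity of $\operatorname{Vol}$ at degenerate, linearly dependent configurations requires the short case analysis indicated above, relying on the fact that the vanishing distance factor at a degenerate index $j^{*}$ persists to zero under small perturbations. Once this geometric point is cleared, the remainder is a standard combination of linearity of $\varphi$, countable infima over rational parameters, and the measurability axioms of a measurable field of Banach spaces, with no reflexivity or compactness needed.
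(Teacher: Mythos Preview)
Your proposal is correct and follows essentially the same route as the paper: express $D_k(\varphi(n,\omega,\cdot))$ as a countable supremum over $\Delta_0^k$ via homogeneity of $\operatorname{Vol}$ and density, then write each distance factor as a countable infimum over rational coefficients using linearity of $\varphi(n,\omega,\cdot)$ and the fact that $\Delta$ is a linear subspace. The only difference is that you are more explicit than the paper about the continuity of $\operatorname{Vol}$ needed to pass from the unit sphere to the countable dense subset; the paper asserts the supremum identity directly and proceeds to the same rational-infimum computation for the distances.
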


\begin{proof}
  For $k = 1$, the claim follows from Lemma \ref{lemma:prop_Dk} and Lemma \ref{lemma:oper_meas}. Note that for $\omega \in \Omega$,
  \begin{align*}
   &\Psi_n^k(\omega) = \sup_{g_1,\ldots, g_k \in \Delta_0} \operatorname{Vol} ( \varphi(n,\omega,\tilde{g}_1(\omega)), \ldots, \varphi(n,\omega,\tilde{g}_k(\omega))) \chi_{\{\|g_1\| > 0,\ldots, \|g_k\| > 0\}}(\omega)
  \end{align*}
  where we used the notation $\tilde{g}_i(\omega) = g_i(\omega)/\| g_i(\omega) \|$, $i = 1,\ldots,k$. It is therefore sufficient to prove that for fixed $g_1,\ldots,g_k \in \Delta$,
  \begin{align*}
    \omega \mapsto  \operatorname{Vol} ( \varphi(n,\omega, \tilde{g}_1(\omega)), \ldots, \varphi(n,\omega,\tilde{g}_k(\omega))) \chi_{\{\|g_1\| > 0,\ldots, \|g_k\| > 0\}}(\omega)
  \end{align*}
  is measurable. For $ i\geqslant 2$, we have
\begin{align*}
d\bigg{(}\varphi &\big{(}n,\omega,\tilde{g}_i(\omega)\big{)},\langle \varphi\big{(}n,\omega, \tilde{g}_t(\omega)\big{)} \rangle_{1\leqslant t<i}\bigg{)} \\
= &\inf_{q_{1},...,q_{i-1}\in \mathbb{Q}}\bigg{\Vert} \varphi\big{(}n,\omega, \tilde{g}_i(\omega) \big{)} - \Sigma_{1\leqslant t< i} q_{t} \varphi \big{(}n,\omega,\tilde{g}_t(\omega) \big{)} \bigg{\Vert} \\
= &\frac{1}{\|g_i(\omega)\|} \inf_{q_{1},...,q_{i-1}\in \mathbb{Q}}\bigg{\Vert} \varphi\big{(}n,\omega, {g}_i(\omega) \big{)} - \Sigma_{1\leqslant t< i} q_{t} \varphi \big{(}n,\omega,{g}_t(\omega) \big{)} \bigg{\Vert} \\
= &\frac{1}{\|g_i(\omega)\|} \inf_{q_{1},...,q_{i-1}\in \mathbb{Q}}\bigg{\Vert} \varphi \left( n,\omega, {g}_i(\omega) - \Sigma_{1\leqslant t< i} q_{t} {g}_t(\omega) \right) \bigg{\Vert}.
\end{align*}
The claim follows by definition of $\operatorname{Vol}$.

\end{proof}
\begin{lemma}\label{two}
Under the same setting as in Proposition \ref{KING}, let
 $\chi_{n}^{k}(\omega)=\log(\Psi_{n}^{k}(\omega))$. Assume that
 \begin{align*}
  \log^+ \Vert\varphi(1,\omega,\cdot)\Vert \in L^{1}(\Omega).
 \end{align*}
  Then there exists a measurable forward invariant set $\Omega_{1} \subset \Omega$ of full measure such that the limit
\begin{align}
  \Lambda_k(\omega) := \lim_{n\rightarrow\infty}\frac{\chi_{n}^{k}(\omega)}{n}\in [-\infty ,\infty)
\end{align}
  exists for every $\omega \in \Omega_1$ and $k \geq 1$. Furthermore, $\Lambda_k(\theta \omega) = \Lambda_k(\omega)$ for every $k \geq 1$, $\omega \in \Omega_1$ and $\Lambda_k(\omega)$ is constant on $\Omega_1$ in case the underlying metric dynamical system is ergodic. 
\end{lemma}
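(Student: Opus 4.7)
The plan is to apply Kingman's subadditive ergodic theorem to the sequence $(\chi_n^k)_{n\geq 1}$ for each fixed $k \geq 1$. To do so, we need to verify three things: measurability, subadditivity along the cocycle, and integrability of the positive part of $\chi_1^k$.

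First, measurability of $\omega \mapsto \chi_n^k(\omega) = \log \Psi_n^k(\omega)$ is immediate from Proposition \ref{KING}. For subadditivity, recall from the cocycle property that $\varphi(n+m,\omega,\cdot) = \varphi(m,\theta^n\omega,\cdot) \circ \varphi(n,\omega,\cdot)$. Applying Lemma \ref{lemma:prop_Dk}(ii) gives
\begin{align*}
 D_k(\varphi(n+m,\omega,\cdot)) \leq D_k(\varphi(m,\theta^n\omega,\cdot))\, D_k(\varphi(n,\omega,\cdot)),
\end{align*}
and taking logarithms yields $\chi_{n+m}^k(\omega) \leq \chi_n^k(\omega) + \chi_m^k(\theta^n \omega)$, which is exactly the subadditivity condition required by Kingman's theorem.

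For integrability, Lemma \ref{lemma:prop_Dk}(i) gives $D_k(\varphi(1,\omega,\cdot)) \leq \| \varphi(1,\omega,\cdot) \|^k$, so
\begin{align*}
  (\chi_1^k(\omega))^+ \leq k \bigl( \log \| \varphi(1,\omega,\cdot) \| \bigr)^+ = k \log^+ \| \varphi(1,\omega,\cdot) \|,
\end{align*}
which is in $L^1(\Omega)$ by the hypothesis of the lemma. Thus $(\chi_1^k)^+ \in L^1(\Omega)$, which is exactly the one-sided integrability condition allowed by Kingman's theorem (the negative part is allowed to fail integrability, in which case the limit may equal $-\infty$).

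Having verified the three hypotheses, Kingman's subadditive ergodic theorem produces a $\theta$-invariant measurable function $\Lambda_k \colon \Omega \to [-\infty, \infty)$ and a $\theta$-invariant set $\Omega_1^{(k)}$ of full measure on which $\chi_n^k(\omega)/n \to \Lambda_k(\omega)$ as $n \to \infty$; moreover, $\Lambda_k$ is constant almost surely if $\theta$ is ergodic. Setting $\Omega_1 := \bigcap_{k \geq 1} \Omega_1^{(k)}$, which is still a $\theta$-invariant set of full measure, gives the desired convergence simultaneously for all $k$. No real obstacle arises here; the only subtlety worth noting is that we must use the version of Kingman's theorem that permits $\E[(\chi_1^k)^-] = \infty$ and allows a limit in $[-\infty, \infty)$, as $\chi_n^k$ can indeed tend to $-\infty$ (for instance when the cocycle has insufficiently many Lyapunov exponents above some threshold to fill $k$ directions).
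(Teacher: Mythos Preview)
Your proof is correct and follows exactly the same route as the paper: verify subadditivity of $\chi_n^k$ via Lemma \ref{lemma:prop_Dk}(ii) and the cocycle property, check $(\chi_1^k)^+ \in L^1$ via Lemma \ref{lemma:prop_Dk}(i) and the hypothesis, then apply Kingman's subadditive ergodic theorem. The paper's proof is simply a terser version of what you wrote.
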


\begin{proof}
From Lemma \ref{lemma:prop_Dk} and the cocycle property,
\begin{align}
\chi_{n+m}^{k}(\omega)\leqslant\chi_{n}^{k}(\theta^{m}\omega)+\chi_{m}^{k}(\omega).
\end{align}
 By assumption and Lemma \ref{lemma:prop_Dk}, it follows that $ \chi_{1}^{k;+}\in L^{1}(\Omega) $. Therefore, we can directly apply Kingman's Subadditive Ergodic Theorem \cite[3.3.2 Theorem]{Arn98} to conclude.
\end{proof}

\begin{remark}
\begin{itemize}
 \item[(i)] From Birkhoff's Ergodic Theorem, we can furthermore assume that 
\begin{align}\label{limit}
\lim_{n\rightarrow\infty}\frac{ \log^{+}  \Vert\varphi(1,\theta^{n}\omega,\cdot)\Vert }{n}=0
\end{align}
for all $\omega \in \Omega_1$.
  \item[(ii)] From Lemma \ref{restr}, it follows that
  \begin{align*}
   \Lambda_k \le \Lambda_m + \Lambda_{k-m}
  \end{align*}
  for every $k > m$. In particular, if $\Lambda_m = -\infty$, it follows that $\Lambda_k = -\infty$ for every $k > m$.

\end{itemize}
\end{remark}

\begin{definition}
 If the assumptions of Lemma \ref{two} are satisfied, we define
 \begin{align*}
  \lambda_k(\omega) := \begin{cases}
                        \Lambda_k(\omega) - \Lambda_{k-1}(\omega) &\text{if } \Lambda_k(\omega), \Lambda_k(\omega) \in \R \\
                        -\infty &\text{if } \Lambda_k(\omega) = -\infty
                       \end{cases}
 \end{align*}
 for $k \geq 1$, where we set $\Lambda_0(\omega) := 0$. We call $\lambda_k$ the \emph{$k$-th Lyapunov exponent} of $\varphi$. Note that they are deterministic almost surely in case the underlying system is ergodic. 

\end{definition}

\begin{remark}
 Following the same strategy as in \cite[Theorem 13]{GTQ15}, one can show that $(\lambda_k)_{k \geq 1}$ is a decreasing sequence.
\end{remark}

%

The next lemma shows that the sequence $(\lambda_k)$ does not have real cluster points in case the cocycle is compact.     
\begin{lemma}\label{three}
  Let $\varphi$ be as in Lemma \ref{two}. Furthermore, assume that it is compact. Then there is a measurable forward invariant subset $\tilde{\Omega} \subset \Omega$ with full measure such that for any $\omega \in \tilde{\Omega}$ and $\rho\in\mathbb{R}$, there are only finitely many exponents $ \lambda_{k}(\omega)$ that exceed $\rho$.
\end{lemma}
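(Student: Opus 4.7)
The plan is to prove the stronger statement $\Lambda_k(\omega)/k \to -\infty$ as $k \to \infty$ on a forward-invariant full-measure set $\tilde{\Omega}$. Since $(\lambda_k)$ is decreasing (as noted after Lemma \ref{two}) and therefore $\lambda_k(\omega) \leq \Lambda_k(\omega)/k$, this will immediately imply $\lambda_k(\omega) \to -\infty$, and hence the claim. The set $\tilde{\Omega}$ will be built from the forward-invariant set of Lemma \ref{two} intersected with the Birkhoff-convergence sets for the functions $g_k(\omega) := \log D_k(\varphi(1,\omega,\cdot)) \in [-\infty,\infty)$, $k \geq 1$.

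To control $\Lambda_k$ I will iterate Lemma \ref{lemma:prop_Dk}(ii) over the cocycle decomposition $\varphi(n,\omega,\cdot) = \varphi(1,\theta^{n-1}\omega,\cdot) \circ \cdots \circ \varphi(1,\omega,\cdot)$, obtaining
\begin{align*}
 \tfrac{1}{n} \log D_k(\varphi(n,\omega,\cdot)) \leq \tfrac{1}{n} \sum_{i=0}^{n-1} g_k(\theta^i \omega).
\end{align*}
Since $g_k^+(\omega) \leq k \log^+ \Vert \varphi(1,\omega,\cdot) \Vert$ lies in $L^1(\Omega)$ by hypothesis, the extended Birkhoff theorem yields, as $n \to \infty$, the bound $\Lambda_k(\omega) \leq \E[g_k \mid \mathcal{I}](\omega)$ a.s., where $\mathcal{I}$ is the $\theta$-invariant $\sigma$-algebra. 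To prove $\Lambda_k/k \to -\infty$ it then suffices to show $\E[g_k/k \mid \mathcal{I}] \to -\infty$ a.s.; since $g_k(\omega)/k$ admits the uniform integrable upper bound $\log^+ \Vert \varphi(1,\omega,\cdot) \Vert \in L^1(\Omega)$, the reverse Fatou lemma (conditional form, applied via truncation if needed) reduces this in turn to the pointwise statement $g_k(\omega)/k \to -\infty$ as $k \to \infty$.

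The hard part is this pointwise decay, and it is where compactness of $\varphi(1,\omega,\cdot)$ enters. For every $\epsilon > 0$ I construct a closed finite-codimensional subspace $V_\omega \subset E_\omega$ with $\Vert \varphi(1,\omega,\cdot) \vert_{V_\omega} \Vert \leq \epsilon$ without invoking any approximation property of the target space: I take a finite $\epsilon/2$-net $\{y_1,\dots,y_N\}$ of the totally bounded image $\varphi(1,\omega,\cdot)(B_{E_\omega})$, pick by Hahn-Banach functionals $\ell_i \in E_{\theta\omega}^*$ with $\Vert \ell_i \Vert \leq 1$ and $\ell_i(y_i) = \Vert y_i \Vert$, and set
\begin{align*}
 V_\omega := \bigcap_{i=1}^N \ker\bigl(\ell_i \circ \varphi(1,\omega,\cdot)\bigr),
\end{align*}
of codimension $m \leq N$. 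For $x \in V_\omega \cap B_{E_\omega}$, picking $i$ with $\Vert \varphi(1,\omega,\cdot)x - y_i \Vert \leq \epsilon/2$ and using $\ell_i(\varphi(1,\omega,\cdot)x) = 0$ gives $\Vert y_i \Vert = \ell_i(y_i - \varphi(1,\omega,\cdot)x) \leq \epsilon/2$, hence $\Vert \varphi(1,\omega,\cdot)x \Vert \leq \epsilon$. Plugging this into Lemma \ref{restr} together with Lemma \ref{lemma:prop_Dk}(i) yields $D_k(\varphi(1,\omega,\cdot)) \leq C \Vert \varphi(1,\omega,\cdot) \Vert^{m} \epsilon^{k-m}$ for $k > m$, so $\limsup_{k \to \infty} g_k(\omega)/k \leq \log \epsilon$; since $\epsilon$ was arbitrary, $g_k(\omega)/k \to -\infty$, and the two previous paragraphs complete the argument.
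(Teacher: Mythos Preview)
Your argument is correct and follows a genuinely different route from the paper's. The paper uses an entropy/covering-number argument in the style of \cite{GTQ15}: it introduces measurable sets $G(\vartheta,\nu)$ of $\omega$'s for which $\varphi(1,\omega,B_\omega)$ admits an $e^{\nu}$-cover of cardinality $e^{\vartheta}$, iterates this cover along the orbit to cover $\varphi(m,\omega,B_\omega)$ by $e^{m\vartheta}$ balls of radius $e^{m\gamma_m^{\vartheta,\nu}}$, and uses Birkhoff to control the ergodic average $\gamma_m^{\vartheta,\nu}$; this yields the bound $k(\rho-\gamma_m^{\vartheta,\nu})\leq\vartheta$ directly. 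Your approach bypasses the covering iteration entirely: you first prove the purely operator-theoretic fact that compactness of a single map $T$ forces $D_k(T)^{1/k}\to 0$ (via your neat Hahn--Banach construction of a finite-codimensional kernel on which $T$ is small, combined with Lemma~\ref{restr}), and then transfer this pointwise decay to the cocycle level by the one-step bound $\frac{1}{n}\log D_k(\varphi(n,\omega,\cdot))\leq \frac{1}{n}\sum_{i<n} g_k(\theta^i\omega)$, Birkhoff, and conditional reverse Fatou. This is arguably more elementary and also delivers the equivalent but suggestive form $\Lambda_k/k\to-\infty$.

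One point worth making explicit: for the step $\limsup_k g_k(\omega)/k\leq\log\epsilon$ you need the constant $C$ in Lemma~\ref{restr} to be independent of $k$ (depending only on the codimension $m$). The paper's statement of Lemma~\ref{restr} says ``$C$ depends on $k$ and $m$'', which would not suffice; however the cited source \cite[Lemma~8]{GTQ15} does give $C=C_m$ depending on the codimension alone, so your use is justified --- just cite the sharper form.
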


\begin{proof}
Let $\Omega_1$ be the set provided in Lemma \ref{two}. For $\omega \in \Omega$, let $B_{\omega}$ be the unit ball in $E_{\omega}$. Set
\begin{align}
G(\vartheta,\nu) := \bigg{\lbrace}\omega\in \Omega_{1}\, :\, \varphi(1,\omega,B_{\omega}) \text{ can be covered by  }   e^{\vartheta}  \text{  balls with sizes less than }  e^{\nu}\bigg{\rbrace}.
\end{align}
We claim that $G(\vartheta,\nu)$ is a measurable subset. To see this, define
\begin{align*}
 S(\omega) := \left\{ s \in B_{\omega}\, :\, s = r \frac{g(\omega)}{\|g(\omega)\|} \chi_{\{ \|g\| > 0\}}(\omega), \, g \in \Delta_0,\, r \in \Q \cap [0,1] \right\}.
\end{align*}
One can easily check that $S(\omega)$ is dense in $B_{\omega}$. 
Let $ p = e^{\vartheta} $ and define
\begin{align*}
H(\omega)=\inf_{s_1,\ldots,s_p \in S(\omega)} \bigg{(}\sup_{s \in S(\omega)} \min_{1\leqslant i\leqslant p} \big{(}\Vert\varphi(1,\omega,s) - \varphi(1,\omega,s_i) \Vert\big{)} \bigg{)}.
\end{align*}
It is not hard to see that
\begin{align*}
G(\vartheta ,\nu)=\big{\lbrace}\omega \in \Omega_1\, :\, \ H(\omega)< e^{\nu}\big{\rbrace}
\end{align*}
and consequently $G(\vartheta ,\nu)$ is indeed measurable. Since $\varphi$ is compact, for any $\nu\in\mathbb{R}$,
\begin{align*}
\lim_{\vartheta\rightarrow\infty} \P\big{(}G(\vartheta,\nu)\big{)} = 1. 
\end{align*}
Let $\omega \in \Omega_1$. With the same argument as on \cite[page 247]{GTQ15}, we can say that $\varphi(m,\omega,B_{\omega})$ can be covered by $N_{m}=e^{m\vartheta}$ balls of size $R_{m}^{\vartheta,\nu}=e^{m\gamma _{m}^{\vartheta,\nu}}$ where
\begin{align*}
\gamma _{m}^{\vartheta,\nu}(\omega)=\frac{1}{m}\bigg{[}\nu\sum_{0\leqslant j\leqslant m}\chi_{G(\vartheta,\nu)}(\theta^{j}\omega ) + \sum_{0\leqslant j\leqslant m}\chi_{G(\vartheta,\nu)^{c}}\log^{+} \Vert\varphi(1,\theta^{j}\omega,\cdot)\Vert \bigg{]} =: \nu A_{m}^{\vartheta,\nu}(\omega) +B_{m}^{\vartheta,\nu}(\omega).
\end{align*}
Let $\lambda_k(\omega) > \rho$. For large $m$, 
we must have $ k(\rho -\gamma _{m}^{\vartheta,\nu})\leqslant \vartheta$.
 If we can show that $\rho -\gamma _{m}^{\vartheta,\nu} > 0$ for some $m,\vartheta,\mu$, the proof is finished since in that case, $ k<\frac{\vartheta}{\rho -\gamma _{m}^{\vartheta,\nu}}$.\\
 Let $ \epsilon >0 $ and choose $ \nu< 0 $ such that $\nu<\frac{\rho-\epsilon}{\epsilon}$. From integrability of $\log^{+} \Vert\varphi(1,\omega,\cdot)\Vert$, there exists a $\delta >0$ such that for $\P(E) < \delta$,
 \begin{align}\label{Asghar}
  \int_{E}\log^{+} \Vert(\varphi(1,\omega,\cdot)\Vert \, d\P \leqslant \epsilon^{2}.
 \end{align}
Now we choose $ \vartheta>0 $ such that
\begin{align}\label{akbar}
  \P\big{(}G(\vartheta,\nu)^{c}\big{)}\leqslant \epsilon \wedge \delta.
\end{align}
 Since $ 0\leqslant A_{m}^{\vartheta,\nu}(\omega)\leqslant 1 $,
\begin{align*}
\int _{\Omega}A_{m}^{\vartheta,\nu}\, d\P &\leqslant \P(A_{m}^{\nu ,r}>\epsilon) + \epsilon\quad \text{and} \\
\P(B_{m}^{\vartheta,\nu}>\epsilon) &\leqslant \frac{1}{\epsilon} \int_{\Omega} B_{m}^{\vartheta,\nu}\, d\P.
\end{align*}
Now from (\ref{Asghar}), (\ref{akbar}) and Birkhoff's Ergodic theorem, for large $ m $,
\begin{align*}
  \P(A_{m}^{\vartheta,\nu}>\epsilon)\geqslant 1-3\epsilon \ \ \  \text{and}\ \ \ \P(B_{m}^{\vartheta,\nu}>\epsilon) \leqslant 2\epsilon.
\end{align*}
Set $ A_{1} := \lbrace A_{m}^{\vartheta,\nu}>\epsilon\rbrace $ and $B_{1} := \lbrace B_{m}^{\vartheta,\nu}\leqslant\epsilon\rbrace $ and note that $ \P(A_{1}\cap B_{1})\geqslant 1-5\epsilon $. For $ \omega\in  A_{1}\cap B_{1}$,
\begin{align*}
\gamma_{m}^{\vartheta,\nu} < \rho.
\end{align*}
Since $\epsilon$ is arbitrary, we can find a set $\Omega_{2} \subset \Omega_1$ of full measure with the desired property. Finally we put $\Omega_{3} := \bigcap_{j=0}^{\infty}(\theta^{j})^{-1}{\Omega_{2}}$.
\end{proof}

The following proposition, a trajectory-wise version of the Multiplicative Ergodic Theorem, will play a central role in the proof of our main result. It is a slight reformulation of \cite[Proposition 3.4]{Blu16}. The proof is very similar to Blumenthal's original proof, but because of its importance, we decided to sketch it in the appendix, cf. page \pageref{proof_pathwise_MET}.

\begin{proposition}\label{imp}
  Let $ \lbrace V_{j}\rbrace_{j\geqslant 0} $ be a sequence of Banach spaces and  $ T_{i}:V_{i}\rightarrow V_{i+1} $ a sequence of bounded linear operators. Set $T^{n}=T_{n-1}\circ ...\circ T_{0}$. Assume that: 

\begin{itemize}
  \item[(i)] $\limsup_{n\rightarrow\infty}\frac{1}{n}\log^{+} \Vert T_{n} \Vert =0$.
  \item[(ii)] For any $ k \geq 1 $, the following limits exists:
\begin{align*}
  L_{k}=\lim_{n\rightarrow\infty}\frac{1}{n}\log D_{k}(T^{n}).
\end{align*}
\item[(iii)] Setting $L_{0} := 0$ and $ l_{k} := L_{k} - L_{k-1}$ for $k \geq 1$, assume that there is a number $m < \infty $ for which $\overline{l} := l_{1} = \ldots = l_{m} > l_{m+1} =: \underline{l}$.
 \end{itemize}
 Then the subspace
 \begin{align*}
    F:=\big{\lbrace}v\in V_{0} : \limsup_{n\rightarrow\infty} \frac{1}{n}\log\Vert T^{n}v\Vert\leqslant\underline{l}\big{\rbrace}
 \end{align*}
 is closed and $m$-codimensional. Also, for $ v\in V_{0}\setminus F $,
\begin{align}\label{eqn:conv_overlinel}
  \lim_{n\rightarrow\infty} \frac{1}{n} \log \Vert T^{n}v \Vert = \overline{l}.
\end{align}
  Furthermore, for any complement $ H $ of $ F $,
\begin{align}\label{uniform}
  \lim_{n\rightarrow\infty} \frac{1}{n} \log\inf_{v\in H \setminus \lbrace 0\rbrace} \frac{ \Vert T^{n}v \Vert}{\Vert v\Vert}= \overline{l}.
\end{align}
  Finally, if $h_1,\ldots, h_m \in V_0$ are linearly independent and $H = \langle h_{1},...,h_{m}\rangle$,
\begin{align}\label{det}
  \lim_{n\rightarrow\infty} \frac{1}{n} \log \operatorname{Vol}{(}T^{n}h_{1},T^{n}h_{2},...,T^{n}h_{m}{)}  = m\overline{l}.
\end{align}

\end{proposition}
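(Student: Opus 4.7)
The plan is to split $V_0 = H \oplus F$, where $H$ is a finite-dimensional ``fast'' complement on which all nonzero vectors grow at rate $\bar{l}$ uniformly and $F$ is the closed, codimension-$m$ ``slow'' subspace of vectors growing at rate at most $\underline{l}$. The decisive analytic bridge between the volume-growth data $L_k$ and individual vector growth is Lemma \ref{estimate2}.

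\emph{Constructing $H$.} Using assumption (ii) with $k = m$, for each $n$ I pick unit vectors $h_1^{(n)},\ldots,h_m^{(n)} \in V_0$ with
\begin{equation*}
\operatorname{Vol}\bigl(T^n h_1^{(n)},\ldots,T^n h_m^{(n)}\bigr) \geq \Bigl(1-\tfrac{1}{n}\Bigr)\, D_m(T^n) = e^{nm\bar{l}+o(n)}.
\end{equation*}
The challenge is to extract from these near-maximisers an $n$-independent $m$-plane; following the strategy of \cite{Blu16}, a diagonal selection --- possibly after renormalising and passing to quotients by already-extracted directions --- yields fixed unit vectors $h_1,\ldots,h_m$ for which $\operatorname{Vol}(T^n h_1,\ldots,T^n h_m) = e^{nm\bar{l}+o(n)}$ as well. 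Setting $H := \langle h_1,\ldots,h_m\rangle$ and applying Lemma \ref{estimate2} to any unit $h \in H$ gives
\begin{equation*}
\|T^n h\| \geq \alpha_m^{-1}\,\|T^n\|^{-(m-1)}\operatorname{Vol}(T^n h_1,\ldots,T^n h_m) = e^{n\bar{l}+o(n)},
\end{equation*}
where $\|T^n\| = D_1(T^n) = e^{n\bar{l}+o(n)}$ by assumption (ii) with $k=1$. Combined with the trivial upper bound $\|T^n h\| \leq \|T^n\|$, this yields $\lim \frac{1}{n}\log \|T^n h\| = \bar{l}$ uniformly on the unit sphere of $H$.

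\emph{Identifying $F$.} Define $F := \{v \in V_0 : \limsup_n \tfrac{1}{n}\log \|T^n v\| \leq \underline{l}\}$; since $\underline{l} < \bar{l}$, we have $H \cap F = \{0\}$. To verify that $V_0 = H \oplus F$ with $F$ closed and $m$-codimensional, the bound $D_{m+1}(T^n) = e^{n(m\bar{l}+\underline{l})+o(n)}$ from (iii) gives for every $v \in V_0$
\begin{equation*}
d\bigl(T^n v,\ \langle T^n h_1,\ldots,T^n h_m\rangle\bigr) \leq \frac{D_{m+1}(T^n)}{\operatorname{Vol}(T^n h_1,\ldots,T^n h_m)}\,\|v\| = e^{n\underline{l}+o(n)}\|v\|.
\end{equation*}
The minimiser in $\langle T^n h_i\rangle$ determines coefficients $\alpha_i^{(n)}(v)$; using the uniform lower bound inside $H$ established above, these coefficients form a Cauchy sequence converging to $\alpha_i(v)$, which define a bounded linear projection $\pi_H v := \sum_i \alpha_i(v) h_i$. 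Its kernel coincides with $F$, so $F$ is closed, $m$-codimensional, and complementary to $H$.

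\emph{Remaining statements and main obstacle.} Equation \eqref{eqn:conv_overlinel} is then immediate: any $v \notin F$ has $\pi_H v \neq 0$, and the $H$-part of $T^n v$ grows at rate $\bar{l}$, dominating the $F$-part which grows at rate at most $\underline{l}$. For \eqref{det}, any basis $\tilde h_1,\ldots,\tilde h_m$ of $H$ has $\operatorname{Vol}(T^n \tilde h_i)$ bounded above by $D_m(T^n)$ and below by Lemma \ref{estimate2} applied iteratively, both growing at rate $m\bar{l}$; for an arbitrary complement $H'$ of $F$, decomposing a basis $h'_i = \pi_H h'_i + f_i$ with $f_i \in F$, the fast-growing $H$-components dominate the volume, so the same rate persists. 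Finally \eqref{uniform} for an arbitrary complement follows from \eqref{det} combined once more with Lemma \ref{estimate2}. The main technical obstacle is the very first step: in finite dimensions, extracting a limit $m$-plane from near-volume-maximisers is automatic Grassmannian compactness, but here we work in an infinite-dimensional Banach space, and a delicate diagonal argument in the style of \cite{Blu16} is required, exploiting the spectral gap $\bar{l} - \underline{l} > 0$ to force the near-maximisers to stabilise modulo slowly-growing directions.
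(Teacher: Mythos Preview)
Your overall architecture is right, but the order of operations in the limiting step is inverted relative to what actually works, and this is not a cosmetic difference.

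You propose to extract a \emph{fixed} fast $m$-plane $H$ from the $n$-dependent near-maximisers $h_i^{(n)}$ by a diagonal selection, and you flag this as the main obstacle. It is, and the resolution is not to perform this extraction at all. In an infinite-dimensional Banach space the Grassmannian of $m$-planes is not compact, and there is no mechanism that forces the near-maximising $m$-planes $E^1_n := \langle h_1^{(n)},\ldots,h_m^{(n)}\rangle$ themselves to stabilise; the spectral gap $\bar l - \underline l$ does not directly help here, because all of $E^1_n$ grows at the single top rate $\bar l$. Blumenthal's argument, which the paper follows, does \emph{not} extract a limiting $H$.

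Instead, the paper works with the $n$-dependent pair $(E^1_n, F^1_n)$, where $F^1_n$ is the pullback under $T^n$ of a bounded-projection complement of $T^n E^1_n$ in $V_n$. The crucial estimate is that for unit $v \in F^1_n$,
\[
\|T^n v\| \;\lesssim\; \frac{D_{m+1}(T^n)}{D_m(T^n)} \;=\; e^{n\underline l + o(n)},
\]
which is your displayed bound, but applied on $F^1_n$ rather than on a yet-to-be-constructed kernel. One then shows that the \emph{slow} spaces $F^1_n$ form a Cauchy sequence in the Hausdorff metric, converging at an exponential rate governed by the gap $\bar l - \underline l$; this is where the gap is genuinely used, and assumption (i) on $\|T_n\|$ enters to compare $F^1_n$ with $F^1_{n+1}$. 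The limit is $F$, automatically closed and $m$-codimensional. Any complement $H$ of $F$ then serves for \eqref{eqn:conv_overlinel}--\eqref{det}; no canonical fast space is ever constructed.

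So the correction is: do not try to pass to the limit on the fast side. Keep $E^1_n$ $n$-dependent, build the companion slow spaces $F^1_n$, and show \emph{those} converge. Your subsequent arguments for \eqref{eqn:conv_overlinel}, \eqref{uniform}, and \eqref{det} via Lemma \ref{estimate2} are then essentially correct once $F$ is in hand.
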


%

\begin{remark}\label{remark:after_prop_key}
In the proof of the proposition above, we will also see that
\begin{align}\label{uniform2}
\limsup_{n\rightarrow \infty}\frac{1}{n}\log \Vert T^{n}|_{F} \Vert \leqslant\underline{l}
\end{align}
holds.
\end{remark}

We finally state the main result of this section, a Multiplicative Ergodic Theorem for cocycles acting on measurable fields of Banach spaces.
\begin{theorem}\label{thm:MET_Banach_fields}
Let $(\Omega,\mathcal{F},\mathbb{P},\theta)$ be an ergodic measurable metric dynamical system and $\varphi$ be a compact linear cocycle acting on a measurable field of Banach spaces $\{E_{\omega}\}_{\omega \in \Omega}$ in the sense of Definition \ref{def:RDS_on_Banach_field}. For $\lambda \in \R \cup \{-\infty\}$ and $\omega \in {\Omega}$, define
\begin{align*}
 F_{\lambda}(\omega) := \big{\lbrace} x\in E_{\omega}\, :\, \limsup_{n\rightarrow\infty} \frac{1}{n} \log \Vert\varphi(n,\omega,x) \Vert \leqslant\lambda \big{\rbrace}. 
\end{align*}
Assume that
\begin{align*}
 \log^+ \Vert\varphi(1,\omega,\cdot)\Vert \in L^{1}(\Omega).
\end{align*}
Then there is a measurable forward invariant set $\tilde{\Omega} \subset \Omega$ of full measure such that:
 \begin{itemize}
 \item[(i)] For any $\omega\in\tilde{\Omega}$ and $ k\geqslant 1 $, the limit
\begin{align}
 \Lambda_k := \lim_{n\rightarrow\infty}\frac{1}{n} \log D_{k}({\varphi(n,\omega,\cdot)}) \in [-\infty,\infty)
\end{align}
exists and is independent of $\omega$.
\item[(ii)] Setting $\Lambda_0 := 0$ and $ \lambda_{k} := \Lambda_{k} -\Lambda_{k-1}$ with $\lambda_k = -\infty$ if $\Lambda_k = -\infty$, the sequence $(\lambda_k)$ is decreasing. If the number of distinct values of this sequence is infinite, then \mbox{$\lim_{k\rightarrow\infty}\lambda_{k} = -\infty$}. We denote the decreasing subsequence of distinct values by $ (\mu_{j})_{j\geqslant 1}$, which can be a finite or an infinite sequence, and $m_j$ will denote the multiplicity of $\mu_j$ in the sequence $(\lambda_j)$. If $\mu_j \in \R$, $m_j$ is finite.
\item[(iii)] For $\lambda_{i} > \lambda_{i+1}$ and $\omega \in \tilde{\Omega}$,
\begin{align}
x\in F_{\lambda_{i}}(\omega)\setminus F_{\lambda_{i+1}}(\omega) \quad \text{if and only if}\quad \lim_{n\rightarrow\infty}\frac{1}{n}\log \Vert\varphi(n,\omega,x ) \| = \lambda_{i}.
\end{align}
\item[(iv)] For any $\mu_j$, $\operatorname{codim} F_{\mu_j}(\omega) = m_1 + \ldots + m_{j-1}$ for every $\omega \in \tilde{\Omega}$.
\item[(v)] For $ \omega\in\tilde{\Omega}$, if $h^1,\ldots,h^k \in E_{\omega}$ are linearly independent and $H_{\omega} = \langle h^{1},...,h^{k} \rangle$ is a complement subspace for $ F_{\mu_{j}}(\omega) $ in $E_{\omega}$, then
\begin{align}\label{eqn:vol_growth_MET}
\lim_{n\rightarrow\infty}\frac{1}{n} \log \operatorname{Vol} \big{(}\varphi(n,\omega,h^{1}),...,\varphi(n,\omega,h^{k})\big{)} = \sum_{1\leqslant i\leqslant j} m_{i} \mu_{i}.
\end{align}
 \end{itemize}
\end{theorem}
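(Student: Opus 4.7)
The plan is to assemble three previously developed ingredients: the Kingman-type existence of the volume exponents $\Lambda_k$ from Lemma \ref{two}, the discreteness of the Lyapunov spectrum from Lemma \ref{three} (which is where compactness of $\varphi$ is essential), and the deterministic Multiplicative Ergodic Theorem of Proposition \ref{imp}, applied along individual orbits and iterated over successive filtration levels.

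\emph{Existence and spectral discreteness.} Proposition \ref{KING} supplies measurability of $\omega \mapsto D_k(\varphi(n,\omega,\cdot))$, so together with the integrability assumption Lemma \ref{two} yields a $\theta$-invariant full-measure set $\Omega_1$ on which each $\Lambda_k$ exists; ergodicity renders them deterministic, giving (i). Defining $\lambda_k = \Lambda_k - \Lambda_{k-1}$, monotonicity of $(\lambda_k)$ is a standard consequence of Lemma \ref{restr} (cf.~\cite[Theorem 13]{GTQ15}), and Lemma \ref{three} rules out finite accumulation points. This gives the ordered distinct values $\mu_1 > \mu_2 > \ldots$ with finite real multiplicities $m_j$ and $\mu_j \to -\infty$ in the infinite case, proving (ii).

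\emph{Pathwise MET and induction.} Fix $\omega$ in a $\theta$-invariant full-measure set on which in addition $\frac{1}{n}\log^+\|\varphi(1,\theta^n\omega,\cdot)\| \to 0$ (by Birkhoff). Applying Proposition \ref{imp} to $T_i = \varphi(1,\theta^i\omega,\cdot)$, $V_i = E_{\theta^i\omega}$, with $m = m_1$, produces a closed $m_1$-codimensional subspace which, by \eqref{eqn:conv_overlinel} and \eqref{uniform}, must equal $F_{\mu_2}(\omega)$; off this subspace the growth rate is exactly $\mu_1$. This is (iii) and (iv) at the first level. To obtain deeper levels, I would restrict the cocycle to the invariant sub-bundle $\{F_{\mu_2}(\theta^n\omega)\}_{n \ge 0}$, where invariance is forced by \eqref{uniform2} in Remark \ref{remark:after_prop_key}, and reapply Proposition \ref{imp} to the restriction with new top exponent $\mu_2$ and multiplicity $m_2$. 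The volume exponents of the restricted cocycle would be identified by splitting a norm-one $k$-tuple along a fixed algebraic complement of $F_{\mu_2}(\omega)$ in $E_\omega$, using \eqref{det} to peel off an $m_1$-dimensional factor of exponential growth $m_1 \mu_1$, and Lemma \ref{restr} to relate the remainder to $D_{k-m_1}$ of the restriction. Iterating produces the full filtration and the codimension formula in (iv). Formula (v) then follows by decomposing the basis $h^1,\ldots,h^k$ of the complement layer by layer with respect to the quotients $F_{\mu_i}(\omega)/F_{\mu_{i+1}}(\omega)$ and applying \eqref{det} on each layer.

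\emph{Measurability and main obstacle.} With the filtration in hand, the hypotheses of Lemma \ref{Z2} are met, so the norms $\omega \mapsto \|\varphi(n,\omega,\cdot)|_{F_{\mu_j}(\omega)}\|\chi_{\Omega_1}(\omega)$ are measurable. The conditions (i)--(v) can then be expressed as countable measurable combinations of these norms and the $D_k(\varphi(n,\omega,\cdot))$, so the exceptional set $\tilde\Omega$ is measurable; forward invariance is enforced by passing to $\bigcap_{n \ge 0}\theta^{-n}\tilde\Omega$. The hard part will be the identification of the Lyapunov spectrum of the restricted cocycle in the inductive step: matching it with $\mu_2, \mu_3, \ldots$ (rather than with some a~priori different deterministic sequence) requires careful bookkeeping with Lemma \ref{restr} and the pathwise bounds \eqref{uniform}, \eqref{uniform2}, \eqref{det}, the more so because at that stage no measurable choice of complement to $F_{\mu_2}(\omega)$ is available.
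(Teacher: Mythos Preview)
Your overall strategy matches the paper's: (i) and (ii) come from Lemmas \ref{two} and \ref{three}, and (iii)--(v) are obtained by induction via Proposition \ref{imp}, restricting at each stage to the slow subspace. However, there is a genuine gap in your inductive step.

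The missing ingredient is Lemma \ref{Mea}. To identify the volume exponents of the restriction $\varphi(n,\omega,\cdot)|_{F_{\mu_2}(\omega)}$ with $\Lambda_{k+m_1} - \Lambda_{m_1}$ (this is \eqref{LLL} in the paper), Lemma \ref{restr} supplies only one inequality. For the reverse one, the paper takes near-maximizing $f^1,\ldots,f^k \in F_{\mu_2}(\omega)$ and a complement $H_\omega = \langle h^1,\ldots,h^{m_1}\rangle$, and bounds
\[
D_{k+m_1}(\varphi^n_\omega) \;\ge\; \tfrac{1}{2}\,\|\Pi_{F_{\mu_2}(\theta^n\omega)\parallel \varphi(n,\omega,H_\omega)}\|^{-m_1}\,\operatorname{Vol}\big(\varphi^n_\omega(h^1),\ldots,\varphi^n_\omega(h^{m_1})\big)\, D_k\big(\varphi^n_\omega|_{F_{\mu_2}(\omega)}\big),
\]
then invokes Lemma \ref{Mea} to show that the projection norm grows subexponentially in $n$. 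The pathwise bounds you list, \eqref{uniform}, \eqref{uniform2}, \eqref{det}, do not control this projection: Lemma \ref{Mea} is genuinely probabilistic, relying on Lemma \ref{Z2} to make $\omega \mapsto \|\varphi(n,\omega,\cdot)|_{F_{\mu_2}(\omega)}\|$ measurable, and then on a Birkhoff-type slowly-varying argument (\cite[Lemma III.8]{Man83}) along the orbit. The same projection estimate is reused in the proof of (v).

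This also means your placement of Lemma \ref{Z2} at the very end is misleading: Lemma \ref{Z2} (feeding Lemma \ref{Mea}) is invoked \emph{inside} the induction at every level, not merely as a final tidying of the exceptional set. And the obstacle you name---lack of a measurable complement to $F_{\mu_2}(\omega)$---is not actually the issue; Lemma \ref{Mea} delivers \eqref{PRo} for \emph{any} complement $H_\omega$, measurable or not, once the restricted operator norms are known to be measurable.
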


\begin{remark}
 The sequence $(\mu_j)$ is called the \emph{Lyapunov spectrum}, the filtration of spaces
 \begin{align*}
  F_{\mu_1}(\omega) \supset F_{\mu_2}(\omega) \supset \cdots
 \end{align*}
 is called \emph{Oseledets filtration}.
\end{remark}

\begin{proof}
Note that (i) and (ii) are direct consequences of Lemma \ref{two} and Lemma \ref{three}, hence we only have to prove (iii), (iv) and (v). The idea
is to prove the consecutive statements for each Lyapunov exponent by induction, where Proposition \ref{imp} will play a central role. 
We will only give the proof in case that the Lyapunov spectrum is infinite, the case of a finite Lyapunov spectrum is similar.

Let us start to formulate a result for the first Lyapunov exponent $\mu_1$. Consider $\Omega _{1} \subset \Omega $ as in Lemma \ref{two}.
We may assume that \eqref{limit} is also satisfied for every $\omega \in \Omega_1$. Fix some $\omega \in \Omega_1$ and define $ V_{j} := E_{\theta^{j}\omega}$ and $ T_{j} := \varphi(1, \theta^{j}\omega,\cdot)$. Note that, by definition, $\mu_{1}=\lambda_{1}=...=\lambda _{m_1} > \lambda _{m_1 + 1} = \mu_{2} $ and $\mu_1 = \Lambda_1$, therefore $F_{\mu_1}(\omega) = E_{\omega} = V_0$. Proposition \ref{imp} now implies that for $x \in F_{\mu_1}(\omega) \setminus F_{\mu_2}(\omega)$, we have $\lim_{n \to \infty} \frac{1}{n} \log \| \varphi(n,\omega,x) \| = \mu_1$ and that $F_{\mu_2}(\omega)$ is $m_1$-codimensional. Furthermore, if $H_{\omega} = \langle h^1,\ldots,h^{m_1} \rangle$ is a complement for $F_{\mu_2}(\omega)$, 
\begin{align}\label{eqn:vol_growth_MET_mu2}
\lim_{n\rightarrow\infty}\frac{1}{n} \log \operatorname{Vol} \big{(}\varphi(n,\omega,h^{1}),...,\varphi(n,\omega,h^{k})\big{)} = m_{1} \mu_{1}.
\end{align}\eqref{eqn:vol_growth_MET}

For the next step, we set $ V_{j}:=F_{\mu_{2}}(\theta^{j}\omega)$ and $ T_{j}:=\varphi (1, \theta^{j}\omega,\cdot )\mid_{F_{\mu_{2}}(\theta^{j}\omega)} $. Note that from the cocycle property, $ T_{j} \colon V_{j} \to V_{j+1}$. We claim that there is a measurable and $ \theta $-invariant subset $\Omega_{2}\subset\Omega_{1}$ with full measure such that for any $ \omega\in\Omega_{2} $ and $ k\geqslant 1 $,
\begin{align}\label{LLL}
\lim_{n\rightarrow\infty}\frac{1}{n}\log D_{k}\big{[}\varphi(n,\omega,\cdot)\mid_{F_{\mu_{2}}(\omega)}\big{]}=\Lambda_{k+m}-\Lambda_{m}
\end{align}
where we set $m := m_1$ for simplicity. Let $ \Omega_{2}\subset\Omega_{1} $ be a measurable subset with the properties stated in Lemma \ref{Mea}. Fix some $ \omega\in\Omega_{2} $. As a consequence of Lemma \ref{restr},
\begin{align}\label{liminf}
\Lambda_{k+m} \leqslant \Lambda_{m} + \liminf_{n\rightarrow\infty} \frac{1}{n} \log D_{k}\big{[}\varphi(n,\omega,\cdot)\mid_{F_{\mu_{2}}(\omega)}\big{]}.
\end{align}
For $n \in \N$ to be specified later, let $\lbrace f^{i} \rbrace_{1\leqslant i\leqslant k}\subset F_{\mu_{2}}(\omega) $ be chosen such that $ \Vert f^{i} \Vert = 1 $ for every $i$ and
\begin{align}\label{CX}
  \operatorname{Vol} \bigg{(}\varphi(n,\omega,f^{1}),...,\varphi(n,\omega,f^{k}) \bigg{)}\geqslant \frac{1}{2}D_{k}\big{[}\varphi(n,\omega,\cdot)\mid_{F_{\mu_{2}}(\omega)}\big{]}.
\end{align}
 Let $H_{\omega} = \langle h^{1} ,h^{2},...,h^{m} \rangle $ be a complement subspace for $ F_{\mu_{2}}(\omega) $. We can assume that $\Vert h^{i} \Vert =1 $ for all $i$. To ease notation, set $ \varphi_{\omega}^{n}(\cdot) := \varphi(n,\omega,\cdot)$. By definition,
\begin{align}\label{CY}
\begin{split}
  &D_{k+m} (\varphi^{n}_{\omega}(\cdot)) \geqslant \operatorname{Vol} \big{(}\varphi^{n}_{\omega}(h^{1}),...,\varphi^{n}_{\omega}(h^{m}),\varphi^{n}_{\omega}(f^{1}),...,\varphi^{n}_{\omega}(f^{k})\big{)} \\ 
  &= \operatorname{Vol} \big{(}\varphi^{n}_{\omega}(h^{1}),...,\varphi^{n}_{\omega}(h^{m})\big{)} \prod_{j=1}^{m} d\bigg{(}\varphi^{n}_{\omega}(f^{j}_{\omega}),\big{<}\varphi^{n}_{\omega}(h^{1}),...,\varphi^{n}_{\omega}(h^{m}), \varphi^{n}_{\omega}(f^{1}),...,\varphi^{n}_{\omega}(f^{j-1})\big{>}\bigg{)}.
\end{split}
\end{align} 
It is not hard to see that
\begin{align*}
\frac{d\bigg{(}\varphi^{n}_{\omega}(f^{j}),\big{<}\varphi^{n}_{\omega}(f^{1}), ... ,\varphi^{n}_{\omega}(f^{j-1})\big{>}\bigg{)}}{d\bigg{(}\varphi^{n}_{\omega}(f^{j}),\big{<}\varphi^{n}_{\omega}(h^{1}),...,\varphi^{n}_{\omega}(h^{m}),\varphi^{n}_{\omega}(f^{1}),...,\varphi^{n}_{\omega}(f^{j-1})\big{>}\bigg{)}} \leqslant \| \Pi_{F_{\mu_{2}}(\theta^{n}\omega)||\varphi(n,\omega,H_{\omega})} \|.
\end{align*}
Consequently, by (\ref{CX}) and (\ref{CY}),
\begin{align*}
D_{k+m}(\varphi^{n}_{\omega}(\cdot)) &\geqslant \| \Pi_{F_{\mu_2}(\theta^{n}\omega)||\varphi(n,\omega,H_{\omega})} \|^{-m} \operatorname{Vol} \big{(}\varphi^{n}_{\omega}(h^{1}),...,\varphi^n_{\omega} (h^{m}) \big{)} \operatorname{Vol} (\varphi^{n}_{\omega}(f^{1}),...,\varphi^{n}_{\omega}(f^{n})) \\
&\geqslant \frac{1}{2} \| \Pi_{F_{\mu_{2}}(\theta^{n}\omega)||\varphi(n,\omega,H_{\omega})} \|^{-m} \operatorname{Vol} \big{(}\varphi^{n}_{\omega}(h^{1}),...,\varphi^n_{\omega} (h^{m}) \big{)} D_{k}\big{[}\varphi(n,\omega,\cdot)\mid_{F_{\mu_{2}}(\omega)}\big{]}.
\end{align*}
Note that, by definition of the projection operator,
\begin{align*}
   1\leqslant \| \Pi_{F_{\mu_{2}}(\theta^{n}\omega)||\varphi(n,\omega,H_{\omega})} \| \leqslant \| \Pi_{\varphi(n,\omega,H_{\omega})||F_{\mu_{2}}(\theta^{n}\omega)}\| + 1.
\end{align*}
 Choosing $n$  large, using \eqref{eqn:vol_growth_MET_mu2} and Lemma \ref{Mea}, we see that
\begin{align}\label{lisup}
\limsup_{n\rightarrow\infty}\frac{1}{n} \log D_{k}\big{[}\varphi(n,\omega,\cdot)\mid_{F_{\mu_{2}}(\omega)}\big{]}+\Lambda_{m}\leqslant\Lambda_{k+m}
\end{align}
and \eqref{LLL} is shown. We can now use Proposition \ref{imp} again with $\overline{l} = \mu_2$, $\underline{l} = \mu_3$ and $m = m_2$ which proves that for $\omega \in \Omega_2$ and $x \in F_{\mu_2}(\omega) \setminus F_{\mu_3}(\omega)$,
\begin{align*}
 \lim_{n \to \infty} \frac{1}{n} \log \| \varphi(n,\omega,x) \| = \mu_2.
\end{align*}
Moreover, $F_{\mu_3}(\omega)$ is $m_2$-codimensional in $F_{\mu_2}(\omega)$. Using that $F_{\mu_2}(\omega)$ is $m_1$-codimensional in $E_{\omega}$ implies that $F_{\mu_3}(\omega)$ has codimension $m_1 + m_2$ in $E_{\omega}$.

It remains to prove (v). Let $ \langle h^{1},...,h^{m_{1}+m_{2}} \rangle$ be a complement subspace for $ F_{\mu_{3}}(\omega)$. Note that $ \operatorname{Vol}\big{(}\varphi^{n}_{\omega}(h^{1}),...,\varphi^{n}_{\omega}(h^{m_{1}+m_{2}})\big{)} $ is not invariant under permutation, but all permutations are equivalent up to a constant which only depends on $ m_{1}+m_{2} $, cf. the proof of Lemma \ref{estimate2}. We may assume that $ H_{\omega} = \langle h^{1} ,..., h^{m_{1}} \rangle$ is a complement subspace for $ F_{\mu_{2}}(\omega)$ and that for $ m_{1}+1\leqslant j\leqslant m_{1}+m_{2}$, we have $
h^{j} = g^{j-m_{1}} + f^{j-m_{1}}$ where $g^{j-m_{1}} \in F_{\mu_{2}}(\omega)$ and $f^{j-m_{1}} \in H_{\omega}$. It is not hard to see that $G_{\omega} := \langle g^{1},...,g^{m_{2}} \rangle$ is a complement subspace for $ F_{\mu_{3}}(\omega) $ in $ F_{\mu_{2}}(\omega) $. By definition,
\begin{align*}
&\operatorname{Vol} \big{(}\varphi^{n}_{\omega}(g^{1}),...,\varphi^{n}_{\omega}(g^{m_{2}}),\varphi^{n}_{\omega}(h^{1}),...,\varphi^{n}_{\omega}(h^{m_{1}})\big{)} \\
=\ &\operatorname{Vol} \big{(}\varphi^{n}_{\omega}(g^{1}),...,\varphi^{n}_{\omega}(g^{m_{2}}) \big{)} \prod_{j=1}^{m_{1}}d\big{(}\varphi^{n}_{\omega}(h^{j}) , \langle \varphi^{n}_{\omega}(g^{1}),...,\varphi^{n}_{\omega}(g^{m_{2}}),\varphi^{n}_{\omega}(h^{1}),...,\varphi^{n}_{\omega}(h^{j-1}) \rangle\big{)}.
\end{align*}
Note that
\begin{align*}
1\leqslant\frac{d\big{(}\varphi^{n}_{\omega}(h^{j}), \langle \varphi^{n}_{\omega}(h^{1}),...,\varphi^{n}_{\omega}(h^{j-1}) \rangle \big{)}}{d\big{(}\varphi^{n}_{\omega}(h^{j}),\langle \varphi^{n}_{\omega}(g^{1}),...,\varphi^{n}_{\omega}(g^{m_{2}}),\varphi^{n}_{\omega}(h^{1}),..,\varphi^{n}_{\omega}(h^{j-1}) \rangle \big{)}}\leqslant \| \Pi_{\varphi(n,\omega,H_{\omega}) || F_{\mu_{2}}(\theta^{n}\omega)} \|.
\end{align*}
Together with Lemma \ref{Mea} and (\ref{det}) in Proposition \ref{imp}, this implies that
\begin{align}\label{VBN}
\begin{split}
&\lim_{n\rightarrow\infty}\frac{1}{n}\log \operatorname{Vol} \big{(}\varphi^{n}_{\omega}(h^{1}),...,\varphi^{n}_{\omega}(h^{m_{1}}),\varphi^{n}_{\omega}(g^{1}),...,\varphi^{n}_{\omega}(g^{m_{2}})\big{)}  \\
=\ &\lim_{n\rightarrow\infty}\frac{1}{n}\log \operatorname{Vol} \big{(}\varphi^{n}_{\omega}(g^{1}),...,\varphi^{n}_{\omega}(g^{m_{2}}),\varphi^{n}_{\omega}(h^{1}),...,\varphi^{n}_{\omega}(h^{m_{1}})\big{)} = m_{1}\mu_{1}+m_{2}\mu_{2}.\\
\end{split}
\end{align}
 Since $ f^{k} \in H_{\omega}  $ for $1\leqslant j\leqslant m_{1}$,
\begin{align*}
&d\big{(}\varphi^{n}_{\omega}(g^{j}), \langle \varphi^{n}_{\omega}(h^{1}),...,\varphi^{n}_{\omega}(h^{m_{1}}),\varphi^{n}_{\omega}(g^{1}),...,\varphi^{n}_{\omega}(g^{j-1}) \rangle \big{)}  \\
=\ &d\big{(}\varphi^{n}_{\omega}(h^{m_{1}+j}), \langle \varphi^{n}_{\omega}(h^{1}),...,\varphi^{n}_{\omega}(h^{m_{1}}),\varphi^{n}_{\omega}(h^{m_{1}+1}),..,\varphi^{n}_{\omega}(h^{m_{1}+j-1}) \rangle \big{)}.
\end{align*}
Consequently, by (\ref{VBN}),
\begin{align*}
&\lim_{n\rightarrow\infty}\frac{1}{n}\log \operatorname{Vol} \big{(}\varphi^{n}_{\omega}(h^{1}),...,\varphi^{n}_{\omega}(h^{m_{1}}),\varphi^{n}_{\omega}(h^{m_{1}+1}),...,\varphi^{n}_{\omega}(h^{m_{1}+m_{2}} )\big{)}  \\
=\ &\lim_{n\rightarrow\infty}\frac{1}{n}\log \operatorname{Vol} \big{(}\varphi^{n}_{\omega}(h^{m_{1}+1}),...,\varphi^{n}_{\omega}(h^{m_{1}+m_{2}}),\varphi^{n}_{\omega}(h^{1}),...,\varphi^{n}_{\omega}(h^{m_{1}})\big{)}\big{]} = m_{1}\mu_{1}+m_{2}\mu_{2}.
\end{align*}
This finishes step 2. We can now iterate the procedure and the general result follows by induction.

%
%
\end{proof}

\section{The Lyapunov spectrum for linear equations}\label{sec:lyapunovspectrum}

In this section, we formulate the main results of the article. 

\begin{theorem}\label{thm:main_linear}
 Let $(\Omega,\mathcal{F},\P,(\theta)_{t \in \R})$ be an ergodic measurable metric dynamical system and $\mathbf{X}$ a delayed $\gamma$-rough path cocycle for some $\gamma \in (1/3,1/2]$ and some delay $r > 0$. Assume that there are $\alpha < \beta < \gamma$ such that \eqref{eqn:alpha_beta_gamma} holds for some $\kappa \in (0,\gamma)$. In addition, we assume that
  \begin{align}\label{eqn:moment_bound_rough_cocycle} 
& \| X \|_{\gamma;[0,r]} + \|\mathbb{X}\|_{2 \gamma;[0,r]} + \|\mathbb{X}(-r)\|_{2 \gamma;[0,r]} \in L^{\frac{1}{\gamma - \beta}}(\Omega).
 \end{align}
 Let $\sigma \in L(W^2,L(U,W))$. Then we have the following:
 
 \begin{itemize}
  \item[(i)] The equation
 \begin{align}\label{eqn:main_thm_linear}
  \begin{split}
  dy_t &= \sigma(y_t,y_{t-r})\, d\mathbf{X}_t(\omega); \quad t \geq 0 \\
  y_t &= \xi_t; \quad t \in [-r,0]
  \end{split}
 \end{align}
 has a unique solution $y \colon [0,\infty) \to W$ for every initial condition $(\xi,\xi') \in \mathscr{D}^{\alpha,\beta}_{X(\omega)}([-r,0],W)$ with
 \begin{align*}
    (y_{t+n}(\omega), y'_{t+n}(\omega))_{t \in [-r,0]} \in \mathscr{D}_{X(\theta_{nr}\omega)}^{\alpha,\beta}([-r,0],W)
 \end{align*}
 for every $n \geq 0$ where
 \begin{align*}
  y'_t(\omega) = \begin{cases}
                  \sigma(y_t(\omega),y_{t-r}(\omega)) &\text{for } t \geq 0 \\
                  \xi'_t &\text{for } t \in [-r,0].
                 \end{cases}
 \end{align*}
 
 \item[(ii)] Set $\varphi(n,\omega,\xi) := (y_{t+n}(\omega), y'_{t+n}(\omega))_{t \in [-r,0]}$ and $E_{\omega} := \mathscr{D}^{\alpha,\beta}_{X(\omega)}([-r,0],W)$. Then $\varphi$ is a compact linear cocycle defined on the discrete ergodic measurable metric dynamical system $(\Omega,\mathcal{F},\P,\theta_r)$ acting on the measurable field of Banach spaces $\{E\}_{\omega \in \Omega}$ and all statements of the Multiplicative Ergodic Theorem \ref{thm:MET_Banach_fields} hold. In particular, a deterministic Lyapunov spectrum $(\mu_j)_{j \geq 0}$ exists and induces an Oseledets filtration of the space of admissible initial conditions $\mathscr{D}^{\alpha,\beta}_{X(\omega)}([-r,0],W)$ on a set of full measure.
 
 \end{itemize}

\end{theorem}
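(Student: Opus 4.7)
The proof is essentially an orchestration of results from the preceding sections, so my plan is to identify the building blocks, verify the one analytic hypothesis that is not already on the page, and then invoke the abstract MET. For part (i), since $\sigma$ is linear, Theorem \ref{thm:delay_linear} gives a unique solution to \eqref{eqn:main_thm_linear} on $[0,r]$ for every initial datum in $\mathscr{D}_{X(\omega)}^{\beta}([-r,0],W)$ together with the a priori bound \eqref{eqn:a_priori_bound_linear}; iterating over $[nr,(n+1)r]$ as in Theorem \ref{thm:flow_prop} extends the solution to $[0,\infty)$. The claimed Gubinelli derivative $y'_t = \sigma(y_t,y_{t-r})$ for $t\ge 0$ is the content of Theorem \ref{thm:delayed_rough_integral}. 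Invariance of the closed subspace $\mathscr{D}_{X(\omega)}^{\alpha,\beta}$ under the solution map is already recorded at the end of Theorem \ref{fiber}: stability (Theorem \ref{thm:delay_stability}) shows that the solution map is continuous in the $\mathscr{D}^{\alpha}$-topology, and $\mathscr{D}^{\beta}$-valued initial data is mapped to $\mathscr{D}^{\beta}$-valued output, so the closure is preserved.

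For part (ii), the cocycle property and compactness of $\varphi(n,\omega,\cdot)\colon\mathscr{D}_{X(\omega)}^{\alpha,\beta}\to\mathscr{D}_{X(\theta_{nr}\omega)}^{\alpha,\beta}$ come directly from Theorem \ref{fiber} (using Proposition \ref{prop:compact_map} for compactness in the linear case). Measurability in the sense of Definition \ref{def:RDS_on_Banach_field}, namely that $\omega\mapsto\|\varphi(n,\omega,g(\omega))\|_{E_{\theta^n\omega}}$ is measurable for every $g\in\Delta$, is precisely the content of Theorem \ref{eqn:delay_induce_RDS}, proved there by a Picard iteration combined with a localization on the measurable sets $\{T(A(\omega))\le r/m\}$. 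Ergodicity of the discrete system $(\Omega,\mathcal{F},\P,\theta_r)$ follows from ergodicity of the continuous-time flow. Thus $\varphi$ is a compact linear cocycle acting on the measurable field $\{\mathscr{D}_{X(\omega)}^{\alpha,\beta}([-r,0],W)\}_{\omega\in\Omega}$ exactly as required by Theorem \ref{thm:MET_Banach_fields}.

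The only genuinely new step is verifying the integrability hypothesis $\log^+\|\varphi(1,\omega,\cdot)\|\in L^1(\P)$ needed for the MET. Lemma \ref{lemma:oper_meas} secures measurability of the operator norm, and the a priori estimate \eqref{eqn:a_priori_bound_linear} bounds it by
\[
C\bigl(1+r^{\gamma-\beta}\|X(\omega)\|_{\gamma;[0,r]}\bigr)\exp\!\bigl\{CA(\omega)^{1/(\gamma-\beta)}\bigr\},
\]
where $A(\omega)=\|X(\omega)\|_{\gamma;[0,r]}+\|\mathbb{X}(\omega)\|_{2\gamma;[0,r]}+\|\mathbb{X}(\omega)(-r)\|_{2\gamma;[0,r]}$. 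Taking $\log^+$ reduces the right-hand side to a constant multiple of $A(\omega)^{1/(\gamma-\beta)}$ plus a bounded contribution from the logarithm of the prefactor, and assumption \eqref{eqn:moment_bound_rough_cocycle} gives exactly $A(\omega)^{1/(\gamma-\beta)}\in L^1(\P)$. Applying Theorem \ref{thm:MET_Banach_fields} then yields the deterministic Lyapunov spectrum $(\mu_j)$, the Oseledets filtration $F_{\mu_1}(\omega)\supset F_{\mu_2}(\omega)\supset\cdots$, and the growth and codimension statements.

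The only real obstacle is conceptual rather than technical: making the (randomly varying) fiber spaces $\mathscr{D}_{X(\omega)}^{\alpha,\beta}$ fit the abstract framework of a measurable field of Banach spaces with a countable generating set $\Delta_0$. This requires the separability provided by the density result of Theorem \ref{structre} — which is why the condition \eqref{eqn:alpha_beta_gamma} appears in the hypotheses — together with the measurability of the associated seminorms. Both have been established earlier, so the final step is genuinely just an invocation.
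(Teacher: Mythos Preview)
Your proposal is correct and follows essentially the same route as the paper's own proof: assemble the cocycle and measurability structure from Theorem~\ref{fiber} and Theorem~\ref{eqn:delay_induce_RDS}, then use the a priori bound \eqref{eqn:a_priori_bound_linear} from Theorem~\ref{thm:delay_linear} together with the moment assumption \eqref{eqn:moment_bound_rough_cocycle} to verify $\log^+\|\varphi(1,\omega,\cdot)\|\in L^1$, and finally invoke Theorem~\ref{thm:MET_Banach_fields}. Your write-up is simply more explicit about the individual ingredients (the role of Theorem~\ref{structre} and condition~\eqref{eqn:alpha_beta_gamma}, the compactness from Proposition~\ref{prop:compact_map}, and the $\log^+$ computation) than the paper's two-sentence proof.
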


\begin{proof}
 Theorem \ref{fiber} together with Theorem \ref{eqn:delay_induce_RDS} show that \eqref{eqn:main_thm_linear} induces a cocycle acting on a measurable field of Banach spaces given by the spaces of controlled paths. The estimate in Theorem \ref{thm:delay_linear} together with our assumption \eqref{eqn:moment_bound_rough_cocycle} show that the moment condition of the MET \ref{thm:MET_Banach_fields} is satisfied and the theorem follows.
\end{proof}

Finally, we apply our results for the Brownian motion.

\begin{corollary}\label{cor:main_thm_bm}
 Theorem \ref{thm:main_linear}
 can be applied for $X$ being a two-sided Brownian motion $B$ adapted to a two-paramter filtration $(\mathcal{F}_s^t)$ and $\mathbf{X}$ being either $\mathbf{B}^{\text{It\=o}}$ or $\mathbf{B}^{\text{Strat}}$. In the former case, the solution to \eqref{eqn:main_thm_linear} coincides with the usual It\=o-solution and in the later case it coincides with the Stratonovich solution of a stochastic differential equation almost surely in case the initial condition is $\mathcal{F}_{-1}^0$-measurable. 
\end{corollary}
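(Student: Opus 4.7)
The plan is to verify the hypotheses of Theorem \ref{thm:main_linear} when $X = B$ is the two-sided Brownian motion and $\mathbf{X} \in \{\mathbf{B}^{\text{It\=o}}, \mathbf{B}^{\text{Strat}}\}$, and then to invoke Corollary \ref{cor:rough_ito_coincides} for the identification of the rough solution with the classical one.

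First I would choose the H\"older parameters. By the Remark following Theorem \ref{structre}, for any given $\alpha < 1/2$ one can find $\alpha < \beta < \gamma < 1/2$ and $\kappa \in (0,\gamma)$ such that \eqref{eqn:alpha_beta_gamma} holds. Proposition \ref{prop:existence_bm_lift} then guarantees that $\mathbf{B}^{\text{It\=o}}$ and $\mathbf{B}^{\text{Strat}}$ admit modifications whose sample paths are delayed $\gamma$-rough paths on every compact interval, and Theorem \ref{thm:B_delayed_cocycle} upgrades these modifications to genuine delayed $\gamma$-rough path cocycles over some ergodic metric dynamical system $(\Omega,\mathcal{F},\P,(\theta_t)_{t\in\R})$. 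Hence the structural assumption of Theorem \ref{thm:main_linear} is in place.

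Next I would verify the integrability condition \eqref{eqn:moment_bound_rough_cocycle}. Proposition \ref{prop:existence_bm_lift} states precisely that the $\gamma$-H\"older norms of $B$, $\mathbb{B}$ and $\mathbb{B}(-r)$ on any compact interval have finite $p$-th moments for every $p > 0$; in particular for $p = 1/(\gamma-\beta)$. With \eqref{eqn:moment_bound_rough_cocycle} in hand, Theorem \ref{thm:main_linear} applies and yields the cocycle $\varphi$, the measurable field of Banach spaces of controlled paths, and the deterministic Lyapunov spectrum together with the Oseledets filtration.

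It remains to identify the rough-path solution with the classical SDE solution under the measurability assumption on the initial condition. Any $(\xi,\xi') \in \mathscr{D}^{\alpha,\beta}_{B(\omega)}([-r,0],W)$ is by definition almost surely controlled by $B$, so if $(\xi,\xi')$ is in addition $\mathcal{F}^0_{-1}$-measurable, then Corollary \ref{cor:rough_ito_coincides} applied pathwise on each interval $[nr,(n+1)r]$ (where adaptedness of the solution propagates automatically through the rough integral, since the past of $B$ determines both the solution and its Gubinelli derivative) shows that the rough solution coincides almost surely with the It\=o (resp.\ Stratonovich) solution of the SDE on $[0,\infty)$. Iterating this identification over consecutive intervals of length $r$ gives the claim globally in time.

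The main obstacle in this plan is less technical than bookkeeping: one has to make sure that the admissible parameter window $(\alpha,\beta,\gamma,\kappa)$ is compatible \emph{simultaneously} with the density assertion of Theorem \ref{structre}, with the moment bound in Theorem \ref{thm:main_linear}, and with the requirement $\gamma > 1/3$ from the rough paths setup; but all three are open conditions near $\gamma = 1/2$ and $\beta$ close to $\gamma$, so the joint choice causes no genuine difficulty.
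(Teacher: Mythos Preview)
Your proposal is correct and follows essentially the same route as the paper's proof: verify the cocycle and ergodicity via Theorem \ref{thm:B_delayed_cocycle}, check the integrability condition \eqref{eqn:moment_bound_rough_cocycle} via Proposition \ref{prop:existence_bm_lift}, choose compatible parameters $\alpha<\beta<\gamma$ near $1/2$, apply Theorem \ref{thm:main_linear}, and invoke Corollary \ref{cor:rough_ito_coincides} for the identification with the classical It\=o/Stratonovich solution. Your additional remarks on iterating the identification over intervals of length $r$ and on the joint parameter choice are sound elaborations but not needed beyond what the cited results already provide.
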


\begin{proof}
 The fact that $\mathbf{B}^{\text{It\=o}}$ and $\mathbf{B}^{\text{Strat}}$ are delayed $\gamma$-rough path cocycles on an ergodic measurable metric dynamical system for every $\gamma \in (1/3,1/2)$ was shown in Theorem \ref{thm:B_delayed_cocycle}. Choosing $\gamma$ close enough to $1/2$, we can find $\alpha$ and $\beta$ such that \eqref{eqn:alpha_beta_gamma} holds. In Proposition \ref{prop:existence_bm_lift}, we saw that the integrability condition \eqref{eqn:moment_bound_rough_cocycle} is satisfied in the Brownian case, and we can indeed apply Theorem \ref{thm:main_linear}.
 The fact that the solution to \eqref{eqn:main_thm_linear} coincides with the usual It\=o resp. Stratonovich solution was shown in Corollary \ref{cor:rough_ito_coincides}.
\end{proof}

We close this section with a few remarks.

\begin{remark}
 \begin{enumerate}
  \item As already mentioned, it is not hard to prove Theorem \ref{thm:main_linear} 
  for a vector of delays $0 < r_1 < \ldots < r_m$ in which case the equation reads
  \begin{align*}
   dy_t = \sigma(y_t,y_{t-r_1},\ldots,y_{t-r_m})\, d\mathbf{X}_t.
  \end{align*}
  In that case, the largest delay $r_m$ will play the role of $r$. It is also straightforward to include a smooth and bounded drift term in the equation by adding the function $t \mapsto t$ as a smooth component to the process $\mathbf{X}$. Including unbounded drifts is more challenging, cf. \cite{RS17} for a discussion regarding equations without delay. 

  
  \item Theorem \ref{thm:main_linear} 
  is formulated in a generality which opens the possibility to apply the results for a much larger class of driving processes $\mathbf{X}$. For instance, \cite{NNT08} prove that the fractional Brownian motion possess a ``canonical'' delayed L\'evy area using the Russo-Vallois integral \cite{RV93}. However, this approach does not directly show that the fractional Brownian motion has a canonical lift to a delayed rough path cocycle since we used that such lifts are limits of smooth convolutions, cf. the proof of Theorem \ref{thm:B_delayed_cocycle} where we used Theorem \ref{thm:approx_strat_delay}. However, it is possible to show that the delayed L\'evy area for the fractional Brownian motion defined through the Russo-Vallois integral is also a limit of smooth convolutions. This fact even holds for a significantly larger class of Gaussian processes and will be discussed in another future work. Other possible drivers in Theorem \ref{thm:main_linear} 
  are semimartingales with stationary increments and good integrability properties.
  
  
  \item It is possible to use the language of Hairer's Regularity Structures \cite{Hai14} to reformulate our results. In that case, the space of controlled paths has to be replaced by the space of \emph{modelled distributions}. We decided to use the language of rough paths here because less theory is needed and we can directly rely on prior work such as \cite{NNT08}. However, it might be useful to use regularity structures in the future.

 \end{enumerate}

\end{remark}

\section{An example}\label{sec:concrete_example}

In view of our main results obtained in the former section, we now come back to the previous example already discussed in the introduction: we consider the stochastic delay equation
\begin{align}\label{eqn:delayed_linear_chapter6}
  \begin{split}
  dy_t &= y_{t-1}\, d\mathbf{B}^{\text{It\=o}}_t;\quad t \geq 0 \\
  y_t &= \xi_t; \quad t \in [-1,0].
  \end{split}
 \end{align}
 This equation can be considered as the prototype of a singular stochastic delay equation. In its classical It\=o formulation, it was studied by one of us in \cite{Sch13}. In that work, it was shown that there exists a deterministic real number $\Lambda$ such that
 \begin{align}\label{eqn:limit_sch}
  \Lambda = \lim_{t \to \infty} \frac{1}{t} \log \| \varphi(t,\omega,\xi) \|
 \end{align}
 almost surely for any initial condition $\xi \in C([-1,0],\R) \setminus \{0\}$. In \eqref{eqn:limit_sch}, the norm $\| \cdot \|$ may denote the uniform norm or the $M_2$-norm which we will define below. It is a natural question to ask whether $\Lambda$ coincides with the top Lyapunov exponent provided by the Multiplicative Ergodic Theorem \ref{thm:MET_Banach_fields}. We will give an affirmative answer in this section.
 
 Set $ E_{\omega}=\mathscr{D}_{B(\omega)}^{\alpha ,\beta}([-1,0])$ with $\alpha$, $ \beta$ chosen such that  \eqref{eqn:alpha_beta_gamma} holds. Take $ (\xi,\xi') \in E_{\omega} $. On the time interval $[-1,1]$, the unique solution to \eqref{eqn:delayed_linear_chapter6} is given by
  \begin{align}\label{ans}
  (y_t,y'_t) = \begin{cases}
  (\xi_t,\xi'_t) &\text{ if } t\in [-1,0] \\
  \left( \int_0^t \xi_{s - 1}\, d\mathbf{B}^{\text{It\=o}} + \xi_0,\xi_{t-1} \right) &\text{ if } t\in [0,1].
  \end{cases}
 \end{align}
 Note that $C^1([-1,0],\R) \subset E_{\omega}$ for every $\omega \in \Omega$ by the embedding $\eta \mapsto (\eta,0)$. Let us introduce the Hilbert space $M_2 := \R \times L^2([-1,0],\R)$ furnished with the norm
 \begin{align*}
  \| (\nu,\eta) \|_{M_2} := \left( |\nu |^2 + \|\eta\|_{L^2}^2 \right)^{\frac{1}{2}}
 \end{align*}
 for $(\nu,\eta) \in M_2$. Note that $C([-1,0],\R) \subset M_2$ using the embedding $\eta \mapsto (\eta_0,\eta)$. Recall the definition of $\operatorname{Vol}$ given in Definition \ref{def:volume}. Our main result in this section is the following.
 
 \begin{theorem}\label{thm:vol_indep_limit}
  For every $\eta_{1},...,\eta_{k}\in C^{1}([-1,0], \R) \setminus \lbrace 0\rbrace $, the limit
    \begin{align}\label{lim_chap6}
      \lim_{n\rightarrow\infty} \frac{1}{n} \log \operatorname{Vol}\big{(}\varphi(n,\omega,\eta_{1}),...,\varphi(n,\omega,\eta_{k})\big{)}
    \end{align}
  exists almost surely in $[-\infty,\infty)$. Moreover, the limit is independent of the choice of the norm when we take $ \Vert \cdot \Vert_{E_{\theta^{n}\omega}}$, $\Vert \cdot \Vert_{C^{\alpha}}$, $\Vert \cdot \Vert_{\infty}$ or $ \Vert \cdot \Vert_{M_{2}}$ in the definition of $\operatorname{Vol}$. For $k = 1$, if $\| \cdot \|$ denotes any of the norms above, the limit
  \begin{align*}
    \lim_{n\rightarrow\infty} \frac{1}{n} \log  \| \varphi(n,\omega,\eta) \|
  \end{align*}
  is independent of the choice of $\eta \in C^{1}([-1,0], \R) \setminus \lbrace 0\rbrace$ and coincides with the largest Lyapunov exponent provided by the Multiplicative Ergodic Theorem \ref{thm:MET_Banach_fields}.

 \end{theorem}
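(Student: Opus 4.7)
The plan is to compare the various norms of the iterated solution $y = \varphi(n,\omega,\eta)$ on a subexponential scale and then invoke \cite{Sch13} to identify the common exponential growth rate with $\Lambda$, the top Lyapunov exponent $\mu_1$ being thereby forced to equal $\Lambda$ via the MET. First, one direction of the comparison is straightforward: from the defining relation $y_{s,t} = y_{s-1} B_{s,t}(\omega) + y^\#_{s,t}$ on $[n-1,n]$ together with the definition of the controlled path norm one gets the pointwise estimate
\begin{equation*}
\max\bigl(\|y\|_{\infty; [n-1,n]},\,\|y\|_{C^\alpha; [n-1,n]},\,\|y\|_{M_2; [n-1,n]}\bigr) \leq C\bigl(1 + \|B(\theta^n\omega)\|_{\gamma;[-1,0]}\bigr)\,\|y\|_{E_{\theta^n\omega}},
\end{equation*}
and by the integrability hypothesis \eqref{eqn:moment_bound_rough_cocycle} together with Birkhoff's ergodic theorem the random prefactor is subexponential in $n$ almost surely.

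For the reverse direction one uses the rough integral representation $y_{s,t} = \int_s^t y_{u-1}\,d\mathbf{B}^{\text{It\=o}}_u$ and the fact that $u \mapsto y_{u-1}$ is a delayed controlled path on $[n-1,n]$ whose Gubinelli derivatives can be read off from the controlled structure of $y$ on the preceding interval. Applying Theorem \ref{thm:delayed_rough_integral} yields
\begin{equation*}
\|y\|_{E_{\theta^n\omega}} \leq P(n,\omega)\cdot\max_{0 \leq j \leq 3}\|y\|_{\infty;[n-j-1,n-j]},
\end{equation*}
where $P(n,\omega)$ is a polynomial expression in the rough path seminorms $\|B\|_\gamma$, $\|\mathbb{B}\|_{2\gamma}$ and $\|\mathbb{B}(-r)\|_{2\gamma}$ on a bounded number of shifted unit intervals around time $n$; hence $\frac{1}{n}\log P(n,\omega) \to 0$ almost surely by Birkhoff. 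Combining the two estimates gives equivalence on the exponential scale of the four norms in the statement.

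For the case $k=1$, the main theorem of \cite{Sch13} asserts $\frac{1}{n}\log\|\varphi(n,\omega,\eta)\|_\infty \to \Lambda$ for every $\eta \in C([-1,0],\R)\setminus\{0\}$, so by the norm equivalence just derived the same limit $\Lambda$ is reached in the controlled path norm for every $\eta \in C^1\setminus\{0\}$. By Theorem \ref{thm:MET_Banach_fields}(iii) this forces $\Lambda$ to coincide with one of the Lyapunov exponents, and since $C^\infty \subset C^1$ is dense in $E_\omega$ by Theorem \ref{structre} while $F_{\mu_2}(\omega)$ is a proper closed subspace, at least one $\eta \in C^1$ lies outside $F_{\mu_2}(\omega)$; this pins down $\Lambda = \mu_1$, and consequently every nonzero $C^1$ initial condition sits in $E_\omega \setminus F_{\mu_2}(\omega)$. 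For $k \geq 2$ the same polynomial sandwich applied to $k$-fold volumes (bounded in all four norms by a common subexponential factor) yields norm-independence on the exponential scale, and existence of the limit in $[-\infty,\infty)$ follows from the MET after changing basis in $\langle \eta_1,\ldots,\eta_k\rangle$ to one adapted to the Oseledets filtration: this base change is a constant-volume operation, and part (v) of Theorem \ref{thm:MET_Banach_fields} then expresses the growth rate as a sum of Lyapunov exponents.

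The principal obstacle is the lower bound of step two, expressing the controlled path norm of the solution in terms of its sup norm on nearby intervals with only \emph{polynomial} dependence on the rough path seminorms; the exponential dependence appearing in the generic a priori bound of Theorem \ref{thm:delay_linear} would be fatal, as Birkhoff would then only yield boundedness of the log-prefactor rather than its subexponentiality. Exploiting the specific delayed controlled structure of $y$, rather than the generic linear a priori estimate, is what allows the harmless polynomial dependence.
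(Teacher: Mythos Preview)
Your strategy for the existence of the limit in the $E_{\theta^n\omega}$-norm and for the identification $\Lambda=\mu_1$ (density of the set in Theorem~\ref{structre}, $F_{\mu_2}(\omega)$ proper closed) matches the paper and is fine. The gap is in your reverse comparison of norms.

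You assert that Theorem~\ref{thm:delayed_rough_integral} gives
\[
\|y\|_{E_{\theta^n\omega}}\;\le\;P(n,\omega)\,\max_{0\le j\le 3}\|y\|_{\infty;[n-j-1,n-j]},
\]
with $P$ a polynomial in the rough-path seminorms on a \emph{bounded} window around $n$. This is not what Theorem~\ref{thm:delayed_rough_integral} yields. That theorem bounds the rough integral via the \emph{controlled path norm} of the integrand: the estimate for $y^{\#}$ on $[n-1,n]$ involves $\|m^{\#}\|_{2\alpha}$ and $\|\zeta^{1}\|_{\alpha}$ for $m=y_{\cdot-1}$, which are precisely $\|y^{\#}\|_{2\alpha;[n-2,n-1]}$ and $\|y\|_{\alpha;[n-3,n-2]}$, not sup norms. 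Expanding $\|y\|_{\alpha;[n-3,n-2]}$ via the controlled decomposition again produces $\|y^{\#}\|_{2\alpha;[n-3,n-2]}$, and the recursion in $\|y^{\#}\|_{2\alpha}$ does not close after finitely many steps with only sup norms on the right. Iterating it back to time $0$ recovers exactly the exponential a~priori bound of Theorem~\ref{thm:delay_linear}, which, as you correctly note in your final paragraph, would be fatal. In short, a pathwise bound of the controlled-path norm by the sup norm with merely local polynomial dependence on $\mathbf{B}$ is precisely what rough path theory says you \emph{cannot} have: the integral depends on the full decomposition, not on the values of $y$ alone.

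The paper obtains the reverse comparison by a genuinely different, stochastic mechanism. Conditioning on $\mathcal{F}_{n-1}$ and combining the Garsia--Rodemich--Rumsey inequality (Lemma~\ref{lem:GRR}) with Burkholder--Davis--Gundy gives
\[
\P\bigl(\|\xi^{n}\|_{\alpha}>C\,\big|\,\mathcal{F}_{n-1}\bigr)\;\lesssim\;\frac{\|\xi^{n-1}\|_{\infty}^{2m}}{C^{2m}},
\qquad
\P\bigl(\|(\xi^{n})^{\#}\|_{2\alpha}>C\,\big|\,\mathcal{F}_{n-1}\bigr)\;\lesssim\;\frac{\|\xi^{n-1}\|_{\alpha}^{2m}}{C^{2m}},
\]
and Doob's inequality handles $\|\cdot\|_{\infty}$ versus $\|\cdot\|_{M_2}$. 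These convert into summable tail bounds of the form $\P(\frac{1}{n}\log\|\xi^n\|_{\alpha}\ge\varepsilon+\frac{1}{n-1}\log\|\xi^{n-1}\|_{\infty})\lesssim e^{-2m\varepsilon n}$, and Borel--Cantelli then closes the chain of norms almost surely. The essential point is that BDG bounds the stochastic integral in terms of $\|\xi^{n-1}\|_{\infty}$ (through the quadratic variation), bypassing the controlled structure entirely; this is where the Brownian randomness, rather than a pathwise rough-path argument, is genuinely used.
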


%
%

Before proving Theorem \ref{thm:vol_indep_limit}, we need two classical inequalities:

\begin{lemma}\label{lem:GRR}
  Let $ \alpha<\frac{1}{2} $, $ p>2 $ and let $ \xi \colon [-1,0]\rightarrow\mathbb{R} $ be an $ \alpha $-H\"older path. Then there is a constant $A_{p}$ such that
  \begin{align}\label{ES1}
  \| \xi \|_{\alpha} = \sup_{-1 \leqslant s<t\leqslant 0}\frac{\vert \xi_{s,t}\vert}{(t-s)^{\alpha}} \leqslant A_{p} \bigg{(}\iint_{[-1,0]^{2}}\frac{| \xi_{u} - \xi_{v}\vert^{p}}{\vert u-v\vert^{p\alpha+2}}\, du\, dv \bigg{)}^{\frac{1}{p}}.
  \end{align} 
  If $X$ is $\alpha$-H\"older and $(\xi, \xi') \in \mathscr{D}_{X}^{\alpha}([-1,0],\R) $,
  \begin{align}\label{ES2}
  \sup_{-1\leqslant s<t\leqslant 0}\frac{\vert\xi^{\#}_{s,t}\vert}{(t-s)^{2\alpha}}\leqslant A_{p}\bigg{[}\bigg{(}\iint_{-1\leqslant u<v\leqslant 0}\frac{\vert\xi^{\#}_{u,v}\vert^{p}}{ |u - v|^{2\alpha p+2}}\, du \, dv\bigg{)}^{\frac{1}{p}} + \|\xi^{\prime}\|_{\alpha} \| X\|_{\alpha}\bigg{]}.
  \end{align}
\end{lemma}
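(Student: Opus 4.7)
The first inequality (ES1) is the classical Garsia-Rodemich-Rumsey (GRR) inequality, obtained by applying the abstract GRR lemma with the convex function $\Psi(x) = |x|^p$ and modulus $p(u) = u^{\alpha + 2/p}$; see, e.g., \cite[Theorem A.1]{FV10}. The resulting constant $A_p$ depends only on $p$ (and mildly on the length of the interval, which is fixed here).

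For the second inequality (ES2), the point is that $\xi^{\#}$ is a \emph{two-parameter} function, not a path difference, so the one-parameter GRR cannot be applied directly. The key algebraic fact is the almost-additivity identity
\begin{align*}
 \xi^{\#}_{s,t} - \xi^{\#}_{s,u} - \xi^{\#}_{u,t} = \xi'_{s,u}\, X_{u,t}, \qquad s \leq u \leq t,
\end{align*}
obtained by inserting $\xi_{s,t} = \xi_{s,u} + \xi_{u,t}$ into the controlled-path decomposition $\xi_{s,t} = \xi'_s X_{s,t} + \xi^{\#}_{s,t}$. This places $\xi^{\#}$ in exactly the same framework as the second level of a rough path, where Chen's identity $\mathbb{X}_{s,t} - \mathbb{X}_{s,u} - \mathbb{X}_{u,t} = X_{s,u} \otimes X_{u,t}$ plays the analogous role. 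The plan is therefore to follow the proof of the Kolmogorov criterion for rough paths (cf.~\cite[Theorem 3.1]{FH14}) line by line, with $X_{s,u} \otimes X_{u,t}$ replaced by $\xi'_{s,u} X_{u,t}$ throughout.

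Concretely, the approach is: fix $-1 \leq s < t \leq 0$ and set $I := \iint |\xi^{\#}_{u,v}|^p / |u-v|^{2\alpha p + 2}\, du\, dv$. A standard Chebyshev-type argument on the inner integrals of $I$ allows one to recursively extract a dyadic sequence of intermediate points $u_n$ inside shrinking sub-intervals of $[s,t]$ at which $|\xi^{\#}|$ is controlled by $A_p\, I^{1/p}$ times the appropriate power of the sub-interval length. Telescoping $\xi^{\#}_{s,t}$ along these points by repeated use of the almost-additivity identity then produces two contributions: (i) a geometric sum of the controlled increments of $\xi^{\#}$, whose total is bounded by $A_p\, I^{1/p}\,(t-s)^{2\alpha}$, and (ii) an accumulated defect of the form $\sum_n \xi'_{s_n,u_n}\, X_{u_n,t_n}$ whose $n$-th term is bounded by $\|\xi'\|_\alpha \|X\|_\alpha\,(t-s)^{2\alpha}\, 2^{-2\alpha n}$; summing this geometric series in $n$ gives precisely the claimed extra term $\|\xi'\|_\alpha \|X\|_\alpha$.

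The hardest part is the combinatorial bookkeeping of the telescoping, in particular ensuring that the intermediate points selected by the Chebyshev argument at each dyadic scale fit together so that the defect contributions form an absolutely convergent series; but since no estimate beyond the one-dimensional GRR and the almost-additivity identity is required, the proof is entirely analogous to the now-standard Kolmogorov criterion for the Lévy area and we regard it as routine.
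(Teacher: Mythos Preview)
Your proposal is correct and is essentially the same argument as the paper invokes. The paper's own proof consists only of a citation to \cite[Corollary 4]{Gub04}, and what you have sketched---the almost-additivity identity $\xi^{\#}_{s,t} - \xi^{\#}_{s,u} - \xi^{\#}_{u,t} = \xi'_{s,u} X_{u,t}$ combined with a dyadic GRR-type telescoping---is precisely the content of that reference. One minor quibble: your pointer to \cite[Theorem 3.1]{FH14} is slightly misleading, since that is the probabilistic Kolmogorov criterion (moment bounds for random processes) rather than the deterministic pathwise GRR estimate you actually describe; the correct antecedent for the two-parameter deterministic argument is Gubinelli's Corollary 4, though the telescoping mechanics with a Chen-type defect are indeed analogous.
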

\begin{proof}
Cf. \cite[Corollary 4]{Gub04}.
\end{proof}


\begin{proof}[Proof of Theorem \ref{thm:vol_indep_limit}]
First, we claim that the limit \eqref{lim_chap6} exists for any choice of $\eta_1,\ldots, \eta_k$ for the norm $ \Vert \cdot \Vert_{E_{\theta^{n}\omega}}$. Indeed, if $\eta_1,\ldots, \eta_k$ are linearly dependent, the limit \eqref{lim_chap6} clearly exists and equals $- \infty$. Also if for every $ j\geqslant 1 $ we have $ \langle \eta_{1},...,\eta_{k} \rangle\cap F_{\mu_{j}}(\omega)\neq \lbrace 0\rbrace$, since $ \mu_{j}\rightarrow-\infty $, Lemma \ref{estimate2} implies that \eqref{lim_chap6} exists and equals $- \infty$. So we can assume that for some  $ j\geqslant 1 $, $ \langle \eta_{1},...,\eta_{k} \rangle\cap F_{\mu_{j+1}}(\omega)=\lbrace 0\rbrace$.
 For $ i\leqslant j $ we can find a finite-dimensional subspace $ H_{i}(\omega) $ such that $ H_{i}(\omega)\bigoplus F_{\mu_{i+1}}(\omega)=F_{\mu_{i}}(\omega) $. Furthermore, for each $ i\leqslant j $, there is a subspace $ \tilde{H}_{i}(\omega)\subset H_{i}(\omega)$ with $ \operatorname{dim} \big{[}\tilde{H}_{i}(\omega)\big{]} = n_{i}$ such that
 \begin{align*}
 \frac{\langle \eta_{1},...,\eta_{k} \rangle}{F_{\mu_{j+1}}(\omega)}=\frac{\bigoplus_{1\leqslant i \leqslant j}\tilde{H}_{i}(\omega)}{F_{\mu_{j+1}}(\omega)}.
 \end{align*}
 Now as a consequence of item (v) in the Multiplicative Ergodic Theorem \ref{thm:MET_Banach_fields},
 \begin{align*}
\lim_{n\rightarrow\infty}\frac{1}{n} \log \operatorname{Vol} \big{(}\varphi(n,\omega,\eta_{1}),...,\varphi(n,\omega,\eta_{k})\big{)} = \sum_{1\leqslant i\leqslant j} n_{i} \mu_{i}
 \end{align*}
 which shows the claim. 
 
 The strategy of the proof now is to compare all norms against one another. For $ -1\leqslant t\leqslant 0 $ , $ \xi ,\eta \in C^{1}([-1,0],\R) \setminus \lbrace 0\rbrace  $ and $ n\in\mathbb{N}_0$ set $ \xi^{n}_{t}=y^{\xi}_{n+t} $ and $\eta^{n}_{t}=y^{\eta}_{n+t}$ where $ y^{\xi} $ and $y^{\eta}$ are solutions to \eqref{eqn:delayed_linear_chapter6} starting from $\xi$, $\eta$ respectively. By definition,
\begin{align}\label{EES}
(\xi^{n})^{\prime}_{t}=\xi^{n-1}_{t} , \ \ \ \ \ (\xi^{n})^{\#}_{s,t}=\int_s^t \xi^{n-1}_{s,u}dB_{n+u}
\end{align}
for $-1 \leq s \leq t \leq 0$ and $n \geq 0$ where we define $\xi^{-1} \equiv 0$. Set $\mathcal{F}_t := \mathcal{F}_0^t$. From Lemma \ref{lem:GRR}, for any $C > 0$,
\begin{align*}
&\P \big{(}\| \xi^{n} \|_{\alpha}>C\, |\, \mathcal{F}_{n-1}\big{)}\leqslant \P \left( \iint_{[-1,0]^{2}}\frac{\vert \xi_{v,u}^{n}\vert^{p}}{\vert u-v\vert^{2+p\alpha}}\, du\, dv \geqslant \frac{C^{p}}{(A_{p})^{p}} \big{|} \mathcal{F}_{n-1}\right) = \\ &\P \left( \iint_{[-1,0]^{2}} \frac{\vert\int_{[u,v]}\xi^{n-1}_{\tau } \,dB_{n+\tau}\vert^{p}}{\vert u-v\vert^{p\alpha +2}}\, du\, dv \geqslant \frac{C^{p}}{(A_{p})^{p}}|\mathcal{F}_{n-1} \right)
\end{align*}
almost surely. Similarly,
\begin{align*}
\P  \left( \inf_{\beta\in\mathbb{Q}} \| \eta^{n} - \beta\xi^{n} \|_{\alpha} > C  \big{|} \mathcal{F}_{n-1} \right)  \leqslant 
 \inf_{\beta\in\mathbb{Q}} \P\left( \iint_{[-1,0]^{2}}\frac{\vert\int_{[u,v]}\eta^{n-1}_{\tau} - \beta\xi^{n-1}_{\tau}\, dB_{n+\tau}\vert^{p}}{\vert u-v\vert^{p\alpha +2}}\, du\, dv \geqslant \frac{C^{p}}{(A_{p})^{p}}\big{|}\mathcal{F}_{n-1} \right)
\end{align*}
almost surely. Set $ p=2m $ for $m$ chosen such that $ m(1-2\alpha)>1 $. From the Burkholder-Davis-Gundy inequality, it follows that
\begin{align*}
\E \left( \left| \int_{[u,v]}\xi^{n-1}_{\tau}\, dB_{n+\tau} \right|^{2m} \big{|}\mathcal{F}_{n-1} \right) \leqslant B_{2m} \vert u-v\vert^{m}\Vert\xi^{n-1}\Vert^{2m}_{\infty}
\end{align*}
almost surely for some constant $B_{2m} > 0$. Consequently,
\begin{align}
&\P \left( \| \xi^{n} \|_{\alpha} > C | \mathcal{F}_{n-1} \right) \leqslant \tilde{A}_{2m} \frac{\| \xi^{n-1} \|^{2m}_{\infty}}{C^{2m}} \quad \text{and} \label{CDC} \\ &\P \left( \inf_{\beta\in\mathbb{Q}} \| \eta^{n}-\beta\xi^{n} \|_{\alpha} > C |\mathcal{F}_{n-1} \right) \leqslant \tilde{A}_{2m} \frac{\inf_{\beta\in\mathbb{Q}} \| \eta^{n-1} - \beta\xi^{n-1} \|_{\infty}^{2m}}{C^{2m}}
\end{align}
for a general constant $ \tilde{A}_{2m}$. Now for any $ \varepsilon>0 $, \eqref{CDC} implies that
\begin{align}\label{CCD}
\begin{split}
\P \left( \frac{1}{n} \log \| \xi^{n} \|_{\alpha} \geqslant \varepsilon + \frac{1}{n-1} \log \| \xi^{n-1} \|_{\infty}] \right) & \leqslant \P \left( \| \xi^{n-1} \|_{\alpha} \geqslant \| \xi^{n-1} \|_{\infty} \exp [\varepsilon(n-1)] \right) \\
&\leqslant \frac{\tilde{A}_{2m}}{\exp{[}2m\varepsilon(n-1){]}}\longrightarrow 0
\end{split}
\end{align}
as $n \to \infty$. Similarly,
\begin{align}\label{CCE}
  \P \left( \frac{1}{n} \log \inf_{\beta\in\mathbb{Q}} \| \eta^{n}-\beta\xi^{n} \|_{\alpha} \geqslant \varepsilon + \frac{1}{n-1} \log \inf_{\beta\in\mathbb{Q}} \|\eta^{n-1} - \beta\xi^{n-1} \|_{\infty} \right) \longrightarrow 0
\end{align}
as $n \to \infty$. 
Now from \eqref{ES2} and \eqref{EES},
\begin{align*}
&\P \left( \sup_{-1\leqslant s<t\leqslant 0} \frac{\vert(\xi^{n})^{\#}_{s,t}\vert}{(t-s)^{2\alpha}} > C\, \big{|}\, \mathcal{F}_{n-1} \right) \leqslant \\
&\P \left( A_{p} \left[ \left( \iint_{-1\leqslant u<v\leqslant 0} \frac{\vert \int_{u,v}\xi^{n-1}_{u,\tau}\, dB_{n+\tau} \vert^{p}}{(v-u)^{2p\alpha +2}}\,du \,dv \right)^{\frac{1}{p}} + \| \xi^{n-1} \|_{\alpha} \Vert B^{n} \Vert_{\alpha} \right] > C\, \big{|} \, \mathcal{F}_{n-1} \right) \leqslant \\
&\P \left( \iint_{-1\leqslant u<v\leqslant 0} \frac{\vert \int_{u,v}\xi^{n-1}_{u,\tau} \,dB_{n+\tau} \vert^{p}}{(v-u)^{2p\alpha +2}} \,du \, dv + \|\xi^{n-1} \|_{\alpha}^{p} \| B^{n} \|_{\alpha}^{p} > \frac{C^{p}}{(2A_{p})^{p}}\, \big{|}\, \mathcal{F}_{n-1} \right)
\end{align*}
almost surely. Similarly,
\begin{align*}
&\P \left( \inf_{\beta\in\mathbb{Q}} \left[ \sup_{-1\leqslant s<t\leqslant 0} \frac{\vert(\eta^{n}-\beta\xi^{n})^{\#}_{s,t}\vert}{(t-s)^{2\alpha}} \right] > C\, \big{|}\, \mathcal{F}_{n-1} \right) \leqslant \\
&\inf_{\beta\in\mathbb{Q}} \P \left( \iint_{-1\leqslant u<v\leqslant 0} \frac{\vert \int_{u,v}(\eta^{n}_{u,\tau}-\beta\xi^{n}_{u,\tau})\,dB_{n+\tau}\vert^{p}}{(v-u)^{2p\alpha +2}}\, du\, dv + \| \eta^{n-1}-\beta\xi^{n-1} \|_{\alpha}^{p} \| B^{n} \|_{\alpha}^{p} > \frac{C^{p}}{(2A_{p})^{p}}\, \big{|}\, \mathcal{F}_{n-1} \right)
\end{align*}
almost surely. Set $ p=2m $ such that $ m(1-2\alpha)>1 $. Then
\begin{align*}
  \E \left( \left| \int_{[u,v]}\xi^{n-1}_{u,\tau}\, dB_{n+\tau} \right|^{2m}\, \big{|}\, \mathcal{F}_{n-1} \right) \leqslant B_{2m}( v-u)^{m(2\alpha +1)}\Vert\xi^{n-1}\Vert_{\alpha}^{2m}
\end{align*}
almost surely. Consequently, for general constants $ M $ and $ \tilde{M} $,
\begin{align*}
\begin{split}
\P \left( \sup_{-1\leqslant s<t\leqslant 0}\frac{\vert(\xi^{n})^{\#}_{s,t}\vert}{(t-s)^{2\alpha}} > C\, \big{|}\, \mathcal{F}_{n-1} \right) & \leqslant \P \left( M \|\xi^{n-1}\|_{\alpha}^{2m} (1 + \| B^{n} \|_{\alpha}^{2m}) > C^{2m}\, \big{|}\, \mathcal{F}_{n-1} \right) \\
&\leqslant \frac{\tilde{M}}{C^{2m}} \| \xi^{n-1} \|^{2m}_{\alpha}
\end{split}
\end{align*}
almost surely and
\begin{align*}
  \P \left( \inf_{\beta\in\mathbb{Q}} \sup_{-1\leqslant s<t\leqslant 0} \frac{|(\eta^{n}-\beta\xi^{n})^{\#}_{s,t} |}{(t-s)^{2\alpha}} > C \, \big{|}\, \mathcal{F}_{n-1} \right) \leqslant \frac{\tilde{M}}{C^{2m}} \inf_{\beta\in\mathbb{Q}}\Vert\eta^{n-1} - \beta\xi^{n-1} \Vert^{2m}_{\alpha}
\end{align*}
almost surely. Similarly to \eqref{CCD}, for any $\varepsilon > 0$,
\begin{align}\label{CCF}
\begin{split}
  &\P \left( \frac{1}{n}\log \| (\xi^{n})^{\#}\|_{2\alpha} 
  \geqslant \varepsilon+\frac{1}{n-1}\log \Vert\xi^{n-1}\Vert_{\alpha}] \right) \longrightarrow 0\qquad \text{and} \\
  &\P \left( \frac{1}{n}\log \inf_{\beta\in\mathbb{Q}} 
  \| (\eta^{n}-\beta\xi^{n})^{\#} \|_{2\alpha} \geqslant \varepsilon + \frac{1}{n-1}\log \inf_{\beta\in\mathbb{Q}}\Vert\eta^{n-1}-\beta\xi^{n-1}\Vert_{\alpha}] \right) \longrightarrow 0
\end{split}
\end{align}
as $n \to \infty$. 
Remember $ \Vert\xi^{n}\Vert_{M_{2}}^{2}=\vert\xi_{-1}^{n}\vert^{2}+\int_{-1}^{0}(\xi_{t}^{n})^{2}\,dt$. From Doob's submartingale inequality, for a general constant $ M $,
\begin{align*}
\begin{split}
  \P(\| \xi^{n} \|_{\infty} >C\,|\,\mathcal{F}_{n-1}) &\leqslant \P \left( \vert\xi_{-1}^{n}\vert + \sup_{-1\leqslant t\leqslant 0}\vert\xi_{-1,t}^{n}\vert >C\,|\, \mathcal{F}_{n-1} \right) \\
  &\leqslant \frac{4\vert\xi_{-1}^{n}\vert^{2} + 4 \E \vert\xi_{-1,0}^{n}\vert^{2}}{C^{2}}\leqslant \frac{M}{C^{2}}\Vert\xi^{n-1}\Vert_{M_{2}}^{2}
\end{split}
\end{align*}
almost surely. Also,
\begin{align*}
\P \left( \inf_{\beta\in\mathbb{Q}} \| \eta^{n}-\beta\xi^{n} \|_{\infty} > C\, |\, \mathcal{F}_{n-1} \right) \leqslant \frac{M}{C^{2}} \inf_{\beta\in\mathbb{Q}} \|\eta^{n-1} - \beta\xi^{n-1} \|_{M_{2}}^{2}
\end{align*}
almost surely. Again as in \eqref{CCD}, for any $\varepsilon > 0$,
\begin{align}\label{CCG}
\begin{split}
&\P \left( \frac{1}{n}\log \inf_{\beta\in\mathbb{Q}} \| \xi^{n} \|_{\infty} \geqslant \varepsilon + \frac{1}{n-1} \log \inf_{\beta\in\mathbb{Q}} \| \xi^{n-1} \|_{M_{2}} \right) \longrightarrow 0\qquad \text{and} \\
&\P \left( \frac{1}{n} \log \inf_{\beta\in\mathbb{Q}} \| \eta^{n}-\beta\xi^{n} \|_{\infty} \geqslant \varepsilon + \frac{1}{n-1} \log \inf_{\beta\in\mathbb{Q}} \|\eta^{n-1} - \beta\xi^{n-1}\|_{M_{2}} \right) \longrightarrow 0
\end{split}
\end{align}
as $n \to \infty$. Now from the Multiplicative Ergodic Theorem \ref{thm:MET_Banach_fields}, \eqref{CCD}, \eqref{CCE}, \eqref{CCF} and \eqref{CCG}, the following limits exist
\begin{align}\label{top}
\begin{split}
&\lim_{n\rightarrow\infty}\frac{1}{n}\log\Vert\varphi(n,\omega,\xi) \Vert\\
&\lim_{n\rightarrow\infty}\frac{1}{n} \log \operatorname{Vol} \big{(}\varphi(n,\omega,\xi), \varphi(n,\omega,\eta)\big{)}
\end{split}
\end{align}
as $n \to \infty$ where $\| \cdot \|$ could be any of the proposed norms, used also in the definition of $\operatorname{Vol}$, and the limit is independent of the choice of the norm. From the definition of $\operatorname{Vol}$, the above argument together with a simple induction generalizes to every $k\geqslant 1$ which proves the first claim.

To prove the second claim, let $ \eta\in C^{1}([-1,0],\R) \setminus \lbrace 0\rbrace $. Then the limit $\mu := \lim_{n\rightarrow\infty} \frac{1}{n} \log \Vert\varphi(n,\omega,\eta)\Vert$ is independent from $ \eta $, cf. \cite[Theorem 1.1]{Sch13}. Therefore, from the Multiplicative Ergodic Theorem \ref{thm:MET_Banach_fields}, $ C^{1}([-1,0],\R) \setminus\lbrace 0\rbrace\subset F_{\mu_{j}}(\omega)\setminus F_{\mu_{j+1}}(\omega) $ for some $j \geq 1$. Let $ \xi$, $\eta\in C^{\infty}([-1,0],\R)\setminus\lbrace 0\rbrace $, $ a\in\mathbb{R} $ and set $ \tilde{\xi}_{t} := \int_{-1}^{t} \xi_{\tau}\, dB_{\tau} $. Using \eqref{ans}, we have
 \begin{align}\label{eqn:dense_subset}
  \gamma_{t} := \tilde{\xi}_{t} + \eta_{t} + a = \varphi(1, \theta^{-1} \omega,\xi)[t] + \eta_{t} +a -\xi_{0}
 \end{align}
 and \eqref{top} implies that $ \lim_{n\rightarrow\infty} \frac{1}{n} \log \| \varphi(n,\omega, \gamma) \| \leqslant\mu$. From Theorem \ref{structre}, we know that elements of the form $\gamma$ are dense in $ E_{\omega}$. Choose $ \xi_{\omega}\in F_{\mu_{1}}(\omega)\setminus F_{\mu_{2}}(\omega) $. Since  $ F_{\mu_{2}}(\omega) $ is a closed subspace, we can find a neighborhood $ B(\xi_{\omega} ,\delta)\subset F_{\mu_{1}}(\omega)\setminus F_{\mu_{2}}(\omega) $ and an element $\gamma \in B(\xi_{\omega} ,\delta)$ of the form \eqref{eqn:dense_subset}. Therefore, $\mu_1 \leq \mu$, thus $\mu = \mu_1$.
\end{proof}

\begin{remark}
Taking the Hilbert space norm $ \Vert \cdot \Vert_{M_{2}} $ in the definition of $\operatorname{Vol}$, we actually have
\begin{align*}
  \operatorname{Vol}(X_{1},...,X_{k})= \Vert X_{1}\wedge X_{2}\wedge ...\wedge X_{k}\Vert_{M_{2}}.
\end{align*}
We conjecture that the limit
    \begin{align*}
      \lim_{n\rightarrow\infty} \frac{1}{n} \log \Vert \varphi(n,\omega,\eta_{1}) \wedge \cdots \wedge \varphi(n,\omega,\eta_{k}) \Vert_{M_2}
    \end{align*}
is independent of the choice of $\eta_1,\ldots,\eta_k$ whenever these vectors are linearly independent, and that the limit coincides with $\Lambda_k$ almost surely. This would be in good accordance with the classical definition of Lyapunov exponents in the finite dimensional case, cf. \cite[Chapter 3]{Arn98}.

\end{remark}

%
%
%
%
%
%

\appendix


\section{Stability for rough delay equations}

In the following, we sketch the proof of Theorem \ref{thm:delay_stability}. The strategy is the same as in \cite[Theorem 4.2]{NNT08}.
%
%

 \begin{proof}[Proof of Theorem \ref{thm:delay_stability} (sketch)]\label{proof:stability}
 For simplicity, we assume that $U=W=\mathbb{R}$. By definition,
\begin{align}\label{DED}
y_{s,t} = \int_{s}^{t}\sigma(y_{\tau},\xi_{\tau -r})\, d\mathbf{X}_{\tau} 
=\Lambda_{s,t}+\rho^{2}_{s,t} = \sigma(y_{s},\xi_{s-r})X_{s,t} + \rho^{1}_{s,t} + \rho^{2}_{s,t}
\end{align}
where
\begin{align*}
 \Lambda_{s,t} &= \sigma(y_{s},\xi_{s-r})X_{s,t} + \sigma_{1}(y_{s},\xi_{s-r})y^{\prime}_{s}\mathbb{X}_{s,t} + \sigma_{2}(y_{s},\xi_{s-r})\xi^{\prime}_{s-r}\mathbb{X}_{s,t}(-r) , \\
 \rho^{1}_{s,t} &= \sigma_{1}(y_{s},\xi_{s-r})y^{\prime}_{s}\mathbb{X}_{s,t} + \sigma_{2}(y_{s},\xi_{s-r})\xi^{\prime}_{s-r}\mathbb{X}_{s,t}(-r)  \quad \text{and} \\
 \rho^{2}_{s,t} &= \int_{s}^{t}\sigma(y_{\tau},\xi_{\tau -r})\, d\mathbf{X}_{\tau} - \Lambda_{s,t},
\end{align*}
using the notation $\sigma_1(x,y) = \partial_x \sigma(x,y)$, $\sigma_2(x,y) = \partial_y \sigma(x,y)$. Analogously, one defines $\tilde{\Lambda}$, $\tilde{\rho}^1$ and $\tilde{\rho}^2$ such that
\begin{align*}
  \tilde{y}_{s,t} = \tilde{\Lambda}_{s,t} + \tilde{\rho}^{2}_{s,t} = \sigma(\tilde{y}_{s},\tilde{\xi}_{s-r}) \tilde{X}_{s,t} + \tilde{\rho}^{1}_{s,t} + \tilde{\rho}^{2}_{s,t}.
\end{align*}
Note that $y'_s = \sigma(y_{s},\xi_{s-r})$ and $y^{\#}_{s,t}=\rho^{1}_{s,t}+\rho^{2}_{s,t} $. It is not hard to see  that
\begin{align}\label{ZXZA}
 \begin{split}
  \Lambda_{s,t}&-\Lambda_{s,u} - \Lambda_{u,t}  = [\sigma_{1}(y_{s},\xi_{s-r})y^{\#}_{s,u} + \sigma_{2}(y_{s},\xi_{s-r})\xi^{\#}_{s-r,u-r}] X_{u,t}\\&+[\sigma_{1}(y_{u},\xi_{u-r})y^{\prime}_{u}-\sigma_{1}(y_{s},\xi_{s-r})y^{\prime}_{s}]\mathbb{X}_{u,t}+[\sigma_{2}(y_{u},\xi_{u-r})\xi^{\prime}_{u-r}-\sigma_{2}(y_{s},\xi_{s-r})\xi^{\prime}_{s-r}]\mathbb{X}_{u,t}(-r) \\&+ \int_{0}^{1}(1-\tau)\big{[}\sigma_{1,1}(z^{\tau}_{s,u},\bar{z}^{\tau}_{s,u})(y_{s,u})^{2}+2\sigma_{1,2}(z^{\tau}_{s,u},\bar{z}^{\tau}_{s,u})y_{s,y}\xi_{s-r,u-r}+\sigma_{2,2}(z^{\tau}_{s,u},\bar{z}^{\tau}_{s,u})(\xi_{s-r,u-r})^{2}\big{]}d\tau X_{u,t}
 \end{split}
\end{align}
where $ z^{\tau}_{s,u}=\tau y_{u}+(1-\tau)y_{s} $, $ \bar{z}^{\tau}_{s,u}=\tau\xi_{u-r}+(1-\tau)\xi_{s-r} $ and $\sigma_{1,1}(x,y) = \partial^2_x \sigma(x,y)$, $\sigma_{1,2}(x,y) = \partial_x \partial_y \sigma(x,y)$ and $\sigma_{2,2}(x,y) = \partial^2_y \sigma(x,y)$. Set 
\begin{align*}
 R &:= \Vert X-\tilde{X}\Vert_{\gamma,[0,r] } + \Vert \mathbb{X} - \tilde{\mathbb{X}} \Vert_{2\gamma ,[0,r]} + \Vert \mathbb{X}(-r)-\tilde{\mathbb{X}}(-r)\Vert_{2\gamma ,[0,r]} \\
 &\qquad + \Vert\xi^{\prime}-\tilde{\xi}^{\prime}\Vert_{\beta ,[0,r]}+\Vert\xi^{\#}-\tilde{\xi}^{\#}\Vert_{2\beta ,[0,r]}+\Vert\xi -\tilde{\xi}\Vert_{\beta ,[0,r]}, \\
 C(y) &:= \Vert X\Vert_{\gamma} + \Vert\mathbb{X}\Vert_{2\gamma ,[0,r]} + \Vert\mathbb{X}(-r)\Vert_{2\gamma ,[0,r]} + \Vert y\Vert_{\mathscr{D}_{{X}}^\beta([0 ,r],W)}+\Vert\xi\Vert_{\mathscr{D}_{{X}}^\beta([-r,0],W)} \quad \text{and} \\
  D(X) &:= \Vert X\Vert_{\gamma} + \Vert\mathbb{X} \Vert_{2\gamma} + \Vert\mathbb{X}(-r)\Vert_{2\gamma} + \Vert\xi\Vert_{\mathscr{D}_{{X}}^\beta([0 ,r],W)}
 \end{align*}
with an analogous definition of $C(\tilde{y})$ and $D(\tilde{X})$. It is not hard to see that there is a continuous function  $ g:(0,\infty)^{4}\rightarrow [0,\infty) $, increasing in every of its arguments, such that
\begin{align*}
\Vert \rho^{1} -\tilde{\rho}^{1}\Vert_{2\beta ;[a,b]}&\leqslant (b-a)^{\gamma -\beta}g\big{[}D(X),D(\tilde{X}),C(y),C(\tilde{y}) \big{]}\\ &\big{[}R +\Vert y-\tilde{y}\Vert_{\beta ;[a,b]}+\Vert y^{\prime}-\tilde{y}^{\prime}\Vert _{\beta ;[a,b]}+ \Vert y^{\#}-\tilde{y}^{\#}\Vert_{2\beta ;[a,b]}\big{]}
\end{align*} 
for every $[a,b] \subseteq [0,r]$. From the Sewing lemma \cite[Lemma 4.2]{FH14},

\begin{align*}
\Vert\rho^{2} -  \tilde{\rho}^{2}\Vert_{2\beta ;[a,b]} \leqslant M \sup_{s,u,t\in [a,b]}\frac{\big{\vert}(\Lambda_{s,t}-\tilde{\Lambda}_{s,t})-(\Lambda_{s,u}-\tilde{\Lambda}_{s,u})-(\Lambda_{u,t}-\tilde{\Lambda}_{u,t})\big{\vert}}{(t-s)^{2\beta}}
\end{align*}
for some constant $M > 0$.
Using (\ref{ZXZA}), one can deduce that
\begin{align*}
&\sup_{s,u,t\in [a,b]}\frac{\big{\vert}(\Lambda_{s,t}-\tilde{\Lambda}_{s,t})-(\Lambda_{s,u}-\tilde{\Lambda}_{s,u})-(\Lambda_{u,t}-\tilde{\Lambda}_{u,t})\big{\vert}}{(t-s)^{2\beta}}\\ &\leqslant (b-a)^{\gamma -\beta}g\big{[}D(X),D(\tilde{X}),C(y),C(\tilde{y}) \big{]}\big{[}R +\Vert y-\tilde{y}\Vert_{2\beta ;[a,b]}+\Vert y^{\prime}-\tilde{y}^{\prime}\Vert _{\beta ;[a,b]}+ \Vert y^{\#}-\tilde{y}^{\#}\Vert_{2\beta ;[a,b]}\big{]}.
\end{align*}
Now, along with $ (\ref{DED}) $,
\begin{align*}
&\Vert y-\tilde{y}\Vert_{\beta;[a,b]}+\Vert y^{\prime}-\tilde{y}^{\prime}\Vert_{\beta ;[a,b]}+\Vert y^{\#}-\tilde{y}^{\#}\Vert_{2\beta;[a,b]}\leqslant\\
&(b-a)^{\gamma -\beta}\tilde{g}\big{[}D(X),D(\tilde{X}),C(y),C(\tilde{y}) \big{]}\big{[}R +\Vert y-\tilde{y}\Vert_{\beta ;[a,b]}+\Vert y^{\prime}-\tilde{y}^{\prime}\Vert _{\beta ;[a,b]}+ \Vert y^{\#}-\tilde{y}^{\#}\Vert_{2\beta ;[a,b]}\big{]}
\end{align*}
with $ \tilde{g} $ being a continuous increasing function. Using the bounds for the norm of $y$ and $\tilde{y}$ provided in \cite[Equation (62)]{NNT08}, we can find an increasing continuous function $ H:(0,\infty)^{2}\rightarrow [0,\infty) $ such that
\begin{align*}
&\Vert y-\tilde{y}\Vert_{\beta;[a,b]}+\Vert y^{\prime}-\tilde{y}^{\prime}\Vert_{\beta ;[a,b]}+\Vert y^{\#}-\tilde{y}^{\#}\Vert_{2\beta;[a,b]}\leqslant \\ &(b-a)^{\gamma -\beta}H[D(X),D(\tilde{X})] [R+\Vert y-\tilde{y}\Vert_{\beta;[a,b]}+\Vert y^{\prime}-\tilde{y}^{\prime}\Vert_{\beta ;[a,b]}+\Vert y^{\#}-\tilde{y}^{\#}\Vert_{2\beta;[a,b]}].
\end{align*}
Now by the same argument as for the linear case, cf. the proof of Theorem \ref{thm:delay_linear}, one sees that
\begin{align}\label{MNNM}
\begin{split}
&\Vert y-\tilde{y}\Vert_{\beta;[0,r]}+\Vert y^{\prime}-\tilde{y}^{\prime}\Vert_{\beta ;[0,r]}+\Vert y^{\#}-\tilde{y}^{\#}\Vert_{2\beta;[0,r]}\leqslant F[D(X)+D(\tilde{X})]R 
\end{split}
\end{align}
holds for an increasing continous function $F$. The claim follows from (\ref{MNNM}) .
 \end{proof}

\section{A pathwise MET}

 
 \begin{proof}[Proof of Proposition \ref{imp}]\label{proof_pathwise_MET}
  For given $n \in \N$, let $E^{1}_{n} := \langle e^1_n,\ldots, e^m_n \rangle$ be an $m$-dimensional subspace of $ V_{0} $ with $ \Vert e^{i}_{n}\Vert =1 $ and
 \begin{align}\label{MKMM}
    \operatorname{Vol}(T^{n}e^{1}_{n},...,T^{n}e^{m}_{n})\geqslant\frac{1}{2}D_{m}(T^{n}).
 \end{align}
 By \cite[Lemma 2.3]{Blu16}, we can find a closed complement subspace $F^{2}_{n}$ to $ E^{2}_{n} := T^{n}E^{1}_{n} $ in $V_{n}$ such that for $ P_{n}^{2}:=\Pi_{E^{2}_{n}||F^{2}_{n}} $,
 \begin{align*}
\Vert P_{n}^{2}\Vert\leqslant \sqrt{m}.
 \end{align*}
 Let $ F^{1}_{n}:= \lbrace v\in V_{0} : T^{n}v\in F^{2}_{n}\rbrace$. One can check
 that $F^1_n$ is a closed complement subspace to $ E^{1}_{n} $. Set $P^{1}_{n} :=\Pi_{E^{1}_{n}||F^{1}_{n}}$. From Lemma \ref{estimate2} and (\ref{MKMM}), it follows that there is a constant $\alpha_m$ such that for any $ v\in E^{1}_{n} $,
 \begin{align}\label{TYT}
 \frac{\Vert T^{n}v\Vert}{\Vert v\Vert}\geqslant \frac{D_{m}(T^{n})}{2\alpha_{m}\Vert T^{n}\Vert^{m-1}}.
 \end{align}
 From $ P^{1}_{n}=(T^{n}|_{E^{1}_{n}})^{-1}P^{2}_{n}T^{n} $, (\ref{TYT}) implies that
 \begin{align}
 \Vert P^{1}_{n}\Vert\leqslant (m+1) \Vert T^{n}\Vert \Vert (T^{n}|_{E^{1}_{n}})^{-1}\Vert\leqslant \frac{2\alpha_{m}\Vert T^{n}\Vert^{m}}{D_{m}(T^{n})}.
 \end{align}
 Let $ v\in F^{1}_{n} $ with $ \Vert v\Vert =1 $. Then 
 \begin{align}\label{CVCV}
  \operatorname{Vol} (T^{n}e^{1}_{n},...,T^{n}e^{m}_{n},T^{n}v) = \operatorname{Vol}(T^{n}e^{1}_{n},...,T^{n}e^{m}_{n})\, d(T^{n}v, \langle T^{n}e^{1}_{n},...,T^{n}e^{m}_{n} \rangle).
 \end{align}
 Since $d(T^{n}v, \langle T^{n}e^{1}_{n},...,T^{n}e^{m}_{n} \rangle) = \inf_{\beta_{j}\in \mathbb{R}}\Vert T^{n}v -\sum_{1\leqslant j\leqslant m}\beta_{j}T^{n}e^{j}_{n} \Vert  $, we see that
\begin{align*}
\frac{\Vert T^{n}v\Vert}{d(T^{n}v, \langle T^{n}e^{1}_{n},...,T^{n}e^{m}_{n} \rangle)}\leqslant \Vert P^{2}_{n} \Vert +1 \leqslant \sqrt{m} + 1.
\end{align*} 
Consequently, from (\ref{MKMM}) and (\ref{CVCV}),
\begin{align}\label{eqn:key_est_blum}
\Vert T^{n}v\Vert\leqslant\frac{2( \sqrt{m} + 1) D_{m+1}(T^{n})}{D_{m}(T^{n})}.
\end{align}
The rest of the proof is almost identical to the original proof of \cite[Proposition 3.4]{Blu16}. First, one can show that the sequence of subspaces $(F^n)$ converge to $F$ in the Hausdorff distance at a sufficiently fast exponential rate, cf. \cite[Claim 3 on page 2396]{Blu16}. Together with \eqref{eqn:key_est_blum}, this implies the bound
\begin{align*}
\limsup_{n\rightarrow\infty}\frac{1}{n}\log \Vert T^{n}|_{F} \Vert \leqslant\underline{l}
\end{align*}
which was announced in Remark \eqref{remark:after_prop_key}. From the convergence, we can also deduce that $F$ is closed and $m$-codimensional. The identities \eqref{eqn:conv_overlinel} and \eqref{uniform} can be proved exactly as in \cite{Blu16}. To see \eqref{det},
%
%
%
let $ H = \langle h_{1},...,h_{m}\rangle $ be a complement subspace to $ F $. Note that, from (\ref{uniform}) and assumption (ii), for any $ \delta >0 $, we can choose $n$ large enough such that
\begin{align*}
\exp\big{(}n(\overline{l}-\delta )\big{)}\leqslant\frac{\Vert T^{n}v\Vert}{\Vert v\Vert}\leqslant\exp\big{(} n(\overline{l}+\delta )\big{)} 
\end{align*}
holds for all $v \in H$. Consequently, 
\begin{align*}
\exp\big{(}n(\overline{l}-\delta )\big{)}\leqslant\frac{d\big{(}T^{n}h_{j}, \langle T^{n} h_{i} \rangle_{1\leqslant i< j}\big{)}}{d\big{(} h_{j}, \langle h_{i} \rangle_{1\leqslant i< j}\big{)}}\leqslant\exp\big{(} n(\overline{l}+\delta )\big{)}
\end{align*}
for all $1 \leq j \leq m$ and (\ref{det}) follows.
 \end{proof}

\subsection*{Acknowledgements}
\label{sec:acknowledgements}

MGV acknowledges a scholarship from the Berlin Mathematical School (BMS). SR and MS acknowledge financial support by the DFG via Research Unit FOR 2402. All authors would like to thank A.~Blumenthal for sending us a corrected version of the proof of \cite[Lemma 3.7]{Blu16} and A.~Schmeding for valuable discussions and comments during the preparation of the manuscript.

\bibliographystyle{alpha}
\bibliography{refs}

\end{document}